\documentclass[10pt,reqno,a4paper]{amsart}
\usepackage{a4wide}
\usepackage{amsmath}
\usepackage{amssymb}
\usepackage{amsthm}
\usepackage{amstext}
\usepackage{geometry}
\usepackage{fancyhdr}
\usepackage{array}
\usepackage{graphicx}
\usepackage{hyperref}
\usepackage{fullpage}
\usepackage{cite}
\usepackage{bm}
\allowdisplaybreaks
\hypersetup{
colorlinks=true,
linkcolor=black,
urlcolor=red,
citecolor=green,
}

\allowdisplaybreaks[4]

\newtheorem{theorem}{Theorem}[section]

\newtheorem{corollary}[theorem]{Corollary}

\newtheorem{lemma}[theorem]{Lemma}
\newtheorem{proposition}[theorem]{Proposition}
\newtheorem{remark}[theorem]{Remark}

\numberwithin{equation}{section}

\begin{document}

\title{Uniform parameter bounds for weighted exponential equations on simply connected domains}

\author{Hong-Ge Chen}
\address{[Hong-Ge Chen] School of Mathematics and Statistics, Key Laboratory of Nonlinear Analysis Applications (Ministry of Education), Central China Normal University, Wuhan 430079, P.R. China}
\email{hongge\_chen@whu.edu.cn}

\author{Peng Luo}
\address{[Peng Luo] School of Mathematics and Statistics, Key Laboratory of Nonlinear Analysis Applications (Ministry of Education), Central China Normal University, Wuhan 430079, P.R. China}
\email{pluo@ccnu.edu.cn}

\author{Shuangjie Peng}
\address{[Shuangjie Peng] School of Mathematics and Statistics, Key Laboratory of Nonlinear Analysis Applications (Ministry of Education), Central China Normal University, Wuhan 430079, P.R. China}
\email{sjpeng@mail.ccnu.edu.cn}

\author{Fulin Zhong}
\address{[Fulin Zhong] School of Mathematics and Statistics, Central China Normal University, Wuhan 430079, P.R. China}
\email{flzhong@mails.ccnu.edu.cn}


\date{\today}

\begin{abstract}
We consider positive solutions of
\begin{equation*}
\begin{cases}
-\Delta u=p\lambda h(x)u^{p-1}\mathrm{e}^{u^p}&\text{in }\Omega,\\
u>0&\text{in }\Omega,\\
u=0&\text{on }\partial\Omega,
\end{cases}
\end{equation*}
where $p\in[1,2]$, $\lambda>0$, $\Omega\subset\mathbb{R}^2$ is a smooth bounded simply connected domain and $h\in C^2(\overline\Omega)$ is positive on $\overline\Omega$. The cases $p=1$ and $p=2$ correspond to the Liouville and Moser-Trudinger equations, respectively. We prove that the normalized parameter
\begin{equation*}
\beta=\frac{\lambda p^2}{2}\left(\int_\Omega h(\mathrm{e}^{u^p}-1)\mathrm{d}x\right)^{\frac{2-p}{p}}\left(\int_\Omega hu^p\mathrm{e}^{u^p}\mathrm{d}x\right)^{\frac{2(p-1)}{p}}
\end{equation*}
is bounded by a constant depending only on $\Omega$ and $h$, uniformly over all $p\in[1,2]$, $\lambda>0$ and positive solutions. At $p=2$, this excludes positive constrained critical points of the weighted Moser-Trudinger functional at sufficiently large Dirichlet energies. For $p=2$ and $h\equiv1$, we further prove that the energy converges uniformly to $4\pi$ as $\lambda\to0$ when $\Omega$ is convex. We also develop a boundary exclusion argument for positive weights that rules out boundary blow-up for $1<p<2$ without  a priori energy bound.

\vspace{0.25cm}

\noindent{\bf Keywords:} {\em Weighted exponential equations, uniform parameter bound, simply connected domain, constrained critical points, blow-up analysis.}

\vspace{0.25cm}

\noindent{\bf AMS subject classification:}
35J61, 
35B44, 
35B45, 
35B33. 
\end{abstract}

\maketitle

\tableofcontents

\section{Introduction} \label{sec-introduction}

We consider the family of equations
\begin{equation} \label{1}
\begin{cases}
-\Delta u=p\lambda h(x)u^{p-1}\mathrm{e}^{u^p}&\text{in }\Omega,\\
u>0&\text{in }\Omega,\\
u=0&\text{on }\partial\Omega,
\end{cases}
\end{equation}
where $p\in[1,2]$, $\lambda>0$, $\Omega\subset\mathbb{R}^2$ is a smooth bounded simply connected domain and  $h\in C^2(\overline\Omega)$ is positive on $\overline\Omega$. Following \cite{MalchiodiMartinazziThizy2024}, we use the normalized parameter
\begin{equation} \label{beta-definition}
\beta=\frac{\lambda p^2}{2}\left(\int_\Omega h(\mathrm{e}^{u^p}-1)\mathrm{d}x\right)^{\frac{2-p}{p}}\left(\int_\Omega hu^p\mathrm{e}^{u^p}\mathrm{d}x\right)^{\frac{2(p-1)}{p}}.
\end{equation}
Throughout the paper, a classical solution belongs to $C^2(\Omega)\cap C^1(\overline\Omega)$.

\vskip 0.1cm

For $p=1$, equation \eqref{1} reduces to the weighted Liouville equation
\begin{equation*}
\begin{cases}
-\Delta u=\lambda h(x)\mathrm{e}^u&\text{in }\Omega,\\
u>0&\text{in }\Omega,\\
u=0&\text{on }\partial\Omega.
\end{cases}
\end{equation*}
For a priori estimates and blow-up analysis, see \cite{BrezisMerle1991,BrezisLiShafrir1993,LiShafrir1994,Li1999,ChenLin1998,ChenLin2002}. For related results on mean field equations, one can refer to  \cite{CagliotiLionsMarchioroPulvirenti1992,CagliotiLionsMarchioroPulvirenti1995,DingJostLiWang1999,Lin2000,ChenLin2003,BartolucciLin2014}.

\vskip 0.1cm

For $p=2$, equation \eqref{1} becomes the weighted Moser-Trudinger equation
\begin{equation*}
\begin{cases}
-\Delta u=2\lambda h(x)u\mathrm{e}^{u^2}&\text{in }\Omega,\\
u>0&\text{in }\Omega,\\
u=0&\text{on }\partial\Omega.
\end{cases}
\end{equation*}
For $v\in H_0^1(\Omega)$, we denote its Dirichlet energy by
\begin{equation*}
E(v)=\int_\Omega|\nabla v|^2\mathrm{d}x.
\end{equation*}
Testing the equation with $u$ yields
\begin{equation*}
\beta=2\lambda\int_\Omega hu^2\mathrm{e}^{u^2}\mathrm{d}x=E(u).
\end{equation*}
Thus the normalized parameter is precisely the Dirichlet energy in this case. The corresponding variational problem is to find positive critical points of $J$ constrained to $\mathcal{M}_\beta$, where
\begin{equation*}
J(v)=\int_\Omega h(\mathrm{e}^{v^2}-1)\mathrm{d}x,\quad
\mathcal{M}_\beta=\left\{v\in H_0^1(\Omega)\mid E(v)=\beta\right\}.
\end{equation*}
The sharp Moser-Trudinger inequality \cite{Trudinger1967,Moser1971} states that
\begin{equation*}
\sup_{v\in\mathcal{M}_\beta}J(v)<+\infty
\quad\text{if and only if}\quad
\beta\leq4\pi.
\end{equation*}

We first recall the case $h\equiv1$. For $0<\beta<4\pi$, the direct variational method yields maximizers. At $\beta=4\pi$, compactness may fail. Critical extremals were obtained on the disk by Carleson and Chang \cite{CarlesonChang1986}, and on more general bounded planar domains by Struwe \cite{Struwe1988} and Flucher \cite{Flucher1992}. Lions \cite{Lions1985} developed the concentration-compactness framework for the limiting embedding. For further compactness and blow-up results, see \cite{AdimurthiStruwe2000,AdimurthiDruet2004}. For energies above $4\pi$, Struwe \cite{Struwe1988} proved, under an additional assumption on $\Omega$, that there exists $\varepsilon_0>0$ such that a positive local maximizer of $J|_{\mathcal{M}_\beta}$ exists for every $\beta\in(4\pi,4\pi+\varepsilon_0)$, and for almost every $\beta$ in this interval, a second positive critical point of mountain-pass type exists. Later, Lamm, Robert and Struwe \cite{LammRobertStruwe2009} proved that, for any smooth bounded planar domain $\Omega$, there exists $\varepsilon_0>0$ such that $J|_{\mathcal{M}_\beta}$ has two distinct positive critical points for every $\beta\in(4\pi,4\pi+\varepsilon_0)$.

\vskip 0.1cm

For $p=2$ and $h\equiv1$, Malchiodi and Martinazzi \cite{MalchiodiMartinazzi2014} proved that $J|_{\mathcal{M}_\beta}$ has no positive critical point on the disk for sufficiently large $\beta$. On non-simply connected domains, Del Pino, Musso and Ruf \cite{delPinoMussoRuf2012} constructed positive constrained critical points with two peaks at energies slightly above $8\pi$. On annuli, they also constructed such critical points with $k$ peaks at energies slightly above $4\pi k$ for every $k\geq1$. For $p=2$ and general positive weights $h\in C^2(\overline\Omega)$, Malchiodi, Martinazzi and Thizy \cite{MalchiodiMartinazziThizy2024} proved that $J|_{\mathcal{M}_\beta}$ has a positive critical point for every $\beta>0$ whenever $\Omega$ is a smooth bounded non-contractible planar domain.

\vskip 0.1cm

For the weighted equation at $p=2$, Druet and Thizy \cite{DruetThizy2020} established energy quantization and characterized the location of concentration points for sequences of positive solutions with bounded Dirichlet energy.
For $h\equiv1$ and every fixed $p\in[1,2)$, Malchiodi, Martinazzi and Thizy explain after \cite[Theorem 1.2]{MalchiodiMartinazziThizy2024} that \eqref{1} has no positive solution for sufficiently large $\beta$ on a smooth bounded simply connected domain. They also asked whether the Dirichlet energies of positive solutions are bounded from above at $p=2$ \cite[Open problem]{MalchiodiMartinazziThizy2024}. Recently, Hu, Wu and Zou \cite{HuWuZou2026} proved this bound for $p=2$ and $h\equiv1$. The following theorem treats general positive weights and establishes a bound uniform in $p\in[1,2]$.

\begin{theorem} \label{thm-simply-connected}
Let $\Omega\subset\mathbb{R}^2$ be a smooth bounded simply connected domain and let $h\in C^2(\overline\Omega)$ be positive on $\overline\Omega$. There exists a constant $C_{\Omega,h}>0$ such that every positive solution of \eqref{1}, with $p\in[1,2]$ and $\lambda>0$, satisfies
\begin{equation*}
\beta\leq C_{\Omega,h}.
\end{equation*}
The constant depends only on $\Omega$, $\|h\|_{C^2(\overline\Omega)}$ and $(\min_{\overline\Omega}h)^{-1}$.
\end{theorem}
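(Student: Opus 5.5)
We argue by contradiction. Suppose there is a sequence of positive solutions $u_k$ of \eqref{1}, with parameters $p_k\in[1,2]$ and $\lambda_k>0$, such that $\beta_k\to+\infty$. Passing to a subsequence, $p_k\to p_\infty\in[1,2]$. First we need $\gamma_k:=\max_\Omega u_k\to+\infty$ and $\lambda_k\to0$. If $\gamma_k$ stayed bounded, the right-hand side would be controlled by $\lambda_k$. The moving plane method then applies, with uniformity in $p$ coming only from $C^2$ bounds on $h$, through a Kelvin transform after mapping $\Omega$ near the boundary. Together with the Pohozaev identity on a simply connected domain, this would force $\lambda_k$, and hence $\beta_k$, to stay bounded.

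The moving plane method (de Figueiredo--Lions--Nussbaum type, made uniform via the Riemann mapping in the weighted case) gives a boundary neighborhood $\Omega_\delta$, depending only on $\Omega$ and $\|h\|_{C^2}$, on which $\|u_k\|_{L^\infty(\Omega_\delta)}$ is controlled in terms of interior quantities. This is the \emph{boundary exclusion} step, and it has to hold for all $1<p<2$ without an energy bound. Once it is in place, all concentration happens in a compact subset of $\Omega$.

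The core is a blow-up analysis around the maxima. Set $\mu_k^{-2}=p_k^2\lambda_k\gamma_k^{2(p_k-1)}e^{\gamma_k^{p_k}}h(x_k)$ and rescale
\[
w_k(y)=p_k\gamma_k^{p_k-1}\bigl(u_k(x_k+\mu_ky)-\gamma_k\bigr).
\]
Then $w_k\to w=\log\frac{1}{(1+|y|^2/8)^2}$, which carries mass $8\pi$. We then need to show three things.

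\textbf{(i) Sharp pointwise bounds.} Using radial-type comparison, $u_k$ is bounded away from the concentration points by the Green's function contributions.

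\textbf{(ii) Finitely many bubbles.} The number of blow-up points is bounded by a constant depending only on $\Omega$ and $h$. We would get this from the Pohozaev identity
\[
\int_{\partial\Omega}(x-x_0)\cdot\nu\,|\partial_\nu u|^2
\]
balanced against $\int \nabla h\cdot(x-x_0)F(u)$, plus the fact that each bubble contributes a definite quantum.

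\textbf{(iii) Quantization of the normalized parameter.} The normalized parameter satisfies $\beta_k\to4\pi N$, where $N$ is the number of bubbles, up to lower-order terms. This uses the precise expansion of the interpolating exponents $(2-p)/p$ and $2(p-1)/p$ in \eqref{beta-definition}: they are designed so that $\beta$ measures $\frac{p^2\lambda}{2}\gamma^{2(p-1)}e^{\gamma^p}\times$(area scale), which each bubble makes asymptotically $4\pi$. That contradicts $\beta_k\to\infty$.

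For simply connected $\Omega$ one can alternatively rule out $N\ge2$ and get $N=1$, as in Hu--Wu--Zou. This uses the fact that the Robin function has no critical points of the multi-bubble type, which follows from the Pohozaev identity on the domain mapped to the disk.

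\textbf{Main obstacle.} The hardest step is uniformity: obtaining the pointwise estimates (i) and the bubble count (ii) with constants independent of $p\in[1,2]$ and without an a priori energy bound. Near $p=2$ the tail interaction between bubbles is only logarithmically small, as in Druet--Thizy, so one needs a careful iteration in which $\beta_k$ is not assumed bounded.

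I would first try to bound the total mass
\[
\int \lambda_k p_k h u_k^{p_k-1}e^{u_k^{p_k}}\cdot p_k\gamma_k^{p_k-1}
\]
directly by Pohozaev plus the star-shaped structure after a conformal map. Then I would feed this bound into the Brezis--Merle alternative, applied in the rescaled variables, to get finiteness of blow-up points. Uniformity in $p_k$ would be obtained by arguing along the subsequence with $p_k\to p_\infty$ and treating $p_\infty=1$, $1<p_\infty<2$ and $p_\infty=2$ separately.
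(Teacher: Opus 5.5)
There is a genuine gap at the core of your plan. Steps (ii)--(iii), and the quantity you propose to bound ``directly by Pohozaev'', are essentially the theorem itself, and the mechanisms you name do not produce them.

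\textbf{Quantization needs bounded energy.} The quantization $\beta\to4\pi N$ of Druet and Druet--Thizy is proved only for sequences with bounded energy. When $\beta_k\to\infty$, nothing a priori bounds the number of bubbles. On an annulus the energy, and with it the number of peaks, is indeed unbounded (Remark~\ref{rem-annulus} and the $k$-peak solutions of del Pino--Musso--Ruf). So simple connectivity has to enter quantitatively.

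\textbf{Pohozaev cannot count bubbles.} The conformal Pohozaev identity controls only the source mass $M=\int_\Omega f\,\mathrm{d}x$, and along a bad sequence in fact $M_k\to0$. A bubble of height $\gamma$ contributes only about $8\pi/(p\gamma^{p-1})\to0$ to $M$. The definite quantum $4\pi$ per bubble is a quantum of energy, which is exactly the unbounded quantity.

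\textbf{Bounding $\gamma_k^{p_k-1}M_k$ is the whole problem.} Since $\beta^{p/2}\le CMm^{p-1}$ with $m=\max u$, such a bound is essentially the conclusion. Pohozaev, through $M\le C(\log\frac1\lambda)^{-\frac{p-1}{p}}$, achieves it only when $m^p\lesssim\log\frac1\lambda$. The bad regime is precisely $m^p\gg\log\frac1\lambda$.

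\textbf{The fallback is incorrect.} Nonexistence of multi-point solutions of the location equations is known for convex domains (Grossi--Takahashi). The paper uses it only for Theorem~\ref{thm-convex}, after the bound is established. It is not known for general simply connected domains such as dumbbells, and in any case it lives inside the bounded-energy theory.

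\textbf{The missing idea is a contradiction that never counts bubbles.} The paper's route is as follows.
\begin{enumerate}
\item Coarea and the isoperimetric inequality on superlevel components give $\int_\Omega hF(u)\le C\sum_jq_{D_j}^2\le CM^2$. Hence $\beta^{p/2}\le CMm^{p-1}$, so $\beta_n\to\infty$ forces $M_nm_n^{p_n-1}\to\infty$.
\item Blow-up and radial comparison at a global maximum give $p_n\to2$ and $\lambda_n\to0$. Combined with the Green lower bound $\overline u_n(\delta_0)\ge cM_n$ at a fixed radius, they give $\Theta_n(m_n)\to0$, where $\Theta_n(s)=\lambda_ns^{2(p_n-1)}\mathrm{e}^{\frac{2-p_n}{2}s^{p_n}}$.
\item Pohozaev together with the component bound yields a component $D_n$ of a high superlevel set carrying a fixed fraction of $M_n$.
\item A truncated radial vector field controls the remaining source with a distance weight, with no separation between components required. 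This yields $M_n^{\ell_n}L_n\to0$, and $\frac{p_n}{2}\gamma_n^{p_n-1}q_n\to4\pi$ at $\gamma_n=\max_{\overline{D_n}}u_n$.
\item Radial comparison at the maximum point of $D_n$ then gives $\Theta_n(\gamma_n)\ge c$.
\end{enumerate}
Since $\gamma_n\le m_n$ and $\Theta_n$ is nondecreasing, this contradicts $\Theta_n(m_n)\to0$. Only one bubble is ever analyzed, and a second one is excluded only inside a shrinking neighborhood of $D_n$.
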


\begin{remark}
The assumption in Theorem \ref{thm-simply-connected} that $\Omega$ is simply connected cannot be omitted, as shown in Remark \ref{rem-annulus}.
\end{remark}

For $p=2$, Theorem \ref{thm-simply-connected} yields the following variational consequence. When $h\equiv1$, the corollary extends the large energy nonexistence result of \cite[Theorem 1]{MalchiodiMartinazzi2014} from the disk to all smooth bounded simply connected planar domains.

\begin{corollary} \label{cor-bound}
Let $\Omega\subset\mathbb{R}^2$ be a smooth bounded simply connected domain and let $h\in C^2(\overline\Omega)$ be positive on $\overline\Omega$. There exists a constant $\beta_*>4\pi$, depending only on $\Omega$ and $h$, such that $J|_{\mathcal{M}_\beta}$ has no positive critical point whenever $\beta>\beta_*$.
\end{corollary}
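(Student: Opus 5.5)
The corollary follows from Theorem \ref{thm-simply-connected} at $p=2$ once constrained critical points are identified with positive solutions of \eqref{1}. I will set
\begin{equation*}
\beta_*=\max\{C_{\Omega,h},\,4\pi+1\},
\end{equation*}
where $C_{\Omega,h}$ is the constant from Theorem \ref{thm-simply-connected}. By construction $\beta_*>4\pi$, and it depends only on $\Omega$ and $h$.

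First, let $v\in\mathcal{M}_\beta$ be a positive critical point of $J|_{\mathcal{M}_\beta}$ with $\beta>0$. By the Moser--Trudinger inequality (in Trudinger's form, $\mathrm{e}^{v^2}\in L^q$ for every $q<\infty$), the functional $J$ is $C^1$ on $H_0^1(\Omega)$, with
\begin{equation*}
J'(v)[\varphi]=2\int_\Omega hv\mathrm{e}^{v^2}\varphi\,\mathrm{d}x .
\end{equation*}
The constraint $E$ has derivative $2\int\nabla v\cdot\nabla\varphi$, which is nonzero since $\beta>0$. The Lagrange multiplier rule then gives $\mu\in\mathbb{R}$ with
\begin{equation*}
-\mu\Delta v=2hv\mathrm{e}^{v^2}
\end{equation*}
weakly. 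Testing with $v$ yields $\mu\beta=2\int_\Omega hv^2\mathrm{e}^{v^2}>0$, so $\mu>0$. Setting $\lambda=1/\mu$, we see that $v$ is a positive weak solution of \eqref{1} with $p=2$.

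Next, I will upgrade $v$ to a classical solution. The right-hand side $2\lambda hv\mathrm{e}^{v^2}$ lies in $L^q$ for every $q$, so elliptic $L^q$ estimates give $v\in W^{2,q}\subset C^{1,\alpha}(\overline\Omega)$. Schauder theory, using $h\in C^2$, then gives $v\in C^{2,\alpha}$. Hence $v\in C^2(\Omega)\cap C^1(\overline\Omega)$, and $v>0$ in $\Omega$ by the maximum principle (this is also part of positivity).

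Finally, as noted in the introduction, at $p=2$ the normalized parameter equals $E(v)=\beta$. Theorem \ref{thm-simply-connected} gives $\beta\le C_{\Omega,h}\le\beta_*$. So no positive critical point exists when $\beta>\beta_*$. The only step requiring care is the regularity bootstrap, which is standard.
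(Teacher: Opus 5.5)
Your proposal is correct and follows essentially the same route as the paper: a Lagrange multiplier whose sign is fixed by testing against the critical point, an $L^r$/Schauder bootstrap based on exponential integrability of $H_0^1$ functions, the strong maximum principle, and then Theorem \ref{thm-simply-connected} at $p=2$, where $\beta=E(u)$. The differences are cosmetic. You quote Trudinger's integrability $\mathrm{e}^{v^2}\in L^q$ directly, whereas the paper derives it by a density argument, and your normalizations of the multiplier and of $\beta_*$ differ harmlessly.
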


We also consider the limit $\lambda\to0$ on convex domains for $h\equiv1$ and $p=2$. On the disk, Malchiodi and Martinazzi \cite[Theorem 2]{MalchiodiMartinazzi2014} proved that the energy tends to $4\pi$ as the maximum of a positive solution tends to infinity. Mancini and Martinazzi \cite{ManciniMartinazzi2017} obtained upper and lower bounds for the energy of positive radial critical points. For related semilinear equations, Grossi and Takahashi \cite{GrossiTakahashi2010} proved the nonexistence of multi-bubble solutions on convex domains. We prove the following uniform limit.

\begin{theorem} \label{thm-convex}
Let $\Omega\subset\mathbb{R}^2$ be a smooth bounded convex domain. For every $\varepsilon>0$ there exists a constant $\lambda_\varepsilon>0$, depending only on $\Omega$ and $\varepsilon$, such that every positive solution of \eqref{1} with $h\equiv1$, $p=2$ and $0<\lambda<\lambda_\varepsilon$ satisfies
\begin{equation*}
|E(u)-4\pi|<\varepsilon.
\end{equation*}
\end{theorem}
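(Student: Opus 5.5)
We argue by contradiction and use blow-up analysis. Suppose there are positive solutions $u_k$ of \eqref{1} with $h\equiv1$, $p=2$, $\lambda_k\to0$, and $|E(u_k)-4\pi|\geq\varepsilon$. First, $\|u_k\|_\infty\to\infty$. Otherwise $u_k$ would be bounded, and $-\Delta u_k=2\lambda_k u_k\mathrm{e}^{u_k^2}$ gives $\|\Delta u_k\|_\infty\leq C\lambda_k\|u_k\|_\infty$. Elliptic estimates then give $\|u_k\|_\infty\leq C\lambda_k\|u_k\|_\infty$, which is impossible for $k$ large. By Theorem \ref{thm-simply-connected}, $E(u_k)=\beta_k\leq C_\Omega$. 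This puts us in the bounded-energy regime, where we may invoke the Druet--Thizy quantization \cite{DruetThizy2020}. Up to a subsequence, $u_k\rightharpoonup u_\infty$ weakly in $H_0^1$ and
\begin{equation*}
E(u_k)\to4\pi N+E(u_\infty)
\end{equation*}
for some integer $N\geq1$, with concentration at finitely many interior points $x_1,\dots,x_N$. The weak limit $u_\infty$ solves $-\Delta u_\infty=2\lambda_\infty u_\infty\mathrm{e}^{u_\infty^2}$ with $\lambda_\infty=\lim\lambda_k=0$. Hence $u_\infty$ is harmonic with zero boundary values, so $u_\infty\equiv0$.

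It remains to show $N=1$. Boundary blow-up is excluded: on a convex domain this follows from the moving plane method near $\partial\Omega$. The Gidas--Ni--Nirenberg/de Figueiredo--Lions--Nussbaum argument shows that $u_k$ is monotone in a boundary strip of fixed width, since the nonlinearity $f(u)=2\lambda u\mathrm{e}^{u^2}$ is Lipschitz on bounded sets and nonnegative. Consequently $u_k$ is uniformly bounded on that strip and in $C^1$ there, and all concentration points lie in a compact subset of $\Omega$. Next, Druet--Thizy locate the points: $(x_1,\dots,x_N)$ is a critical point of the Kirchhoff--Routh type function
\begin{equation*}
\Psi_N(x_1,\dots,x_N)=\sum_{i}R(x_i)-\sum_{i\neq j}G(x_i,x_j),
\end{equation*}
where $G$ is the Green function and $R$ the Robin function, possibly with weights that are all equal here because $h\equiv1$ and the bubbles carry equal mass asymptotically. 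The simplest route to $N=1$ is the Grossi--Takahashi result \cite{GrossiTakahashi2010}: on convex domains, $\Psi_N$ has no critical points for $N\geq2$. Their argument combines a Pohozaev identity with the convexity of $\Omega$, through the inequality $(x-y)\cdot(\nabla_x+\nabla_y)$ applied to the regular part $H$ of the Green function, and the condition $x\cdot\nu>0$. Applying it here forces $N=1$. Then $E(u_k)\to4\pi$, which contradicts the choice of $u_k$.

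The hypotheses of Druet--Thizy must be checked. They treat fixed $h$ and arbitrary $\lambda_k$ with bounded energy, which matches our situation. However, the localization statement for $N\geq2$ requires the bubbles to have comparable heights (i.e.\ $\gamma_{i,k}/\gamma_{j,k}\to1$), or else one must handle limiting configurations with different weights. With unequal weights $m_i$, the point condition becomes criticality of $\sum m_i^2R(x_i)-\sum_{i\neq j}m_im_jG(x_i,x_j)$. The Grossi--Takahashi Pohozaev argument still excludes this: on convex domains one uses the identity $\sum_i m_i^2\,\partial_\nu$-type boundary integrals with the vector field $x-x_0$. The boundary term is $\int_{\partial\Omega}(x-x_0)\cdot\nu\,|\partial_\nu G_m|^2>0$, while the interior contributions cancel by criticality. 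This gives $(\sum m_i)^2\cdot\text{const}=\sum m_i^2$, which fails for $N\geq2$.

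The main obstacle is this last step: pinning down the precise localization condition from \cite{DruetThizy2020} in the Moser--Trudinger setting with $\lambda\to0$, where the weights may differ, and showing that the convex-domain Pohozaev obstruction applies to it. The uniformity in $\lambda_\varepsilon$ depending only on $\Omega,\varepsilon$ comes automatically from the compactness argument.
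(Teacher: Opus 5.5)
Your proof is essentially the paper's: argue by contradiction with $\lambda_n\to0$, get the energy bound from Theorem \ref{thm-simply-connected}, show the maximum blows up, apply Druet--Thizy quantization $E(u_n)\to4\pi N$ with interior concentration points, and use Grossi--Takahashi to force $N=1$. The obstacle you flag is resolved as you suggest. Theorem 1.2 of \cite{DruetThizy2020} gives $\sigma_i\nabla_xH(x_i,x_i)+\sum_{j\neq i}\sigma_j\nabla_xG(x_i,x_j)=0$ with positive weights that need not be equal, so your equal-weights aside is unjustified. Multiplying by $-\sigma_i$ turns this into criticality of the weighted Kirchhoff--Routh function, and \cite{GrossiTakahashi2010} already covers arbitrary positive weights, so you can simply cite it. Your Pohozaev sketch comparing $(\sum m_i)^2$ with $\sum m_i^2$ is not needed; as written it is really the disk Cauchy--Schwarz argument and does not work on general convex domains. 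Your separate moving-plane boundary exclusion is likewise unnecessary.
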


For a positive solution $u$ of \eqref{1}, set
\begin{equation*}
\begin{aligned}
&f=p\lambda h(x)u^{p-1}\mathrm{e}^{u^p},\quad F(s)=\lambda(\mathrm{e}^{s^p}-1),\quad
M=\int_\Omega f\mathrm{d}x,\quad m=\max_{x\in\overline{\Omega}}u(x),\quad L=\log\frac{1}{\lambda}.
\end{aligned}
\end{equation*}
Thus $hF'(u)=f$. We call $f$ the source and $M$ the total source mass. For a sequence of solutions $u_n$ with exponents $p_n$ and parameters $\lambda_n$, we use the corresponding notation $f_n$, $F_n$, $M_n$, $m_n$, $L_n$ and $\beta_n$.

\vskip 0.1cm

The proof of Theorem \ref{thm-simply-connected} uses blow-up analysis first at a global maximum of the solution and then at the maximum on a superlevel component with the largest source mass. We argue by contradiction and assume that $\beta_n\to\infty$. The proof consists of the following four steps.

\smallskip
\noindent\textbf{Step 1.} A conformal Pohozaev argument following \cite{Battaglia2019} yields $M\leq C$ for all $\lambda>0$ and
\begin{equation*}
M\leq CL^{-\frac{p-1}{p}}
\end{equation*}
for sufficiently small $\lambda>0$, uniformly in $p\in[1,2]$. Under the assumption $\beta_n\to\infty$, the integral inequality for superlevel components implies $M_nm_n^{p_n-1}\to\infty$. Using the source estimates, we also obtain $m_n\to\infty$ and $\frac{L_n}{m_n^{p_n}}\to0$. We perform the first blow-up analysis at a global maximum, using $u_n\leq m_n$, and prove that $p_n\to2$. The radial comparison of \cite{DeMarchisMalchiodiMartinazziThizy2022} then yields $\lambda_n\to0$ and a circle mean estimate at a fixed radius. For $s>0$, set
\begin{equation*}
\Theta_n(s)=\lambda_ns^{2(p_n-1)}\mathrm{e}^{\frac{2-p_n}{2}s^{p_n}}.
\end{equation*}
The Green representation yields a lower bound $cM_n$ for the circle mean at the same fixed radius. Combining this lower bound with the upper bound from radial comparison, we obtain
\begin{equation*}
\Theta_n(m_n)\leq C\mathrm{e}^{-cM_nm_n^{p_n-1}}\to0.
\end{equation*}

\smallskip
\noindent\textbf{Step 2.} We choose a high superlevel component $D_n$ with maximal source mass. The source and integral estimates imply
\begin{equation*}
q_n=\int_{D_n}f_n\mathrm{d}x\geq cM_n.
\end{equation*}
The components have small diameters but need not be separated. We use the maximality of $q_n$ and a truncated radial vector field to bound the high source outside $D_n$ with a distance weight and establish
\begin{equation*}
\frac{2\lambda_n}{q_n^2}\int_{D_n}h\mathrm{e}^{u_n^{p_n}}\mathrm{d}x\to\frac{1}{4\pi}.
\end{equation*}
The exponential integrability of normalized Green potentials, combined with this limit, yields
\begin{equation*}
M_n^{\frac{p_n}{p_n-1}}L_n\to0.
\end{equation*}

\smallskip
\noindent\textbf{Step 3.} The stronger source bound allows us to estimate the Green potential generated by $f_n$ outside $D_n$ and its oscillation near $D_n$. Combining these estimates with the integral estimate on $D_n$, we prove that
\begin{equation*}
\frac{p_n}{2}\int_{D_n}u_n^{p_n-1}f_n\mathrm{d}x\to4\pi.
\end{equation*}
The same limit holds on a suitable shrinking neighborhood of $D_n$. Choose $x_{1,n}\in D_n$ with $\gamma_n=u_n(x_{1,n})=\max_{\overline{D_n}}u_n$. The oscillation estimate provides a local upper bound near $x_{1,n}$, while the stronger source bound implies $\frac{L_n}{\gamma_n^{p_n}}\to0$. Blow-up analysis at $x_{1,n}$ and the integral estimates on $D_n$ then yield
\begin{equation*}
\frac{p_n}{2}\gamma_n^{p_n-1}q_n\to4\pi.
\end{equation*}
The local profile and the integral limit on the shrinking neighborhood exclude a second bubble through a point selection argument. The resulting pointwise estimate and the Green representation yield the gradient bound needed for radial comparison. We also verify the required radius condition.

\smallskip
\noindent\textbf{Step 4.} Within the range of radial comparison, we choose radii $0<r_n<R_n$ such that $D_n\subset B_{r_n}(x_{1,n})$ and
\begin{equation*}
\log\frac{R_n}{r_n}\geq cM_n^{-\frac{p_n}{p_n-1}},\quad
0<\log\frac{1}{R_n}\leq CM_n^{-\frac{p_n}{p_n-1}}.
\end{equation*}
The circle mean identity and radial comparison imply
\begin{equation*}
\frac{p_n}{2}\gamma_n^{p_n-1}q_n\leq4\pi+CM_n^{\frac{p_n}{p_n-1}}.
\end{equation*}
Comparing the radial estimate with the Green representation on $\partial B_{R_n}(x_{1,n})$ and using these bounds, we obtain $-\log\Theta_n(\gamma_n)\leq C$. Since $\gamma_n\leq m_n$ and $\Theta_n$ is nondecreasing, we obtain the contradiction
\begin{equation*}
0<c\leq\Theta_n(\gamma_n)\leq\Theta_n(m_n)\to0.
\end{equation*}

For convex domains with $h\equiv1$ and $p=2$, the uniform energy bound allows us to apply \cite{DruetThizy2020}. The nonexistence result in \cite{GrossiTakahashi2010} applies to the location equations and excludes multiple concentration points. Since $m\to\infty$ as $\lambda\to0$, there exists exactly one concentration point. Energy quantization then implies $E(u)\to4\pi$ and proves Theorem \ref{thm-convex}.

\vskip 0.1cm

The paper is organized as follows. Section \ref{sec-source} bounds the total source mass and records the Green function estimates. Section \ref{sec-cores} establishes estimates for superlevel components and the maximum of the solution. Section \ref{sec-maximal} selects a component with maximal source mass and improves the source bound. Section \ref{sec-background} estimates the Green potential generated by the source outside this component and studies blow-up at the component maximum. Section \ref{sec-final} completes the proof of Theorem \ref{thm-simply-connected}. Section \ref{app-remaining} contains the proofs of Theorem \ref{thm-convex} and Corollary \ref{cor-bound}. Appendix \ref{app-boundary} proves the boundary exclusion used in Section \ref{sec-cores}.

\section{Source and Green estimates} \label{sec-source}

In this section, we use a conformal Pohozaev identity to bound the total source mass. For $p>1$, we also estimate the Green potential generated by the source where $u$ is below a suitable level.

\vskip 0.1cm

Fix a Riemann map $\Phi\colon B_1(0) \to\Omega$. Since $\Omega$ is smooth,  we know from
\cite{Bell2016} that the map $\Phi$ extends to a $C^\infty$ diffeomorphism from $\overline{B_1(0)}$ onto $\overline\Omega$, with $\Phi'\neq0$ in $\overline{B_1(0)}$. Consequently, there exist constants $c,C>0$ such that
\begin{equation*}
0<c\leq|\Phi'|\leq C,\quad |\Phi''|\leq C \quad\text{in }\overline{B_1(0)}.
\end{equation*}
Set $v=u\circ\Phi$ and $a=|\Phi'|^2$, and write $\Phi=(\Phi_1,\Phi_2)$. The Cauchy-Riemann equations are
\begin{equation*}
\partial_{y_1}\Phi_1=\partial_{y_2}\Phi_2,\quad \partial_{y_2}\Phi_1=-\partial_{y_1}\Phi_2.
\end{equation*}
The chain rule yields
\begin{equation*}
\Delta_yv=a(\Delta_xu)\circ\Phi, \quad
\mathrm{d}x=a(y)\mathrm{d}y.
\end{equation*}

\begin{proposition} \label{pro:bound}
There exists a constant $C>0$, depending only on $\Omega$ and $\|\nabla\log h\|_{L^\infty(\Omega)}$, such that every positive solution of \eqref{1} with $p\in[1,2]$ satisfies
\begin{equation} \label{2}
M^2\leq C\int_\Omega hF(u)\mathrm{d}x.
\end{equation}
Moreover, $M\leq C$ for all $\lambda>0$. There exists $\lambda_0\in(0,1)$, depending only on $\Omega$ and $\|h\|_{L^\infty(\Omega)}$, such that
\begin{equation} \label{3}
M\leq CL^{-\frac{p-1}{p}} \quad \text{for any } 0<\lambda<\lambda_0.
\end{equation}
\end{proposition}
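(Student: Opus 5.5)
We pull the problem back to the unit disk via $\Phi$ and apply a Pohozaev identity there, in the spirit of \cite{Battaglia2019}. The disk is star-shaped with respect to the origin, and its boundary term has a sign.

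\textbf{Setup on the disk.} The function $v=u\circ\Phi$ solves $-\Delta_y v=a\,(hF'(u))\circ\Phi=\tilde h F'(v)$ in $B_1(0)$ with $v=0$ on $\partial B_1(0)$, where $\tilde h=a\,h\circ\Phi$. Here $\tilde h$ is positive and $|\nabla\log\tilde h|\le C(\Omega,\|\nabla\log h\|_\infty)$, because $\nabla\log a=2\nabla\log|\Phi'|$ is bounded by the bounds on $\Phi',\Phi''$.

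\textbf{Proof of \eqref{2}.} Multiply the equation by $y\cdot\nabla v$ and integrate over $B_1$. The standard Pohozaev computation in two dimensions gives
\[
\frac12\int_{\partial B_1}(\partial_\nu v)^2\,\mathrm d\sigma=\int_{B_1}\bigl(2\tilde h+y\cdot\nabla\tilde h\bigr)F(v)\,\mathrm dy .
\]
The gradient terms cancel in dimension two, and there is no boundary term involving $F$ because $F(0)=0$. The right-hand side is at most $C\int_{B_1}\tilde hF(v)\,\mathrm dy=C\int_\Omega hF(u)\,\mathrm dx$.

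On the other hand, integrating the equation gives $M=\int_{B_1}\tilde hF'(v)\,\mathrm dy=-\int_{\partial B_1}\partial_\nu v$. Cauchy--Schwarz then yields $M^2\le 2\pi\int_{\partial B_1}(\partial_\nu v)^2$, and \eqref{2} follows. This step is routine.

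\textbf{Bounding $\int hF(u)$ by $M$.} It remains to control $\int hF(u)$ in terms of $M$, splitting by the size of $u$.
\begin{itemize}
\item On $\{u^p\ge K\}$ we have $F(u)\le\lambda e^{u^p}=f/(p h u^{p-1})$, so this part contributes at most $M/(pK^{(p-1)/p})\le M$ for $K\ge1$.
\item On $\{u^p<K\}$ we have $\lambda(e^{u^p}-1)\le \lambda e^K u^p$. For $u\le1$ this is bounded by $C f\,u$, and for $1\le u$ with $u^p<K$ it is bounded by $C_K f$ (using $p\in[1,2]$). Hence the contribution is at most $C\int_\Omega f(1+u)\le CM(1+\|u\|_{L^\infty(\{u^p<K\})})\le C_K M$.
\end{itemize}
Together, $M^2\le CM$, so $M\le C$ for all $\lambda>0$, uniformly in $p$.

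\textbf{Proof of \eqref{3}.} For small $\lambda$ we refine the splitting threshold to $u^p\ge L/2$, that is $e^{u^p}\ge\lambda^{-1/2}$.
\begin{itemize}
\item On the set where $u^p\ge L/2$, we get $hF(u)\le f/(p u^{p-1})\le C f\,L^{-(p-1)/p}$, so this part is at most $CML^{-(p-1)/p}$.
\item On the set where $u^p<L/2$, we have $F(u)\le \lambda e^{u^p}\le\lambda^{1/2}$. This contributes at most $C\lambda^{1/2}|\Omega|$, which is $\le CL^{-2(p-1)/p}$ once $\lambda<\lambda_0$.
\end{itemize}
Then \eqref{2} gives $M^2\le CML^{-(p-1)/p}+CL^{-2(p-1)/p}$, hence $M\le CL^{-(p-1)/p}$.

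\textbf{Main obstacle.} The delicate point is keeping all constants uniform in $p\in[1,2]$, including near $p=1$ where $u^{p-1}\approx1$. There, \eqref{3} simply says $M\le C$, which is consistent with the above. One must also check that the $\lambda_0$ threshold depends only on $\Omega$ and $\|h\|_\infty$: the low-level piece uses $\|h\|_\infty|\Omega|\lambda^{1/2}\le L^{-2}$. No additional ingredient such as the Green function is needed.
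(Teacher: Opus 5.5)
Your proposal is correct and follows the paper's proof: the conformal Pohozaev identity on the disk with Cauchy--Schwarz on $\partial B_1(0)$ gives \eqref{2}, and splitting at the level $u^p=L/2$ gives \eqref{3}. The differences are cosmetic. Your two-case argument for $M\le C$ reduces to the single pointwise bound $F(s)\le F'(s)$ (so $hF(u)\le f$) that the paper uses directly. You also solve the quadratic inequality $M^2\le aM+b$ where the paper absorbs $CM/T^{p-1}$ into $\tfrac12M^2$ by Young's inequality.
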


\begin{proof}
Set $v=u\circ\Phi$ and $b=|\Phi'|^2(h\circ\Phi)$. Then $v$ satisfies
\begin{equation} \label{3-1}
\begin{cases}
-\Delta v=bF'(v)&\text{in } B_1(0) ,\\[1mm]
v=0&\text{on }\partial B_1(0) ,
\end{cases}
\end{equation}
and
\begin{equation*}
M=\int_{B_1(0)}bF'(v)\mathrm{d}y=-\int_{\partial B_1(0)}\partial_\nu v\mathrm{d}s,\quad \int_\Omega hF(u)\mathrm{d}x=\int_{B_1(0)}bF(v)\mathrm{d}y.
\end{equation*}
On $\partial B_1(0) $, $y=\nu$, $\nabla v=(\partial_\nu v)\nu$ and $F(v)=0$. Multiplying \eqref{3-1} by $y\cdot\nabla v$ and integrating by parts, we obtain
\begin{equation*}
\begin{aligned}
\int_{B_1(0)}(-\Delta v)(y\cdot\nabla v)\mathrm{d}y &=-\int_{\partial B_1(0)}(\partial_\nu v)^2\mathrm{d}s +\int_{B_1(0)}\nabla v\cdot\nabla(y\cdot\nabla v)\mathrm{d}y\\
&=-\int_{\partial B_1(0)}(\partial_\nu v)^2\mathrm{d}s +\int_{B_1(0)}|\nabla v|^2\mathrm{d}y +\frac{1}{2}\int_{B_1(0)}y\cdot\nabla|\nabla v|^2\mathrm{d}y\\
&=-\int_{\partial B_1(0)}(\partial_\nu v)^2\mathrm{d}s +\frac{1}{2}\int_{\partial B_1(0)}(y\cdot\nu)|\nabla v|^2\mathrm{d}s =-\frac{1}{2}\int_{\partial B_1(0)}(\partial_\nu v)^2\mathrm{d}s,
\end{aligned}
\end{equation*}
where we use that
\begin{equation*}
\operatorname{div}\left(y|\nabla v|^2\right)=2|\nabla v|^2+y\cdot\nabla|\nabla v|^2.
\end{equation*}
Since $\nabla F(v)=F'(v)\nabla v$ and $\operatorname{div}(by)=2b+y\cdot\nabla b$, integration by parts gives
\begin{equation*}
\begin{aligned}
\int_{B_1(0)}bF'(v)(y\cdot\nabla v)\mathrm{d}y&=\int_{\partial B_1(0)}bF(v)(y\cdot\nu)\mathrm{d}s-\int_{B_1(0)}(2b+y\cdot\nabla b)F(v)\mathrm{d}y\\
&=-\int_{B_1(0)}(2b+y\cdot\nabla b)F(v)\mathrm{d}y.
\end{aligned}
\end{equation*}
Thus we obtain the Pohozaev identity (see also \cite{Battaglia2019})
\begin{equation*}
\int_{\partial B_1(0)}(\partial_\nu v)^2\mathrm{d}s =2\int_{B_1(0)}(2b+y\cdot\nabla b)F(v)\mathrm{d}y.
\end{equation*}
Since $b=|\Phi'|^2(h\circ\Phi)$,
\begin{equation*}
\begin{aligned}
\|\nabla\log b\|_{L^\infty(B_1(0))}
&\leq C_\Omega\left(1+\|\nabla\log h\|_{L^\infty(\Omega)}\right),
\end{aligned}
\end{equation*}
and
\begin{equation*}
\begin{aligned}
2b+y\cdot\nabla b&\leq\left(2+\|\nabla\log b\|_{L^\infty(B_1(0))}\right)b\leq Cb.
\end{aligned}
\end{equation*}
Using $F(v)\geq0$ and the Cauchy-Schwarz inequality on $\partial B_1(0) $,
\begin{equation*}
\begin{aligned}
M^2 &\leq2\pi\int_{\partial B_1(0)}(\partial_\nu v)^2\mathrm{d}s =4\pi\int_{B_1(0)}(2b+y\cdot\nabla b)F(v)\mathrm{d}y\\
&\leq C\int_{B_1(0)}bF(v)\mathrm{d}y =C\int_\Omega hF(u)\mathrm{d}x.
\end{aligned}
\end{equation*}
This proves \eqref{2}.

\vskip 0.1cm

For $s>0$, using $1-\mathrm{e}^{-s^p} \leq \min\{s^p,1\}$,
\begin{equation*}
\frac{F(s)}{F'(s)}=\frac{1-\mathrm{e}^{-s^p}}{ps^{p-1}}\leq\begin{cases}
\frac{s}{p}\leq1,&0<s\leq1,\\
\frac{1}{ps^{p-1}}\leq1,&s\geq1.
\end{cases}
\end{equation*}
Thus
\begin{equation*}
\int_\Omega hF(u)\mathrm{d}x\leq \int_\Omega hF'(u)\mathrm{d}x=M,
\end{equation*}
and \eqref{2} gives $M^2\leq CM$. Since $M>0$, we obtain $M\leq C$.

\vskip 0.1cm
For every $T>0$, we have
\begin{equation*}
2hF(u)\leq2\lambda h\mathrm{e}^{T^p} \quad\text{in }\{u\leq T\},
\end{equation*}
and
\begin{equation*}
2hF(u)=2\lambda h(\mathrm{e}^{u^p}-1)\leq\frac{2f}{pu^{p-1}}\leq\frac{2f}{pT^{p-1}} \quad\text{in }\{u>T\}.
\end{equation*}
Consequently,
\begin{equation*}
\begin{aligned}
2\int_\Omega hF(u)\mathrm{d}x &\leq 2\lambda\int_{\{u\leq T\}}h\mathrm{e}^{T^p}\mathrm{d}x +\int_{\{u>T\}}\frac{2f}{pu^{p-1}}\mathrm{d}x \leq2\lambda\|h\|_{L^\infty(\Omega)}|\Omega|\mathrm{e}^{T^p}+\frac{2M}{pT^{p-1}}.
\end{aligned}
\end{equation*}
Using \eqref{2} and Young's inequality, we obtain
\begin{equation*}
M^2\leq C\lambda\|h\|_{L^\infty(\Omega)}\mathrm{e}^{T^p}+\frac{CM}{T^{p-1}}\leq\frac{1}{2}M^2+C\lambda\|h\|_{L^\infty(\Omega)}\mathrm{e}^{T^p}+\frac{C}{T^{2(p-1)}}.
\end{equation*}
Take $T=(\frac{L}{2})^{\frac{1}{p}}$. Choose $\lambda_0\in(0,1)$, depending only on $\|h\|_{L^\infty(\Omega)}$, so that
\begin{equation*}
L\geq2,\quad \|h\|_{L^\infty(\Omega)}\lambda^{\frac{1}{2}}\leq L^{-1}
\quad \text{for } 0<\lambda<\lambda_0.
\end{equation*}
Since $0\leq\frac{2(p-1)}{p}\leq1$, we conclude that
\begin{equation*}
M^2\leq C\left(\|h\|_{L^\infty(\Omega)}\lambda^{\frac{1}{2}}+L^{-\frac{2(p-1)}{p}}\right)\leq CL^{-\frac{2(p-1)}{p}}.
\end{equation*}
\end{proof}

\begin{remark} \label{rem-annulus}
Already for $h\equiv1$ and $p=2$, estimate \eqref{2} fails on the annulus $\mathcal{A}=\{x\in\mathbb{R}^2\mid 1<|x|<2\}$. Introduce $t=\log|x|\in(0,\log2)$. For $\beta>0$, consider
\begin{equation*}
\max_{\substack{v\in H_0^1(0,\log2)\\
2\pi\int_0^{\log2}|v'|^2\mathrm{d}t\leq\beta}}
2\pi\int_0^{\log2}\mathrm{e}^{2t}(\mathrm{e}^{v^2}-1)\mathrm{d}t.
\end{equation*}
The compact Sobolev embedding theorem provides a nonnegative maximizer $v_\beta\not\equiv0$ satisfying
\begin{equation*}
2\pi\int_0^{\log2}|v_\beta'|^2\mathrm{d}t=\beta.
\end{equation*}
There exists a constant $\lambda_\beta>0$ such that
\begin{equation*}
-v_\beta''=2\lambda_\beta\mathrm{e}^{2t}v_\beta\mathrm{e}^{v_\beta^2} \quad\text{in }(0,\log2),\quad v_\beta(0)=v_\beta(\log2)=0.
\end{equation*}
Multiplication by $v_\beta$ and integration give
\begin{equation*}
\lambda_\beta=\frac{\beta}{4\pi\displaystyle\int_0^{\log2}\mathrm{e}^{2t}v_\beta^2
\mathrm{e}^{v_\beta^2}\mathrm{d}t}>0.
\end{equation*}
Thus $v_\beta$ is concave and positive in $(0,\log2)$. Set $u_\beta(x)=v_\beta(\log|x|)$. Then
\begin{equation*}
-\Delta u_\beta(x)=-|x|^{-2}v_\beta''(\log|x|)=2\lambda_\beta u_\beta(x)\mathrm{e}^{u_\beta(x)^2} \quad\text{in }\mathcal{A},
\end{equation*}
and
\begin{equation*}
u_\beta=0\quad\text{on }\partial\mathcal{A},\quad E(u_\beta)=2\pi\int_0^{\log2}|v_\beta'|^2\mathrm{d}t=\beta.
\end{equation*}
Set $f_\beta=2\lambda_\beta u_\beta\mathrm{e}^{u_\beta^2}$ and $M_\beta=\displaystyle\int_{\mathcal{A}}f_\beta\mathrm{d}x$. For every $t\in(0,\log2)$, the Cauchy-Schwarz inequality gives
\begin{equation*}
v_\beta(t)^2\leq t\displaystyle\int_0^t|v_\beta'|^2\mathrm{d}s,\quad v_\beta(t)^2\leq(\log2-t)\displaystyle\int_t^{\log2}|v_\beta'|^2\mathrm{d}s.
\end{equation*}
Therefore, it holds
\begin{equation*}
\frac{\beta}{2\pi}\geq\frac{v_\beta(t)^2}{t}+\frac{v_\beta(t)^2}{\log2-t}\geq\frac{4v_\beta(t)^2}{\log2}.
\end{equation*}
Using $\beta=\displaystyle\int_{\mathcal{A}}u_\beta f_\beta\mathrm{d}x$ and the bound on $\|v_\beta\|_{L^\infty(0,\log2)}$, we obtain
\begin{equation*}
\beta\leq\|v_\beta\|_{L^\infty(0,\log2)}M_\beta,\quad M_\beta\geq\sqrt{\frac{8\pi\beta}{\log2}}.
\end{equation*}
For $s>0$, $\frac{1-\mathrm{e}^{-s^2}}{s}\leq1$. Hence
\begin{equation*}
\lambda_\beta\int_{\mathcal{A}}(\mathrm{e}^{u_\beta^2}-1)\mathrm{d}x
=\frac{1}{2}\int_{\mathcal{A}}\frac{1-\mathrm{e}^{-u_\beta^2}}{u_\beta}f_\beta\mathrm{d}x
\leq\frac{M_\beta}{2}.
\end{equation*}
Consequently,
\begin{equation*}
\frac{M_\beta^2}{\lambda_\beta\int_{\mathcal{A}}(\mathrm{e}^{u_\beta^2}-1)\mathrm{d}x}
\geq2M_\beta\to\infty
\quad\text{as }\beta\to\infty.
\end{equation*}
\end{remark}

The Dirichlet Green function $G$ of $\Omega$ satisfies
\begin{equation*}
\begin{cases}
-\Delta_xG(x,y)=\delta_y&\text{in }\Omega,\\
G(x,y)=0&\text{on }\partial\Omega,
\end{cases}
\end{equation*}
where $\delta_y$ is the unit Dirac mass at $y$. Write
\begin{equation*}
G(x,y)=\frac{1}{2\pi}\log\frac{1}{|x-y|}+H(x,y),\quad (Gg)(x)=\int_\Omega G(x,y)g(y)\mathrm{d}y.
\end{equation*}
Here $H$ is the regular part of $G$, and $Gg$ denotes the Green potential of $g$.

\vskip 0.1cm

The following gradient estimate follows from \cite{DongKim2021}.

\begin{lemma}
There exists a constant $C>0$, depending only on $\Omega$, such that
\begin{equation} \label{6}
|\nabla_xG(x,y)|\leq\frac{C}{|x-y|}, \quad \text{for all $x,y\in\Omega$ with $x\neq y$}.
\end{equation}

\end{lemma}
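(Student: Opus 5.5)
The plan is to reduce \eqref{6} to the unit disk via the Riemann map $\Phi$ fixed above, where the Green function is explicit. We will then transfer the bound back using the bi-Lipschitz control $c\le|\Phi'|\le C$, which holds up to the boundary by \cite{Bell2016}. Conformal invariance of the Dirichlet Green function in dimension two gives
\begin{equation*}
G(x,y)=G_{B}(\Phi^{-1}(x),\Phi^{-1}(y)),\qquad G_B(\xi,\eta)=\frac{1}{2\pi}\log\frac{|1-\bar\eta\xi|}{|\xi-\eta|},
\end{equation*}
where we identify $\mathbb{R}^2$ with $\mathbb{C}$. The identity holds because $G_B(\Phi^{-1}(\cdot),\Phi^{-1}(y))$ is harmonic away from $y$ and vanishes on $\partial\Omega$. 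Near $y$ it differs from $\frac{1}{2\pi}\log\frac{1}{|x-y|}$ by a bounded harmonic function, since $\Phi^{-1}$ is conformal with nonvanishing derivative.

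The first step is the estimate on the disk. For $\xi\ne\eta$ in $B_1(0)$ we have
\begin{equation*}
|\nabla_\xi G_B(\xi,\eta)|\le\frac{1}{2\pi}\Big(\frac{1}{|\xi-\eta|}+\frac{|\eta|}{|1-\bar\eta\xi|}\Big).
\end{equation*}
We control the second term using the elementary inequality $|1-\bar\eta\xi|\ge|\xi-\eta|$ for $\xi,\eta\in B_1(0)$. This follows from the identity
\begin{equation*}
|1-\bar\eta\xi|^2-|\xi-\eta|^2=(1-|\xi|^2)(1-|\eta|^2)\ge0.
\end{equation*}
Hence $|\nabla_\xi G_B(\xi,\eta)|\le\frac{1}{\pi|\xi-\eta|}$, uniformly, including when $\eta$ is near the boundary.

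The second step is the transfer to $\Omega$. By the chain rule,
\begin{equation*}
|\nabla_xG(x,y)|=|(\Phi^{-1})'(x)|\,|\nabla_\xi G_B(\xi,\eta)|\le \frac{1}{c\pi|\xi-\eta|},\qquad \xi=\Phi^{-1}(x),\ \eta=\Phi^{-1}(y).
\end{equation*}
It remains to show $|x-y|\le C|\xi-\eta|$. This is immediate from convexity of $B_1(0)$ and $|\Phi'|\le C$, by integrating $\Phi'$ along the segment from $\eta$ to $\xi$. Combining the two bounds gives \eqref{6} with a constant depending only on $\Omega$ through the bounds on $\Phi'$.

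The only delicate point is uniformity as $x$ or $y$ approaches $\partial\Omega$. Here this is handled by the boundary regularity of $\Phi$ and the explicit disk inequality above, so no appeal to \cite{DongKim2021} is needed in the simply connected case. For a general smooth domain one would instead cite the pointwise Green function gradient bounds of \cite{DongKim2021} directly.
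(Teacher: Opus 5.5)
Your proof is correct, and it takes a genuinely different route from the paper. The paper gives no argument and simply cites the pointwise Green function gradient bounds of \cite{DongKim2021}.

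You instead use the Riemann map $\Phi$ already fixed in Section~\ref{sec-source}, in three steps:
\begin{enumerate}
\item Conformal invariance reduces the estimate to the explicit disk Green function.
\item The identity $|1-\bar\eta\xi|^2-|\xi-\eta|^2=(1-|\xi|^2)(1-|\eta|^2)$ absorbs the reflected term uniformly up to $\partial B_1(0)$.
\item The bounds $c\le|\Phi'|\le C$ from \cite{Bell2016} transfer the estimate back. The lower bound handles the chain rule, and the upper bound together with convexity of the disk gives $|x-y|\le C|\xi-\eta|$.
\end{enumerate}
This yields the explicit constant $\frac{\max|\Phi'|}{\pi\min|\Phi'|}$.

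What your route buys is that it is elementary and self-contained, using only boundary regularity of $\Phi$ that the paper already invokes. Its cost is that it needs simple connectivity. Since that is a standing hypothesis throughout the paper, nothing is lost for the paper's purposes. The citation is a black box, but it does not rely on simple connectivity and covers more general operators; the paper needs neither.

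One point deserves an explicit sentence in your conformal-invariance step. You should say that $\Phi^{-1}$ extends continuously to $\overline\Omega$ with $\Phi^{-1}(\partial\Omega)=\partial B_1(0)$, which again follows from \cite{Bell2016}. Then $G_B(\Phi^{-1}(\cdot),\Phi^{-1}(y))-G(\cdot,y)$ is harmonic with a removable singularity at $y$ and zero boundary values, so the maximum principle gives the identity.
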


Set $d_\Omega=2\operatorname{diam}\Omega$. Since $\Omega\subset B_{d_\Omega}(y)$ for every $y\in\Omega$, the domain monotonicity of the Green function yields
\begin{equation} \label{7}
0\leq G_\Omega(x,y)\leq G_{B_{d_\Omega}(y)}(x,y)=\frac{1}{2\pi}\log\frac{d_\Omega}{|x-y|} \quad\text{for }x,y\in\Omega,\ x\neq y.
\end{equation}

We shall use the following consequence of the Brezis-Merle estimate in \cite{BrezisMerle1991}.

\begin{lemma} \label{lem-green-jensen}
Let $g\in L^\infty(\Omega)$ satisfy $g\geq0$ and $\displaystyle\int_\Omega g\mathrm{d}x=1$, and denote $w=Gg$. Then for every $0<\alpha<4\pi$,
\begin{equation} \label{8}
\int_\Omega\mathrm{e}^{\alpha w}\mathrm{d}x \leq C_\alpha,\quad C_\alpha=\frac{4\pi^2d_\Omega^2}{4\pi-\alpha}.
\end{equation}
Moreover, if $r>0$ and $0\leq b<\alpha<4\pi$, then
\begin{equation} \label{9}
\int_\Omega w^r\mathrm{e}^{bw}\mathrm{d}x \leq\left(\frac{r}{\mathrm{e}(\alpha-b)}\right)^r C_\alpha.
\end{equation}
\end{lemma}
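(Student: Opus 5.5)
The first estimate \eqref{8} will come from the Brezis--Merle argument applied through the pointwise bound \eqref{7}. For $x\in\Omega$, write
\begin{equation*}
\alpha w(x)=\int_\Omega \alpha G(x,y)g(y)\mathrm{d}y\leq\int_\Omega\frac{\alpha}{2\pi}\log\frac{d_\Omega}{|x-y|}\,g(y)\mathrm{d}y .
\end{equation*}
Since $g\,\mathrm{d}y$ is a probability measure, Jensen's inequality for the convex function $\exp$ gives
\begin{equation*}
\mathrm{e}^{\alpha w(x)}\leq\int_\Omega\left(\frac{d_\Omega}{|x-y|}\right)^{\frac{\alpha}{2\pi}}g(y)\mathrm{d}y .
\end{equation*}
We then integrate in $x$ and apply Fubini. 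For each fixed $y$ we have $\Omega\subset B_{d_\Omega}(y)$, so
\begin{equation*}
\int_\Omega\left(\frac{d_\Omega}{|x-y|}\right)^{\frac{\alpha}{2\pi}}\mathrm{d}x\leq\int_{B_{d_\Omega}(0)}\left(\frac{d_\Omega}{|z|}\right)^{\frac{\alpha}{2\pi}}\mathrm{d}z=2\pi d_\Omega^2\int_0^1 t^{1-\frac{\alpha}{2\pi}}\mathrm{d}t=\frac{2\pi d_\Omega^2}{2-\frac{\alpha}{2\pi}}=\frac{4\pi^2d_\Omega^2}{4\pi-\alpha}.
\end{equation*}
The integral over $t$ converges because $\alpha<4\pi$. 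Integrating this bound against $g(y)\mathrm{d}y$ and using $\int_\Omega g=1$ gives exactly $C_\alpha$. Measurability and finiteness cause no difficulty, since $g\in L^\infty$ and the kernel is integrable.

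The second estimate \eqref{9} reduces to the first by an elementary scalar bound. For $s\geq0$ and $\delta=\alpha-b>0$, the function $s^r\mathrm{e}^{-\delta s}$ attains its maximum at $s=r/\delta$. This gives
\begin{equation*}
s^r\leq\left(\frac{r}{\mathrm{e}\delta}\right)^r\mathrm{e}^{\delta s}.
\end{equation*}
Apply this with $s=w(x)\geq0$; note that $w\geq0$ because $G\geq0$ and $g\geq0$. It follows that
\begin{equation*}
w^r\mathrm{e}^{bw}\leq\left(\frac{r}{\mathrm{e}(\alpha-b)}\right)^r\mathrm{e}^{\alpha w}.
\end{equation*}
Integrating over $\Omega$ and invoking \eqref{8} yields \eqref{9}.

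There is no substantial obstacle in this argument. The only steps that need care are the use of Jensen's inequality with the normalized measure $g\,\mathrm{d}y$, and checking that the explicit radial integral reproduces the stated constant $4\pi^2d_\Omega^2/(4\pi-\alpha)$.
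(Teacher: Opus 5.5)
Your proposal is correct and follows essentially the same route as the paper. The paper also combines Jensen's inequality for the probability measure $g\,\mathrm{d}y$ with the bound \eqref{7}, then Fubini and the radial integral over $B_{d_\Omega}$ to get \eqref{8}, and then the scalar bound $\max_{t\geq0}t^r\mathrm{e}^{-(\alpha-b)t}=\bigl(\frac{r}{\mathrm{e}(\alpha-b)}\bigr)^r$ to get \eqref{9}.
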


\begin{proof}
Using Jensen's inequality and \eqref{7}, we get
\begin{equation*}
\mathrm{e}^{\alpha w(x)} \leq\int_\Omega\mathrm{e}^{\alpha G(x,y)}g(y)\mathrm{d}y \leq\int_\Omega\left(\frac{d_\Omega}{|x-y|}\right)^{\frac{\alpha}{2\pi}}g(y)\mathrm{d}y.
\end{equation*}
Since $\Omega\subset B_{d_\Omega}(y)$,
\begin{equation*}
\begin{aligned}
\int_\Omega\mathrm{e}^{\alpha w(x)}\mathrm{d}x &\leq d_\Omega^{\frac{\alpha}{2\pi}}\int_\Omega g(y) \left(\int_\Omega|x-y|^{-\frac{\alpha}{2\pi}}\mathrm{d}x\right)\mathrm{d}y\\
&\leq2\pi d_\Omega^{\frac{\alpha}{2\pi}}\int_0^{d_\Omega}t^{1-\frac{\alpha}{2\pi}}\mathrm{d}t=\left.\frac{2\pi d_\Omega^{\frac{\alpha}{2\pi}}t^{2-\frac{\alpha}{2\pi}}}{2-\frac{\alpha}{2\pi}}\right|_{t=0}^{t=d_\Omega}=\frac{4\pi^2d_\Omega^2}{4\pi-\alpha}.
\end{aligned}
\end{equation*}
For $t\geq0$,
\begin{equation*}
t^r\mathrm{e}^{bt}=t^r\mathrm{e}^{-(\alpha-b)t}\mathrm{e}^{\alpha t},\quad \max_{t\geq0}t^r\mathrm{e}^{-(\alpha-b)t} =\left(\frac{r}{\mathrm{e}(\alpha-b)}\right)^r.
\end{equation*}
Integration with $t=w(x)$ and \eqref{8} proves \eqref{9}.
\end{proof}

For $p\in[\frac{3}{2},2]$, fix $0<\kappa<4\pi$ and set
\begin{equation*}
t=\left(\frac{\kappa}{M}\right)^{\frac{1}{p-1}},\quad f_0=f\mathbf{1}_{\{u\leq t\}},\quad F_0=hF(\min\{u,t\}).
\end{equation*}
If $\kappa<2\pi$, denote $v_0=Gf_0$.

\begin{lemma} \label{lem-low-source}
There exists a constant $C>0$, depending only on $\Omega$, $\|h\|_{L^\infty(\Omega)}$ and $\kappa$, such that every positive solution of \eqref{1} satisfies
\begin{equation} \label{10}
\int_\Omega f_0\mathrm{d}x\leq C\lambda M^{p-1},\quad \int_{\Omega}uf_0\mathrm{d}x\leq C\lambda M^p, \quad
\frac{p}{2}\int_{\Omega}u^{p-1}f_0\mathrm{d}x\leq C\lambda M^{2p-2},
\end{equation}
and
\begin{equation} \label{11}
\int_\Omega F_0\mathrm{d}x\leq C\lambda M^p.
\end{equation}
If $\kappa<2\pi$, then
\begin{equation} \label{12}
\|v_0\|_{C^1(\overline\Omega)}\leq C\lambda M^{p-1}.
\end{equation}
The constants are independent of $p\in[\frac{3}{2},2]$.
\end{lemma}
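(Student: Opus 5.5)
The plan is to avoid any pointwise bound on $\{u\leq t\}$. Such a bound would bring in $\mathrm{e}^{t^p}$, which is huge when $M$ is small. Instead, I will trade $u^p$ for a multiple of the normalized Green potential and apply Lemma \ref{lem-green-jensen}. Since $u$ is a classical solution, $f\in L^\infty(\Omega)$ and $u=Gf$. Set
\begin{equation*}
w=\frac{u}{M}=G\Big(\frac{f}{M}\Big),
\end{equation*}
so that $w$ is the Green potential of a nonnegative bounded density of unit mass, and Lemma \ref{lem-green-jensen} applies to $w$. The key observation is that on $\{u\leq t\}$
\begin{equation*}
u^p=u\cdot u^{p-1}\leq u\,t^{p-1}=\frac{\kappa u}{M}=\kappa w,
\end{equation*}
hence $\mathrm{e}^{u^p}\leq \mathrm{e}^{\kappa w}$ there. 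Consequently
\begin{equation*}
f_0\leq p\lambda\|h\|_{L^\infty(\Omega)}M^{p-1}w^{p-1}\mathrm{e}^{\kappa w}\quad\text{in }\Omega.
\end{equation*}

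For \eqref{10}, I fix $\alpha=\frac{\kappa+4\pi}{2}\in(\kappa,4\pi)$ and apply \eqref{9} with $b=\kappa$ and the following exponents $r$.
\begin{itemize}
\item For $\int_\Omega f_0$, take $r=p-1$.
\item For $\int_\Omega uf_0$, note $uf_0\leq p\lambda\|h\|_{L^\infty(\Omega)}M^pw^p\mathrm{e}^{\kappa w}$ and take $r=p$.
\item For $\frac{p}{2}\int_\Omega u^{p-1}f_0$, note $u^{p-1}f_0\leq p\lambda\|h\|_{L^\infty(\Omega)}M^{2p-2}w^{2p-2}\mathrm{e}^{\kappa w}$ and take $r=2p-2$.
\end{itemize}
Since $p\in[\frac32,2]$, every exponent lies in $[\frac12,4]$. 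The factor $\left(\frac{r}{\mathrm{e}(\alpha-b)}\right)^r$ in \eqref{9} is continuous in $r$ on this compact subset of $(0,\infty)$, so it is bounded by a constant depending only on $\kappa$. This gives constants independent of $p$; the restriction $p\geq\frac32$ is what keeps $r$ away from $0$.

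For \eqref{11}, set $s=\min\{u,t\}$ and use $\mathrm{e}^{\sigma}-1\leq \sigma\mathrm{e}^{\sigma}$ for $\sigma\geq0$. This gives
\begin{equation*}
F_0\leq\lambda\|h\|_{L^\infty(\Omega)}\, s^p\,\mathrm{e}^{s^p}.
\end{equation*}
Now $s^p\leq t^{p-1}u=\kappa w$ controls the exponential, and $s^p\leq u^p=M^pw^p$ controls the prefactor. Hence $F_0\leq\lambda\|h\|_{L^\infty(\Omega)}M^pw^p\mathrm{e}^{\kappa w}$, and \eqref{9} with $r=p$ concludes.

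For \eqref{12}, which assumes $\kappa<2\pi$, I use $L^q$ elliptic theory with some $q>2$. Then $W^{2,q}\hookrightarrow C^1(\overline\Omega)$ on the smooth domain, and
\begin{equation*}
\|v_0\|_{C^1(\overline\Omega)}\leq C_q\|f_0\|_{L^q(\Omega)}.
\end{equation*}
I choose $q=1+\frac{2\pi}{\kappa}$, so that $q>2$ and $q\kappa<4\pi$; this is the only place the assumption $\kappa<2\pi$ is needed. Then
\begin{equation*}
f_0^q\leq (2\lambda\|h\|_{L^\infty(\Omega)})^qM^{q(p-1)}w^{q(p-1)}\mathrm{e}^{q\kappa w}.
\end{equation*}
Applying \eqref{9} with $b=q\kappa$, some $\alpha\in(q\kappa,4\pi)$ and $r=q(p-1)$, and taking the $q$-th root, gives $\|f_0\|_{L^q(\Omega)}\leq C\lambda M^{p-1}$.

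The only step needing care is making sure every constant is uniform in $p$ and depends only on $\Omega$, $\|h\|_{L^\infty(\Omega)}$ and $\kappa$. The rest is bookkeeping.
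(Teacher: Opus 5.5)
Your proposal is correct and follows essentially the same argument as the paper. Both use the normalization $w=u/M=G(f/M)$, the key inequality $u^p\le t^{p-1}u=\kappa w$ on $\{u\le t\}$ (with $\min\{u,t\}^p\le\kappa w$ for $F_0$), Lemma \ref{lem-green-jensen}, and a $W^{2,q}$ estimate with $q>2$, $q\kappa<4\pi$ for \eqref{12}. The only difference is cosmetic: you apply \eqref{9} directly with the $p$-dependent exponents and use continuity of $(r/(\mathrm{e}(\alpha-b)))^r$ in $r$, whereas the paper first bounds $w^r\le 1+w^2$ and then uses \eqref{8} and \eqref{9} with a fixed exponent. (Incidentally, since $r^r\to1$ as $r\to0^+$, that factor stays bounded near $r=0$ too, so $p\ge\frac32$ is not actually needed for this step.)
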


\begin{proof}
Set $w=\frac{u}{M}=G\left(\frac{f}{M}\right)$. In $\{u\leq t\}$, we have $u^p\leq t^{p-1}u=\kappa w$. Since $f_0=0$ outside this set, we obtain
\begin{equation*}
\begin{aligned}
f_0&\leq p\lambda\|h\|_{L^\infty(\Omega)}M^{p-1}w^{p-1}\mathrm{e}^{\kappa w},\\
uf_0&\leq p\lambda\|h\|_{L^\infty(\Omega)}M^pw^p\mathrm{e}^{\kappa w},\\
\frac{p}{2}u^{p-1}f_0&\leq\frac{p^2\lambda\|h\|_{L^\infty(\Omega)}}{2}M^{2p-2}w^{2p-2}\mathrm{e}^{\kappa w}.
\end{aligned}
\end{equation*}
Choose $\alpha=\frac{\kappa+4\pi}{2}\in(\kappa,4\pi)$. Since $\displaystyle\int_\Omega\frac{f}{M}\mathrm{d}x=1$ and $p\in[\frac{3}{2},2]$, estimates \eqref{8} and \eqref{9} imply
\begin{equation*}
\begin{aligned}
\int_\Omega\left(w^{p-1}+w^p+w^{2p-2}\right)\mathrm{e}^{\kappa w}\mathrm{d}x
&\leq3\int_\Omega(1+w^2)\mathrm{e}^{\kappa w}\mathrm{d}x
\leq3\left(1+\frac{4}{\mathrm{e}^2(\alpha-\kappa)^2}\right)C_\alpha
\leq C.
\end{aligned}
\end{equation*}
Integrating the three pointwise bounds, we obtain \eqref{10}.

\vskip 0.1cm

Since $\min\{u,t\}\leq u=Mw$ and $\min\{u,t\}\leq t$, we have
\begin{equation*}
\min\{u,t\}^p\leq M^pw^p,\quad
\min\{u,t\}^p\leq ut^{p-1}=\kappa w.
\end{equation*}
Using $\mathrm{e}^{s^p}-1\leq s^p\mathrm{e}^{s^p}$ for $s\geq0$, we obtain
\begin{equation*}
\begin{aligned}
F_0&\leq\lambda h\min\{u,t\}^p\mathrm{e}^{\min\{u,t\}^p}\leq\lambda\|h\|_{L^\infty(\Omega)}M^pw^p\mathrm{e}^{\kappa w},
\end{aligned}
\end{equation*}
which proves \eqref{11}.

\vskip 0.1cm

Finally, if $\kappa<2\pi$, choose $2<r<\frac{4\pi}{\kappa}$ and then $\alpha=\frac{r\kappa+4\pi}{2}\in(r\kappa,4\pi)$.
Since $r(p-1)\in[\frac{r}{2},r]$, we have
\begin{equation*}
\begin{aligned}
\|f_0\|_{L^r(\Omega)}^r
&\leq\left(p\lambda\|h\|_{L^\infty(\Omega)}\right)^rM^{r(p-1)}
\int_\Omega w^{r(p-1)}\mathrm{e}^{r\kappa w}\mathrm{d}x\\
&\leq C\lambda^rM^{r(p-1)}
\int_\Omega(1+w^r)\mathrm{e}^{r\kappa w}\mathrm{d}x
\leq C\lambda^rM^{r(p-1)}.
\end{aligned}
\end{equation*}
The $L^r$ theory and the Sobolev embedding theorem yield
\begin{equation*}
\|v_0\|_{C^1(\overline\Omega)} \leq C\|v_0\|_{W^{2,r}(\Omega)} \leq C\|f_0\|_{L^r(\Omega)}\leq C\lambda M^{p-1}.
\end{equation*}
\end{proof}

\begin{remark}
The height $t$ is chosen so that
\begin{equation*}
u^p\leq t^{p-1}u=\kappa\frac{u}{M}\quad\text{in }\{u\leq t\}.
\end{equation*}
This allows us to estimate $f_0$ and $F_0$ using \eqref{8}. These bounds will be used in Section \ref{sec-maximal}.
\end{remark}

\section{Superlevel components and maximum estimates} \label{sec-cores}

In this section, we establish diameter and integral bounds for superlevel components. We then derive the local blow-up profile and apply radial comparison near a global maximum. These arguments yield the required maximum estimate for sequences with $\beta_n\to\infty$, without assuming an a priori bound on $E(u_n)$.

\subsection{Diameter and integral bounds for superlevel components}\

\vskip .1cm

In this subsection, we estimate the diameters of superlevel components using the Green representation. The isoperimetric inequality then yields an integral bound on each component.

\begin{lemma} \label{lem-diameter}
Let $g\in L^\infty(\Omega)$ satisfy $g\geq0$ and $q=\displaystyle\int_\Omega g\mathrm{d}x>0$, and denote $v=Gg$. Then for every $t>0$, each nonempty connected component $D$ of $\{v>t\}$ satisfies
\begin{equation*}
\operatorname{diam}D\leq2\mathrm{e}d_\Omega\mathrm{e}^{-\frac{2\pi t}{q}},
\end{equation*}
where $d_\Omega=2\operatorname{diam}\Omega$.
\end{lemma}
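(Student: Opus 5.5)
The plan is to combine the pointwise Green bound \eqref{7} with the connectedness of $D$, and then average over radii centred at one point of $D$. This averaging removes the logarithmic singularity.

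\textbf{Step 1: set-up and circles meeting $D$.} Fix $\varepsilon>0$ and pick $x_0,z\in D$ with $\rho:=|x_0-z|>\operatorname{diam}D-\varepsilon$. If $\operatorname{diam}D=0$ there is nothing to prove, so assume $\rho>0$. Since $D$ is connected and the function $x\mapsto|x-x_0|$ is continuous on $D$, its image is an interval containing $0$ and $\rho$. Hence for every $s\in(0,\rho)$ there is a point $x_s\in D$ with $|x_s-x_0|=s$. At such a point $v(x_s)>t$.

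\textbf{Step 2: a deterministic upper bound.} By \eqref{7},
\begin{equation*}
t<v(x_s)\leq\frac{1}{2\pi}\int_\Omega\log\frac{d_\Omega}{|x_s-y|}\,g(y)\mathrm{d}y .
\end{equation*}
The triangle inequality gives $|x_s-y|\geq\bigl|s-|x_0-y|\bigr|$, and the logarithm is nonnegative because $|x_s-y|<d_\Omega$. Therefore
\begin{equation*}
t<\psi(s):=\frac{1}{2\pi}\int_\Omega\log\frac{d_\Omega}{\bigl|s-|x_0-y|\bigr|}\,g(y)\mathrm{d}y
\quad\text{for every } s\in(0,\rho).
\end{equation*}
The function $\psi$ depends only on $s$, not on the choice of $x_s$, so no measurable selection of $x_s$ is needed.

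\textbf{Step 3: averaging over $s$.} Average this inequality over $s\in(0,\rho)$ and apply Fubini. For fixed $a=|x_0-y|\geq0$ we need the one-dimensional bound
\begin{equation*}
\frac{1}{\rho}\int_0^\rho\log\frac{1}{|s-a|}\mathrm{d}s\leq\frac{1}{\rho}\int_{-\rho/2}^{\rho/2}\log\frac{1}{|\sigma|}\mathrm{d}\sigma=\log\frac{2}{\rho}+1 .
\end{equation*}
This holds because $\log\frac{1}{|\cdot|}$ is symmetric and decreasing in $|\sigma|$. Among all intervals of length $\rho$, the centred one therefore maximizes the integral, by a rearrangement argument. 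Consequently
\begin{equation*}
t<\frac{q}{2\pi}\log\frac{2\mathrm{e}d_\Omega}{\rho},
\qquad\text{that is,}\qquad
\rho<2\mathrm{e}d_\Omega\mathrm{e}^{-\frac{2\pi t}{q}} .
\end{equation*}
Letting $\varepsilon\to0$ gives the claimed bound on $\operatorname{diam}D$.

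\textbf{Main obstacle.} The only delicate point is getting the sharp constant $2\mathrm{e}$. A cruder choice, such as the radius $\operatorname{diam}D/2$ or a non-centred comparison interval, loses a factor of $2$. This is why the proof uses the full near-diameter $\rho$ from a point $x_0$ of $D$, together with the centred-interval rearrangement bound in Step 3.
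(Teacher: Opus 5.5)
Your argument is correct and is essentially the paper's proof. In both, connectedness of $D$ supplies a point of $D$ at every level $s\in(0,\ell)$ of a $1$-Lipschitz function, the bound $|z-y|\geq|s-c|$ is combined with \eqref{7}, and the one-dimensional integral $\int_0^\ell\log\frac{d_\Omega}{|s-c|}\,\mathrm{d}s$ is maximized by centring, which yields the constant $2\mathrm{e}$. The differences are minor: you slice by the distance $|x-x_0|$ instead of the projection $e\cdot(x-x_0)$ along a polygonal path, and because you bound pointwise before averaging, you avoid the paper's first-hitting selection $z(s)$ and any measurability question about $s\mapsto v(z(s))$.
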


\begin{proof}
Fix two distinct $x_0,x_1\in D$, and denote $\ell=|x_1-x_0|$ and $e=\frac{x_1-x_0}{\ell}$. Choose a polygonal path $\gamma\colon [0,1]\to D$ joining these points. For $0\leq s\leq\ell$, set
\begin{equation*}
\tau(s)=\min\big\{\tau\in[0,1]\mid e\cdot(\gamma(\tau)-x_0)=s\big\},\quad z(s)=\gamma\big(\tau(s)\big).
\end{equation*}
The function $\tau$ is well defined and nondecreasing. For $y\in\Omega$ and $c=e\cdot(y-x_0)$,
\begin{equation*}
|z(s)-y|\geq|e\cdot(z(s)-y)|=|s-c|.
\end{equation*}
By \eqref{7}, we have
\begin{equation*}
\int_0^\ell G(z(s),y)\mathrm{d}s \leq\frac{1}{2\pi}\int_0^\ell\log\frac{d_\Omega}{|s-c|}\mathrm{d}s.
\end{equation*}
For $c\leq0$, the integral $\displaystyle\int_0^\ell\log\frac{d_\Omega}{|s-c|}\mathrm{d}s$ is increasing in $c$, while for $c\geq\ell$ it is decreasing. For $0<c<\ell$,
\begin{equation*}
\frac{\mathrm{d}}{\mathrm{d}c}\int_0^\ell\log\frac{d_\Omega}{|s-c|}\mathrm{d}s =\log\frac{\ell-c}{c}.
\end{equation*}
Hence the maximum occurs at $c=\frac{\ell}{2}$, with
\begin{equation*}
\begin{aligned}
\int_0^\ell\log\frac{d_\Omega}{|s-c|}\mathrm{d}s &\leq2\int_0^{\frac{\ell}{2}}\log\frac{d_\Omega}{s}\mathrm{d}s=\left.2s\left(\log\frac{d_\Omega}{s}+1\right)\right|_{s=0}^{s=\frac{\ell}{2}}=\ell\left(\log\frac{2d_\Omega}{\ell}+1\right).
\end{aligned}
\end{equation*}
Since $D$ is a component of $\{v>t\}$, we have $v(z(s))>t$ and
\begin{equation*}
\begin{aligned}
t\ell&<\int_0^\ell v(z(s))\mathrm{d}s=\int_\Omega g(y)\int_0^\ell G(z(s),y)\mathrm{d}s\mathrm{d}y \leq\frac{q\ell}{2\pi}\left(\log\frac{2d_\Omega}{\ell}+1\right).
\end{aligned}
\end{equation*}
It follows that $\ell\leq2\mathrm{e}d_\Omega\mathrm{e}^{-\frac{2\pi t}{q}}$. Taking the supremum over $x_0,x_1\in D$ proves the result.
\end{proof}

\begin{lemma} \label{lem-interior}
There exists a constant $\delta_*>0$, depending only on $\Omega$ and $\|\nabla\log h\|_{L^\infty(\Omega)}$, such that every critical point $x$ of every positive solution of \eqref{1} satisfies $\operatorname{dist}(x,\partial\Omega)\geq\delta_*$.
\end{lemma}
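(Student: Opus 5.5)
This is the classical boundary-layer statement (de Figueiredo–Lions–Nussbaum, Han) transplanted through the Riemann map. The constant must not depend on $p$ or $\lambda$, and we must not assume an energy bound. The approach has three steps.

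\textbf{Step 1: pull back to the disk.} I pass to $v=u\circ\Phi$, which solves \eqref{3-1} on $B_1(0)$ with weight $b=|\Phi'|^2(h\circ\Phi)$. Since $|\Phi'|$ is bounded above and below, a critical point $x$ of $u$ at distance less than $\delta_*$ from $\partial\Omega$ corresponds to a critical point $y$ of $v$ with $1-|y|$ comparably small. It therefore suffices to show that $v$ has no critical point in an annulus $\{1-\rho<|y|<1\}$, with $\rho$ depending only on $\|\nabla\log b\|_{L^\infty}$. By the estimate in the proof of Proposition \ref{pro:bound}, that quantity is controlled by $\Omega$ and $\|\nabla\log h\|_{L^\infty(\Omega)}$.

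\textbf{Step 2: moving planes on the disk.} On the disk there is no convexity issue, and I run moving planes in each direction $e\in S^1$. Writing $\lambda'$ for the plane's position, set $T_{\lambda'}=\{y\cdot e>\lambda'\}\cap B_1(0)$ and let $y^{\lambda'}$ be the reflection of $y$. Consider
\[
w(y)=v(y^{\lambda'})-v(y).
\]
The nonlinearity $F'(s)=p\lambda s^{p-1}\mathrm{e}^{s^p}$ is nondecreasing in $s$, but $b$ is not symmetric. The standard remedy, as in the Pohozaev-type variant for weighted problems, is to compare $v(y^{\lambda'})$ against $v(y)$ through the Kelvin or inversion structure rather than through reflections.

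Concretely, I try moving spheres/inversions. For a boundary point $\xi\in\partial B_1(0)$ and small circles centered outside the disk near $\xi$, one considers $v$ composed with the inversion mapping $B_1(0)$ to itself across those circles. Alternatively, one reflects in lines perpendicular to $e$ near $\xi=e$, where the cap is thin. The perturbation from $b$ then contributes a term of size $|b(y^{\lambda'})-b(y)|\le \|\nabla b\|\,|y-y^{\lambda'}|$. This is small relative to $b$ in a cap of width $\rho$ once $\rho\|\nabla\log b\|_\infty$ is small.

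To absorb this error I use the trick of comparing with $\tilde w=v(y^{\lambda'})-v(y)+\epsilon$-type barriers. Equivalently, I multiply by a factor $(1+\kappa(y\cdot e-\lambda'))$ chosen so that
\[
b(y^{\lambda'})\ge b(y)\bigl(1-C\rho\bigr)
\]
is compensated. The goal is to show $w>0$ in $T_{\lambda'}$ for all $\lambda'\in(1-\rho,1)$, which forces $e\cdot\nabla v<0$ in the cap $\{y\cdot e>1-\rho/2\}$. The start of the procedure uses the Hopf lemma, since $\partial_\nu v<0$ on $\partial B_1(0)$ because $-\Delta v>0$. Uniformity over all directions $e$ then gives the annulus free of critical points.

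\textbf{Step 3: the main obstacle.} The hard step is absorbing the nonsymmetric weight $b$ uniformly in $\lambda,p$ and without an energy bound. A plain comparison gives
\[
-\Delta w=b(y^{\lambda'})F'(v(y^{\lambda'}))-b(y)F'(v(y)),
\]
and the mismatch $(b(y^{\lambda'})-b(y))F'(v)$ has the wrong sign in general.

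To handle it, I first try a Kazdan–Warner/Pohozaev-style change of unknown. On a thin cap of width $\rho$ one can write $b(y^{\lambda'})F'(s)\ge b(y)F'(s)\mathrm{e}^{-C\rho}$. I then compare $v(y^{\lambda'})$ with $\mathrm{e}^{-C\rho}$-rescaled versions, or use the monotonicity of $s\mapsto F'(s)$ together with the fact that $v(y^{\lambda'})>v(y)$ is what we are proving. Positivity of $w$ near the boundary of the cap plus the maximum principle on thin domains (narrow-region principle, whose constant is independent of the zeroth-order coefficient's size only in the sense that we work with $w^-$) should close the argument.

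If the narrow-region principle fails because $F'$ may be huge, I instead use the sign structure. Where $w<0$ we have $v(y^{\lambda'})<v(y)$, so monotonicity gives
\[
b(y^{\lambda'})F'(v(y^{\lambda'}))\le b(y^{\lambda'})F'(v(y)).
\]
The remaining term $(b(y^{\lambda'})-b(y))F'(v(y))$ can then be controlled by choosing the direction of reflection adapted so that $y\cdot\nabla\log b$ is dominated by the conformal factor. That domination is the analogue of the term $2b+y\cdot\nabla b\ge0$ appearing in the Pohozaev identity of Proposition \ref{pro:bound}.
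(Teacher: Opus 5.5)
Your reduction is the right framework: exclude critical points in a fixed boundary strip by moving planes started with Hopf's lemma, uniformly in the boundary point. But there is a genuine gap exactly where you place the main obstacle. Nothing in the proposal gives the weight mismatch a sign, and none of the remedies you list closes the argument.

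\textbf{Why the listed remedies fail.}
With reflections in lines, $b(y^{\lambda'})-b(y)\approx-2(y\cdot e-\lambda')\,e\cdot\nabla b$ has no sign. Tilting $e$ within a cone around the normal cannot help, since $\nabla b$ may point along the outward normal. Hence even on $\{w\ge0\}$ you only get $-\Delta w\ge(b(y^{\lambda'})-b(y))F'(v(y^{\lambda'}))$, which may be negative. So $w$ can touch $0$ from above and the plane cannot be moved.

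This unsigned term is an inhomogeneity, not a multiple of $w$, so no maximum principle absorbs it. The narrow-region principle is also not uniform here. The coefficient $b(y^{\lambda'})\frac{F'(v(y^{\lambda'}))-F'(v(y))}{v(y^{\lambda'})-v(y)}$ is not bounded independently of $\lambda,p,u$: no a priori bound on $u$ is available, and $F''(s)\to\infty$ as $s\to0$ when $1<p<2$.

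Rescaling $v(y)$ by $1-\theta$ fails because $F'((1-\theta)s)/((1-\theta)F'(s))\to0$ as $s\to\infty$. Finally, your last step applies monotonicity of $F'$ on $\{w<0\}$. There it only yields the upper bound $-\Delta w<(b(y^{\lambda'})-b(y))F'(v(y))$. An upper bound gives no contradiction at a negative minimum, where $-\Delta w\le0$ holds anyway.

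\textbf{The missing idea.}
You need to manufacture monotonicity of the coefficient with a Kelvin transform in a small exterior tangent circle, as the paper does directly on $\Omega$. The Riemann map is then unnecessary, and the pulled-back weight $|\Phi'|^2h\circ\Phi$ is no better than $h$ anyway.

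At $P\in\partial\Omega$ with $\nu(P)=e_2$, invert in $\partial B_r(re_2)$. Take $r$ below the uniform exterior-ball radius with $r\|\nabla\log h\|_{L^\infty(\Omega)}\le\frac12$. The image $\Sigma$ lies in $B_r(re_2)$ and touches $\{y_2=0\}$ only at $0$. The function $v=u\circ K$ solves $-\Delta v=c(y)F'(v)$ with $c(y)=r^4h(K(y))|y-re_2|^{-4}$.

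The logarithmic derivative of the Kelvin factor dominates: for $0<y_2<r/2$,
$\partial_{y_2}\log c\ge\frac{4(r-y_2)-r^2\|\nabla\log h\|_{L^\infty(\Omega)}}{|y-re_2|^2}\ge\frac{3r}{2|y-re_2|^2}>0$.
So $c(y^t)>c(y)$ for $y^t=(y_1,2t-y_2)$.

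Now monotonicity of $F'$ is used in the right direction: $-\Delta w_t\ge(c(y^t)-c(y))F'(v(y^t))>0$ \emph{wherever} $w_t\ge0$. This strict superharmonicity lets the plane move by a continuity and compactness argument:
start from $\partial_{y_2}v>0$ near $0$ (Hopf);
at the critical position, upgrade $w_T\ge0$ to $w_T>0$;
use Hopf on $\{y_2=T\}$;
use $w_T=v(y^T)>0$ on the lateral boundary.

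No narrow-region principle and no bound on $u$, $F'$ or $F''$ enters. This is why $\delta_*$ depends only on $\Omega$ and $\|\nabla\log h\|_{L^\infty(\Omega)}$.

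Your passing mention of inversions in small circles points toward this mechanism. However, you never identify the conformal factor as the term that dominates $\nabla\log b$. The argument you actually develop uses reflections in lines, which produce no such factor.
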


\begin{proof}
For $h\equiv1$ and $p=2$, the conclusion follows from Proposition 2.1 in  \cite{MalchiodiMartinazziThizy2024}. The general case is proved in Appendix \ref{app-boundary}. This gives the boundary exclusion for positive weights and $1<p<2$ discussed in Remark 2.6 of \cite{MalchiodiMartinazziThizy2024}.
\end{proof}

Let $t>0$ and let $D$ be a nonempty connected component of $\{u>t\}$. Then
\begin{equation} \label{component-boundary}
\overline D\subset\subset\Omega,\quad u=t\quad\text{on }\partial D.
\end{equation}
Indeed, since $u=0$ on $\partial\Omega$ and $u\in C(\overline{\Omega})$, the set $\{u\geq t\}$ is compactly contained in $\Omega$. Continuity implies $u\geq t$ on $\partial D$. If $x\in\partial D$ and $u(x)>t$, continuity gives a ball $B_r(x)\subset\{u>t\}$. Since $B_r(x)$ meets $D$, the set $D\cup B_r(x)$ is connected, contradicting the maximality of $D$. Thus $u=t$ on $\partial D$.

\begin{remark}
Let $x\in\Omega$ satisfy $\operatorname{dist}(x,\partial\Omega)<\frac{\delta_*}{2}$. For $0<t<u(x)$, let $D$ be the connected component of $\{u>t\}$ containing $x$, and choose $a\in D$ with $u(a)=\max_{\overline D}u$. Since $\nabla u(a)=0$, Lemma \ref{lem-interior} gives $\operatorname{dist}(a,\partial\Omega)\geq\delta_*$. Lemma \ref{lem-diameter} then yields
\begin{equation*}
\frac{\delta_*}{2}\leq|a-x|\leq\operatorname{diam}D\leq2\mathrm{e}d_\Omega\mathrm{e}^{-\frac{2\pi t}{M}}.
\end{equation*}
Taking logarithms and letting $t\to u(x)$, we obtain from Proposition \ref{pro:bound} that
\begin{equation*}
u(x)\leq\frac{M}{2\pi}\log\frac{4\mathrm{e}d_\Omega}{\delta_*}\leq C.
\end{equation*}
Since $C$ and $\delta_*$ are independent of $u$, $p$ and $\lambda$, all positive solutions of \eqref{1} are uniformly bounded in a fixed neighborhood of $\partial\Omega$.
\end{remark}

\begin{proposition} \label{prop-localization}
Fix $\kappa>0$ and let $p\in[\frac{3}{2},2]$. There exists $\lambda_0>0$, depending only on $\Omega$, $\|h\|_{L^\infty(\Omega)}$, $\|\nabla\log h\|_{L^\infty(\Omega)}$ and $\kappa$, such that, for every $0<\lambda<\lambda_0$, each connected component $D_j$ of $\{u>\left(\frac{\kappa}{M}\right)^{\frac{1}{p-1}}\}$ satisfies
\begin{equation*}
D_j\subset\mathcal{K}=\left\{x\in\Omega\bigg|\operatorname{dist}(x,\partial\Omega)
\geq\frac{\delta_*}{2}\right\}\subset\subset\Omega
\end{equation*}
and
\begin{equation*}
\operatorname{diam}D_j\leq C_0\mathrm{e}^{-\frac{2\pi\kappa^{\frac{1}{p-1}}}{M^{\frac{p}{p-1}}}},
\end{equation*}
where $C_0>0$ is a constant depending only on $\Omega$. The constants are independent of $p\in[\frac{3}{2},2]$.
\end{proposition}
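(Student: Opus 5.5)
The plan is to apply Lemma \ref{lem-diameter} with $g=f$, so that $v=Gf=u$ and $q=M$, at the level $t=\left(\frac{\kappa}{M}\right)^{\frac{1}{p-1}}$. This gives, for each component $D_j$,
\begin{equation*}
\operatorname{diam}D_j\leq2\mathrm{e}d_\Omega\exp\Big(-\frac{2\pi}{M}\Big(\frac{\kappa}{M}\Big)^{\frac{1}{p-1}}\Big)
=2\mathrm{e}d_\Omega\,\mathrm{e}^{-\frac{2\pi\kappa^{\frac{1}{p-1}}}{M^{\frac{p}{p-1}}}},
\end{equation*}
because $1+\frac{1}{p-1}=\frac{p}{p-1}$. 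This is exactly the diameter bound, with $C_0=2\mathrm{e}d_\Omega$, which depends only on $\Omega$. Note that this bound holds for every $\lambda>0$ and needs no smallness.

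Next I would make the exponent large, uniformly in $p\in[\frac32,2]$. By \eqref{3} of Proposition \ref{pro:bound}, for $0<\lambda<\lambda_0'$ (the $\lambda_0$ of that proposition) we have $M\leq CL^{-\frac{p-1}{p}}$. Hence $M^{\frac{p}{p-1}}\leq C^{\frac{p}{p-1}}L^{-1}\leq \max\{C^2,C^3\}L^{-1}$, since $\frac{p}{p-1}\in[2,3]$. Similarly $\frac{1}{p-1}\in[1,2]$ gives $\kappa^{\frac{1}{p-1}}\geq\min\{\kappa,\kappa^2\}$. The exponent is therefore at least $cL$, with $c>0$ depending only on $\Omega$, $\|\nabla\log h\|_{L^\infty}$ and $\kappa$. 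Consequently $\operatorname{diam}D_j\leq C_0\lambda^{c}$, and I choose $\lambda_0\leq\lambda_0'$ so small that $C_0\lambda_0^{c}<\frac{\delta_*}{2}$.

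For the localization, by \eqref{component-boundary} we have $\overline{D_j}\subset\subset\Omega$ and $u=t$ on $\partial D_j$. Hence $\max_{\overline{D_j}}u>t$ is attained at an interior point $a\in D_j$ with $\nabla u(a)=0$. Lemma \ref{lem-interior} then gives $\operatorname{dist}(a,\partial\Omega)\geq\delta_*$. For any $x\in D_j$ we get
\begin{equation*}
\operatorname{dist}(x,\partial\Omega)\geq\delta_*-|x-a|\geq\delta_*-\operatorname{diam}D_j>\frac{\delta_*}{2},
\end{equation*}
so $D_j\subset\mathcal K$, and $\mathcal K$ is compact in $\Omega$. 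The only mildly delicate point is the uniformity in $p$ of the constant in $M^{\frac{p}{p-1}}\lesssim L^{-1}$, which is handled by the bounded range of the exponents above. There is no real obstacle here.
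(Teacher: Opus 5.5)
Your proof is correct and follows the paper's argument: Lemma \ref{lem-diameter} with $g=f$ at the level $t=(\kappa/M)^{1/(p-1)}$, then \eqref{3} to make the diameter smaller than $\delta_*/2$ uniformly in $p$, then Lemma \ref{lem-interior} at an interior maximum point of $u$ on $\overline{D_j}$. Your explicit tracking of the exponent ranges $\frac{p}{p-1}\in[2,3]$ and $\frac{1}{p-1}\in[1,2]$ simply spells out the uniformity in $p$ that the paper states in one line.
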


\begin{proof}
Fix a component $D=D_j$. Since $u=\left(\frac{\kappa}{M}\right)^{\frac{1}{p-1}}$ on $\partial D$ by \eqref{component-boundary}, there exists $a_D\in D$ such that
\begin{equation*}
u(a_D)=\max_{\overline D}u,\quad \nabla u(a_D)=0.
\end{equation*}
Lemma \ref{lem-interior} therefore implies
$\operatorname{dist}(a_D,\partial\Omega)\geq\delta_*$.

\vskip 0.1cm

Since $u=Gf$ and $\displaystyle\int_\Omega f\mathrm{d}x=M$, Lemma \ref{lem-diameter} yields
\begin{equation*}
\operatorname{diam}D
\leq C_0\mathrm{e}^{-\frac{2\pi\kappa^{\frac{1}{p-1}}}{M^{\frac{p}{p-1}}}}.
\end{equation*}
By \eqref{3}, we may choose $\lambda_0>0$ sufficiently small, independently of $p\in[\frac{3}{2},2]$, such that
$\operatorname{diam}D\leq\frac{\delta_*}{2}$ whenever $0<\lambda<\lambda_0$.
Consequently, for every $x\in D$,
\begin{equation*}
\begin{aligned}
\operatorname{dist}(x,\partial\Omega)&\geq\operatorname{dist}(a_D,\partial\Omega)-|x-a_D|
\geq\delta_*-\operatorname{diam}D
\geq\frac{\delta_*}{2}.
\end{aligned}
\end{equation*}
Thus $D\subset\mathcal{K}$.
\end{proof}

\begin{remark}
The estimates in Lemma \ref{lem-low-source} and Proposition \ref{prop-localization} are stated for $p\in[\frac{3}{2},2]$. In the proof of Theorem \ref{thm-simply-connected}, both are used only after the local blow-up analysis has established $p_n\to2$.
\end{remark}

For a connected component $D$ of $\{u>t\}$ with $t>0$, set
\begin{equation*}
q_D=\int_D f\mathrm{d}x,\quad P_D=h(F(u)-F(t))\mathbf{1}_D.
\end{equation*}
Since $u=t$ on $\partial D$, we have $P_D=0$ on $\partial D$.

\begin{proposition} \label{prop-pressure}
The zero extension of $P_D$ to $\mathbb{R}^2$ is compactly supported and Lipschitz. Moreover,
\begin{equation*}
0\leq\int_\Omega P_D\mathrm{d}x\leq\frac{\sup_Dh}{\inf_Dh}\frac{q_D^2}{8\pi},
\end{equation*}
and for every Lipschitz vector field $Y\colon \overline\Omega\to\mathbb{R}^2$,
\begin{equation} \label{pressure-vector}
\int_D fY\cdot\nabla u\mathrm{d}x=-\int_\Omega P_D\operatorname{div}Y\mathrm{d}x-\int_\Omega P_DY\cdot\nabla\log h\mathrm{d}x.
\end{equation}
\end{proposition}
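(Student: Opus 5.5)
Three things need proof: the regularity of $P_D$, the bound on $\int_\Omega P_D\,\mathrm{d}x$, and the Pohozaev-type identity \eqref{pressure-vector}. The plan is to take them in that order, with the integral bound as the core.

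\textbf{Regularity and the vector-field identity.} By \eqref{component-boundary}, $\overline D\subset\subset\Omega$ and $u=t$ on $\partial D$. The map $s\mapsto F(s)$ is locally Lipschitz on $[0,\infty)$; for $p=1$ we have $F(s)=\lambda(\mathrm e^s-1)$, and for $p>1$ the derivative $F'(s)=p\lambda s^{p-1}\mathrm e^{s^p}$ is continuous. Also $u\in C^1(\overline\Omega)$ and $h\in C^2$. Hence $\max\{F(u)-F(t),0\}$ is Lipschitz on $\Omega$. Restricted to the open set $D$ it equals $F(u)-F(t)\ge 0$ and vanishes on $\partial D$. So the zero extension of $P_D$ is Lipschitz, with compact support in $\overline D$, and its weak gradient is
\begin{equation*}
\nabla P_D=\mathbf 1_D\bigl(f\nabla u+(F(u)-F(t))\nabla h\bigr)=\mathbf 1_D f\nabla u+P_D\nabla\log h .
\end{equation*}
For Lipschitz $Y$, integrate $\operatorname{div}(P_DY)$ over $\Omega$. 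Since $P_D$ vanishes near $\partial\Omega$, this integral is zero, which gives $\int_\Omega P_D\operatorname{div}Y+\int_\Omega Y\cdot\nabla P_D=0$. Inserting the formula for $\nabla P_D$ yields \eqref{pressure-vector}.

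\textbf{The bound on $\int P_D$.} Nonnegativity is clear. For the upper bound I use the coarea formula with $\mu(s)=|\{x\in D\mid u>s\}|$ for $s\ge t$. Then
\begin{equation*}
\int_D(F(u)-F(t))\,\mathrm dx=\int_t^\infty F'(s)\mu(s)\,\mathrm ds,\qquad Q(s)=\int_{\{u>s\}\cap D}F'(u)\,\mathrm dx=\int_s^\infty F'(\sigma)(-\mu'(\sigma))\,\mathrm d\sigma,
\end{equation*}
where $Q$ is an unweighted mass. For a.e.\ regular value $s>t$, Green's formula on $D_s=\{u>s\}\cap D$, whose boundary is $\{u=s\}\cap D$, gives
\begin{equation*}
\int_{D_s}hF'(u)\,\mathrm dx=\int_{\{u=s\}\cap D}|\nabla u|.
\end{equation*}
Cauchy--Schwarz and the isoperimetric inequality then give
\begin{equation*}
4\pi\mu(s)\le |\partial D_s|^2\le \Bigl(\int|\nabla u|\Bigr)\Bigl(\int|\nabla u|^{-1}\Bigr)\le (\sup_D h)\,Q(s)\,(-\mu'(s)).
\end{equation*}
Multiplying by $F'(s)$ and using $-Q'(s)=F'(s)(-\mu'(s))$, I get $4\pi F'(s)\mu(s)\le -(\sup_D h)\,Q(s)Q'(s)$. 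Integrating over $s\in(t,\infty)$ gives
\begin{equation*}
4\pi\int_t^\infty F'\mu\,\mathrm ds\le (\sup_D h)\frac{Q(t)^2}{2}.
\end{equation*}
Since $q_D\ge(\inf_D h)Q(t)$, this yields
\begin{equation*}
\int P_D\le (\sup_D h)\int_D(F(u)-F(t))\le (\sup_D h)^2\frac{Q(t)^2}{8\pi}\le\frac{(\sup_D h)^2}{(\inf_D h)^2}\frac{q_D^2}{8\pi}.
\end{equation*}
This has the wrong power of the weight ratio, so the weight must be handled more carefully. The fix is to use the weighted measure $\nu(s)=\int_{D_s}h\,\mathrm dx$, so that $\int P_D=\int_t^\infty F'(s)\nu(s)\,\mathrm ds$. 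Cauchy--Schwarz with the weight $h$ gives
\begin{equation*}
\Bigl(\int h\Bigr)^2\le\Bigl(\int|\nabla u|\Bigr)\Bigl(\int h^2|\nabla u|^{-1}\Bigr)\le (\sup h)\,q(s)\,(-\nu'(s)),
\end{equation*}
with $q(s)=\int_{D_s}hF'(u)$. Combining $|\partial D_s|^2\ge4\pi\mu(s)\ge 4\pi\nu(s)/\sup h$ with the estimate $\int_{\partial D_s}h\ge(\inf h)|\partial D_s|$ gives $(\inf h)^2\,4\pi\nu/\sup h\le(\sup h)\,q(-\nu')$. This still gives a ratio squared. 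To reach a single ratio I will instead use the pairing
\begin{equation*}
|\partial D_s|^2\le\Bigl(\int|\nabla u|\Bigr)\Bigl(\int|\nabla u|^{-1}\Bigr),
\end{equation*}
with $\int|\nabla u|^{-1}=-\mu'$, and $\nu\le(\sup h)\mu$, $-q'=F'(-\nu')\ge(\inf h)F'(-\mu')$. These give $4\pi F'\nu\le(\sup h)\,q(-q')/\inf h$, hence $\int P_D\le\frac{\sup h}{\inf h}\frac{q_D^2}{8\pi}$ because $q(t)=q_D$.

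\textbf{Main obstacle.} The technical point is justifying the coarea and isoperimetric steps. Critical levels are handled by Sard's theorem, which applies since $u\in C^2(\Omega)$: a.e.\ $s$ is regular, and $\mu,q$ are absolutely continuous on the relevant ranges. The set where $\nabla u=0$ inside $D_s$ must also be controlled. This is standard, since $-\Delta u>0$ forces that set to be null. Finally, the boundary terms at $s=\infty$ vanish because $u$ is bounded.
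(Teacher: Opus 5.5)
Your final route is the paper's argument: Green's formula $q(s)=\int_{\{u=s\}\cap D}|\nabla u|$, then Cauchy--Schwarz with the isoperimetric inequality to get $4\pi\mu(s)\le q(s)(-\mu'(s))$, then $-q'\ge(\inf_D h)F'(-\mu')$ and $\nu\le(\sup_D h)\mu$ before integrating $q(-q')$. Your handling of the Lipschitz zero extension, the a.e.\ gradient formula, the null critical set and the identity \eqref{pressure-vector} matches it too, so the proposal is correct; just delete the two abandoned attempts that give the squared weight ratio, since they play no role.
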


\begin{proof}
Since $\overline D\subset\subset\Omega$, the gradient of $h(F(u)-F(t))$ is bounded on $\overline D$. For $x,y\in\mathbb{R}^2$, it suffices to consider $P_D(x)>P_D(y)\geq0$, so that $x\in D$. If the segment joining $x$ and $y$ is contained in $D$, then integration along this segment yields
\begin{equation*}
|P_D(x)-P_D(y)|\leq C|x-y|.
\end{equation*}
Otherwise, let $z$ be the first point where the segment from $x$ to $y$ meets $\partial D$. Since $P_D(z)=0$ by \eqref{component-boundary},
\begin{equation*}
|P_D(x)-P_D(y)|\leq |P_D(x)-P_D(z)|\leq C|x-z|\leq C|x-y|.
\end{equation*}
Thus the zero extension of $P_D$ is Lipschitz and compactly supported in $\Omega$. By Theorem 4.4 in \cite{EvansGariepy2025}, we deduce that
\begin{equation*}
\nabla P_D=f\mathbf{1}_D\nabla u+P_D\nabla\log h\quad\text{almost everywhere in }\Omega.
\end{equation*}
Hence, it holds
\begin{equation*}
\begin{aligned}
\int_D fY\cdot\nabla u\mathrm{d}x&=\int_\Omega Y\cdot\nabla P_D\mathrm{d}x-\int_\Omega P_DY\cdot\nabla\log h\mathrm{d}x\\
&=-\int_\Omega P_D\operatorname{div}Y\mathrm{d}x-\int_\Omega P_DY\cdot\nabla\log h\mathrm{d}x.
\end{aligned}
\end{equation*}

Set $Z=\{\nabla u=0\}$. Using Theorem 4.4 in \cite{EvansGariepy2025} again, we also have
\begin{equation*}
\nabla(\partial_j u)=0\quad\text{almost everywhere in }\{\partial_j u=0\}\quad\text{for }j=1,2.
\end{equation*}
Thus $D^2u=0$ almost everywhere in $Z$. Since $-\Delta u=f>0$ in $\Omega$,
\begin{equation} \label{critical-set-zero}
|Z|=0.
\end{equation}
For $s>t$, define
\begin{equation*}
a(s)=|D\cap\{u>s\}|,\quad q(s)=\int_{D\cap\{u>s\}}f\mathrm{d}x.
\end{equation*}
By \eqref{critical-set-zero} and the coarea formula in \cite{EvansGariepy2025}, the functions $a$ and $q$ are locally absolutely continuous on $(t,\infty)$, with
\begin{equation} \label{coarea-a}
-a'(s)=\int_{D\cap\{u=s\}}\frac{1}{|\nabla u|}\mathrm{d}s,
\end{equation}
and
\begin{equation} \label{coarea-aq}
-q'(s)=F'(s)\int_{D\cap\{u=s\}}\frac{h}{|\nabla u|}\mathrm{d}s\geq(\inf_Dh)F'(s)(-a'(s))
\end{equation}
for almost every $s>t$.

\vskip 0.1cm

By Sard's theorem, almost every $s\in(t,\max_Du)$ is a regular value of $u$. Denote $E_s=D\cap\{u>s\}$. Then $\overline E_s\subset\subset D$ and
$
\partial E_s=D\cap\{u=s\}$
is a regular $C^2$ level curve, with outward normal $-\frac{\nabla u}{|\nabla u|}$. Thus
\begin{equation} \label{level-flux}
q(s)=\int_{E_s}f\mathrm{d}x =-\int_{\partial E_s}\partial_\nu u\mathrm{d}s =\int_{\partial E_s}|\nabla u|\mathrm{d}s.
\end{equation}
By \eqref{coarea-a}, \eqref{level-flux}, the Cauchy-Schwarz inequality and the isoperimetric inequality, we have
\begin{equation} \label{aq-isoperimetric}
\begin{aligned}
q(s)(-a'(s)) &=\left(\int_{\partial E_s}|\nabla u|\mathrm{d}s\right) \left(\int_{\partial E_s}\frac{1}{|\nabla u|}\mathrm{d}s\right) \geq\left(\int_{\partial E_s}\mathrm{d}s\right)^2 \geq4\pi|E_s|=4\pi a(s),
\end{aligned}
\end{equation}
for almost every $s>t$.

\vskip 0.1cm

Since $F'(s)=p\lambda s^{p-1}\mathrm{e}^{s^p}$, Tonelli's theorem yields
\begin{equation} \label{pressure-layer-cake}
\begin{aligned}
\int_\Omega P_D\mathrm{d}x&=\int_Dh(x)\int_t^{u(x)}F'(s)\mathrm{d}s\mathrm{d}x\\
&=\int_t^\infty F'(s)\left(\int_{D\cap\{u>s\}}h(x)\mathrm{d}x\right)\mathrm{d}s
\leq(\sup_Dh)\int_t^\infty F'(s)a(s)\mathrm{d}s.
\end{aligned}
\end{equation}
Since $q(t^+)=q_D$ and $q(s)=0$ for $s\geq\max_Du$,
\begin{equation*}
\int_t^\infty q(s)(-q'(s))\mathrm{d}s =\left.-\frac{1}{2}q(s)^2\right|_{s=t^+}^{s=\infty} =\frac{q_D^2}{2}.
\end{equation*}
By \eqref{coarea-aq} and \eqref{aq-isoperimetric}, we have
\begin{equation*}
F'(s)a(s)\leq\frac{F'(s)q(s)(-a'(s))}{4\pi}\leq\frac{q(s)(-q'(s))}{4\pi\inf_Dh}
\end{equation*}
for almost every $s>t$. Substituting this bound into \eqref{pressure-layer-cake}, we obtain
\begin{equation*}
\begin{aligned}
0\leq\int_\Omega P_D\mathrm{d}x&\leq(\sup_Dh)\int_t^\infty F'(s)a(s)\mathrm{d}s\\
&\leq\frac{\sup_Dh}{4\pi\inf_Dh}\int_t^\infty q(s)(-q'(s))\mathrm{d}s=\frac{\sup_Dh}{\inf_Dh}\frac{q_D^2}{8\pi}.
\end{aligned}
\end{equation*}
\end{proof}

Fix $t>0$ and let $D_j$ be the connected components of $\{u>t\}$. Summing the bound in Proposition \ref{prop-pressure}, we obtain
\begin{equation*}
\int_{\{u>t\}}h(F(u)-F(t))\mathrm{d}x\leq\frac{\max_{\overline\Omega}h}{\min_{\overline\Omega}h}\frac{1}{8\pi}\sum_jq_{D_j}^2.
\end{equation*}
Since the components are disjoint, then we have 
\begin{equation*}
\sum_jq_{D_j}^2\leq\Big(\sum_jq_{D_j}\Big)^2=\Big(\int_{\{u>t\}}f\mathrm{d}x\Big)^2\leq M^2.
\end{equation*}
Letting $t\to0$ and applying the monotone convergence theorem yields
\begin{equation} \label{global-primitive}
\int_\Omega hF(u)\mathrm{d}x\leq\frac{\max_{\overline\Omega}h}{\min_{\overline\Omega}h}\frac{M^2}{8\pi}.
\end{equation}

Testing \eqref{1} with $u$, we have
\begin{equation*}
E(u)=\int_\Omega uf\mathrm{d}x\leq mM.
\end{equation*}
Together with \eqref{beta-definition} and \eqref{global-primitive}, this yields
\begin{equation} \label{beta-source}
\begin{aligned}
\beta^{\frac{p}{2}}&=\frac{p}{2^{\frac{p}{2}}}\left(\int_\Omega hF(u)\mathrm{d}x\right)^{\frac{2-p}{2}}E(u)^{p-1}\leq CMm^{p-1},
\end{aligned}
\end{equation}
where $C$ is independent of $p\in[1,2]$.

\subsection{Local profiles and radial comparison}\

\vskip 0.1cm

In this subsection, we develop a local blow-up analysis that applies to suitably selected high points without requiring them to be critical points of $u$. We then derive a radial comparison estimate from \cite{DeMarchisMalchiodiMartinazziThizy2022}. At $p=2$, see also \cite{Druet2006,DruetThizy2020}.

\vskip 0.1cm

For a sequence of positive solutions $u_n$ of \eqref{1} with exponents $p_n$ and parameters $\lambda_n$, choose $x_{0,n}\in\Omega$ such that $u_n(x_{0,n})=m_n$. Let $\lambda_1=\lambda_1(\Omega)$ be the first Dirichlet eigenvalue and let $\varphi_1>0$ be a corresponding eigenfunction. For $1\leq p<2$, set $a=\frac{2-p}{p}\in(0,1]$. Since
\begin{equation*}
\frac{\mathrm{d}}{\mathrm{d}z}(z^{-a}\mathrm{e}^z)=z^{-a-1}\mathrm{e}^z(z-a)\quad\text{for }z>0,
\end{equation*}
we obtain for every $s>0$,
\begin{equation*}
ps^{p-2}\mathrm{e}^{s^p}=p(s^p)^{-a}\mathrm{e}^{s^p}\geq pa^{-a}\mathrm{e}^a\geq p\mathrm{e}^a>p(1+a)=2.
\end{equation*}
For $p=2$, the same strict inequality follows from $2\mathrm{e}^{s^2}>2$ for $s>0$.
Testing \eqref{1} against $\varphi_1$, we obtain
\begin{equation*}
\begin{aligned}
\lambda_1\int_\Omega u_n\varphi_1\mathrm{d}x&=p_n\lambda_n\int_\Omega hu_n^{p_n-1}\mathrm{e}^{u_n^{p_n}}\varphi_1\mathrm{d}x
>2\lambda_n\min_{\overline\Omega}h\int_\Omega u_n\varphi_1\mathrm{d}x.
\end{aligned}
\end{equation*}
Therefore, it holds
\begin{equation} \label{lambda-m-basic}
0<2\lambda_n\min_{\overline\Omega}h<\lambda_1.
\end{equation}
Moreover, for $p_n=2$, the bound $u_n\leq m_n$ yields
\begin{equation*}
\begin{aligned}
\lambda_1\int_\Omega u_n\varphi_1\mathrm{d}x&=2\lambda_n\int_\Omega hu_n\mathrm{e}^{u_n^2}\varphi_1\mathrm{d}x
\leq2\lambda_n\max_{\overline\Omega}h\mathrm{e}^{m_n^2}\int_\Omega u_n\varphi_1\mathrm{d}x.
\end{aligned}
\end{equation*}
Thus, we also have
\begin{equation} \label{lambda-m-critical}
m_n^2\geq\log\frac{\lambda_1}{2\lambda_n\max_{\overline\Omega}h}
=L_n+\log\frac{\lambda_1}{2\max_{\overline\Omega}h}.
\end{equation}

For $x_n\in\Omega$, set
\begin{equation} \label{point-scale}
b_n=u_n(x_n),\quad \nu_n=\frac{2\sqrt{2}}{p_n\sqrt{\lambda_n h(x_n)}b_n^{p_n-1}\mathrm{e}^{\frac{b_n^{p_n}}{2}}},
\end{equation}
and define
\begin{equation*}
\Omega_n=\frac{\Omega-x_n}{\nu_n},\quad \xi_n(y)=\frac{p_n}{2}b_n^{p_n-1}\big(u_n(x_n+\nu_ny)-b_n\big).
\end{equation*}
In the rescaled variables $y$, we write $B_R=B_R(0)$.

\begin{lemma} \label{lem-critical-core}
Assume that
\begin{equation*}
\operatorname{dist}(x_n,\partial\Omega)\geq\delta_1>0,\quad b_n\to\infty,\quad \frac{L_n}{b_n^{p_n}}\to0.
\end{equation*}
Suppose that one of the following conditions holds:
\begin{enumerate}
\item[(i)] $\nabla u_n(x_n)=0$ and for every fixed $R>0$, $\displaystyle\sup_{B_R\cap\Omega_n}\xi_n\leq C_R$, with a constant $C_R$ independent of $n$.
\item[(ii)] For every fixed $R>0$, $\displaystyle\sup_{B_R\cap\Omega_n}\xi_n\to0$.
\end{enumerate}
Then $p_n\to2$ and
\begin{equation} \label{core-limit}
\xi_n\to-\log(1+|y|^2)\quad\text{in }C^2_{\mathrm{loc}}(\mathbb{R}^2).
\end{equation}
Moreover, for every fixed $R>0$,
\begin{equation} \label{core-masses}
\frac{p_n}{2}b_n^{p_n-1}\int_{B_{\nu_nR}(x_n)}f_n\mathrm{d}x \to\frac{4\pi R^2}{1+R^2},\quad \frac{p_n}{2}\int_{B_{\nu_nR}(x_n)}u_n^{p_n-1}f_n\mathrm{d}x \to\frac{4\pi R^2}{1+R^2}.
\end{equation}
\end{lemma}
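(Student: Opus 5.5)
The plan is to derive the rescaled equation for $\xi_n$, pass to a limit with elliptic estimates, and identify the limit as the standard Liouville bubble. Since $x_n+\nu_ny=x$ and $u_n=b_n+\frac{2}{p_nb_n^{p_n-1}}\xi_n$, a direct computation gives
\begin{equation*}
-\Delta\xi_n=\frac{p_n}{2}b_n^{p_n-1}\nu_n^2f_n(x_n+\nu_ny)
=4\frac{h(x_n+\nu_ny)}{h(x_n)}\Big(\frac{u_n}{b_n}\Big)^{p_n-1}\mathrm{e}^{u_n^{p_n}-b_n^{p_n}}
\end{equation*}
in $\Omega_n$. First I check that $\nu_n\to0$. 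Using $b_n^{p_n}\geq$ something large relative to $L_n$, namely $L_n/b_n^{p_n}\to0$, we get $\lambda_n\mathrm{e}^{b_n^{p_n}}=\mathrm{e}^{b_n^{p_n}(1-L_n/b_n^{p_n})}\to\infty$, hence $\nu_n\to0$. Together with $\operatorname{dist}(x_n,\partial\Omega)\geq\delta_1$, this gives $\Omega_n\to\mathbb{R}^2$.

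Next I expand the exponent. Write $u_n=b_n(1+\tau)$ with $\tau=\frac{2\xi_n}{p_nb_n^{p_n}}$. On $B_R$ the upper bound $\xi_n\leq C_R$ holds, and $u_n\geq0$ gives $\tau\geq-1$. Then $(1+\tau)^{p}$ is convex for $p\geq1$, so $u_n^{p_n}-b_n^{p_n}\geq 2\xi_n$, though that inequality points the wrong way for an upper bound. For the upper bound I use $(1+\tau)^p\leq 1+p\tau+C\tau^2$ on $\tau\le C_R/b_n^{p_n}$. This keeps the right-hand side bounded above where $\xi_n$ is bounded below. To get a full $L^\infty_{\rm loc}$ bound on $\xi_n$, I use the standard Harnack argument. $\xi_n\leq C_R$ and $\xi_n(0)=0$, and the right-hand side is bounded in $L^\infty(B_R)$ because $\mathrm{e}^{u^p-b^p}\leq \mathrm{e}^{C}$ when $u\le b+O(b^{1-p})$ and $(u/b)^{p-1}\leq 2$. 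Splitting $\xi_n$ into a Newtonian potential plus a harmonic function bounded above with value $0$ at the origin, Harnack gives $|\xi_n|\leq C_R$ on $B_R$. Then $\tau=O(b_n^{-p_n})$ uniformly on $B_R$, so $u_n^{p_n}-b_n^{p_n}=2\xi_n+O(b_n^{-p_n})$ and $(u_n/b_n)^{p_n-1}\to1$. Elliptic estimates give $C^{2,\alpha}_{\rm loc}$ compactness, and the right-hand side converges to $4\mathrm{e}^{2\xi}$. The limit $\xi\leq0$ in case (ii), or $\xi\le C$ with $\nabla\xi(0)=0$ in case (i), solves $-\Delta\xi=4\mathrm{e}^{2\xi}$ with $\xi(0)=0$.

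Then I classify the limit with Chen--Li, which needs finite mass $\int\mathrm{e}^{2\xi}<\infty$. I get this by Fatou from $\int_{B_{\nu_nR}(x_n)}\frac{p_n}{2}b_n^{p_n-1}f_n\leq\frac{p_n}{2}b_n^{p_n-1}\int_\Omega f_n$. That quantity is not obviously bounded a priori, and I expect this to be the main obstacle. My first attempt is to use instead the localized Pohozaev/isoperimetric bound of Proposition~\ref{prop-pressure} on the superlevel component of $\{u_n>b_n-\frac{2A}{p_nb_n^{p_n-1}}\}$ containing $x_n$, which controls $\int\mathrm{e}^{2\xi}$ over the rescaled component. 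Alternatively, for entire solutions of $-\Delta\xi=4\mathrm{e}^{2\xi}$ bounded above, one can use the known fact that they have finite mass, or argue via the circle-average ODE. Once finite mass is known, the solutions are $\log\frac{\mu}{1+\mu^2|y-y_0|^2}$. The normalization $\xi(0)=0$ and $\max\xi=0$ give $\mu=1$, $y_0=0$. In case (ii) $\max\xi=0$ comes from $\xi\le0$; in case (i) it comes from $\nabla\xi(0)=0$, since $\nabla\xi(0)=0$ together with $\xi(0)=0$ forces $y_0=0$ and $\mu=1$. This proves \eqref{core-limit}.

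Finally, $p_n\to2$ follows by comparing powers. The identity $-\Delta\xi=4\mathrm{e}^{2\xi}$ came from writing $\nu_n^2$ exactly, so no exponent mismatch appears there. Instead, the mismatch comes from the relation between $u_n$'s decay and $\lambda_n$. On $|y|\sim R_n\to\infty$ slowly, $u_n\approx b_n-\frac{4\log R_n}{p_nb_n^{p_n-1}}$. I would use the Green representation $u_n\le\frac{M_n}{2\pi}\log\frac{d_\Omega}{|\cdot|}+\dots$ together with the bubble mass $\frac{p_n}{2}b_n^{p_n-1}\int f_n\gtrsim4\pi$. This gives $M_n\gtrsim b_n^{1-p_n}$, while the bubble lives at scale $\log\frac1{\nu_n}\approx\frac{b_n^{p_n}}2$. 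Forcing the value $b_n$ at $x_n$ then yields $b_n\lesssim M_n\log\frac1{\nu_n}+C$, hence $b_n^{p_n-1}\cdot b_n^{1-p_n}\cdot\frac{b_n^{p_n}}{2}\cdot\frac{1}{b_n}$-type balances. I expect the upshot $b_n^{2-p_n}$ bounded, so $p_n\to2$, given $b_n\to\infty$ and the upper bound $M_n\le C$. The mass limits \eqref{core-masses} then follow from $C^0$ convergence and $\int_{B_R}4(1+|y|^2)^{-2}\mathrm{d}y=\frac{4\pi R^2}{1+R^2}$. For the second limit, I use $u_n^{p_n-1}/b_n^{p_n-1}\to1$ uniformly on $B_{\nu_nR}(x_n)$.
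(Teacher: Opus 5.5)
There is a genuine gap. Your local analysis is fine: the rescaled equation, the Harnack step, $C^{2,\alpha}_{\rm loc}$ compactness, the limit problem $-\Delta\xi=4\mathrm{e}^{2\xi}$ with $\xi(0)=0$ and $\nabla\xi(0)=0$, and the normalization after classification all work. The gap is the step you yourself flag, finite mass of the limit, and none of your three suggestions closes it.

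Finite mass cannot follow from the limit problem alone. The function $\xi(y)=-\log\cosh(2y_1)$ satisfies $-\Delta\xi=4\cosh^{-2}(2y_1)=4\mathrm{e}^{2\xi}$ on $\mathbb{R}^2$, $\xi\leq0=\xi(0)$ and $\nabla\xi(0)=0$, yet $\int_{\mathbb{R}^2}\mathrm{e}^{2\xi}\mathrm{d}y=\infty$. So the ``known fact'' is false. The circle-average ODE is also satisfied by this example, so it cannot exclude it.

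Proposition~\ref{prop-pressure} points the wrong way. It bounds $\int_DP_D$ from above by a multiple of $q_D^2$, which is a lower bound on the mass of a component. You need an upper bound on $\int_{B_R}k_n$ that is uniform in $R$. Fatou from the global mass also fails: $b_n^{p_n-1}M_n$ is unbounded in the application, where $M_nm_n^{p_n-1}\to\infty$.

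Your argument for $p_n\to2$ does not work either. An upper bound $b_n\lesssim M_n\log\frac1{\nu_n}+C$, together with $M_n\leq C$ and $\log\frac1{\nu_n}\approx\frac{b_n^{p_n}}{2}$, only gives $b_n\lesssim b_n^{p_n}$, which says nothing about $p_n$. It also uses the bubble mass, which presupposes the classification you have not justified.

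The missing idea is the Green representation at $x_n$ combined with a \emph{lower} bound on $G$. By domain monotonicity, $G(x_n,x)\geq\frac{1}{2\pi}\log\frac{\delta_1}{|x-x_n|}$ for $0<|x-x_n|<\delta_1$. Since $f_n\geq0$, you may discard all source outside $B_{\nu_nR}(x_n)$:
\begin{equation*}
\frac{p_n}{2}b_n^{p_n}=\frac{p_n}{2}b_n^{p_n-1}u_n(x_n)\geq\frac{p_nb_n^{p_n-1}}{4\pi}\int_{B_{\nu_nR}(x_n)}\log\frac{\delta_1}{|x-x_n|}f_n\mathrm{d}x\geq\frac{1}{2\pi}\log\frac{\delta_1}{\nu_nR}\int_{B_R}k_n\mathrm{d}y.
\end{equation*}
The hypothesis $L_n/b_n^{p_n}\to0$ is exactly what gives $b_n^{-p_n}\log\frac{\delta_1}{\nu_nR}\to\frac12$; you used it only to get $\nu_n\to0$. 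Dividing by $b_n^{p_n}$ and passing to the limit along a subsequence with $p_n\to p_*$ yields
\begin{equation*}
4\int_{B_R}\mathrm{e}^{2\xi_\infty}\mathrm{d}y\leq2\pi p_*\leq4\pi\quad\text{for every }R>0.
\end{equation*}
This gives the finite mass needed for Chen--Li. The normalized bubble $-\log(1+|y|^2)$ has total mass $4\int_{\mathbb{R}^2}\mathrm{e}^{2\xi_\infty}\mathrm{d}y=4\pi$, so the same inequality forces $4\pi\leq2\pi p_*$, that is $p_*=2$. One estimate therefore supplies both the finite mass and $p_n\to2$, and your argument needs it at both points.
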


\begin{proof}
For each fixed $R>0$,
\begin{equation} \label{core-scale}
\log\frac{\delta_1}{\nu_nR} =\frac{b_n^{p_n}-L_n}{2}+(p_n-1)\log b_n+\log\frac{p_n\sqrt{h(x_n)}\delta_1}{2\sqrt{2}R} \to\infty.
\end{equation}
Thus $B_R\subset\Omega_n$ for sufficiently large $n$, and in both cases
\begin{equation} \label{upper-bound}
\sup_{B_R}\xi_n\leq C_R.
\end{equation}
Under the change of variables $x=x_n+\nu_ny$, we have
\begin{equation*}
u_n(x_n+\nu_ny)=b_n+\frac{2\xi_n(y)}{p_nb_n^{p_n-1}},\quad \frac{\lambda_np_n^2h(x_n)}{2}b_n^{2p_n-2}\mathrm{e}^{b_n^{p_n}}\nu_n^2=4.
\end{equation*}
Therefore, it holds
\begin{equation} \label{core-rescaled}
-\Delta\xi_n=k_n,\quad \xi_n(0)=0,\quad \xi_n>-\frac{p_n}{2}b_n^{p_n},
\end{equation}
where
\begin{equation*}
k_n=4\frac{h(x_n+\nu_ny)}{h(x_n)}\left(1+\frac{2\xi_n}{p_nb_n^{p_n}}\right)^{p_n-1}\exp\left[b_n^{p_n}\left(\left(1+\frac{2\xi_n}{p_nb_n^{p_n}}\right)^{p_n}-1\right)\right].
\end{equation*}
In $\{\xi_n\leq0\}$,
\begin{equation*}
\begin{aligned}
&0<1+\frac{2\xi_n}{p_nb_n^{p_n}}=\frac{u_n(x_n+\nu_ny)}{b_n}\leq1,\\
&b_n^{p_n}\left(\left(1+\frac{2\xi_n}{p_nb_n^{p_n}}\right)^{p_n}-1\right)=u_n(x_n+\nu_ny)^{p_n}-b_n^{p_n}\leq0.
\end{aligned}
\end{equation*}
Consequently, \eqref{core-rescaled} implies
\begin{equation*}
0\leq k_n\leq4\frac{h(x_n+\nu_ny)}{h(x_n)}\leq4\frac{\max_{\overline\Omega}h}{\min_{\overline\Omega}h}.
\end{equation*}
In $\{\xi_n>0\}\cap B_{2R}$, using \eqref{upper-bound} and $b_n\geq1$ for sufficiently large $n$, we obtain
\begin{equation*}
\begin{aligned}
&b_n^{p_n}\left(\left(1+\frac{2\xi_n}{p_nb_n^{p_n}}\right)^{p_n}-1\right)=2\xi_n\int_0^1\left(1+\frac{2t\xi_n}{p_nb_n^{p_n}}\right)^{p_n-1}\mathrm{d}t\leq2C_{2R}(1+2C_{2R}).
\end{aligned}
\end{equation*}
Consequently,
\begin{equation*}
k_n\leq C(1+2C_{2R})\mathrm{e}^{2C_{2R}(1+2C_{2R})}\leq C_R.
\end{equation*}
Thus $0\leq k_n\leq C_R$ in $B_{2R}$, with $C_R$ independent of $p_n\in[1,2]$.
Let $w_n$ solve
\begin{equation*}
\begin{cases}
-\Delta w_n=k_n &\quad\text{in }B_{2R},\\
w_n=0 &\quad\text{on }\partial B_{2R}.
\end{cases}
\end{equation*}
The comparison principle implies
\begin{equation*}
0\leq w_n(y)\leq\frac{C_R}{4}(4R^2-|y|^2)\leq C_RR^2 \quad\text{for }y\in B_{2R}.
\end{equation*}
Choose $A_R>0$ with $\xi_n\leq A_R$ in $B_{2R}$ and denote
$h_n=w_n-\xi_n+A_R$. Then $h_n\geq0$ and $\Delta h_n=0$.
Harnack's inequality in \cite{GilbargTrudinger2001} yields
\begin{equation*}
\sup_{B_R}h_n\leq Ch_n(0)=C(w_n(0)+A_R)\leq C_R.
\end{equation*}
Hence we have
\begin{equation*}
-C_R\leq\xi_n\leq C_R\quad\text{in }B_R.
\end{equation*}
Fix $r>2$ and $0<\alpha<1-\frac{2}{r}$. The $L^r$ theory  yields
\begin{equation*}
\|\xi_n\|_{W^{2,r}(B_{3R})} \leq C_R\left(\|k_n\|_{L^r(B_{4R})}+\|\xi_n\|_{L^r(B_{4R})}\right) \leq C_R.
\end{equation*}
The Sobolev embedding theorem implies
\begin{equation*}
\|\xi_n\|_{C^{1,\alpha}(\overline{B_{2R}})}\leq C_R.
\end{equation*}
For $|s|\leq C_R$, put $r_n(s)=1+\frac{2s}{p_nb_n^{p_n}}$. Then $r_n(s)\in[\frac{1}{2},\frac{3}{2}]$ for sufficiently large $n$, and
\begin{equation*}
\begin{aligned}
&\left|\frac{\mathrm{d}}{\mathrm{d}s}\left(4r_n(s)^{p_n-1}\mathrm{e}^{b_n^{p_n}(r_n(s)^{p_n}-1)}
\right)\right|=4\mathrm{e}^{b_n^{p_n}(r_n(s)^{p_n}-1)}\left(\frac{2(p_n-1)}{p_nb_n^{p_n}}r_n(s)^{p_n-2}+2r_n(s)^{2p_n-2}\right)\leq C_R.
\end{aligned}
\end{equation*}
Since $h\in C^2(\overline\Omega)$ is positive and $\nu_n\to0$,
\begin{equation*}
\left\|\frac{h(x_n+\nu_n\cdot)}{h(x_n)}-1\right\|_{C^2(\overline{B_{2R}})}\leq C_R\frac{\|h\|_{C^2(\overline\Omega)}}{\min_{\overline\Omega}h}(\nu_n+\nu_n^2)\to0.
\end{equation*}
Thus $\|k_n\|_{C^\alpha(\overline{B_{2R}})}\leq C_R$. The Classical Schauder theory implies
\begin{equation*}
\|\xi_n\|_{C^{2,\alpha}(\overline{B_R})} \leq C_R\left(\|k_n\|_{C^\alpha(\overline{B_{2R}})} +\|\xi_n\|_{C^0(\overline{B_{2R}})}\right)\leq C_R.
\end{equation*}
Moreover, Taylor's formula and the uniform bound for $\xi_n$ on $B_R$ yield
\begin{equation*}
0\leq b_n^{p_n}\left(\left(1+\frac{2\xi_n}{p_nb_n^{p_n}}\right)^{p_n}-1\right)-2\xi_n
\leq\frac{C_R}{b_n^{p_n}}.
\end{equation*}
Therefore, after passing to a subsequence, $p_n\to p_* \in [1,2]$ and
\begin{equation} \label{core-subsequence}
\xi_n\to\xi_\infty\quad\text{in }C^2_{\mathrm{loc}}(\mathbb{R}^2),
\end{equation}
with
\begin{equation} \label{core-limit-eq}
-\Delta\xi_\infty=4\mathrm{e}^{2\xi_\infty}\quad\text{in }\mathbb{R}^2, \quad \xi_\infty(0)=0.
\end{equation}
In case (i), $\nabla\xi_n(0)=\frac{p_n}{2}b_n^{p_n-1}\nu_n\nabla u_n(x_n)=0$. In case (ii), $\xi_\infty\leq0=\xi_\infty(0)$. Then
\begin{equation*}
\nabla\xi_\infty(0)=0.
\end{equation*}

For $0<|x-x_n|<\delta_1$, the domain monotonicity of the Green function yields
\begin{equation*}
G_\Omega(x_n,x)\geq G_{B_{\delta_1}(x_n)}(x_n,x) =\frac{1}{2\pi}\log\frac{\delta_1}{|x-x_n|}.
\end{equation*}
For every fixed $R>0$ and sufficiently large $n$, the Green representation yields
\begin{equation*}
\begin{aligned}
\frac{p_n}{2}b_n^{p_n}
&\geq\frac{p_nb_n^{p_n-1}}{4\pi}
\int_{B_{\nu_nR}(x_n)}\log\frac{\delta_1}{|x-x_n|}f_n(x)\mathrm{d}x\\
&=\frac{1}{2\pi}\int_{B_R}\log\frac{\delta_1}{\nu_n|y|}k_n(y)\mathrm{d}y
\geq\frac{1}{2\pi}\log\frac{\delta_1}{\nu_nR}
\int_{B_R}k_n(y)\mathrm{d}y.
\end{aligned}
\end{equation*}
By \eqref{core-scale}, $b_n^{-p_n}\log\frac{\delta_1}{\nu_nR}\to\frac{1}{2}$. Using \eqref{core-rescaled} and \eqref{core-subsequence} yields
\begin{equation*}
4\int_{B_R}\mathrm{e}^{2\xi_\infty}\mathrm{d}y\leq2\pi p_*.
\end{equation*}
Letting $R\to\infty$, we obtain
\begin{equation} \label{core-finite-mass}
4\int_{\mathbb{R}^2}\mathrm{e}^{2\xi_\infty}\mathrm{d}y\leq2\pi p_*\leq4\pi.
\end{equation}
By \eqref{core-limit-eq} and \eqref{core-finite-mass}, the function $v=2\xi_\infty+\log8$ satisfies
\begin{equation*}
-\Delta v=\mathrm{e}^v\quad\text{in }\mathbb{R}^2,\quad \int_{\mathbb{R}^2}\mathrm{e}^v\mathrm{d}y =8\int_{\mathbb{R}^2}\mathrm{e}^{2\xi_\infty}\mathrm{d}y<\infty.
\end{equation*}
The classification theorem in \cite{ChenLi1991} provides $A>0$ and $y_0\in\mathbb{R}^2$ such that
\begin{equation*}
\mathrm{e}^{v(y)}=\frac{8A^2}{(1+A^2|y-y_0|^2)^2},\quad \xi_\infty(y)=\log A-\log(1+A^2|y-y_0|^2).
\end{equation*}
Since
\begin{equation*}
\nabla\xi_\infty(0)=\frac{2A^2y_0}{1+A^2|y_0|^2}=0,\quad \xi_\infty(0)=0,
\end{equation*}
we have $y_0=0$ and $A=1$. For this profile, \eqref{core-finite-mass} gives
\begin{equation*}
4\pi=4\int_{\mathbb{R}^2}\mathrm{e}^{2\xi_\infty}\mathrm{d}y
\leq2\pi p_*\leq4\pi.
\end{equation*}
Thus $p_*=2$. Both the limit and the profile are independent of the subsequence. Therefore, $p_n\to2$ and \eqref{core-limit} holds.

\vskip 0.1cm

Finally, recall that
\begin{equation*}
\mathrm{d}x=\nu_n^2\mathrm{d}y,\quad \frac{u_n(x_n+\nu_ny)}{b_n}=1+\frac{2\xi_n(y)}{p_nb_n^{p_n}},\quad \frac{p_n}{2}b_n^{p_n-1}\nu_n^2f_n(x_n+\nu_ny)=k_n(y).
\end{equation*}
Hence we get
\begin{equation*}
\begin{aligned}
\frac{p_n}{2}b_n^{p_n-1}\int_{B_{\nu_nR}(x_n)}f_n\mathrm{d}x&=\int_{B_R}\frac{p_n}{2}b_n^{p_n-1}\nu_n^2f_n(x_n+\nu_n y)\mathrm{d}y=\int_{B_R}k_n\mathrm{d}y,\\
\frac{p_n}{2}\int_{B_{\nu_nR}(x_n)}u_n^{p_n-1}f_n\mathrm{d}x&=\int_{B_R}\left(1+\frac{2\xi_n}{p_nb_n^{p_n}}\right)^{p_n-1}k_n\mathrm{d}y.
\end{aligned}
\end{equation*}
Since $k_n\to4(1+|y|^2)^{-2}$ and $\frac{\xi_n}{b_n^{p_n}}\to0$ uniformly in $B_R$, both integrals converge to
\begin{equation*}
\int_{B_R}\frac{4}{(1+|y|^2)^2}\mathrm{d}y=8\pi\int_0^R\frac{r}{(1+r^2)^2}\mathrm{d}r=\left.-\frac{4\pi}{1+r^2}\right|_{r=0}^{r=R}=\frac{4\pi R^2}{1+R^2}.
\end{equation*}
\end{proof}

With $b_n$ and $\nu_n$ as in \eqref{point-scale}, define
\begin{equation*}
\overline u_n(r)=\frac{1}{2\pi}\int_0^{2\pi} u_n(x_n+r(\cos\theta,\sin\theta))\mathrm{d}\theta,\quad t_n(r)=\log\left(1+\frac{r^2}{\nu_n^2}\right).
\end{equation*}
We have the following radial comparison estimate.

\begin{lemma} \label{lem-radial}
Assume that $p_n\in[1,2]$, $b_n\to\infty$, $\nabla u_n(x_n)=0$ and
\begin{equation*}
B_{\varrho_n}(x_n)\subset\subset\Omega,\quad \varrho_n\to0,\quad \frac{\varrho_n}{\nu_n}\to\infty.
\end{equation*}
Suppose that
\begin{equation*}
\frac{p_n}{2}b_n^{p_n-1}\left(u_n(x_n+\nu_ny)-b_n\right)\to-\log(1+|y|^2)\quad\text{in }C^2_{\mathrm{loc}}(\mathbb{R}^2),
\end{equation*}
and for some fixed $C_*>0$ and $\eta_0\in(0,1)$,
\begin{equation*}
u_n\leq C_*b_n,\quad |x-x_n||\nabla u_n(x)|\leq\frac{C_*}{b_n^{p_n-1}}\quad\text{in }B_{\varrho_n}(x_n),
\end{equation*}
and
\begin{equation*}
t_n(\varrho_n)\leq\eta_0\frac{p_n}{2}b_n^{p_n}.
\end{equation*}
Then there exists a constant $C>0$, depending only on $C_*$, $\eta_0$, $\|h\|_{C^2(\overline\Omega)}$ and $(\min_{\overline\Omega}h)^{-1}$, such that
\begin{equation} \label{radial-mean}
\Big|\frac{p_n}{2}b_n^{p_n-1}\overline u_n(r)-\left(\frac{p_n}{2}b_n^{p_n}-t_n(r)\right)\Big|\leq C\quad\text{for $0\leq r\leq\varrho_n$ and all sufficiently large $n$}.
\end{equation}
\end{lemma}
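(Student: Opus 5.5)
We propose to prove \eqref{radial-mean} by integrating an ODE for the circle mean, following \cite{DeMarchisMalchiodiMartinazziThizy2022}. Set $\overline\xi_n(r)=\frac{p_n}{2}b_n^{p_n-1}(\overline u_n(r)-b_n)$, so that \eqref{radial-mean} reads $|\overline\xi_n(r)+t_n(r)|\leq C$. By the divergence theorem,
\begin{equation*}
-r\overline\xi_n'(r)=\frac{1}{2\pi}\frac{p_n}{2}b_n^{p_n-1}\int_{B_r(x_n)}f_n\mathrm{d}x=:\frac{\mu_n(r)}{2\pi},
\end{equation*}
and the model satisfies $rt_n'(r)=\frac{2r^2/\nu_n^2}{1+r^2/\nu_n^2}$, which equals $\frac{1}{2\pi}\cdot\frac{4\pi\rho^2}{1+\rho^2}$ with $\rho=r/\nu_n$. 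It therefore suffices to show
\begin{equation*}
\int_0^{\varrho_n}\Big|\frac{\mu_n(r)}{2\pi}-\frac{2\rho^2}{1+\rho^2}\Big|\frac{\mathrm{d}r}{r}\leq C .
\end{equation*}
For $r\leq R\nu_n$ with $R$ fixed, this follows from the $C^2_{\mathrm{loc}}$ convergence, exactly as in \eqref{core-masses}. The hard part is the range $R\nu_n\leq r\leq\varrho_n$.

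\textbf{Step 1 (pointwise control from the gradient bound).} The assumption $|x-x_n||\nabla u_n|\leq C_*b_n^{1-p_n}$ shows that the oscillation of $\xi_n$ on each circle $\partial B_r(x_n)$ is at most $\pi C_*p_n/2\leq C$. Hence
\begin{equation*}
u_n^{p_n}\leq \overline u_n(r)^{p_n}+C\overline u_n(r)^{p_n-1}b_n^{1-p_n}+\dots .
\end{equation*}
Using $u_n\leq C_*b_n$, Taylor expansion, and $t_n(\varrho_n)\leq\eta_0\frac{p_n}{2}b_n^{p_n}$ (which keeps $u_n/b_n$ bounded below by a constant $c(\eta_0)>0$ as long as $\overline\xi_n\approx -t_n$), we obtain
\begin{equation*}
u_n^{p_n}-b_n^{p_n}\leq 2\xi_n\bigl(1-c\,|\xi_n|/b_n^{p_n}\bigr)\leq 2\theta\,\overline\xi_n+C
\end{equation*}
for some fixed $\theta\in(0,1)$ close to $1$ determined by $\eta_0$. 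Consequently
\begin{equation*}
\frac{p_n}{2}b_n^{p_n-1}f_n\nu_n^2\leq Ce^{2\theta\overline\xi_n(r)},
\end{equation*}
in rescaled variables $|y|=\rho$.

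\textbf{Step 2 (bootstrap/continuity argument).} Let $\bar r_n\leq\varrho_n$ be the largest radius such that $\overline\xi_n(r)\leq-(2-\epsilon)\log\rho+C_1$ on $[R\nu_n,\bar r_n]$; this holds at $r=R\nu_n$ for suitable $C_1$ and large $n$, since there $\mu_n\approx 4\pi$. On this range, Step 1 gives a density at most $C\rho^{-2\theta(2-\epsilon)}$. We choose $\theta(2-\epsilon)>1$; this is possible once $\theta$ is near $1$, which requires $\eta_0<1$ and is the role of that hypothesis. The mass outside $B_{R\nu_n}$ is then at most $CR^{2-2\theta(2-\epsilon)}$, which is small. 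Thus $\mu_n(r)\geq 4\pi-\delta(R)$, so $\overline\xi_n$ decays faster than $-(2-\epsilon)\log\rho$, and continuity gives $\bar r_n=\varrho_n$. Moreover, the tail bound yields
\begin{equation*}
\Big|\frac{\mu_n(r)}{2\pi}-\frac{2\rho^2}{1+\rho^2}\Big|\leq C\rho^{-\sigma}
\end{equation*}
for some $\sigma>0$, whose integral against $\mathrm{d}r/r=\mathrm{d}\rho/\rho$ over $[R,\infty)$ is bounded. Integrating the ODE from $0$ then gives \eqref{radial-mean}.

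The \textbf{main obstacle} is Step 1: justifying the lower bound $u_n\geq cb_n$ needed for the concavity gain in $u^{p}-b^{p}$ uniformly in $p_n\in[1,2]$. This lower bound itself depends on the bootstrap, so Steps 1 and 2 must be run simultaneously in the continuity argument, with constants depending only on $C_*$, $\eta_0$, and the bounds on $h$ (the factor $h(x)/h(x_n)$ is bounded above and below).
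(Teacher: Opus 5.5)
Your reduction is sound: since $-r\overline\xi_n'=\mu_n/(2\pi)$ and $rt_n'=2\rho^2/(1+\rho^2)$, \eqref{radial-mean} follows from a bound on $\int_0^{\varrho_n}\bigl|\frac{\mu_n}{2\pi}-\frac{2\rho^2}{1+\rho^2}\bigr|\frac{\mathrm{d}r}{r}$. The core $r\le R\nu_n$ is handled by the $C^2_{\mathrm{loc}}$ convergence.

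The gap is in the outer range, and it is a gap in precision. The hypotheses allow $\log(\varrho_n/\nu_n)$ to be of order $b_n^{p_n}$; this is exactly the regime of Propositions \ref{prop-uniform-max} and \ref{prop-final-height}. A bounded error therefore forces your mass function to satisfy $|\mu_n(r)-4\pi|=O(b_n^{-p_n})$ on average in $\log\rho$. Your claimed bound $\bigl|\frac{\mu_n}{2\pi}-\frac{2\rho^2}{1+\rho^2}\bigr|\le C\rho^{-\sigma}$ would give this, but it is false.

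Here is a counterexample. Take the radial model with $p_n=2$ and $\varrho_n=\nu_n\mathrm{e}^{\sigma b_n^2}$; it satisfies every local hypothesis you use. Write $\xi=-t+b_n^{-2}w_1(t)+\dots$ with $t=\log(1+\rho^2)$. The corrector solves $-\frac{\mathrm{d}}{\mathrm{d}t}\bigl[(1-\mathrm{e}^{-t})w_1'\bigr]=\mathrm{e}^{-t}(2w_1+t^2-t)$. Testing against the kernel element $2\mathrm{e}^{-t}-1$ gives $w_1'(\infty)=\int_0^\infty\mathrm{e}^{-t}(t^2-t)(2\mathrm{e}^{-t}-1)\,\mathrm{d}t=-1$. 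Hence $\frac{\mu_n}{2\pi}=2(1-\mathrm{e}^{-t})\bigl(1-b_n^{-2}w_1'(t)+\dots\bigr)$. So $\mu_n\approx4\pi(1+b_n^{-2})$ on, for example, $\mathrm{e}^{\sqrt{b_n}}\le\rho\le\mathrm{e}^{b_n}$, where $C\rho^{-\sigma}$ is exponentially small.

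The true bound has the form $C(\rho^{-\sigma}+b_n^{-p_n})$. The $b_n^{-p_n}$ part integrates to $Cb_n^{-p_n}\log(\varrho_n/\nu_n)\le C$. This, and not the tail decay, is where the radius condition $t_n(\varrho_n)\le\eta_0\frac{p_n}{2}b_n^{p_n}$ really enters.

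Your ingredients cannot resolve effects of size $b_n^{-p_n}$:
\begin{itemize}
\item The $C^2_{\mathrm{loc}}$ convergence has no rate, so $\mu_n(R\nu_n)$ is known only up to $o(1)$, and this error does not decay when carried outward.
\item The tail estimate only bounds $\mu_n(r)-\mu_n(R\nu_n)$.
\item The gradient hypothesis only bounds the oscillation of $\xi_n$ on circles by a constant, so it determines the density only up to a factor $\mathrm{e}^{O(1)}$.
\end{itemize}
What you actually obtain on $[R\nu_n,\varrho_n]$ is $(\delta(R)+o(1))\log(\varrho_n/\nu_n)$, which is unbounded.

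Your Step 1 also has the sign of the effect backwards. The map $s\mapsto s^{p}$ is convex, so $u_n^{p_n}-b_n^{p_n}\ge2\xi_n$. The term $\frac{2(p_n-1)}{p_n}\xi_n^2b_n^{-p_n}$ slows the decay rather than producing a gain, and it is one source of the $O(b_n^{-p_n})$ mass shift. Step 1 can be repaired, but it is not the real obstacle.

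The missing ingredient is a quantitative second-order description of the bubble over the whole range. This includes showing the nonradial part is small, not merely bounded. The paper takes this from two results:
\begin{itemize}
\item \cite[Proposition 3.1]{DeMarchisMalchiodiMartinazziThizy2022} gives $|u_n(x_n+x)-B_n(|x|)|\le C|x|/(b_n^{p_n-1}\varrho_n)$, where $B_n$ is the radial ODE solution with $B_n(0)=b_n$, $B_n'(0)=0$. This step uses $\nabla u_n(x_n)=0$ and the gradient bound.
\item \cite[Proposition 2.1]{DeMarchisMalchiodiMartinazziThizy2022} gives $B_n=b_n-\frac{2t_n}{p_nb_n^{p_n-1}}+O\bigl(t_n/b_n^{2p_n-1}\bigr)$.
\end{itemize}
Multiplying by $\frac{p_n}{2}b_n^{p_n-1}$ and using the radius condition turns the error into $O(1)$.
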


\begin{proof}
By \eqref{point-scale} and the assumptions on $h$, we have
\begin{equation*}
\begin{aligned}
\lambda_np_n^2h(x_n)b_n^{2(p_n-1)}\nu_n^2\mathrm{e}^{b_n^{p_n}}=8,\quad
\left\|\frac{h(x_n+\varrho_n\cdot)}{h(x_n)}-1\right\|_{C^2(\overline{B_1})}\leq C\frac{\|h\|_{C^2(\overline\Omega)}}{\min_{\overline\Omega}h}(\varrho_n+\varrho_n^2)\to0.
\end{aligned}
\end{equation*}
The assumed bounds for $u_n$ and $\nabla u_n$ imply
\begin{equation*}
|x-x_n|u_n^{p_n-1}|\nabla u_n|\leq C_*^{p_n}\leq C\quad\text{in }B_{\varrho_n}(x_n).
\end{equation*}
To apply Proposition 3.1 in \cite{DeMarchisMalchiodiMartinazziThizy2022}, set
\begin{equation*}
\varphi_n(x)=\frac{1}{2}\log\frac{h(x_n+x)}{h(x_n)}.
\end{equation*}
Then $\varphi_n(0)=0$ and $\varphi_n(\varrho_n\cdot)\to0$ in $C^2(\overline{B_1})$. From the proof of Theorem 2.1 in \cite{MalchiodiMartinazziThizy2024}, when the linear zeroth-order coefficient is zero, neither an $L^4$ bound for $u_n$ nor a bound on $b_n^{2p_n}\varrho_n^2$ is needed. Therefore, Proposition 3.1 in \cite{DeMarchisMalchiodiMartinazziThizy2022} gives
\begin{equation} \label{radial-approx}
\big|u_n(x_n+x)-B_n(|x|)\big|\leq\frac{C|x|}{b_n^{p_n-1}\varrho_n}\quad\text{for }|x|\leq\varrho_n,
\end{equation}
where
\begin{equation*}
-B_n''-\frac{1}{r}B_n'=p_n\lambda_nh(x_n)B_n^{p_n-1}\mathrm{e}^{B_n^{p_n}},\quad B_n(0)=b_n,\quad B_n'(0)=0.
\end{equation*}
The radial estimate in Proposition 2.1 of \cite{DeMarchisMalchiodiMartinazziThizy2022} yields
\begin{equation} \label{radial-expansion}
B_n(r)=b_n-\frac{2t_n(r)}{p_nb_n^{p_n-1}}+O\left(\frac{t_n(r)}{b_n^{2p_n-1}}\right)\quad\text{for }0\leq r\leq\varrho_n.
\end{equation}
Since $B_n$ is radial, \eqref{radial-approx} implies
\begin{equation*}
\left|\overline u_n(r)-B_n(r)\right|\leq\frac{Cr}{b_n^{p_n-1}\varrho_n}.
\end{equation*}
Combining this with \eqref{radial-expansion}, we obtain for $0\leq r\leq\varrho_n$,
\begin{equation*}
\left|\frac{p_n}{2}b_n^{p_n-1}\overline u_n(r)-\left(\frac{p_n}{2}b_n^{p_n}-t_n(r)\right)\right|\leq C\frac{r}{\varrho_n}+C\frac{t_n(r)}{b_n^{p_n}}\leq C.
\end{equation*}
This proves \eqref{radial-mean}.
\end{proof}

\subsection{The maximum estimate}\

\vskip 0.1cm

In this subsection, we prove the maximum estimate for a sequence with $\beta_n\to\infty$. We first exclude a second core near a global maximum and then apply the radial comparison estimate.

\vskip 0.1cm

Set
\begin{equation*}
\mathcal{F}_n=\frac{p_n^2\lambda_n}{2}hu_n^{2(p_n-1)}\mathrm{e}^{u_n^{p_n}}.
\end{equation*}
Let $\mathcal{U}_n\subset\Omega$ and let $d_n\colon\mathcal{U}_n\to[0,\infty)$ be $1$-Lipschitz, that is,
\begin{equation*}
|d_n(x)-d_n(y)|\leq|x-y|\quad\text{for all }x,y\in\mathcal{U}_n.
\end{equation*}
Suppose that $x_n\in\mathcal{U}_n$ maximizes $d_n^2\mathcal{F}_n$. Write
\begin{equation} \label{point-scale-1}
\Lambda_n=d_n(x_n)^2\mathcal{F}_n(x_n),\quad b_n=u_n(x_n),\quad \nu_n=\frac{2}{\sqrt{\mathcal{F}_n(x_n)}}.
\end{equation}
Suppose that $\Lambda_n\to\infty$, $\nu_n\to0$ and that $\overline{B_{R\nu_n}(x_n)}\subset\mathcal{U}_n$ for each fixed $R>0$ and sufficiently large $n$. Since $\mathcal{F}_n(x_n)>0$ and $\Lambda_n>0$, we have $d_n(x_n)>0$ for sufficiently large $n$, and
\begin{equation*}
\frac{\nu_n}{d_n(x_n)}=\frac{2}{\sqrt{\Lambda_n}}\to0,\quad d_n(x_n+\nu_nz)\geq d_n(x_n)-R\nu_n>0\quad\text{for }|z|\leq R.
\end{equation*}
For $|z|\leq R$, denote $y=x_n+\nu_nz$. The choice of $x_n$ yields
\begin{equation*}
\frac{\mathcal{F}_n(y)}{\mathcal{F}_n(x_n)} \leq\left(\frac{d_n(x_n)}{d_n(y)}\right)^2 \leq\left(\frac{d_n(x_n)}{d_n(x_n)-R\nu_n}\right)^2.
\end{equation*}
If $u_n(y)\geq b_n$, then
\begin{equation*}
\begin{aligned}
p_nb_n^{p_n-1}(u_n(y)-b_n) &\leq u_n(y)^{p_n}-b_n^{p_n}\\
&=\log\frac{\mathcal{F}_n(y)}{\mathcal{F}_n(x_n)}-2(p_n-1)\log\frac{u_n(y)}{b_n}-\log\frac{h(y)}{h(x_n)}\\
&\leq\log\frac{\mathcal{F}_n(y)}{\mathcal{F}_n(x_n)}+C|y-x_n|.
\end{aligned}
\end{equation*}
Since $u_n(x_n)=b_n$ and $\frac{R\nu_n}{d_n(x_n)}=\frac{2R}{\sqrt{\Lambda_n}}$,
\begin{equation} \label{selection-upper}
\begin{aligned}
0&\leq\sup_{|z|\leq R}\frac{p_n}{2}b_n^{p_n-1}\left(u_n(x_n+\nu_nz)-b_n\right)\\
&\leq\log\frac{d_n(x_n)}{d_n(x_n)-R\nu_n}+CR\nu_n
=-\log\left(1-\frac{2R}{\sqrt{\Lambda_n}}\right)+CR\nu_n\to0.
\end{aligned}
\end{equation}
Recall that $x_{0,n}\in\Omega$ is a global maximum point of $u_n$ with $u_n(x_{0,n})=m_n$. We now apply the point-selection argument near $x_{0,n}$.
For $s>0$, set
\begin{equation*}
\Theta_n(s)=\lambda_ns^{2(p_n-1)}\mathrm{e}^{\frac{2-p_n}{2}s^{p_n}}.
\end{equation*}

\begin{proposition} \label{prop-uniform-max}
Assume that $m_n\to\infty$ and $\frac{L_n}{m_n^{p_n}}\to0$, and let
\begin{equation*}
\overline u_n(r)=\frac{1}{2\pi}\int_0^{2\pi} u_n(x_{0,n}+r(\cos\theta,\sin\theta))\mathrm{d}\theta.
\end{equation*}
For each fixed $\delta>0$ such that $B_{2\delta}(x_{0,n})\subset\subset\Omega$ for all sufficiently large $n$, we have
\begin{equation} \label{fixed-radius-mean}
\frac{p_n}{2}m_n^{p_n-1}\overline u_n(\delta)\leq-\log\Theta_n(m_n)+C,
\end{equation}
where $C>0$ depends only on $\Omega$, $\delta$, $\|h\|_{C^2(\overline\Omega)}$ and $(\min_{\overline\Omega}h)^{-1}$. Moreover, $p_n\to2$, $\lambda_n\to0$ and for all sufficiently large $n$,
\begin{equation} \label{uniform-max-bound}
\Theta_n(m_n)\leq C,\quad m_n^{p_n-1}\leq\frac{C}{\sqrt{\lambda_n}},\quad M_n\geq c\sqrt{\lambda_n}.
\end{equation}
\end{proposition}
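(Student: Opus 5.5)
The plan is to apply the local blow-up analysis of Lemma \ref{lem-critical-core} at the global maximum $x_n=x_{0,n}$, so that $b_n=m_n$. Then feed the resulting profile into the radial comparison of Lemma \ref{lem-radial} on a ball of radius $\varrho_n$ that shrinks slowly. Finally, pass from $\varrho_n$ to the fixed radius $\delta$ using superharmonicity of circle means.

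First, Lemma \ref{lem-interior} gives $\nabla u_n(x_{0,n})=0$ and $\operatorname{dist}(x_{0,n},\partial\Omega)\geq\delta_*$. Since $u_n\leq m_n$, we have $\xi_n\leq 0$, so case (i) (indeed also case (ii)) of Lemma \ref{lem-critical-core} holds. This yields $p_n\to2$, the profile \eqref{core-limit} and the masses \eqref{core-masses}.

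Second, I verify the hypotheses of Lemma \ref{lem-radial} with $C_*=1$ for the bound $u_n\leq C_*b_n$. The gradient bound $|x-x_{0,n}||\nabla u_n|\leq C/m_n^{p_n-1}$ on $B_{\varrho_n}(x_{0,n})$ is the delicate point. I would obtain it from the Green representation together with \eqref{6}:
\[
|x-x_{0,n}||\nabla u_n(x)|\leq C\int_\Omega\frac{|x-x_{0,n}|}{|x-y|}f_n(y)\,\mathrm{d}y .
\]
This must then be controlled by a pointwise decay estimate $f_n\lesssim m_n^{1-p_n}\nu_n^{-2}(1+|x-x_{0,n}|^2/\nu_n^2)^{-1-\epsilon}$ near $x_{0,n}$. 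That decay follows, in the usual way, from excluding a second bubble near the global maximum via the point-selection argument \eqref{selection-upper} with $d_n=|x-x_{0,n}|$ and $\mathcal{U}_n=B_{\varrho_n}(x_{0,n})$. If $\Lambda_n=\sup d_n^2\mathcal{F}_n\to\infty$, the selected points satisfy condition (ii) of Lemma \ref{lem-critical-core}, hence produce a second bubble carrying $\frac{p_n}{2}\int u_n^{p_n-1}f_n\approx4\pi$. Such a bubble contradicts the mass bound coming from the core integral computed against the Green lower bound, exactly as in the proof of \eqref{core-finite-mass}, since $u_n\leq m_n$ and $\frac{p_n}{2}m_n^{p_n}\geq$ the total contribution. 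I expect this step, namely bounding $\Lambda_n$ and converting it into the gradient bound without an a priori energy bound, to be the main obstacle. I would choose $\varrho_n$ so that $t_n(\varrho_n)=\eta_0\frac{p_n}{2}m_n^{p_n}$ with $\eta_0=\frac12$, capped by $\varrho_n\to0$; the hypothesis $L_n/m_n^{p_n}\to0$ guarantees $\varrho_n/\nu_n\to\infty$.

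Third, Lemma \ref{lem-radial} gives \eqref{radial-mean} at $r=\varrho_n$. Using $\nu_n^2\asymp(\lambda_n m_n^{2(p_n-1)}e^{m_n^{p_n}})^{-1}$, a direct computation gives
\[
\frac{p_n}{2}m_n^{p_n}-\log\frac{\varrho_n^2}{\nu_n^2}=\frac{p_n-2}{2}m_n^{p_n}+\log\bigl(\lambda_nm_n^{2(p_n-1)}\bigr)+\log\varrho_n^2+O(1)=\log\Theta_n(m_n)+\log\varrho_n^2+O(1).
\]
This is sign-adjusted: the circle mean at $\varrho_n$ equals $-\log\Theta_n(m_n)$ up to the $\log\varrho_n$ term, after one uses that $\varrho_n$ at the end can be pushed to a fixed small radius. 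Since $-\Delta u_n\geq0$, the circle mean $\overline u_n(r)$ is nonincreasing in $r$. Hence $\overline u_n(\delta)\leq\overline u_n(\varrho_n)$ once $\varrho_n\leq\delta$, and \eqref{fixed-radius-mean} follows, with the $\log\varrho_n$ loss absorbed by choosing $\varrho_n$ comparable to a constant when allowed. Otherwise a Green-function comparison of $\overline u_n$ between $\varrho_n$ and $\delta$ gives $\overline u_n(\delta)\leq\overline u_n(\varrho_n)-\frac{q}{2\pi}\log(\delta/\varrho_n)$, with $q\approx 4\pi/(\frac{p_n}{2}m_n^{p_n-1})$, which exactly cancels $\log\varrho_n^2$.

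Finally, positivity of $\overline u_n(\delta)$ gives $\Theta_n(m_n)\leq C$. This also gives $m_n^{p_n-1}\leq C\lambda_n^{-1/2}$, because the exponential factor in $\Theta_n$ is at least $1$; $\lambda_n\to0$ then follows since $m_n\to\infty$. The bound $M_n\geq c\sqrt{\lambda_n}$ follows from \eqref{core-masses}: $M_n\geq c\,m_n^{1-p_n}\geq c\sqrt{\lambda_n}$.
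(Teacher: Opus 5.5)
Your outline follows the paper's structure: blow-up at $x_{0,n}$, exclusion of a second bubble by point selection against the Green lower bound at $x_{0,n}$, a gradient bound from the Green representation, radial comparison on $B_{\varrho_n}(x_{0,n})$, and integration of the circle-mean identity out to $\delta$. However, the decisive quantitative step is missing.

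\textbf{The passage from $\varrho_n$ to $\delta$.} With the sign corrected, your identity reads $\frac{p_n}{2}m_n^{p_n}-\log\frac{\varrho_n^2}{\nu_n^2}=-\log\Theta_n(m_n)+2\log\frac{1}{\varrho_n}+O(1)$. The radius condition of Lemma \ref{lem-radial} forces $\log\frac{1}{\varrho_n}\geq(\frac{1-\eta_0}{2}+o(1))m_n^{p_n}$. So monotonicity of $\overline u_n$ gives nothing, and $\varrho_n$ is never ``comparable to a constant''. Your second option is the right one, but $q\approx4\pi/(\frac{p_n}{2}m_n^{p_n-1})$ is not enough. The flux is multiplied by $\frac{1}{2\pi}\log\frac{\delta}{\varrho_n}$, which is of order $m_n^{p_n}$, so you need
\[
\frac{p_n}{2}m_n^{p_n-1}\int_{B_{\varrho_n}(x_{0,n})}f_n\,\mathrm{d}x\geq4\pi-\frac{C}{m_n^{p_n}}.
\]
By contrast, \eqref{core-masses} only gives $4\pi-o(1)$, which leaves an unbounded error $o(1)\,m_n^{p_n}$. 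The paper gets the required precision from the radial comparison itself. Evaluating \eqref{radial-mean} at $r=\nu_n$ and $r=\varrho_n$ and using \eqref{derivate} gives
\[
2\log\frac{\varrho_n}{\nu_n}-C\leq\frac{p_n}{2}m_n^{p_n-1}\big(\overline u_n(\nu_n)-\overline u_n(\varrho_n)\big)\leq\frac{p_nm_n^{p_n-1}}{4\pi}\left(\int_{B_{\varrho_n}(x_{0,n})}f_n\,\mathrm{d}x\right)\log\frac{\varrho_n}{\nu_n},
\]
and one then divides by $\log\frac{\varrho_n}{\nu_n}$. This is why $\varrho_n=\nu_n\mathrm{e}^{\sigma m_n^{p_n}}$ is chosen so that $\log\frac{\varrho_n}{\nu_n}$ and $\log\frac{\delta}{\varrho_n}$ are both comparable to $m_n^{p_n}$. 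Only then is the remainder $\frac{C}{m_n^{p_n}}\log\frac{\delta}{\varrho_n}$ bounded.

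\textbf{The gradient step.} You flag this as the main obstacle, and it does not close as set up. With $\eta_0=\frac12$ and $\mathcal U_n=B_{\varrho_n}(x_{0,n})$ you have $\log\frac{1}{\varrho_n}=(\frac14+o(1))m_n^{p_n}$. The Green lower bound $m_n\geq\frac{1}{2\pi}\log\frac{d_0}{r}\int_{B_r(x_{0,n})}f_n\,\mathrm{d}x$ together with $u_n\leq m_n$ gives $\frac{p_n}{2}\int_{B_r(x_{0,n})}u_n^{p_n-1}f_n\,\mathrm{d}x\leq\frac{\pi p_nm_n^{p_n}}{\log(d_0/r)}$. For $r=\varrho_n$ this tends to $8\pi$, which is exactly borderline, so two bubbles of mass $\frac{4\pi R^2}{1+R^2}$ each are not excluded. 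To fix this you need three things:
\begin{enumerate}
\item an exclusion radius $r_n=d_0\mathrm{e}^{-am_n^{p_n}}$ with $a>\frac14$ (the paper takes $a=\frac38$, giving the bound $\frac{16\pi}{3}<8\pi$);
\item the cutoff $d_n=\min\{|z-x_{0,n}|,r_n-|z-x_{0,n}|\}$, so that the selected balls stay inside $B_{r_n}(x_{0,n})$;
\item the check $b_n^{p_n}\geq am_n^{p_n}$ at the selected point, which gives $b_n\to\infty$ and $L_n/b_n^{p_n}\to0$ as Lemma \ref{lem-critical-core} requires.
\end{enumerate}

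Point selection then yields only $|z-x_{0,n}|^2\mathcal F_n(z)\leq C$, not the $(1+|x-x_{0,n}|^2/\nu_n^2)^{-1-\epsilon}$ decay you assert. This weaker bound suffices for the gradient estimate, but only under two further conditions. First, it must be used on a much smaller ball $B_{\varrho_n}$ with $m_n^{p_n-1}\varrho_n/r_n\to0$, which controls the far-field term $CM_n\varrho_n/r_n$. Second, you must first prove $u_n\geq\frac{m_n}{2}$ on $B_{2\varrho_n}(x_{0,n})$ by harmonic comparison below the core. That lower bound is what converts the bound on $\mathcal F_n$ into $f_n\leq Cm_n^{1-p_n}|x-x_{0,n}|^{-2}$.
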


\begin{proof}
Set
\begin{equation*}
\begin{aligned}
\mu_n&=\frac{2\sqrt{2}}{p_n\sqrt{\lambda_n h(x_{0,n})}m_n^{p_n-1}\mathrm{e}^{\frac{m_n^{p_n}}{2}}}, \quad
\frac{1}{4} <a=\frac{3}{8}<\frac{1}{2},\quad d_0=\frac{\delta_*}{4},\quad r_n=d_0\mathrm{e}^{-am_n^{p_n}}.
\end{aligned}
\end{equation*}
By Lemma \ref{lem-interior}, $\overline{B_{2d_0}(x_{0,n})}\subset\Omega$.
Since $x_{0,n}$ is a global maximum of $u_n$,
\begin{equation*}
\nabla u_n(x_{0,n})=0,\quad \frac{p_n}{2}m_n^{p_n-1}(u_n(x_{0,n}+\mu_ny)-m_n)\leq0 \quad\text{for }x_{0,n}+\mu_ny\in\Omega.
\end{equation*}
Lemma \ref{lem-critical-core}-(i) gives $p_n\to2$. In particular, $p_n\in[\frac{3}{2},2]$ for all sufficiently large $n$. Moreover,
\begin{equation} \label{standard-core}
\eta_n(y):=\frac{p_n}{2}m_n^{p_n-1}(u_n(x_{0,n}+\mu_ny)-m_n) \to-\log(1+|y|^2)\quad\text{in }C^2_{\mathrm{loc}}(\mathbb{R}^2),
\end{equation}
and for each fixed $R>0$,
\begin{equation} \label{relation-1}
\frac{p_n}{2}\int_{B_{R\mu_n}(x_{0,n})}u_n^{p_n-1}f_n\mathrm{d}x \to\frac{4\pi R^2}{1+R^2}.
\end{equation}
Also, by the definition of $r_n$ and $\mu_n$,
\begin{equation} \label{r_nmu_n}
\log\frac{r_n}{\mu_n} =\left(\frac{1}{2}-a\right)m_n^{p_n}-\frac{L_n}{2} +(p_n-1)\log m_n+\log\frac{p_n\sqrt{h(x_{0,n})}d_0}{2\sqrt{2}}\to\infty.
\end{equation}
For $0<|z-x_{0,n}|<d_0$, the domain monotonicity of the Green function yields
\begin{equation*}
G(x_{0,n},z)\geq G_{B_{d_0}(x_{0,n})}(x_{0,n},z) =\frac{1}{2\pi}\log\frac{d_0}{|z-x_{0,n}|}.
\end{equation*}
Therefore,
\begin{equation*}
\begin{aligned}
m_n&=\int_\Omega G_\Omega(x_{0,n},x)f_n(x)\mathrm{d}x\geq\frac{1}{2\pi}\log\frac{d_0}{r_n}\int_{B_{r_n}(x_{0,n})}f_n\mathrm{d}x=\frac{am_n^{p_n}}{2\pi}\int_{B_{r_n}(x_{0,n})}f_n\mathrm{d}x.
\end{aligned}
\end{equation*}
Since $u_n\leq m_n$,
\begin{equation} \label{source-reaction}
\begin{aligned}
\int_{B_{r_n}(x_{0,n})}f_n\mathrm{d}x&\leq\frac{2\pi}{am_n^{p_n-1}},\quad
\frac{p_n}{2}\int_{B_{r_n}(x_{0,n})}u_n^{p_n-1}f_n\mathrm{d}x\leq\frac{\pi p_n}{a}\leq\frac{2\pi}{a}=\frac{16\pi}{3}<8\pi.
\end{aligned}
\end{equation}

We first prove
\begin{equation} \label{density-bound}
|z-x_{0,n}|^2\mathcal{F}_n(z)\leq C \quad\text{in }B_{\frac{r_n}{2}}(x_{0,n}).
\end{equation}
Otherwise, after passing to a subsequence,
\begin{equation*}
\sup_{B_{\frac{r_n}{2}}(x_{0,n})}|z-x_{0,n}|^2\mathcal{F}_n(z)\to\infty.
\end{equation*}
Define
\begin{equation*}
d_n(z)=\min\Big\{|z-x_{0,n}|,r_n-|z-x_{0,n}|\Big\},
\end{equation*}
which is $1$-Lipschitz, and choose $x_n$ maximizing $d_n^2\mathcal{F}_n$ in $\overline{B_{r_n}(x_{0,n})}$. Use $\Lambda_n,b_n,\nu_n$ from \eqref{point-scale-1}. Then
\begin{equation*}
\Lambda_n\geq\sup_{B_{\frac{r_n}{2}}(x_{0,n})}|z-x_{0,n}|^2\mathcal{F}_n(z) \to\infty,\quad 0<d_n(x_n),
\end{equation*}
because $d_n=0$ at $x_{0,n}$ and on $\partial B_{r_n}(x_{0,n})$.
By the definition of $\Lambda_n$,
\begin{equation*}
\mathrm{e}^{b_n^{p_n}}=\frac{2\Lambda_n}{p_n^2\lambda_nh(x_n)d_n(x_n)^2b_n^{2(p_n-1)}}.
\end{equation*}
Using \eqref{lambda-m-basic}, $\lambda_n<\frac{\lambda_1}{2\min_{\overline\Omega}h}$, and hence
\begin{equation*}
L_n=\log\frac{1}{\lambda_n}>\log\frac{2\min_{\overline\Omega}h}{\lambda_1}.
\end{equation*}
Using this lower bound, $b_n\leq m_n$, $d_n(x_n)\leq r_n$ and $\log\Lambda_n\geq0$ for sufficiently large $n$, we obtain
\begin{equation*}
\begin{aligned}
b_n^{p_n} &=L_n-2(p_n-1)\log b_n+\log\frac{2\Lambda_n}{p_n^2h(x_n)}-2\log d_n(x_n)\\
&\geq L_n-2\log m_n+\log\Lambda_n-2\log d_0+2am_n^{p_n}-C\\
&\geq2am_n^{p_n}-2\log m_n-C\geq am_n^{p_n},
\end{aligned}
\end{equation*}
for sufficiently large $n$. Thus
\begin{equation*}
\begin{aligned}
&b_n\to\infty,\quad \frac{|L_n|}{b_n^{p_n}}\leq\frac{|L_n|}{am_n^{p_n}}\to0,\quad \operatorname{dist}(x_n,\partial\Omega)\geq\delta_*-r_n\geq\frac{\delta_*}{2}.
\end{aligned}
\end{equation*}
Since $d_n(x_n)\leq r_n$ and $\Lambda_n\to\infty$,
\begin{equation*}
\nu_n=\frac{2d_n(x_n)}{\sqrt{\Lambda_n}}\leq\frac{2r_n}{\sqrt{\Lambda_n}}\to0,\quad
\frac{R\nu_n}{d_n(x_n)}=\frac{2R}{\sqrt{\Lambda_n}}\to0,
\end{equation*}
for every fixed $R>0$. Since
\begin{equation*}
d_n(x_n)\leq r_n-|x_n-x_{0,n}|,
\end{equation*}
we have $\overline{B_{R\nu_n}(x_n)}\subset B_{r_n}(x_{0,n})$ for sufficiently large $n$. Estimate \eqref{selection-upper} verifies condition (ii) of Lemma \ref{lem-critical-core}. Therefore,
\begin{equation} \label{relation-2}
\frac{p_n}{2}\int_{B_{R\nu_n}(x_n)}u_n^{p_n-1}f_n\mathrm{d}x \to\frac{4\pi R^2}{1+R^2}.
\end{equation}

Since $d_n(x_n)\leq|x_n-x_{0,n}|$ and $b_n\leq m_n$, the definition of $\mu_n$ yields
\begin{equation*}
\begin{aligned}
\Lambda_n&\leq|x_n-x_{0,n}|^2\frac{p_n^2\lambda_nh(x_n)}{2}m_n^{2(p_n-1)}\mathrm{e}^{m_n^{p_n}}=4\frac{h(x_n)}{h(x_{0,n})}\frac{|x_n-x_{0,n}|^2}{\mu_n^2}\leq C\frac{|x_n-x_{0,n}|^2}{\mu_n^2}.
\end{aligned}
\end{equation*}
Since $\Lambda_n\to\infty$, we conclude that
\begin{equation*}
\begin{aligned}
\frac{|x_n-x_{0,n}|}{\mu_n}\geq\sqrt{\frac{\Lambda_n}{C}}\to\infty,\quad
\frac{\nu_n}{|x_n-x_{0,n}|}\leq\frac{\nu_n}{d_n(x_n)}=\frac{2}{\sqrt{\Lambda_n}}\to0.
\end{aligned}
\end{equation*}
Consequently, for each fixed $R>0$ and sufficiently large $n$,
\begin{equation*}
B_{R\mu_n}(x_{0,n})\cup B_{R\nu_n}(x_n)\subset B_{r_n}(x_{0,n}),\quad B_{R\mu_n}(x_{0,n})\cap B_{R\nu_n}(x_n)=\emptyset.
\end{equation*}
Fix $R>\sqrt2$. Equations \eqref{relation-1}, \eqref{source-reaction}  and \eqref{relation-2} yield
\begin{equation*}
\begin{aligned}
\frac{2\pi}{a}&\geq\liminf_{n\to\infty}\frac{p_n}{2}\int_{B_{r_n}(x_{0,n})}u_n^{p_n-1}f_n\mathrm{d}x\\
&\geq\lim_{n\to\infty}\frac{p_n}{2}\left(\int_{B_{R\mu_n}(x_{0,n})}u_n^{p_n-1}f_n\mathrm{d}x+\int_{B_{R\nu_n}(x_n)}u_n^{p_n-1}f_n\mathrm{d}x\right)\\
&=\frac{4\pi R^2}{1+R^2}+\frac{4\pi R^2}{1+R^2}=\frac{8\pi R^2}{1+R^2}>\frac{2\pi}{a},
\end{aligned}
\end{equation*}
which is a contradiction. This proves \eqref{density-bound}.

\vskip 0.1cm

Set $R_n=\frac{r_n}{\mu_n}$. Using \eqref{r_nmu_n}, we have $\frac{\log R_n}{m_n^{p_n}}\to\frac{1}{8}$. By \eqref{standard-core},
\begin{equation*}
u_n\geq m_n-\frac{C}{m_n^{p_n-1}}\quad\text{on }\partial B_{\mu_n}(x_{0,n}).
\end{equation*}
Since
\begin{equation*}
-\Delta_yu_n(x_{0,n}+\mu_ny)=\mu_n^2f_n(x_{0,n}+\mu_ny)\geq0,\quad u_n>0,
\end{equation*}
the comparison principle implies
\begin{equation} \label{harmonic-lower}
u_n(x_{0,n}+\mu_ny)\geq \left(m_n-\frac{C}{m_n^{p_n-1}}\right)\frac{\log\frac{R_n}{|y|}}{\log R_n} \quad\text{for }1\leq|y|\leq R_n.
\end{equation}
Choose $\sigma=\frac{1}{128}$ and $\varrho_n=\mu_n\mathrm{e}^{\sigma m_n^{p_n}}$.
Since
\begin{equation*} 
\log\frac{\varrho_n}{r_n} =\sigma m_n^{p_n}-\log R_n =\left(\sigma-\frac{1}{8}+o(1)\right)m_n^{p_n} =-\left(\frac{15}{128}+o(1)\right)m_n^{p_n},
\end{equation*}
we have
\begin{equation} \label{varrho_nr_n}
\frac{\varrho_n}{\mu_n}\to\infty,\quad \frac{\varrho_n}{r_n}\to0,\quad m_n^{p_n-1}\frac{\varrho_n}{r_n}\to0.
\end{equation}
For $1\leq|y|\leq\frac{2\varrho_n}{\mu_n}$,
\begin{equation*}
\frac{\log\frac{R_n}{|y|}}{\log R_n} \geq1-\frac{\sigma m_n^{p_n}+\log2}{\log R_n} =\frac{15}{16}+o(1).
\end{equation*}
Combining \eqref{harmonic-lower} with \eqref{standard-core}, we obtain
\begin{equation} \label{half-height}
u_n\geq\frac{m_n}{2}\quad\text{in }B_{2\varrho_n}(x_{0,n}).
\end{equation}

We next prove
\begin{equation} \label{global-max-gradient}
|x-x_{0,n}||\nabla u_n(x)|\leq\frac{C}{m_n^{p_n-1}} \quad\text{in }B_{\varrho_n}(x_{0,n}).
\end{equation}
Differentiating the Green representation formula of $u_n$ implies
\begin{equation*}
\nabla u_n(x)=\int_\Omega\nabla_xG(x,y)f_n(y)\mathrm{d}y.
\end{equation*}
At $x=x_{0,n}$, the left-hand side of \eqref{global-max-gradient} is zero. Fix $x\in B_{\varrho_n}(x_{0,n})\setminus\{x_{0,n}\}$ and set $d=|x-x_{0,n}|>0$. For $y\in B_{\frac{d}{2}}(x)$,
\begin{equation*}
\frac{d}{2}\leq|y-x_{0,n}|\leq\frac{3d}{2}\leq2\varrho_n<\frac{r_n}{2}.
\end{equation*}
From \eqref{density-bound} and \eqref{half-height},
\begin{equation*}
f_n(y)=\frac{2\mathcal{F}_n(y)}{p_nu_n(y)^{p_n-1}}\leq\frac{C}{m_n^{p_n-1}d^2}.
\end{equation*}
By \eqref{6} and \eqref{source-reaction},
\begin{equation*}
\begin{aligned}
\int_{B_{\frac{d}{2}}(x)}|\nabla_xG(x,y)|f_n(y)\mathrm{d}y&\leq\frac{C}{m_n^{p_n-1}d^2}\int_{B_{\frac{d}{2}}(x)}\frac{\mathrm{d}y}{|x-y|}\leq\frac{C}{m_n^{p_n-1}d},
\end{aligned}
\end{equation*}
and
\begin{equation*}
\int_{B_{r_n}(x_{0,n})\setminus B_{\frac{d}{2}}(x)} |\nabla_xG(x,y)|f_n(y)\mathrm{d}y \leq\frac{C}{d}\int_{B_{r_n}(x_{0,n})}f_n\mathrm{d}y \leq\frac{C}{m_n^{p_n-1}d}.
\end{equation*}
On the other hand, for $x\in B_{\varrho_n}(x_{0,n})$ and $y\in\Omega\setminus B_{r_n}(x_{0,n})$, we have
\begin{equation*}
|x-y|\geq r_n-\varrho_n\geq\frac{r_n}{2}.
\end{equation*}
Using \eqref{6} and the bound $M_n\leq C$ in Proposition \ref{pro:bound}, we obtain
\begin{equation*}
\int_{\Omega\setminus B_{r_n}(x_{0,n})}|\nabla_xG(x,y)|f_n(y)\mathrm{d}y
\leq\frac{CM_n}{r_n}\leq\frac{C}{r_n}.
\end{equation*}
Combining these three estimates and $m_n^{p_n-1}\frac{\varrho_n}{r_n}\to0$ in \eqref{varrho_nr_n}, we obtain
\begin{equation*}
\begin{aligned}
d|\nabla u_n(x)|\leq\frac{C}{m_n^{p_n-1}}+\frac{Cd}{r_n}\leq\frac{C}{m_n^{p_n-1}}+\frac{C\varrho_n}{r_n}\leq\frac{C}{m_n^{p_n-1}}.
\end{aligned}
\end{equation*}
This proves \eqref{global-max-gradient}.

\vskip 0.1cm

Set $t_n(s)=\log(1+\frac{s^2}{\mu_n^2})$. Since $\varrho_n=\mu_n\mathrm{e}^{\sigma m_n^{p_n}}$ and $p_n\to2$,
\begin{equation*}
\frac{t_n(\varrho_n)}{\frac{p_n}{2}m_n^{p_n}}
=\frac{\log(1+\mathrm{e}^{2\sigma m_n^{p_n}})}{\frac{p_n}{2}m_n^{p_n}}\to2\sigma<\frac{1}{2}.
\end{equation*}
Thus the radius condition of Lemma \ref{lem-radial} holds with $\eta_0=\frac{1}{2}$. Since $u_n\leq m_n$, Lemma \ref{lem-radial}, together with \eqref{standard-core} and \eqref{global-max-gradient}, gives
\begin{equation} \label{max-radial}
\left|\frac{p_n}{2}m_n^{p_n-1}\overline u_n(s)-\left(\frac{p_n}{2}m_n^{p_n}-t_n(s)\right)\right|\leq C \quad\text{for }0\leq s\leq\varrho_n
\end{equation}
for sufficiently large $n$. In particular,
\begin{equation} \label{exp-scale-height}
\frac{p_n}{2}m_n^{p_n-1}\overline u_n(\varrho_n) \leq\frac{p_n}{2}m_n^{p_n}-2\log\frac{\varrho_n}{\mu_n}+C.
\end{equation}
Differentiating the circle mean with respect to $s$, we obtain
\begin{equation*}
\begin{aligned}
\overline u_n'(s)=\frac{1}{2\pi s}\int_{\partial B_s(x_{0,n})}\partial_\nu u_n\mathrm{d}s.
\end{aligned}
\end{equation*}
The divergence theorem and $-\Delta u_n=f_n$ then yield
\begin{equation} \label{derivate}
-2\pi s\overline u_n'(s)=\int_{B_s(x_{0,n})}f_n\mathrm{d}x
\end{equation}
for $0<s<\operatorname{dist}(x_{0,n},\partial\Omega)$. Hence
\begin{equation*}
\begin{aligned}
\frac{p_n}{2}m_n^{p_n-1}\left(\overline u_n(\mu_n)-\overline u_n(\varrho_n)\right)&=\frac{p_nm_n^{p_n-1}}{4\pi}\int_{\mu_n}^{\varrho_n}\frac{1}{s}
\left(\int_{B_s(x_{0,n})}f_n\mathrm{d}x\right)\mathrm{d}s\\
&\leq\frac{p_nm_n^{p_n-1}}{4\pi}\left(\int_{B_{\varrho_n}(x_{0,n})}f_n\mathrm{d}x\right)\log\frac{\varrho_n}{\mu_n}.
\end{aligned}
\end{equation*}
On the other hand, since
\begin{equation*}
t_n(\mu_n)=\log2,\quad t_n(\varrho_n)=2\sigma m_n^{p_n}+\log(1+\mathrm{e}^{-2\sigma m_n^{p_n}}),
\end{equation*}
then estimate \eqref{max-radial} implies
\begin{equation*}
\begin{aligned}
\frac{p_n}{2}m_n^{p_n-1}(\overline u_n(\mu_n)-\overline u_n(\varrho_n)) &\geq t_n(\varrho_n)-t_n(\mu_n)-C
\geq2\sigma m_n^{p_n}-C.
\end{aligned}
\end{equation*}
Combining these inequalities and using $\log\frac{\varrho_n}{\mu_n}=\sigma m_n^{p_n}>0$, we obtain
\begin{equation} \label{exp-scale-flux}
\frac{p_n}{2}m_n^{p_n-1}\int_{B_{\varrho_n}(x_{0,n})}f_n\mathrm{d}x \geq4\pi-\frac{C}{m_n^{p_n}}.
\end{equation}

Fix $\delta>0$ such that $B_{2\delta}(x_{0,n})\subset\subset\Omega$ for all sufficiently large $n$. Using \eqref{r_nmu_n} and the definition of $\varrho_n$,
\begin{equation*}
\begin{aligned}
\frac{1}{m_n^{p_n}}\log\frac{\delta}{\varrho_n}&=\frac{1}{2}-\sigma-\frac{L_n}{2m_n^{p_n}}+\frac{(p_n-1)\log m_n+\log\frac{p_n\sqrt{h(x_{0,n})}\delta}{2\sqrt{2}}}{m_n^{p_n}}\to\frac{1}{2}-\sigma\in(0,1).
\end{aligned}
\end{equation*}
Hence $0<\log\frac{\delta}{\varrho_n}\leq m_n^{p_n}$ for sufficiently large $n$.
Integrating from $\varrho_n$ to $\delta$ in \eqref{derivate}, we obtain
\begin{equation*}
\begin{aligned}
\frac{p_n}{2}m_n^{p_n-1}\overline u_n(\delta) &=\frac{p_n}{2}m_n^{p_n-1}\overline u_n(\varrho_n) -\frac{p_nm_n^{p_n-1}}{4\pi}\int_{\varrho_n}^{\delta}\frac{1}{s} \left(\int_{B_s(x_{0,n})}f_n\mathrm{d}x\right)\mathrm{d}s\\
&\leq \frac{p_n}{2}m_n^{p_n-1}\overline u_n(\varrho_n) -\frac{p_nm_n^{p_n-1}}{4\pi}\left(\int_{B_{\varrho_n}(x_{0,n})}f_n\mathrm{d}x\right) \log\frac{\delta}{\varrho_n}.
\end{aligned}
\end{equation*}
Inserting \eqref{exp-scale-height} and \eqref{exp-scale-flux} into this inequality, we obtain
\begin{equation*}
\begin{aligned}
\frac{p_n}{2}m_n^{p_n-1}\overline u_n(\delta) &\leq\frac{p_n}{2}m_n^{p_n}-2\log\frac{\varrho_n}{\mu_n} -\left(2-\frac{C}{m_n^{p_n}}\right)\log\frac{\delta}{\varrho_n}+C\\
&=\frac{p_n}{2}m_n^{p_n}-2\log\frac{\delta}{\mu_n} +\frac{C}{m_n^{p_n}}\log\frac{\delta}{\varrho_n}+C\\
&\leq\frac{p_n}{2}m_n^{p_n}-2\log\frac{\delta}{\mu_n}+C,
\end{aligned}
\end{equation*}
and
\begin{equation*}
\begin{aligned}
\frac{p_n}{2}m_n^{p_n}-2\log\frac{\delta}{\mu_n}
&=\frac{p_n}{2}m_n^{p_n}-2\log\delta+2\log\frac{2\sqrt{2}}{p_n\sqrt{h(x_{0,n})}}\\
&\quad+L_n-2(p_n-1)\log m_n-m_n^{p_n}\\
&=L_n-\frac{2-p_n}{2}m_n^{p_n}-2(p_n-1)\log m_n+2\log\frac{2\sqrt{2}}{p_n\delta\sqrt{h(x_{0,n})}}\\
&=-\log\Theta_n(m_n)+2\log\frac{2\sqrt{2}}{p_n\delta\sqrt{h(x_{0,n})}}.
\end{aligned}
\end{equation*}
This proves \eqref{fixed-radius-mean}.

\vskip 0.1cm

Taking $\delta=d_0$ in \eqref{fixed-radius-mean} and using $\overline u_n(d_0)>0$, we obtain
\begin{equation*}
\lambda_nm_n^{2(p_n-1)}\leq\Theta_n(m_n)\leq C.
\end{equation*}
Since $p_n\to2$ and $m_n\to\infty$, it follows that
\begin{equation*}
0<\lambda_n\leq\frac{C}{m_n^{2(p_n-1)}}\to0,\quad m_n^{p_n-1}\leq\frac{C}{\sqrt{\lambda_n}}.
\end{equation*}
Moreover, \eqref{core-masses} with $R=1$ yields
\begin{equation*}
M_n\geq\int_{B_{\mu_n}(x_{0,n})}f_n\mathrm{d}x\geq\frac{c}{m_n^{p_n-1}}\geq c\sqrt{\lambda_n},
\end{equation*}
which proves \eqref{uniform-max-bound}.
\end{proof}

\begin{remark} \label{rem-fixed-radius}
Fixing $\delta$ allows the term $2\log\frac{1}{\delta}$ to be absorbed into a constant independent of $n$. With this choice, \eqref{fixed-radius-mean} yields $\Theta_n(m_n)\leq C$ directly from the positivity of $\overline u_n(\delta)$.
\end{remark}

\begin{lemma} \label{lem-unbounded-sequence}
Let $u_n$ be positive solutions of \eqref{1} with $p_n\in[1,2]$ and $\beta_n\to\infty$. After passing to a subsequence,
\begin{equation} \label{bad-sequence}
p_n\to2,\quad \lambda_n\to0,\quad m_n\to\infty,\quad \frac{L_n}{m_n^{p_n}}\to0,\quad M_nm_n^{p_n-1}\to\infty.
\end{equation}
Moreover, \eqref{uniform-max-bound} holds for all sufficiently large $n$, and
\begin{equation} \label{low-error-small}
M_n\to0,\quad 0\leq\lambda_nM_n^{p_n-2}\leq C\lambda_n^{\frac{p_n}{2}}\to0.
\end{equation}
\end{lemma}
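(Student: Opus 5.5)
The plan is to extract, in order, $M_nm_n^{p_n-1}\to\infty$, then $m_n\to\infty$, then $L_n/m_n^{p_n}\to0$. Once these hold, Proposition \ref{prop-uniform-max} (whose hypotheses are exactly $m_n\to\infty$ and $L_n/m_n^{p_n}\to0$) gives $p_n\to2$, $\lambda_n\to0$ and \eqref{uniform-max-bound}. Proposition \ref{pro:bound} then gives \eqref{low-error-small}.

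\textbf{Step 1: $M_nm_n^{p_n-1}\to\infty$ and $m_n\to\infty$.} By \eqref{beta-source}, $\beta_n^{p_n/2}\leq CM_nm_n^{p_n-1}$ with $C$ independent of $p_n$. Since $p_n\geq1$ and $\beta_n\to\infty$, we get $\beta_n^{p_n/2}\geq\min\{\beta_n^{1/2},\beta_n\}\to\infty$, hence $M_nm_n^{p_n-1}\to\infty$. Because $M_n\leq C$ (Proposition \ref{pro:bound}), this forces $m_n^{p_n-1}\to\infty$, so $m_n\to\infty$. It also forces $p_n>1$ along the sequence, with $(p_n-1)\log m_n\to\infty$.

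\textbf{Step 2: $L_n/m_n^{p_n}\to0$.} By \eqref{lambda-m-basic}, $L_n$ is bounded below, so only an upper bound is needed. I will use the Pohozaev bound \eqref{2} together with the low-level splitting from the proof of \eqref{3}, but applied in the opposite direction: it should show that if $\lambda_n$ is too small relative to $e^{m_n^{p_n}}$, then $M_n$ is too small. Concretely, with $T=m_n$ we have $\int hF(u_n)\leq C\lambda_ne^{m_n^{p_n}}$ and $M_n^2\leq C\lambda_ne^{m_n^{p_n}}$. Alternatively I will use the Green representation at the maximum, $m_n=\int G(x_{0,n},\cdot)f_n$, together with $f_n\leq C\lambda_nm_n^{p_n-1}e^{m_n^{p_n}}$ and \eqref{7}. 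Splitting into $B_r(x_{0,n})$ and its complement gives
\begin{equation*}
m_n\leq C\lambda_nm_n^{p_n-1}e^{m_n^{p_n}}r^2\log\frac{C}{r}+\frac{M_n}{2\pi}\log\frac{d_\Omega}{r}.
\end{equation*}
Suppose $L_n\geq\varepsilon m_n^{p_n}$ along a subsequence. Choosing $\log\frac1r=\frac{(1-\varepsilon/2)m_n^{p_n}}{2}$ makes the first term $o(1)$, and then $m_n\leq CM_nm_n^{p_n}$, i.e.\ $M_nm_n^{p_n-1}\geq c$, which is no contradiction by itself. I would therefore refine the argument. Proposition \ref{pro:bound} gives $M_n\leq CL_n^{-(p_n-1)/p_n}\leq Cm_n^{-(p_n-1)}\varepsilon^{-(p_n-1)/p_n}$, i.e.\ $M_nm_n^{p_n-1}\leq C_\varepsilon$, contradicting Step 1. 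This is the cleanest route: \eqref{3} applies since $\lambda_n\to0$ whenever $L_n\geq\varepsilon m_n^{p_n}\to\infty$. So either $L_n/m_n^{p_n}\to0$, or a subsequence has $M_nm_n^{p_n-1}$ bounded, which is impossible. The care needed here is making the constants in \eqref{3} uniform in $p$, which the proposition already asserts.

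\textbf{Step 3: conclusion.} Apply Proposition \ref{prop-uniform-max}. Its $\delta$-condition is satisfied by Lemma \ref{lem-interior} with $\delta=\delta_*/4$. This yields $p_n\to2$, $\lambda_n\to0$ and \eqref{uniform-max-bound}, giving \eqref{bad-sequence}. For \eqref{low-error-small}: $\lambda_n\to0$ gives $L_n\to\infty$, so \eqref{3} gives $M_n\leq CL_n^{-(p_n-1)/p_n}\to0$, since $p_n\to2$. From \eqref{uniform-max-bound}, $M_n\geq c\sqrt{\lambda_n}$, and as $p_n-2\leq0$,
\begin{equation*}
\lambda_nM_n^{p_n-2}\leq\lambda_n(c\sqrt{\lambda_n})^{p_n-2}\leq C\lambda_n^{p_n/2}\to0,
\end{equation*}
using that $c^{p_n-2}$ stays bounded for $p_n\in[1,2]$. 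The main obstacle is Step 2; the rest is bookkeeping.
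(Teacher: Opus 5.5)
Your proof is correct and follows essentially the paper's argument: \eqref{beta-source} with $M_n\leq C$ gives $M_nm_n^{p_n-1}\to\infty$, the bound \eqref{3} rules out $L_n\geq\varepsilon m_n^{p_n}$, and Proposition \ref{prop-uniform-max} together with \eqref{3} and $M_n\geq c\sqrt{\lambda_n}$ finishes the proof. The paper rules out $L_n\geq\varepsilon m_n^{p_n}$ through a case split on $\lim\lambda_n=\lambda_*>0$ or $\lambda_*=0$ rather than by contradiction, and you should delete the abandoned attempts at the start of Step 2 (the choice $T=m_n$ and the Green splitting), since only the route through \eqref{3} is needed.
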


\begin{proof}
By Proposition \ref{pro:bound} and \eqref{beta-source}, we have
\begin{equation*}
M_n\leq C,\quad M_nm_n^{p_n-1}\geq c\beta_n^{\frac{p_n}{2}}\geq c\sqrt{\beta_n}\to\infty.
\end{equation*}
Thus $m_n\to\infty$. By \eqref{lambda-m-basic}, after passing to a subsequence, we may assume that $\lambda_n\to\lambda_*\geq0$.

\vskip 0.1cm

If $\lambda_*>0$, then $L_n$ is bounded and
\begin{equation*}
\frac{|L_n|}{m_n^{p_n}}\leq\frac{C}{m_n^{p_n}}\to0.
\end{equation*}
If $\lambda_*=0$, then $L_n\to\infty$. Since $0\leq\frac{p_n-1}{p_n}\leq\frac{1}{2}$, estimate \eqref{3} implies
\begin{equation*}
M_nm_n^{p_n-1}\leq C\left(\frac{m_n^{p_n}}{L_n}\right)^{\frac{p_n-1}{p_n}}\leq C\max\left\{1,\left(\frac{m_n^{p_n}}{L_n}\right)^{\frac{1}{2}}\right\}.
\end{equation*}
Together with $M_nm_n^{p_n-1}\to\infty$, this yields
\begin{equation*}
\frac{m_n^{p_n}}{L_n}\to\infty,\quad \frac{L_n}{m_n^{p_n}}\to0.
\end{equation*}

In both cases, Proposition \ref{prop-uniform-max} implies $p_n\to2$, $\lambda_n\to0$ and \eqref{uniform-max-bound}. Therefore, we have $p_n\in[\frac{3}{2},2]$ and $L_n\geq1$ for sufficiently large $n$. Applying \eqref{3}, we obtain
\begin{equation*}
0<M_n\leq CL_n^{-\frac{p_n-1}{p_n}}\leq CL_n^{-\frac{1}{3}}\to0.
\end{equation*}
Using $M_n\geq c\sqrt{\lambda_n}$ from \eqref{uniform-max-bound} and $p_n-2\leq0$, we also obtain
\begin{equation*}
0\leq\lambda_nM_n^{p_n-2}\leq C\lambda_n\lambda_n^{\frac{p_n-2}{2}}=C\lambda_n^{\frac{p_n}{2}}\to0.
\end{equation*}
This proves \eqref{bad-sequence} and \eqref{low-error-small}.
\end{proof}

\section{A maximal component and a stronger source bound} \label{sec-maximal}

In this section, we improve the bound for $M_n$ by combining estimates on a component of maximal source mass with \eqref{8}. Let $u_n$ be a sequence of positive solutions of \eqref{1} with $\beta_n\to\infty$. By Lemma \ref{lem-unbounded-sequence}, after passing to a subsequence, we may assume that \eqref{bad-sequence} and \eqref{low-error-small} hold and that $p_n\in[\frac{3}{2},2]$. Set
\begin{equation*}
\ell_n=\frac{p_n}{p_n-1},\quad 2\leq\ell_n\leq3,\quad \ell_n\to2.
\end{equation*}
We prove that $M_n^{\ell_n}L_n\to0$.

\subsection{Source outside a maximal component}\

\vskip 0.1cm

In this subsection, we select a component $D_n$ of a high superlevel set with maximal source mass $q_n$ and prove that $q_n\geq cM_n$. We then use a truncated radial vector field to bound the source in the other high components with a distance weight and derive an integral estimate on $D_n$.

\vskip 0.1cm

Fix $0<\kappa_0<2\pi$ and set
\begin{equation*}
t_{0,n}=\left(\frac{\kappa_0}{M_n}\right)^{\frac{1}{p_n-1}},\quad f_{1,n}=f_n\mathbf{1}_{\{u_n>t_{0,n}\}},\quad f_{0,n}=f_n\mathbf{1}_{\{u_n\leq t_{0,n}\}}.
\end{equation*}
For each connected component $D_{j,n}$ of $\{u_n>t_{0,n}\}$, denote
\begin{equation*}
q_{j,n}=\int_{D_{j,n}}f_n\mathrm{d}x,\quad P_{j,n}=h(F_n(u_n)-F_n(t_{0,n}))\mathbf{1}_{D_{j,n}}.
\end{equation*}
For all sufficiently large $n$, Proposition \ref{prop-localization} yields $D_{j,n}\subset\mathcal{K}\subset\subset\Omega$ and
\begin{equation*}
\operatorname{diam}D_{j,n}\leq\delta_n:=C_0\mathrm{e}^{-\frac{2\pi\kappa_0^{\frac{1}{p_n-1}}}{M_n^{\ell_n}}}.
\end{equation*}
Since $h$ is positive and of class $C^1$ in $\overline\Omega$, for sufficiently large $n$,
\begin{equation} \label{component-weight}
\frac{\sup_{D_{j,n}}h}{\inf_{D_{j,n}}h}=1+\frac{\sup_{D_{j,n}}h-\inf_{D_{j,n}}h}{\inf_{D_{j,n}}h}\leq1+C\operatorname{diam}D_{j,n}\leq1+C\delta_n.
\end{equation}

\begin{proposition} \label{prop-fixed-heavy}
There exists a constant $c_*>0$, depending only on $\Omega$, $\kappa_0$, $\|h\|_{C^2(\overline\Omega)}$ and $(\min_{\overline\Omega}h)^{-1}$, such that
\begin{equation*}
\max_jq_{j,n}\geq c_*M_n
\end{equation*}
for sufficiently large $n$.
\end{proposition}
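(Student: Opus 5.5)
The plan is to combine the Pohozaev bound \eqref{2} with the decomposition of $\int_\Omega hF_n(u_n)\mathrm{d}x$ into a low part, controlled by Lemma \ref{lem-low-source}, and a high part, controlled componentwise by Proposition \ref{prop-pressure}. The key identity is pointwise. On $\{u_n\le t_{0,n}\}$ we have $hF_n(u_n)=F_{0,n}$, where $F_{0,n}=hF_n(\min\{u_n,t_{0,n}\})$ is the function of Lemma \ref{lem-low-source} with $\kappa=\kappa_0$. On each component $D_{j,n}$ we have
\begin{equation*}
hF_n(u_n)=h\big(F_n(u_n)-F_n(t_{0,n})\big)+hF_n(t_{0,n})=P_{j,n}+F_{0,n}.
\end{equation*}
Integrating over $\Omega$ therefore gives
\begin{equation*}
\int_\Omega hF_n(u_n)\mathrm{d}x=\int_\Omega F_{0,n}\mathrm{d}x+\sum_j\int_\Omega P_{j,n}\mathrm{d}x .
\end{equation*}
No separation of the components is needed, since the sum is over disjoint sets.

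Next I bound each term. Since $p_n\in[\frac32,2]$ and $\kappa_0<2\pi<4\pi$, estimate \eqref{11} gives $\int_\Omega F_{0,n}\mathrm{d}x\le C\lambda_nM_n^{p_n}$. For the high part, Proposition \ref{prop-pressure} together with \eqref{component-weight} gives
\begin{equation*}
\sum_j\int_\Omega P_{j,n}\mathrm{d}x\le\frac{1+C\delta_n}{8\pi}\sum_jq_{j,n}^2\le\frac{1+C\delta_n}{8\pi}\Big(\max_jq_{j,n}\Big)\sum_jq_{j,n}\le\frac{1+C\delta_n}{8\pi}\Big(\max_jq_{j,n}\Big)M_n .
\end{equation*}
One could even use the cruder factor $\max h/\min h$ in place of $1+C\delta_n$.

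Inserting both bounds into \eqref{2} yields
\begin{equation*}
M_n^2\le C\lambda_nM_n^{p_n}+CM_n\max_jq_{j,n}.
\end{equation*}
The step I expect to need care is absorbing the first term, and this is exactly where \eqref{low-error-small} is used. Since $\lambda_nM_n^{p_n}=(\lambda_nM_n^{p_n-2})M_n^2$ and $\lambda_nM_n^{p_n-2}\to0$, we get $C\lambda_nM_n^{p_n}\le\frac12M_n^2$ for large $n$. Dividing by $M_n>0$ then gives $\max_jq_{j,n}\ge c_*M_n$.

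The constant $c_*$ depends only on the constant in \eqref{2}, which depends on $\Omega$ and $\|\nabla\log h\|_{L^\infty}$, and on the ratio $\max h/\min h$. The constant from \eqref{11} enters only through the threshold on $n$, and it depends on $\kappa_0$ and $\|h\|_{L^\infty}$. All of these are controlled by $\Omega$, $\kappa_0$, $\|h\|_{C^2(\overline\Omega)}$ and $(\min_{\overline\Omega}h)^{-1}$, as the statement requires.
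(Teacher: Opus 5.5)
Your proposal is correct and follows the paper's proof essentially line by line. You use the same pointwise splitting $hF_n(u_n)=hF_n(\min\{u_n,t_{0,n}\})+\sum_jP_{j,n}$, insert \eqref{11} and Proposition \ref{prop-pressure} into \eqref{2}, and absorb $C\lambda_nM_n^{p_n}$ via \eqref{low-error-small}; the only difference is that you bound $\sum_jq_{j,n}^2\le(\max_jq_{j,n})M_n$ before absorbing, whereas the paper absorbs first and then divides by $\sum_jq_{j,n}$. You should also add, as the paper does, one line noting that the supremum over possibly infinitely many components is attained, because $\sum_jq_{j,n}<\infty$ and the supremum is positive.
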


\begin{proof}
On $\{u_n\leq t_{0,n}\}$, $\min\{u_n,t_{0,n}\}=u_n$ and $P_{j,n}=0$ for every $j$. On $D_{j,n}$, $\min\{u_n,t_{0,n}\}=t_{0,n}$ and
\begin{equation*}
hF_n(u_n)=hF_n(t_{0,n})+P_{j,n}.
\end{equation*}
Since the components $D_{j,n}$ are disjoint, we obtain
\begin{equation*}
hF_n(u_n)=hF_n\big(\min\{u_n,t_{0,n}\}\big)+\sum_jP_{j,n}.
\end{equation*}
Moreover,
\begin{equation} \label{sum_jq_{j,n}}
\sum_jq_{j,n}=\sum_j\int_{D_{j,n}}f_n\mathrm{d}x=\int_\Omega f_{1,n}\mathrm{d}x\leq M_n.
\end{equation}
Combining \eqref{2}, \eqref{11} and Proposition \ref{prop-pressure}, we obtain
\begin{equation*}
\begin{aligned}
M_n^2&\leq C\int_\Omega hF_n(u_n)\mathrm{d}x\\
&=C\int_\Omega hF_n\big(\min\{u_n,t_{0,n}\}\big)\mathrm{d}x +C\sum_j\int_{D_{j,n}}P_{j,n}\mathrm{d}x\\
&\leq C\lambda_nM_n^{p_n}+C\sum_jq_{j,n}^2.
\end{aligned}
\end{equation*}
By \eqref{low-error-small}, $C\lambda_nM_n^{p_n-2}\leq\frac{1}{2}$ for sufficiently large $n$. Then
\begin{equation*}
\sum_jq_{j,n}^2\geq\frac{M_n^2}{2C}>0.
\end{equation*}
Since $\displaystyle\sum_jq_{j,n}>0$, \eqref{sum_jq_{j,n}} yields
\begin{equation*}
\sup_jq_{j,n}\geq\frac{\displaystyle\sum_jq_{j,n}^2}{\displaystyle\sum_jq_{j,n}}\geq cM_n.
\end{equation*}
Since $\displaystyle\sum_jq_{j,n}<\infty$ and $\sup_jq_{j,n}>0$, the supremum is attained.
\end{proof}

By Proposition \ref{prop-fixed-heavy}, choose a connected component $D_n$ of $\{u_n>t_{0,n}\}$ with maximal source mass, and fix $a_n\in D_n$. Set
\begin{equation*}
q_n=\int_{D_n}f_n\mathrm{d}x,\quad \alpha_n=\frac{q_n}{M_n}.
\end{equation*}
Then $c_*\leq\alpha_n\leq1$, $q_{j,n}\leq q_n$ for every $j$, and $D_n\subset B_{\delta_n}(a_n)$. Define
\begin{equation*}
\rho_n=\frac{f_{1,n}}{M_n},\quad \omega_n=\frac{1}{M_n^2}\sum_jP_{j,n}.
\end{equation*}
Both functions are nonnegative and supported in $\mathcal{K}$, with
\begin{equation} \label{int-rho}
\int_{\mathcal{K}}\rho_n\mathrm{d}x\leq1,\quad \int_{D_n}\rho_n\mathrm{d}x=\alpha_n.
\end{equation}
Moreover, since the components are disjoint, $\omega_n=\frac{P_{j,n}}{M_n^2}$ on $D_{j,n}$. By Proposition \ref{prop-pressure} and \eqref{component-weight},
\begin{equation} \label{int-rho-1}
\begin{aligned}
\int_{D_{j,n}}\omega_n\mathrm{d}x&=\frac{1}{M_n^2}\int_{D_{j,n}}h(F_n(u_n)-F_n(t_{0,n}))\mathrm{d}x \leq\frac{\sup_{D_{j,n}}h}{\inf_{D_{j,n}}h}\frac{q_{j,n}^2}{8\pi M_n^2}
\leq(1+C\delta_n)\frac{q_{j,n}^2}{8\pi M_n^2}.
\end{aligned}
\end{equation}

For a Lipschitz vector field $Y\colon \overline\Omega\to\mathbb{R}^2$, write
\begin{equation*}
S_Y(x,y)=\frac{(Y(x)-Y(y))\cdot(x-y)}{|x-y|^2}\quad\text{for }x\neq y,\quad S_Y(x,x)=0.
\end{equation*}

\begin{lemma}
For all sufficiently large $n$ and every Lipschitz vector field $Y\colon \overline\Omega\to\mathbb{R}^2$,
\begin{equation} \label{0831-2}
\frac{1}{4\pi}\int_{\mathcal{K}}\int_{\mathcal{K}}S_Y(x,y)\rho_n(x)\rho_n(y)\mathrm{d}x\mathrm{d}y =\int_{\mathcal{K}}\operatorname{div}Y\omega_n\mathrm{d}x+\mathcal{R}_n(Y),
\end{equation}
where
\begin{equation} \label{0831-3}
|\mathcal{R}_n(Y)|\leq C\|Y\|_{L^\infty(\mathcal{K})}.
\end{equation}
The constant $C>0$ depends only on $\Omega$, $\kappa_0$, $\|h\|_{C^2(\overline\Omega)}$ and $(\min_{\overline\Omega}h)^{-1}$.
\end{lemma}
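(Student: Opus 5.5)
The identity is a Pohozaev-type relation in which the left side is the symmetrized interaction of the high-source density $\rho_n$ through $\nabla G$. The plan is to sum the pressure identity \eqref{pressure-vector} over all components $D_{j,n}$ and then substitute the Green representation of $\nabla u_n$.

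\textbf{Step 1: sum \eqref{pressure-vector} over components.} For each $D_{j,n}$, \eqref{pressure-vector} gives
\begin{equation*}
\int_{D_{j,n}}f_nY\cdot\nabla u_n\mathrm{d}x=-\int_\Omega P_{j,n}\operatorname{div}Y\mathrm{d}x-\int_\Omega P_{j,n}Y\cdot\nabla\log h\mathrm{d}x .
\end{equation*}
Summing over $j$ and dividing by $M_n^2$ yields
\begin{equation*}
\frac{1}{M_n}\int_{\mathcal{K}}\rho_nY\cdot\nabla u_n\mathrm{d}x=-\int_{\mathcal{K}}\omega_n\operatorname{div}Y\mathrm{d}x-\int_{\mathcal{K}}\omega_nY\cdot\nabla\log h\mathrm{d}x .
\end{equation*}
By \eqref{int-rho-1} and $\sum_jq_{j,n}^2\le M_n^2$, we have $\int\omega_n\le C$. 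Hence the last term is bounded by $C\|Y\|_{L^\infty(\mathcal K)}$ and goes into $\mathcal{R}_n(Y)$.

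\textbf{Step 2: split $\nabla u_n$.} Write $\nabla u_n=\nabla Gf_{1,n}+\nabla v_{0,n}$, where $v_{0,n}=Gf_{0,n}$ and $\kappa_0<2\pi$. By \eqref{12} and \eqref{low-error-small}, $\|\nabla v_{0,n}\|_\infty\le C\lambda_nM_n^{p_n-1}$. The $v_{0,n}$ contribution is therefore at most
\begin{equation*}
\|Y\|_\infty\, C\lambda_nM_n^{p_n-2}\le C\|Y\|_\infty .
\end{equation*}
The main term is
\begin{equation*}
\int\int Y(x)\cdot\nabla_xG(x,y)\rho_n(x)\rho_n(y)\mathrm{d}x\mathrm{d}y .
\end{equation*}
Split $G=\frac{1}{2\pi}\log\frac1{|x-y|}+H$. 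The singular part contributes $-\frac{1}{2\pi}\int\int\frac{Y(x)\cdot(x-y)}{|x-y|^2}\rho_n\rho_n$. Symmetrizing in $(x,y)$ gives $-\frac{1}{4\pi}\int\int S_Y\rho_n\rho_n$, which after moving sides produces exactly \eqref{0831-2}.

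\textbf{Step 3: the regular part (main obstacle).} It remains to show
\begin{equation*}
\Big|\int\int Y(x)\cdot\nabla_xH(x,y)\rho_n(x)\rho_n(y)\Big|\le C\|Y\|_{L^\infty(\mathcal K)} .
\end{equation*}
This is where the localization $\rho_n$, $\omega_n$ supported in $\mathcal{K}$ from Proposition \ref{prop-localization} is needed. For $x,y\in\mathcal K$ with $\operatorname{dist}(\mathcal K,\partial\Omega)\ge\delta_*/2$, the function $H(\cdot,y)$ is harmonic, and its boundary values $-\frac1{2\pi}\log\frac1{|x-y|}$ are smooth uniformly in $y\in\mathcal K$. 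Hence $|\nabla_xH|\le C(\Omega,\delta_*)$ on $\mathcal K\times\mathcal K$. Together with $\int\rho_n\le1$, this gives the required bound.

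One point needs care: the interchange of integrals in the singular term. It is justified because $\rho_n\in L^\infty$ and $|x-y|^{-1}$ is locally integrable. The symmetrization is legitimate because $S_Y$ is bounded by $\operatorname{Lip}(Y)$. Note, however, that the bound in \eqref{0831-3} must involve only $\|Y\|_\infty$, so the Lipschitz constant may appear only in the $S_Y$ term, which is kept on the left. Collecting the $\log h$, $v_{0,n}$ and $H$ pieces into $\mathcal R_n(Y)$ gives \eqref{0831-3}.
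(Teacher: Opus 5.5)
Your proposal is correct and follows the paper's proof essentially step by step. You sum \eqref{pressure-vector} over the components, split off $v_{0,n}=Gf_{0,n}$ via \eqref{12}, symmetrize the logarithmic kernel to produce $S_Y$, and bound the $H$, $v_{0,n}$ and $\nabla\log h$ terms using $\int\rho_n\le1$, $\int\omega_n\le C$ and $\lambda_nM_n^{p_n-2}\to0$. The differences are cosmetic. The paper symmetrizes the full kernel $G$ before splitting off $H$, which gives the same $\mathcal{R}_n(Y)$ because $H$ is symmetric. The paper also explicitly checks absolute summability of the possibly countably many component identities, and you should state this when you sum over $j$.
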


\begin{proof}
Denote $v_{0,n}=Gf_{0,n}$. Since the components $D_{j,n}$ are disjoint,
\begin{equation*}
\sum_j\int_{D_{j,n}}|f_nY\cdot\nabla u_n|\mathrm{d}x \leq\|Y\|_{L^\infty(\mathcal{K})}\|\nabla u_n\|_{L^\infty(\Omega)}M_n,
\end{equation*}
and
\begin{equation*}
\begin{aligned}
&\sum_j\int_{D_{j,n}}P_{j,n}\left|\operatorname{div}Y+Y\cdot\nabla\log h\right|\mathrm{d}x\\
&\leq\left(\|\operatorname{div}Y\|_{L^\infty(\mathcal{K})}+\|Y\|_{L^\infty(\mathcal{K})}\|\nabla\log h\|_{L^\infty(\Omega)}\right)\sum_j\int_{D_{j,n}}P_{j,n}\mathrm{d}x\\
&\leq\left(\|\operatorname{div}Y\|_{L^\infty(\mathcal{K})}+C\|Y\|_{L^\infty(\mathcal{K})}\right)\int_\Omega hF_n(u_n)\mathrm{d}x<\infty.
\end{aligned}
\end{equation*}
Summing \eqref{pressure-vector} over $D_{j,n}$, we obtain
\begin{equation*}
\int_{\mathcal{K}}f_{1,n}Y\cdot\nabla u_n\mathrm{d}x=-M_n^2\int_{\mathcal{K}}(\operatorname{div}Y+Y\cdot\nabla\log h)\omega_n\mathrm{d}x.
\end{equation*}
Since $f_{1,n}=M_n\rho_n$ and $u_n=M_nG\rho_n+v_{0,n}$,
\begin{equation} \label{nabla(Grho_n)-1}
\int_{\mathcal{K}}Y\cdot\nabla(G\rho_n)\rho_n\mathrm{d}x +\frac{1}{M_n}\int_{\mathcal{K}}Y\cdot\nabla v_{0,n}\rho_n\mathrm{d}x =-\int_{\mathcal{K}}(\operatorname{div}Y+Y\cdot\nabla\log h)\omega_n\mathrm{d}x.
\end{equation}

For each fixed $n$, the continuity of $u_n$ and $M_n>0$ imply $\rho_n=\frac{f_{1,n}}{M_n}\in L^\infty(\Omega)$. Estimates \eqref{6} and \eqref{int-rho} yield
\begin{equation*}
\begin{aligned}
\int_{\mathcal{K}}\int_{\mathcal{K}}|\nabla_xG(x,y)|\rho_n(x)\rho_n(y)\mathrm{d}x\mathrm{d}y &\leq C\|\rho_n\|_{L^\infty(\Omega)} \left(\sup_{y\in\Omega}\int_\Omega\frac{\mathrm{d}x}{|x-y|}\right) \int_{\mathcal{K}}\rho_n\mathrm{d}y \leq C\|\rho_n\|_{L^\infty(\Omega)}<\infty.
\end{aligned}
\end{equation*}
By Fubini's theorem and the symmetry of $G$, we have
\begin{equation*}
\begin{aligned}
\int_{\mathcal{K}}Y\cdot\nabla(G\rho_n)\rho_n\mathrm{d}x &=\int_{\mathcal{K}}\int_{\mathcal{K}}Y(x)\cdot\nabla_xG(x,y)\rho_n(x)\rho_n(y)\mathrm{d}x\mathrm{d}y\\
&=\frac{1}{2}\int_{\mathcal{K}}\int_{\mathcal{K}}\left[Y(x)\cdot\nabla_xG(x,y)+Y(y)\cdot\nabla_yG(x,y)\right]\rho_n(x)\rho_n(y)\mathrm{d}x\mathrm{d}y.
\end{aligned}
\end{equation*}
Since
\begin{equation*}
G(x,y)=\frac{1}{2\pi}\log\frac{1}{|x-y|}+H(x,y),
\end{equation*}
direct differentiation gives
\begin{equation*}
\nabla_xG(x,y)=-\frac{x-y}{2\pi|x-y|^2}+\nabla_xH(x,y),\quad \nabla_yG(x,y)=\frac{x-y}{2\pi|x-y|^2}+\nabla_yH(x,y).
\end{equation*}
Consequently,
\begin{equation*}
\begin{aligned}
&Y(x)\cdot\nabla_xG(x,y)+Y(y)\cdot\nabla_yG(x,y)\\
&=-\frac{(Y(x)-Y(y))\cdot(x-y)}{2\pi|x-y|^2}+Y(x)\cdot\nabla_xH(x,y)+Y(y)\cdot\nabla_yH(x,y)\\
&=-\frac{1}{2\pi}S_Y(x,y)+Y(x)\cdot\nabla_xH(x,y)+Y(y)\cdot\nabla_yH(x,y).
\end{aligned}
\end{equation*}
It follows that
\begin{equation} \label{nabla(Grho_n)-2}
\begin{aligned}
\int_{\mathcal{K}}Y\cdot\nabla(G\rho_n)\rho_n\mathrm{d}x =&-\frac{1}{4\pi}\int_{\mathcal{K}}\int_{\mathcal{K}}S_Y(x,y)\rho_n(x)\rho_n(y)\mathrm{d}x\mathrm{d}y\\
&+\frac{1}{2}\int_{\mathcal{K}}\int_{\mathcal{K}}\left[Y(x)\cdot\nabla_xH(x,y)+Y(y)\cdot\nabla_yH(x,y)\right]\rho_n(x)\rho_n(y)\mathrm{d}x\mathrm{d}y.
\end{aligned}
\end{equation}
Substituting \eqref{nabla(Grho_n)-2} into \eqref{nabla(Grho_n)-1} and rearranging, we obtain
\begin{equation*}
\begin{aligned}
&\frac{1}{4\pi}\int_{\mathcal{K}}\int_{\mathcal{K}}S_Y(x,y)\rho_n(x)\rho_n(y)\mathrm{d}x\mathrm{d}y-\int_{\mathcal{K}}\operatorname{div}Y\omega_n\mathrm{d}x\\
&=\frac{1}{2}\int_{\mathcal{K}}\int_{\mathcal{K}}\left[Y(x)\cdot\nabla_xH(x,y)+Y(y)\cdot\nabla_yH(x,y)\right]\rho_n(x)\rho_n(y)\mathrm{d}x\mathrm{d}y\\
&\quad+\frac{1}{M_n}\int_{\mathcal{K}}Y\cdot\nabla v_{0,n}\rho_n\mathrm{d}x+\int_{\mathcal{K}}\omega_nY\cdot\nabla\log h\mathrm{d}x\\
&=:\mathcal{R}_n(Y).
\end{aligned}
\end{equation*}
This proves \eqref{0831-2}.

\vskip 0.1cm

Moreover, by \eqref{sum_jq_{j,n}} and \eqref{int-rho-1}, we have
\begin{equation*}
\int_{\mathcal{K}}\omega_n\mathrm{d}x\leq\frac{1+C\delta_n}{8\pi M_n^2}\sum_jq_{j,n}^2\leq C.
\end{equation*}
Therefore, the smoothness of $H$ on $\mathcal{K}\times\mathcal{K}$, \eqref{12} and \eqref{int-rho} give
\begin{equation*}
\begin{aligned}
|\mathcal{R}_n(Y)| &\leq C\|Y\|_{L^\infty(\mathcal{K})} \left(\int_{\mathcal{K}}\rho_n\mathrm{d}x\right)^2 +\frac{\|Y\|_{L^\infty(\mathcal{K})}}{M_n} \|\nabla v_{0,n}\|_{L^\infty(\Omega)}\int_{\mathcal{K}}\rho_n\mathrm{d}x +C\|Y\|_{L^\infty(\mathcal{K})}\int_{\mathcal{K}}\omega_n\mathrm{d}x\\
&\leq C(1+\lambda_nM_n^{p_n-2})\|Y\|_{L^\infty(\mathcal{K})} \leq C\|Y\|_{L^\infty(\mathcal{K})},
\end{aligned}
\end{equation*}
where the last inequality follows from \eqref{low-error-small}.
\end{proof}

We have the following source isolation estimate.

\begin{proposition} \label{prop-max-iso}
For all sufficiently large $n$ and every $r\geq32\delta_n$,
\begin{equation} \label{0831-4}
\int_{\mathcal{K}\setminus D_n}\frac{\rho_n(x)}{\max\{r,|x-a_n|\}}\mathrm{d}x\leq C
\end{equation}
and
\begin{equation} \label{0831-5}
-C\delta_nq_n^2\leq\frac{q_n^2}{8\pi}-\int_{D_n}P_{D_n}\mathrm{d}x\leq CM_n^2r.
\end{equation}
The constant $C>0$ depends only on $\Omega$, $\kappa_0$, $\|h\|_{C^2(\overline\Omega)}$ and $(\min_{\overline\Omega}h)^{-1}$, and is independent of $n$, $\lambda_n$, $D_n$, $a_n$ and $r$.
\end{proposition}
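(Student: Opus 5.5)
We plan to apply identity \eqref{0831-2} with a truncated radial vector field centered at $a_n$,
\begin{equation*}
Y(x)=\frac{x-a_n}{\max\{r,|x-a_n|\}},
\end{equation*}
which is Lipschitz, satisfies $\|Y\|_{L^\infty}\leq1$, and has $\operatorname{div}Y=\frac{2}{r}$ in $B_r(a_n)$ and $\operatorname{div}Y=\frac{1}{|x-a_n|}$ outside. For this field we first compute $S_Y$ explicitly. If $x,y\in B_r(a_n)$, then $S_Y(x,y)=\frac1r$. In general $S_Y\geq0$, because $Y=\nabla\psi$ with $\psi$ convex radial; here $\psi(x)=\frac{|x-a_n|^2}{2r}$ inside and $|x-a_n|-\frac r2$ outside. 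Moreover, if $|y-a_n|\leq r\leq|x-a_n|/2$, a direct computation gives $S_Y(x,y)\geq\frac{c}{|x-a_n|}$. Since $D_n\subset B_{\delta_n}(a_n)\subset B_{r/32}(a_n)$, the left side of \eqref{0831-2} is bounded below by
\begin{equation*}
\frac{1}{4\pi r}\alpha_n^2+\frac{c\,\alpha_n}{4\pi}\int_{\mathcal K\setminus D_n}\frac{\rho_n(x)}{\max\{r,|x-a_n|\}}\mathrm{d}x
\end{equation*}
minus controllable terms. Those terms come from pairs where $x$ lies near $B_r(a_n)$ but outside $D_n$. There we use $S_Y\geq0$ and split the region $B_{2r}\setminus B_r$, where the weight is comparable to $\frac1r$.

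On the right side we use \eqref{int-rho-1} and $\operatorname{div}Y\leq\frac{2}{\max\{r,|x-a_n|\}}$, which give
\begin{equation*}
\int\operatorname{div}Y\,\omega_n\leq\frac{2}{r}\int_{D_n}\omega_n+\sum_{j\neq D_n}\frac{2(1+C\delta_n)q_{j,n}^2}{8\pi M_n^2\,\max\{r,\operatorname{dist}(a_n,D_{j,n})\}}.
\end{equation*}
The key point is maximality. Since $q_{j,n}\leq q_n$, we have $q_{j,n}^2\leq q_nq_{j,n}$, so the sum over $j\neq D_n$ is at most $\frac{(1+C\delta_n)\alpha_n}{4\pi}\int_{\mathcal K\setminus D_n}\frac{\rho_n}{\max\{r,|x-a_n|\}}$. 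This is of the same form as the good term on the left, up to the comparison between $|x-a_n|$ and the distance to the component, which the diameter bound $\delta_n\ll r$ handles. For the $D_n$ term, $\frac{2}{r}\int_{D_n}\omega_n\leq\frac{2(1+C\delta_n)\alpha_n^2}{8\pi r}=\frac{(1+C\delta_n)\alpha_n^2}{4\pi r}$, which matches the leading left term $\frac{\alpha_n^2}{4\pi r}$ exactly. Everything therefore hinges on the constants.

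\textbf{Main obstacle.} The far term on the left has coefficient $\frac{\alpha_n}{4\pi}\cdot 2$ in the precise computation. We get this by evaluating $S_Y(x,y)$ for $y\in D_n$ and $|x-a_n|\gg r$: it behaves like $\frac{1}{|x-a_n|}\bigl(1+\frac{(x-a_n)\cdot\ldots}{\ldots}\bigr)$, averaging to a factor $\frac{2}{|x-a_n|}$ once both orderings $(x,y)$ and $(y,x)$ are counted. This must strictly beat the coefficient $\frac{2\alpha_n}{8\pi}\cdot 2=\frac{\alpha_n}{2\pi}$ from the right side by a fixed margin. I expect the margin to come from the pair counting in the double integral versus the single count in $\omega_n$. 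If the margin is exactly zero, we instead use $q_{j,n}^2\leq q_n q_{j,n}$ only for components with $q_{j,n}\geq\frac12 q_n$, and $q_{j,n}^2\leq\frac12 q_nq_{j,n}$ otherwise. The former are few, at most $2/c_*$ of them, and can be handled separately. Once the lower bound on the distance-weighted integral is secured with a positive coefficient, the remainder $|\mathcal R_n(Y)|\leq C$ from \eqref{0831-3} yields \eqref{0831-4}.

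Finally, \eqref{0831-5} comes from the same identity, rearranged. Subtracting gives
\begin{equation*}
\frac{1}{r}\Bigl(\frac{q_n^2}{4\pi M_n^2}-\frac{2}{M_n^2}\int_{D_n}P_{D_n}\Bigr)\leq C+C\int_{\mathcal K\setminus D_n}\frac{\rho_n}{\max\{r,|x-a_n|\}}\leq C,
\end{equation*}
and multiplying by $rM_n^2/2$ gives the upper bound. The lower bound $-C\delta_nq_n^2$ is just Proposition \ref{prop-pressure} combined with \eqref{component-weight}.
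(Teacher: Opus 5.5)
Your architecture matches the paper's: the same truncated field $Y$, the identity \eqref{0831-2}, the maximality trick $q_{j,n}^2\leq q_nq_{j,n}$ on the $\omega_n$ side, and the same derivation of \eqref{0831-5} from the resulting inequality. However, you do not establish the step you yourself flag as decisive, namely the comparison of the coefficients of $J_{r,n}=\int_{\mathcal K\setminus D_n}\rho_nk_{r,n}$, where $k_{r,n}=1/\max\{r,|x-a_n|\}$, and as written the argument does not close.

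First, your count on the $\omega_n$ side in the ``Main obstacle'' paragraph is off by a factor $2$. From $\operatorname{div}Y\leq2k_{r,n}$, $\int_{D_{j,n}}\omega_n\leq(1+C\delta_n)\frac{q_{j,n}^2}{8\pi M_n^2}$, $q_{j,n}\leq\alpha_nM_n$ and $\sup_{D_{j,n}}k_{r,n}\leq(1+\varepsilon_n)\inf_{D_{j,n}}k_{r,n}$, with $\varepsilon_n=\delta_n/r\leq\frac1{32}$, you get the coefficient $\frac{(1+\varepsilon_n)(1+C\delta_n)\alpha_n}{4\pi}$. That is what you found in your second paragraph, not $\frac{\alpha_n}{2\pi}$. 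So the margin is not zero. It is a factor two, coming from the two orderings of the cross pairs, as you guessed. Your fallback would not have rescued a zero margin anyway: a single component with $q_{j,n}\geq q_n/2$ at distance of order $r$ from $a_n$ contributes of order $\alpha_n^2/r$ to both sides, and nothing else bounds it.

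Second, realizing that factor two requires the sharp bound $S_Y(x,y)\geq(1-4\varepsilon_n)k_{r,n}(x)$ for all $y\in D_n$ and all $x\in\mathcal K\setminus D_n$. A bound with an unspecified constant $c$ is useless unless $c>\frac12$ with a fixed margin.

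You also discard the annulus $r<|x-a_n|<2r$ via $S_Y\geq0$, and this cannot work. The $\omega_n$ side still contains $\frac{\alpha_n}{4\pi}\int_{B_{2r}(a_n)\setminus B_r(a_n)}\rho_nk_{r,n}$, which is of size (mass in the annulus)$/r$. That is exactly the kind of quantity \eqref{0831-4} is meant to control.

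Your claim $S_Y\geq c/|x-a_n|$ for all $|y-a_n|\leq r$ is also false. For $y\in\partial B_r(a_n)$ and $x$ on the same ray, $Y(x)=Y(y)$, so $S_Y(x,y)=0$. It holds only for $y$ within a small fraction of $r$ from $a_n$.

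The missing computation is elementary. For $y\in D_n$, write $X=y-a_n$, so $|X|\leq\varepsilon_nr$, and let $t=|x-a_n|>r$. Then
\[
(Y(x)-Y(y))\cdot(x-y)=t+\frac{|X|^2}{r}-\Big(\frac1r+\frac1t\Big)X\cdot(x-a_n)\geq t(1-2\varepsilon_n),\qquad|x-y|\leq(1+\varepsilon_n)t,
\]
so $S_Y\geq(1-4\varepsilon_n)/t$, while $S_Y=1/r$ when $t\leq r$. Counting both orderings, the left side of \eqref{0831-2} is at least $\frac{\alpha_n^2}{4\pi r}+\frac{\alpha_n}{2\pi}(1-4\varepsilon_n)J_{r,n}$. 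Subtracting your $\omega_n$ bound gives
\[
\frac{2}{r}\Big(\frac{\alpha_n^2}{8\pi}-\int_{D_n}\omega_n\Big)+\frac{\alpha_n}{4\pi}(1-9\varepsilon_n-C\delta_n)J_{r,n}\leq C .
\]
Here the first term is at least $-C\delta_n/r\geq-C$, so the $D_n$ terms match only up to this harmless error, not exactly. The coefficient of $J_{r,n}$ is at least $c_*/(8\pi)$. This gives \eqref{0831-4}, and with it your derivation of \eqref{0831-5} goes through as you describe.
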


\begin{proof}
Set
\begin{equation*}
Y_{r,n}(x)=\frac{x-a_n}{\max\{r,|x-a_n|\}},\quad k_{r,n}(x)=\frac{1}{\max\{r,|x-a_n|\}}.
\end{equation*}
For $|x-a_n|\neq r$,
\begin{equation*}
DY_{r,n}(x)=\begin{cases}
\frac{1}{r}I_2, & |x-a_n|<r,\\
\frac{1}{|x-a_n|}I_2-\frac{(x-a_n)\otimes(x-a_n)}{|x-a_n|^3}, & |x-a_n|>r.
\end{cases}
\end{equation*}
Also, we have
\begin{equation*}
\operatorname{div}Y_{r,n}(x)=\begin{cases}
\frac{2}{r}, & |x-a_n|<r,\\
\frac{1}{|x-a_n|}, & |x-a_n|>r.
\end{cases}
\end{equation*}
For $|x-a_n|>r$ and $v\in\mathbb{R}^2$, the Cauchy-Schwarz inequality yields
\begin{equation*}
\begin{aligned}
0\leq v\cdot DY_{r,n}(x)v&=\frac{|v|^2}{|x-a_n|}-\frac{((x-a_n)\cdot v)^2}{|x-a_n|^3}\leq\frac{|v|^2}{r},
\end{aligned}
\end{equation*}and
\begin{equation*}
\begin{aligned}
|DY_{r,n}(x)v|^2&=\frac{|v|^2}{|x-a_n|^2}-\frac{((x-a_n)\cdot v)^2}{|x-a_n|^4}\leq\frac{|v|^2}{r^2}.
\end{aligned}
\end{equation*}
Both bounds also hold in $B_r(a_n)$. Using
\begin{equation*}
Y_{r,n}(x)-Y_{r,n}(y)=\int_0^1DY_{r,n}(y+t(x-y))(x-y)\mathrm{d}t,
\end{equation*}
we have
\begin{equation*}
0\leq(Y_{r,n}(x)-Y_{r,n}(y))\cdot(x-y)\leq\frac{|x-y|^2}{r}, \quad |Y_{r,n}(x)-Y_{r,n}(y)|\leq\frac{|x-y|}{r}.
\end{equation*}
Therefore, $Y_{r,n}$ is Lipschitz and satisfies
\begin{equation*}
\begin{aligned}
0\leq S_{Y_{r,n}}(x,y)\leq\frac{1}{r},\quad \|Y_{r,n}\|_{L^\infty(\mathbb{R}^2)} \leq1,\quad
0\leq\operatorname{div}Y_{r,n}\leq2k_{r,n},
\end{aligned}
\end{equation*}almost everywhere in $\mathbb{R}^2$.
Since $|x-a_n|\leq\delta_n<r$ for $x\in D_n$,
\begin{equation} \label{div}
\operatorname{div}Y_{r,n}=\frac{2}{r}\quad\text{in }D_n,\quad S_{Y_{r,n}}(x,y)=\frac{1}{r}\quad\text{for }x,y\in D_n,\ x\neq y.
\end{equation}

Let $\varepsilon_n=\frac{\delta_n}{r}\leq\frac{1}{32}$. Fix $x\in D_n$ and $y\in\mathcal{K}\setminus D_n$. If $|y-a_n|\leq r$, then
\begin{equation*}
S_{Y_{r,n}}(x,y)=\frac{1}{r}=k_{r,n}(y).
\end{equation*}
If $t=|y-a_n|>r$, then $\delta_n=\varepsilon_nr\leq\varepsilon_nt$. Using $|x-a_n|\leq\delta_n$ and the Cauchy-Schwarz inequality, we obtain
\begin{equation*}
\begin{aligned}
(Y_{r,n}(y)-Y_{r,n}(x))\cdot(y-x)&=t+\frac{|x-a_n|^2}{r}-\left(\frac{1}{r}+\frac{1}{t}\right)(x-a_n)\cdot(y-a_n)\\
&\geq t-\left(\frac{1}{r}+\frac{1}{t}\right)\delta_nt\geq t(1-2\varepsilon_n).
\end{aligned}
\end{equation*}
Moreover,
\begin{equation*}
|x-y|\leq|x-a_n|+|y-a_n|\leq\delta_n+t\leq t(1+\varepsilon_n).
\end{equation*}
Therefore,
\begin{equation*}
S_{Y_{r,n}}(x,y)\geq\frac{1-2\varepsilon_n}{t(1+\varepsilon_n)^2}\geq\frac{1-4\varepsilon_n}{t}=(1-4\varepsilon_n)k_{r,n}(y).
\end{equation*}
Set
\begin{equation*}
J_{r,n}=\int_{\mathcal{K}\setminus D_n}k_{r,n}(x)\rho_n(x)\mathrm{d}x.
\end{equation*}
Since $S_{Y_{r,n}}$ is symmetric and nonnegative, \eqref{int-rho} and \eqref{div} yield
\begin{equation} \label{0831-6}
\begin{aligned}
&\frac{1}{4\pi}\int_{\mathcal{K}}\int_{\mathcal{K}}S_{Y_{r,n}}(x,y)\rho_n(x)\rho_n(y)\mathrm{d}y\mathrm{d}x\\
&\geq\frac{1}{4\pi}\int_{D_n}\int_{D_n} S_{Y_{r,n}}(x,y)\rho_n(x)\rho_n(y)\mathrm{d}y\mathrm{d}x+\frac{1}{2\pi}\int_{D_n}\int_{\mathcal{K}\setminus D_n}S_{Y_{r,n}}(x,y)\rho_n(x)\rho_n(y)\mathrm{d}y\mathrm{d}x \\
&=\frac{1}{4\pi r}\left(\int_{D_n}\rho_n\mathrm{d}x\right)^2+\frac{1}{2\pi}\int_{D_n}\int_{\mathcal{K}\setminus D_n}S_{Y_{r,n}}(x,y)\rho_n(x)\rho_n(y)\mathrm{d}y\mathrm{d}x\\
&\geq\frac{1}{4\pi r}\left(\int_{D_n}\rho_n\mathrm{d}x\right)^2+\frac{1-4\varepsilon_n}{2\pi}\left(\int_{D_n}\rho_n\mathrm{d}x\right)\left(\int_{\mathcal{K}\setminus D_n}k_{r,n}\rho_n\mathrm{d}x\right)\\
&=\frac{\alpha_n^2}{4\pi r}+\frac{\alpha_n}{2\pi}(1-4\varepsilon_n)J_{r,n}.
\end{aligned}
\end{equation}

On the other hand, for $x,y\in D_{j,n}$,
\begin{equation*}
\max\{r,|x-a_n|\}\leq\max\{r,|y-a_n|\}+\delta_n\leq(1+\varepsilon_n)\max\big\{r,|y-a_n|\big\}.
\end{equation*}
Since $\rho_n=\frac{f_n}{M_n}$ in $D_{j,n}$, we obtain
\begin{equation*}
\begin{aligned}
\sup_{D_{j,n}}k_{r,n}&\leq(1+\varepsilon_n)\inf_{D_{j,n}}k_{r,n},\quad
\int_{D_{j,n}}k_{r,n}\rho_n\mathrm{d}x \geq\left(\inf_{D_{j,n}}k_{r,n}\right)\int_{D_{j,n}}\rho_n\mathrm{d}x=\frac{q_{j,n}}{M_n}\inf_{D_{j,n}}k_{r,n}.
\end{aligned}
\end{equation*}
For $D_{j,n}\neq D_n$, Proposition \ref{prop-pressure}, \eqref{component-weight} and $q_{j,n}\leq q_n=\alpha_nM_n$ imply
\begin{equation*}
\begin{aligned}
\int_{D_{j,n}}\operatorname{div}Y_{r,n}\omega_n\mathrm{d}x&\leq2\sup_{D_{j,n}}k_{r,n}\int_{D_{j,n}}\omega_n\mathrm{d}x
\leq\frac{(1+C\delta_n)q_{j,n}^2}{4\pi M_n^2}\sup_{D_{j,n}}k_{r,n}\\
&\leq\frac{(1+\varepsilon_n)(1+C\delta_n)q_{j,n}^2}{4\pi M_n^2}\inf_{D_{j,n}}k_{r,n}\\
&\leq\frac{(1+\varepsilon_n)(1+C\delta_n)q_{j,n}}{4\pi M_n}\int_{D_{j,n}}k_{r,n}\rho_n\mathrm{d}x\\
&\leq\frac{(1+\varepsilon_n)(1+C\delta_n)\alpha_n}{4\pi}\int_{D_{j,n}}k_{r,n}\rho_n\mathrm{d}x.
\end{aligned}
\end{equation*}
Summing over $D_{j,n}\neq D_n$ and using \eqref{div}, we obtain
\begin{equation*}
\int_{\mathcal{K}}\operatorname{div}Y_{r,n}\omega_n\mathrm{d}x
\leq\frac{2}{r}\int_{D_n}\omega_n\mathrm{d}x+\frac{(1+\varepsilon_n)(1+C\delta_n)\alpha_n}{4\pi}J_{r,n}.
\end{equation*}

Combining \eqref{0831-2}-\eqref{0831-3} with \eqref{0831-6}, we obtain
\begin{equation} \label{max-iso}
\frac{2}{r}\left(\frac{\alpha_n^2}{8\pi}-\int_{D_n}\omega_n\mathrm{d}x\right)+\frac{\alpha_n}{4\pi}(1-9\varepsilon_n-C\delta_n)J_{r,n}\leq C.
\end{equation}
By \eqref{int-rho-1} and $q_n=\alpha_nM_n$,
\begin{equation*}
\begin{aligned}
\int_{D_n}\omega_n\mathrm{d}x\leq\frac{\sup_{D_n}h}{\inf_{D_n}h}\frac{q_n^2}{8\pi M_n^2}
\leq(1+C\delta_n)\frac{\alpha_n^2}{8\pi}.
\end{aligned}
\end{equation*}
Since $\alpha_n\leq1$, it follows that
\begin{equation*}
\frac{\alpha_n^2}{8\pi}-\int_{D_n}\omega_n\mathrm{d}x\geq-C\delta_n\alpha_n^2\geq-C\delta_n.
\end{equation*}
Since $\varepsilon_n\leq\frac{1}{32}$ and $\delta_n\to0$, for sufficiently large $n$,
\begin{equation*}
\frac{\alpha_n}{4\pi}(1-9\varepsilon_n-C\delta_n)\geq\frac{\alpha_n}{8\pi}\geq\frac{c_*}{8\pi}.
\end{equation*}
Substituting these bounds into \eqref{max-iso} and using $r\geq32\delta_n$, we obtain
\begin{equation*}
\frac{c_*}{8\pi}J_{r,n}\leq C+\frac{C\delta_n}{r}\leq C.
\end{equation*}
This proves \eqref{0831-4}.

\vskip 0.1cm

Finally, dropping the nonnegative term in \eqref{max-iso}, we obtain
\begin{equation*}
\frac{2}{r}\left(\frac{\alpha_n^2}{8\pi}-\int_{D_n}\omega_n\mathrm{d}x\right)\leq C.
\end{equation*}
Since $q_n=\alpha_nM_n$ and $M_n^2\omega_n=P_{D_n}$ in $D_n$,
\begin{equation*}
\frac{q_n^2}{8\pi}-\int_{D_n}P_{D_n}\mathrm{d}x=M_n^2\left(\frac{\alpha_n^2}{8\pi}-\int_{D_n}\omega_n\mathrm{d}x\right)\leq CM_n^2r.
\end{equation*}
On the other hand, Proposition \ref{prop-pressure} and \eqref{component-weight} imply
\begin{equation*}
\begin{aligned}
\frac{q_n^2}{8\pi}-\int_{D_n}P_{D_n}\mathrm{d}x&\geq\left(1-\frac{\sup_{D_n}h}{\inf_{D_n}h}\right)\frac{q_n^2}{8\pi}
\geq-C\delta_nq_n^2.
\end{aligned}
\end{equation*}
This proves \eqref{0831-5}.
\end{proof}

\begin{remark}
The diameter estimate does not control the distances between distinct components. The truncated field $Y_{r,n}$ yields the error bound $CM_n^2r$ in \eqref{0831-5} without any separation assumption on the components.
\end{remark}

\subsection{A stronger source bound}\

\vskip 0.1cm

In this subsection, we prove that $M_n^{\ell_n}L_n\to0$. We first derive an integral estimate on $D_n$. Combining this estimate with the bound for the source outside $D_n$ and \eqref{8}, we rule out any subsequence along which $M_n^{\ell_n}L_n$ stays bounded away from zero.

\vskip 0.1cm

Set
\begin{equation*}
A_{D_n}=2\int_{D_n}hF_n(u_n)\mathrm{d}x,\quad J_{D_n}=\frac{2\lambda_n}{q_n^2}\int_{D_n}h\mathrm{e}^{u_n^{p_n}}\mathrm{d}x.
\end{equation*}
By the definition of $F_n$, we have
\begin{equation*}
J_{D_n}-\frac{A_{D_n}}{q_n^2}=\frac{2\lambda_n}{q_n^2}\int_{D_n}h\mathrm{d}x.
\end{equation*}

We have the following integral estimates.

\begin{lemma} \label{lem-max-inverse}
There exists a constant $C>0$, depending only on $\Omega$, $\kappa_0$, $\|h\|_{C^2(\overline\Omega)}$ and $(\min_{\overline\Omega}h)^{-1}$, such that, for all sufficiently large $n$,
\begin{equation} \label{0831-7}
\frac{1}{4\pi}-C\delta_n\leq\frac{A_{D_n}}{q_n^2}\leq\frac{1}{4\pi}+C\delta_n+C\lambda_nM_n^{p_n-2}
\end{equation}
and
\begin{equation} \label{0831-8}
\left|J_{D_n}-\frac{1}{4\pi}\right| \leq C\left(\delta_n+\lambda_nM_n^{p_n-2}\right).
\end{equation}
\end{lemma}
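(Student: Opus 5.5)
The plan is to express $A_{D_n}$ through the pressure $P_{D_n}$ and the truncated primitive $F_0$. Then \eqref{0831-5} and Proposition \ref{prop-pressure} give the two-sided bound \eqref{0831-7}, and \eqref{0831-8} follows once the constant term $2\lambda_n\int_{D_n}h\,\mathrm{d}x/q_n^2$ is shown to be small. Throughout we use $q_n=\alpha_nM_n$ with $c_*\leq\alpha_n\leq1$ (Proposition \ref{prop-fixed-heavy}), so $q_n^2$ and $M_n^2$ are comparable.

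\textbf{Decomposition.} On $D_n$ we have $hF_n(u_n)=P_{D_n}+hF_n(t_{0,n})$, and $hF_n(t_{0,n})=F_0=hF_n(\min\{u_n,t_{0,n}\})$ there. Hence
\begin{equation*}
A_{D_n}=2\int_{D_n}P_{D_n}\mathrm{d}x+2\int_{D_n}F_0\mathrm{d}x .
\end{equation*}

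\textbf{Lower bound in \eqref{0831-7}.} Drop the nonnegative $F_0$ term. Apply the upper inequality in \eqref{0831-5} with the admissible choice $r=32\delta_n$:
\begin{equation*}
\int_{D_n}P_{D_n}\mathrm{d}x\geq\frac{q_n^2}{8\pi}-CM_n^2\delta_n\geq\frac{q_n^2}{8\pi}-\frac{C}{c_*^2}\delta_nq_n^2 .
\end{equation*}
This gives $A_{D_n}/q_n^2\geq\frac{1}{4\pi}-C\delta_n$.

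\textbf{Upper bound in \eqref{0831-7}.} For the pressure term, Proposition \ref{prop-pressure} with \eqref{component-weight} gives $2\int_{D_n}P_{D_n}\mathrm{d}x\leq(1+C\delta_n)\frac{q_n^2}{4\pi}$. For the $F_0$ term, \eqref{11} in Lemma \ref{lem-low-source} gives $2\int_{D_n}F_0\mathrm{d}x\leq2\int_\Omega F_0\mathrm{d}x\leq C\lambda_nM_n^{p_n}$. Dividing by $q_n^2\geq c_*^2M_n^2$ turns this into $C\lambda_nM_n^{p_n-2}$, which yields the upper bound in \eqref{0831-7}.

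\textbf{Estimate \eqref{0831-8}.} By the identity preceding the lemma, it suffices to bound $\frac{2\lambda_n}{q_n^2}\int_{D_n}h\,\mathrm{d}x$. The first attempt would be the pointwise bound $\lambda_nh\leq f_n/(p_nt_{0,n}^{p_n-1})$ on $D_n$. This only gives an $O(1)$ bound, so that route fails.

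Instead, the plan uses the diameter bound $D_n\subset B_{\delta_n}(a_n)$, so $|D_n|\leq\pi\delta_n^2$, together with $M_n\geq c\sqrt{\lambda_n}$ from \eqref{uniform-max-bound}. Then
\begin{equation*}
\frac{2\lambda_n}{q_n^2}\int_{D_n}h\,\mathrm{d}x\leq\frac{C\lambda_n\delta_n^2}{c_*^2M_n^2}\leq C\delta_n^2\leq C\delta_n .
\end{equation*}
Combining this with \eqref{0831-7} gives \eqref{0831-8}.

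The only delicate point is this last step: one must use the geometric smallness of $D_n$ rather than pointwise source bounds. The remaining steps are direct bookkeeping with \eqref{0831-5}, \eqref{11} and \eqref{component-weight}.
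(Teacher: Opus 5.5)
Your proof is correct, and your treatment of \eqref{0831-7} matches the paper's. The lower bound comes from the upper inequality in \eqref{0831-5} with $r=32\delta_n$. The upper bound comes from Proposition~\ref{prop-pressure}, \eqref{component-weight} and \eqref{11}.

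For \eqref{0831-8} you take a different route to the constant term $\frac{2\lambda_n}{q_n^2}\int_{D_n}h\,\mathrm{d}x$. The paper notes that $t_{0,n}\to\infty$ (because $M_n\to0$), so $\mathrm{e}^{t_{0,n}^{p_n}}-1\geq1$. Hence $\lambda_n\int_{D_n}h\,\mathrm{d}x\leq\int_{D_n}hF_n(\min\{u_n,t_{0,n}\})\,\mathrm{d}x\leq C\lambda_nM_n^{p_n}$. The constant term is thus absorbed into the truncated-primitive contribution already estimated for \eqref{0831-7}, with no new input.

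You instead use $|D_n|\leq\pi\delta_n^2$ together with the lower bound $M_n\geq c\sqrt{\lambda_n}$ from \eqref{uniform-max-bound}. This is legitimate: Lemma~\ref{lem-unbounded-sequence} makes \eqref{uniform-max-bound} part of the standing assumptions of Section~\ref{sec-maximal}, and its constant has the allowed dependence. Your route even gives the sharper bound $C\delta_n^2$. The cost is that it imports the blow-up analysis at the global maximum (Proposition~\ref{prop-uniform-max}) into a step that only needs the truncation estimate \eqref{11}.

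For the same reason, your closing remark that one \emph{must} use the geometric smallness of $D_n$ is overstated. The paper avoids both pointwise source bounds and area bounds by comparing $\lambda_nh$ directly with $hF_n(t_{0,n})=\lambda_nh(\mathrm{e}^{t_{0,n}^{p_n}}-1)\geq\lambda_nh$ on $D_n$.
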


\begin{proof}
Taking $r=32\delta_n$ in \eqref{0831-5} and using $q_n\geq c_*M_n$, we obtain
\begin{equation*}
\left|\frac{1}{4\pi}-\frac{2}{q_n^2}\int_{D_n}P_{D_n}\mathrm{d}x\right|\leq C\delta_n+\frac{CM_n^2\delta_n}{q_n^2}\leq C\delta_n.
\end{equation*}
Since $hF_n(u_n)=P_{D_n}+hF_n(t_{0,n})$ in $D_n$, we have
\begin{equation*}
\begin{aligned}
A_{D_n}&=2\int_{D_n}P_{D_n}\mathrm{d}x+2F_n(t_{0,n})\int_{D_n}h\mathrm{d}x\\
&=2\int_{D_n}P_{D_n}\mathrm{d}x+2\lambda_n(\mathrm{e}^{t_{0,n}^{p_n}}-1)\int_{D_n}h\mathrm{d}x.
\end{aligned}
\end{equation*}
In $D_n$, $F_n(\min\{u_n,t_{0,n}\})=F_n(t_{0,n})$. By \eqref{11} and $q_n\geq c_*M_n$,
\begin{equation} \label{0831-111}
\begin{aligned}
0\leq2\lambda_n(\mathrm{e}^{t_{0,n}^{p_n}}-1)\int_{D_n}h\mathrm{d}x
&=2\int_{D_n}hF_n(\min\{u_n,t_{0,n}\})\mathrm{d}x\\
&\leq2\int_\Omega hF_n(\min\{u_n,t_{0,n}\})\mathrm{d}x\\
&\leq C\lambda_nM_n^{p_n}\leq C\lambda_nM_n^{p_n-2}q_n^2.
\end{aligned}
\end{equation}
This proves \eqref{0831-7}.

\vskip 0.1cm

Since $t_{0,n}^{p_n}\to\infty$, we have $\mathrm{e}^{t_{0,n}^{p_n}}-1\geq1$ for sufficiently large $n$. Using \eqref{0831-111}, we obtain
\begin{equation*}
\begin{aligned}
0\leq J_{D_n}-\frac{A_{D_n}}{q_n^2}
&=\frac{2\lambda_n}{q_n^2}\int_{D_n}h\mathrm{d}x \leq\frac{2\lambda_n(\mathrm{e}^{t_{0,n}^{p_n}}-1)}{q_n^2}\int_{D_n}h\mathrm{d}x
\leq C\lambda_nM_n^{p_n-2}.
\end{aligned}
\end{equation*}
Together with \eqref{0831-7}, this proves \eqref{0831-8}.
\end{proof}

Estimate \eqref{3} gives $M_n^{\ell_n}L_n\leq C$. We next show that this quantity tends to zero.

\begin{proposition} \label{prop-strong-scale}
Under \eqref{bad-sequence} and \eqref{low-error-small}, it holds
\begin{equation} \label{0831-9}
M_n^{\ell_n}L_n\to0 \quad \text{as } n\to\infty.
\end{equation}
Consequently, for every fixed $c,N>0$,
\begin{equation} \label{0831-10}
\mathrm{e}^{-\frac{c}{M_n^{\ell_n}}}=o(\lambda_n^N) \quad \text{as } n\to\infty.
\end{equation}
\end{proposition}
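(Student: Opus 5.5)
The plan is to compare two estimates for $\lambda_n\int_{D_n}h\mathrm{e}^{u_n^{p_n}}\mathrm{d}x$. The lower one comes from the maximal component, and the upper one from the exponential integrability \eqref{8}. By \eqref{0831-8} and \eqref{low-error-small}, $J_{D_n}\to\frac{1}{4\pi}$. Together with $q_n\geq c_*M_n$ from Proposition \ref{prop-fixed-heavy}, this gives
\begin{equation*}
\int_{D_n}\mathrm{e}^{u_n^{p_n}}\mathrm{d}x\geq cM_n^2\mathrm{e}^{L_n}
\end{equation*}
for all large $n$. Since $M_n^{\ell_n}L_n\leq C$ by \eqref{3}, it suffices to show $\limsup M_n^{\ell_n}L_n\leq0$. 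The upper bound must therefore produce a term of the form $-c\,M_n^{-\ell_n}$ in the exponent, up to lower-order terms.

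\textbf{Upper bound via \eqref{8}.} Write $w_n=\frac{u_n}{M_n}=G\big(\frac{f_n}{M_n}\big)$ and use the elementary Young-type inequality
\begin{equation*}
M^ps^p\leq\frac{\alpha}{s_0}s+\frac{p-1}{p}\Big(\frac{\alpha}{p s_0}\Big)^{\frac{p}{p-1}}M^{-\frac{p}{p-1}}\qquad (s\geq0),
\end{equation*}
obtained by maximizing $M^ps^p-\frac{\alpha}{s_0}s$. Here $s_0\geq1$ is a Hölder exponent and $\alpha<4\pi$. On $D_n$ I would apply Hölder's inequality:
\begin{equation*}
\int_{D_n}\mathrm{e}^{u_n^{p_n}}\mathrm{d}x\leq|D_n|^{1-\frac{1}{s_0}}\Big(\int_\Omega\mathrm{e}^{s_0u_n^{p_n}}\mathrm{d}x\Big)^{\frac{1}{s_0}}.
\end{equation*}
The last factor is bounded by $C_\alpha^{1/s_0}\exp\big(c_1(\alpha,s_0,p_n)M_n^{-\ell_n}\big)$ by \eqref{8}. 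The area factor is controlled by Proposition \ref{prop-localization}, since $|D_n|\leq\pi\delta_n^2$ gives
\begin{equation*}
|D_n|^{1-\frac{1}{s_0}}\leq C\exp\Big(-4\pi\big(1-\tfrac{1}{s_0}\big)\kappa_0^{\frac{1}{p_n-1}}M_n^{-\ell_n}\Big).
\end{equation*}
Taking logarithms yields
\begin{equation*}
L_n\leq\Big(c_1(\alpha,s_0,p_n)-4\pi\big(1-\tfrac{1}{s_0}\big)\kappa_0^{\frac{1}{p_n-1}}\Big)M_n^{-\ell_n}+2\log\frac{1}{M_n}+C.
\end{equation*}
Multiplying by $M_n^{\ell_n}$, the last two terms contribute $o(1)$, because $M_n\to0$ and $\ell_n\geq2$. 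The task is to choose $s_0$ and $\alpha$ so that the bracket is $\leq0$ in the limit $p_n\to2$; for instance $s_0=2$ and $\kappa_0$ close to $2\pi$. At $p=2$ the bracket becomes $\frac{\alpha^2}{16}-2\pi\kappa_0$, which is comfortably negative. This gives $\limsup M_n^{\ell_n}L_n\leq0$, hence \eqref{0831-9}.

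\textbf{Main obstacle.} The delicate point is whether the crude diameter bound really yields the gain above. If this bookkeeping turns out to be too optimistic, the fallback is to split $u_n=G(f_n\mathbf{1}_{D_n})+G(f_n\mathbf{1}_{\Omega\setminus D_n})$ on $D_n$. The isolation estimate \eqref{0831-4} shows, via \eqref{6}, that the second potential has oscillation $o(1)/M_n^{p_n-1}$ on $D_n$, so it acts as a constant. One then applies \eqref{8} to the normalized potential $G(f_n\mathbf{1}_{D_n})/q_n$, whose mass is concentrated in a set of diameter $\delta_n$. The sharp value $\frac{1}{4\pi}$ in \eqref{0831-8}, which is the equality case of the isoperimetric argument in Proposition \ref{prop-pressure}, is what must be matched against $e^{L_n}M_n^2$. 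Extracting a strictly negative multiple of $M_n^{-\ell_n}$ in the exponent is the step I expect to need the most care.

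\textbf{Consequence \eqref{0831-10}.} Write
\begin{equation*}
\mathrm{e}^{-\frac{c}{M_n^{\ell_n}}}=\lambda_n^{\frac{c}{M_n^{\ell_n}L_n}}.
\end{equation*}
By \eqref{0831-9}, the exponent $\frac{c}{M_n^{\ell_n}L_n}$ tends to $+\infty$, and $\lambda_n\to0$. Hence it exceeds $N+1$ eventually, so the left-hand side is $\leq\lambda_n^{N+1}=o(\lambda_n^N)$.
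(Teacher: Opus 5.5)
There is a genuine gap, at the step you flagged. Your Young-type inequality is false. For $p>1$ the map $s\mapsto M^ps^p-\frac{\alpha}{s_0}s$ is convex and unbounded above, so its critical point is a minimum and the inequality only holds in the reverse direction. A bound of $u^p$ by a sub-$4\pi$ multiple of the normalized potential plus a constant is available only on a bounded range of $u$, which is exactly how Lemma \ref{lem-low-source} uses it.

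Concretely, your bound on $\int_\Omega\mathrm{e}^{s_0u_n^{p_n}}\mathrm{d}x$ fails. With $\mu_n$ as in the proof of Proposition \ref{prop-uniform-max}, \eqref{standard-core} gives $u_n^{p_n}\geq m_n^{p_n}-C$ on $B_{\mu_n}(x_{0,n})$. Hence that integral is at least $c\mu_n^2\mathrm{e}^{s_0m_n^{p_n}}\geq c\,\mathrm{e}^{(s_0-1)m_n^{p_n}}$, using $\lambda_nm_n^{2(p_n-1)}\leq\Theta_n(m_n)\leq C$. On the other hand, $m_n^{p_n}M_n^{\ell_n}=(M_nm_n^{p_n-1})^{\ell_n}\to\infty$ by \eqref{bad-sequence}, so no bound of the form $\exp(c_1M_n^{-\ell_n})$ can hold.

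The target itself is also too strong. A strictly negative bracket would give $M_n^{\ell_n}L_n\leq-c+o(1)$, i.e.\ $L_n\to-\infty$, which is impossible by \eqref{lambda-m-basic}. At most you can hope for a bracket that is $o(1)$.

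The fallback does not close the gap. It is not carried out, and its oscillation claim is not available at this stage. Estimates \eqref{0831-4} and \eqref{6} only control source at distance $\gtrsim\delta_n$ from $a_n$, while components adjacent to $D_n$ are controlled only in mass. The paper upgrades that to the $L^\infty$ statement \eqref{1005-9} only by using $s_n=o(\lambda_n^N)$, which is \eqref{0831-10} itself.

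The missing idea is to treat the high part of $D_n$ with the equation rather than with exponential integrability, inside a contradiction argument. Suppose $M_n^{\ell_n}L_n\geq\varepsilon_0$, so that $q_n^{\ell_n}L_n\geq\varepsilon>0$. Split $D_n$ according to $z=q_n^{\frac{1}{p_n-1}}u_n$ at the level $K_n=\big(\frac{2\kappa}{p_n}\big)^{\frac{1}{p_n-1}}$, with $\kappa=4\pi+\tau$.

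On $\{z>K_n\}$ one has $p_nq_nu_n^{p_n-1}>2\kappa$. Therefore $\frac{2\lambda_n}{q_n^2}\int_{D_n\cap\{z>K_n\}}h\mathrm{e}^{u_n^{p_n}}\mathrm{d}x\leq\frac{1}{\kappa q_n}\int_{D_n}f_n\mathrm{d}x=\frac{1}{\kappa}$.

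On the bounded range $\{z\leq K_n\}$, for $\tau$ and a fixed $r$ small depending on $\varepsilon$, one has $u_n^{p_n}\leq\frac{bu_n}{Q_n}+\frac{\varepsilon}{4q_n^{\ell_n}}$ with $b=4\pi-\tau$. Here $Q_n=\int_{B_{2r}(a_n)}f_n\mathrm{d}x\leq q_n\big(1+C(r+\lambda_nM_n^{p_n-2})\big)$ by \eqref{0831-4} and \eqref{10}. Moreover $u_n\leq Q_n(w_n+C_r)$ on $D_n$, where $w_n=G(f_n\mathbf{1}_{B_{2r}(a_n)})/Q_n$ has unit mass.

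This step needs only an upper bound, with no oscillation control. It also normalizes by essentially $q_n$ rather than $M_n$; with $u_n/M_n$ the threshold would be off by the factor $M_n/q_n$, which need not tend to one. Then \eqref{8} and $L_n\geq\varepsilon q_n^{-\ell_n}$ make this part $O\big(q_n^{-2}\mathrm{e}^{-\frac{3\varepsilon}{4q_n^{\ell_n}}}\big)\to0$.

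Combining the two parts gives $\limsup J_{D_n}\leq\frac{1}{\kappa}<\frac{1}{4\pi}$, contradicting \eqref{0831-8}. The sharp constant $\frac{1}{4\pi}$ is played against the threshold $4\pi$ in \eqref{8}, not used to extract a negative exponent. Your derivation of \eqref{0831-10} from \eqref{0831-9} is correct.
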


\begin{proof}
If \eqref{0831-9} fails, then there exist a constant $\varepsilon_0>0$ and a subsequence, still denoted by $n$, such that
\begin{equation*}
M_n^{\ell_n}L_n\geq\varepsilon_0>0.
\end{equation*}
Fix $0<\kappa_0<2\pi$ and set
\begin{equation*}
\begin{aligned}
t_{0,n}&=\left(\frac{\kappa_0}{M_n}\right)^{\frac{1}{p_n-1}},\quad \delta_n=C_0\mathrm{e}^{-\frac{2\pi\kappa_0^{\frac{1}{p_n-1}}}{M_n^{\ell_n}}}, \quad
f_{1,n} =f_n\mathbf{1}_{\{u_n>t_{0,n}\}},\quad f_{0,n}=f_n-f_{1,n}.
\end{aligned}
\end{equation*}
Choose a connected component $D_n$ of $\{u_n>t_{0,n}\}$ with maximal source mass. Fix $a_n\in D_n$ and set
\begin{equation*}
q_n=\int_{D_n}f_n\mathrm{d}x,\quad J_{D_n}=\frac{2\lambda_n}{q_n^2}\int_{D_n}h\mathrm{e}^{u_n^{p_n}}\mathrm{d}x.
\end{equation*}
By Proposition \ref{prop-fixed-heavy}, we can deduce
\begin{equation} \label{0831-11}
q_n\geq c_*M_n,\quad q_n^{\ell_n}L_n\geq c_*^{\ell_n}M_n^{\ell_n}L_n\geq c_*^3\varepsilon_0=:\varepsilon>0,
\end{equation}
where we used $c_*\leq1$ and $\ell_n\leq3$. By \eqref{low-error-small}, $M_n\to0$ and hence $\delta_n\to0$.

\vskip 0.1cm

Fix $0<r<\frac{1}{4}\operatorname{dist}(\mathcal{K},\partial\Omega)$.
For sufficiently large $n$, we have $32\delta_n<r$ and $D_n\subset B_{\delta_n}(a_n)\subset B_r(a_n)$. Set
\begin{equation*}
Q_n=\int_{B_{2r}(a_n)}f_n\mathrm{d}x.
\end{equation*}
Since $D_n\subset B_{2r}(a_n)$, we have $Q_n\geq q_n>0$. For $x\in B_{2r}(a_n)$,
$
\max\{r,|x-a_n|\}\leq2r.
$
Proposition \ref{prop-max-iso} and \eqref{10} imply
\begin{equation*}
\begin{aligned}
0\leq Q_n-q_n&=\int_{B_{2r}(a_n)\setminus D_n}f_n\mathrm{d}x\\
&\leq\int_{B_{2r}(a_n)\setminus D_n}f_{1,n}\mathrm{d}x+\int_\Omega f_{0,n}\mathrm{d}x\\
&\leq2r\int_{\mathcal{K}\setminus D_n}\frac{f_{1,n}(x)}{\max\{r,|x-a_n|\}}\mathrm{d}x+C\lambda_nM_n^{p_n-1}\\
&\leq CM_n(r+\lambda_nM_n^{p_n-2}).
\end{aligned}
\end{equation*}
Using \eqref{0831-11}, we obtain
\begin{equation} \label{0831-13}
1\leq\frac{Q_n}{q_n}\leq1+\frac{CM_n}{q_n}(r+\lambda_nM_n^{p_n-2})\leq1+C(r+\lambda_nM_n^{p_n-2}).
\end{equation}
Set
\begin{equation*}
W_n=G(f_n\mathbf{1}_{B_{2r}(a_n)}),\quad w_n=\frac{W_n}{Q_n}.
\end{equation*}
For $x\in D_n$ and $y\notin B_{2r}(a_n)$, we have $|x-y|\geq2r-\delta_n\geq r$. Hence \eqref{7} gives
\begin{equation*}
\begin{aligned}
0\leq u_n(x)-W_n(x) &=\int_{\Omega\setminus B_{2r}(a_n)}G(x,y)f_n(y)\mathrm{d}y\\
&\leq\frac{1}{2\pi}\log\frac{d_\Omega}{r} \int_{\Omega\setminus B_{2r}(a_n)}f_n(y)\mathrm{d}y
\leq C_rM_n\leq C_rq_n.
\end{aligned}
\end{equation*}
Using $W_n=Q_nw_n$ and $q_n\leq Q_n$, we obtain
\begin{equation} \label{0831-14}
u_n\leq Q_n(w_n+C_r)\quad\text{in }D_n.
\end{equation}
The source $f_n\mathbf{1}_{B_{2r}(a_n)}/Q_n$ has integral one. For every fixed $0<b<4\pi$, \eqref{8} therefore yields
\begin{equation} \label{0831-15}
\int_\Omega\mathrm{e}^{bw_n}\mathrm{d}x\leq C_b.
\end{equation}

Choose $\tau\in(0,4\pi)$ such that $2\tau(4\pi+\tau)<\frac{\varepsilon}{8}$ and set $\kappa=4\pi+\tau$, $b=4\pi-\tau$. With $C$ as in \eqref{0831-13}, choose $r>0$ smaller if necessary so that
\begin{equation*}
\frac{2Cr}{1+2Cr}b\kappa<\frac{\varepsilon}{8}.
\end{equation*}
For sufficiently large $n$, we have $\lambda_nM_n^{p_n-2}<r$. Hence \eqref{0831-13} implies
$
\frac{1}{1+2Cr}\leq\frac{q_n}{Q_n}\leq1.
$
Set
\begin{equation*}
K_n=\left(\frac{2\kappa}{p_n}\right)^{\frac{1}{p_n-1}}\to\kappa.
\end{equation*}
Since $p_n\to2$ and $\kappa-b=2\tau$,
\begin{equation*}
\begin{aligned}
K_n^{p_n}-\frac{bK_n}{1+2Cr}&\to\kappa^2-\frac{b\kappa}{1+2Cr}=2\tau\kappa+\frac{2Cr}{1+2Cr}b\kappa<\frac{\varepsilon}{4}.
\end{aligned}
\end{equation*}
Consequently, for sufficiently large $n$,
\begin{equation*}
K_n^{p_n}-b\frac{q_n}{Q_n}K_n\leq K_n^{p_n}-\frac{bK_n}{1+2Cr}<\frac{\varepsilon}{4}.
\end{equation*}
For $0\leq z\leq K_n$, we have $z^{p_n}\leq K_n^{p_n-1}z$, and hence
\begin{equation*}
z^{p_n}-b\frac{q_n}{Q_n}z\leq\frac{z}{K_n}\left(K_n^{p_n}-b\frac{q_n}{Q_n}K_n\right)
\leq\frac{z}{K_n}\frac{\varepsilon}{4}\leq\frac{\varepsilon}{4}.
\end{equation*}
Set $z=q_n^{\frac{1}{p_n-1}}u_n$ in $D_n$. On $D_n\cap\{z\leq K_n\}$, using \eqref{0831-11}, we obtain
\begin{equation*}
u_n^{p_n}-\frac{bu_n}{Q_n}=\frac{z^{p_n}-b\frac{q_n}{Q_n}z}{q_n^{\ell_n}}
\leq\frac{\varepsilon}{4q_n^{\ell_n}},\quad
-L_n+\frac{\varepsilon}{4q_n^{\ell_n}}\leq-\frac{3\varepsilon}{4q_n^{\ell_n}}.
\end{equation*}
Combining these inequalities with \eqref{0831-14}-\eqref{0831-15}, we obtain
\begin{equation} \label{0831-16}
\begin{aligned}
\frac{2\lambda_n}{q_n^2}\int_{D_n\cap\{z\leq K_n\}}h\mathrm{e}^{u_n^{p_n}}\mathrm{d}x
&\leq\frac{2\lambda_n\|h\|_{L^\infty(\Omega)}}{q_n^2}\mathrm{e}^{\frac{\varepsilon}{4q_n^{\ell_n}}}\int_{D_n}\mathrm{e}^{\frac{bu_n}{Q_n}}\mathrm{d}x\\
&\leq\frac{2\lambda_n\|h\|_{L^\infty(\Omega)}}{q_n^2}\mathrm{e}^{\frac{\varepsilon}{4q_n^{\ell_n}}+bC_r}\int_\Omega\mathrm{e}^{bw_n}\mathrm{d}x\\
&\leq C_{b,r}q_n^{-2}\mathrm{e}^{-\frac{3\varepsilon}{4q_n^{\ell_n}}}.
\end{aligned}
\end{equation}
On $D_n\cap\{z>K_n\}$, we have $p_nq_nu_n^{p_n-1}=p_nz^{p_n-1}>p_nK_n^{p_n-1}=2\kappa$. Thus
\begin{equation} \label{0831-17}
\begin{aligned}
\frac{2\lambda_n}{q_n^2}\int_{D_n\cap\{z>K_n\}}h\mathrm{e}^{u_n^{p_n}}\mathrm{d}x
&\leq\frac{p_n\lambda_n}{\kappa q_n}\int_{D_n}hu_n^{p_n-1}\mathrm{e}^{u_n^{p_n}}\mathrm{d}x =\frac{1}{\kappa q_n}\int_{D_n}f_n\mathrm{d}x=\frac{1}{\kappa}.
\end{aligned}
\end{equation}
By the definition of $J_{D_n}$, estimates \eqref{0831-16} and \eqref{0831-17} yield
\begin{equation*}
J_{D_n}\leq C_{b,r}q_n^{-2}\mathrm{e}^{-\frac{3\varepsilon}{4q_n^{\ell_n}}}+\frac{1}{\kappa}
=\frac{1}{\kappa}+o(1).
\end{equation*}
Lemma \ref{lem-max-inverse} then gives
\begin{equation*}
\frac{1}{4\pi}=\lim_{n\to\infty}J_{D_n}\leq\frac{1}{\kappa}<\frac{1}{4\pi},
\end{equation*}
which is a contradiction.

\vskip 0.1cm

Finally, for fixed $c,N>0$, we have $NM_n^{\ell_n}L_n\leq\frac{c}{2}$ for sufficiently large $n$ by \eqref{0831-9}. Hence
\begin{equation*}
\log\frac{\mathrm{e}^{-\frac{c}{M_n^{\ell_n}}}}{\lambda_n^N}=-\frac{c}{M_n^{\ell_n}}+NL_n\leq-\frac{c}{2M_n^{\ell_n}}\to-\infty,
\end{equation*}
which proves \eqref{0831-10}.
\end{proof}

\section{Estimates near a maximal component} \label{sec-background}

In this section, we consider a sequence of positive solutions of \eqref{1} with $\beta_n\to\infty$ and select a high superlevel component $D_n$ with maximal source mass.
Using $M_n^{\ell_n}L_n\to0$ and the estimates for the source outside $D_n$, we control the Green potential generated by this source and its oscillation near $D_n$. Combining these bounds with Lemma \ref{lem-max-inverse}, we prove that the integral of $u_n^{p_n-1}f_n$ over $D_n$ tends to $4\pi$.
We then use local blow-up analysis to relate the maximum of $u_n$ on $D_n$ to the source mass of this component.

\subsection{The maximal component}\

\vskip 0.1cm

In this subsection, we select a component $D_n$ with maximal source mass and record the estimates for the source outside $D_n$.

\vskip 0.1cm
Assume that a sequence of positive solutions $u_n$ of \eqref{1}, with exponents $p_n$ and parameters $\lambda_n$, satisfies
\begin{equation} \label{1005-1}
\beta_n\to\infty.
\end{equation}
By Lemma \ref{lem-unbounded-sequence}, after passing to a subsequence, we may assume that $p_n\in[\frac{3}{2},2]$ and that \eqref{bad-sequence} and \eqref{low-error-small} hold. Set $\ell_n=\frac{p_n}{p_n-1}$, fix $0<\kappa_0<2\pi$ and define
\begin{equation*}
t_{0,n}=\left(\frac{\kappa_0}{M_n}\right)^{\frac{1}{p_n-1}},\quad f_{1,n}=f_n\mathbf{1}_{\{u_n>t_{0,n}\}},\quad f_{0,n}=f_n-f_{1,n}.
\end{equation*}
In particular,
\begin{equation*}
m_n\to\infty,\quad \frac{L_n}{m_n^{p_n}}\to0,\quad m_n^{p_n-1}M_n\to\infty.
\end{equation*}
Now let $x_{0,n}$ be a global maximum point of $u_n$ and fix $0<\delta_0<\frac{\delta_*}{4}$.
By Lemma \ref{lem-interior} and Proposition \ref{prop-localization},
\begin{equation*}
\operatorname{dist}(x_{0,n},\partial\Omega)\geq\delta_*,\quad \partial B_{\delta_0}(x_{0,n})\cup\operatorname{supp}f_{1,n}\subset\mathcal{K}.
\end{equation*}
For two distinct $x,y\in\mathcal{K}$, we have $G(x,y)\geq c_{\mathcal{K}}>0$. Since $\operatorname{supp}f_{1,n}\subset\mathcal{K}$, the Green representation formula gives
\begin{equation*}
\begin{aligned}
u_n(x)&=\int_\Omega G(x,y)f_n(y)\mathrm{d}y\geq\int_{\mathcal{K}}G(x,y)f_{1,n}(y)\mathrm{d}y\geq c_{\mathcal{K}}\int_\Omega f_{1,n}\mathrm{d}y
\end{aligned}
\end{equation*}
for every $x\in\partial B_{\delta_0}(x_{0,n})\subset\mathcal{K}$.
Using \eqref{10} and \eqref{low-error-small}, we obtain
\begin{equation*}
\begin{aligned}
\overline u_n(\delta_0)&\geq c_{\mathcal{K}}\left(M_n-\int_\Omega f_{0,n}\mathrm{d}x\right)
\geq c_{\mathcal{K}}(1-C\lambda_nM_n^{p_n-2})M_n\geq cM_n
\end{aligned}
\end{equation*}
for sufficiently large $n$. Using this lower bound and \eqref{fixed-radius-mean}, we obtain
\begin{equation*}
cm_n^{p_n-1}M_n\leq\frac{p_n}{2}m_n^{p_n-1}\overline u_n(\delta_0)\leq-\log\Theta_n(m_n)+C,
\end{equation*}
where
\begin{equation*}
\Theta_n(s)=\lambda_ns^{2(p_n-1)}\mathrm{e}^{\frac{2-p_n}{2}s^{p_n}}.
\end{equation*}
Since $m_n^{p_n-1}M_n\to\infty$ by \eqref{bad-sequence} and $p_n\leq2$, we have
\begin{equation} \label{1005-2}
\Theta_n(m_n)\leq C\mathrm{e}^{-cm_n^{p_n-1}M_n}\to0,\quad m_n^{p_n-1}\sqrt{\lambda_n}\leq\sqrt{\Theta_n(m_n)}\to0.
\end{equation}
Thus it holds
\begin{equation} \label{bound-1}
\frac{M_n}{\sqrt{\lambda_n}}=\frac{m_n^{p_n-1}M_n}{m_n^{p_n-1}\sqrt{\lambda_n}}\to\infty.
\end{equation}

Set
\begin{equation*}
\delta_n=C_0\mathrm{e}^{-\frac{2\pi\kappa_0^{\frac{1}{p_n-1}}}{M_n^{\ell_n}}},\quad s_n=\sqrt{\delta_n}.
\end{equation*}
Since $p_n\in[\frac{3}{2},2]$ for sufficiently large $n$,
\begin{equation*}
\kappa_0^{\frac{1}{p_n-1}}\geq\min\{\kappa_0,\kappa_0^2\}>0.
\end{equation*}
By Proposition \ref{prop-strong-scale}, for every fixed $N>0$,
\begin{equation*}
\delta_n=o(\lambda_n^N),\quad s_n=o(\lambda_n^N).
\end{equation*}
Combining \eqref{3} and \eqref{uniform-max-bound}, we obtain
\begin{equation} \label{bound-2}
m_n^{p_n-1}M_ns_n\leq C\frac{s_n}{\sqrt{\lambda_n}L_n^{\frac{p_n-1}{p_n}}}\to0.
\end{equation}
Choose a connected component $D_n$ of $\{u_n>t_{0,n}\}$ with maximal source mass and set
\begin{equation*}
q_n=\int_{D_n}f_n\mathrm{d}x,\quad \rho_n=\frac{f_{1,n}}{M_n}.
\end{equation*}
Proposition \ref{prop-fixed-heavy} yields
\begin{equation} \label{1005-4}
c_*M_n\leq q_n\leq M_n.
\end{equation}

Fix $a_n\in D_n$. Then
\begin{equation*}
D_n\subset B_{\delta_n}(a_n)\subset B_{s_n}(a_n),\quad \frac{\delta_n}{s_n}=s_n\to0.
\end{equation*}
In particular, $s_n\geq32\delta_n$ for sufficiently large $n$.
Taking $r=s_n$ in Proposition \ref{prop-max-iso}, we obtain
\begin{equation} \label{1005-5}
\int_{\{|x-a_n|>s_n\}}\frac{\rho_n(x)}{|x-a_n|}\mathrm{d}x \leq \int_{\mathcal{K}\setminus D_n}\frac{\rho_n(x)}{\max\{s_n,|x-a_n|\}}\mathrm{d}x \leq C,
\end{equation}
and
\begin{equation} \label{1005-6}
\int_{B_{4s_n}(a_n)\setminus D_n}\rho_n(x)\mathrm{d}x\leq4s_n\int_{\mathcal{K}\setminus D_n} \frac{\rho_n(x)}{\max\{s_n,|x-a_n|\}}\mathrm{d}x\leq Cs_n.
\end{equation}

\subsection{The component integral}\

\vskip 0.1cm

In this subsection, we estimate the Green potential generated by $f_n$ outside $D_n$ and its oscillation near $D_n$. We then prove that the integral of $u_n^{p_n-1}f_n$ over $D_n$ tends to $4\pi$.

\vskip 0.1cm

Recall that
\begin{equation*}
A_{D_n}=2\int_{D_n}hF_n(u_n)\mathrm{d}x,\quad J_{D_n}=\frac{2\lambda_n}{q_n^2}\int_{D_n}h\mathrm{e}^{u_n^{p_n}}\mathrm{d}x.
\end{equation*}
By Lemma \ref{lem-max-inverse}, we have
\begin{equation} \label{1005-10}
\frac{1}{4\pi}-C\delta_n\leq\frac{A_{D_n}}{q_n^2}\leq\frac{1}{4\pi}+C\delta_n+C\lambda_nM_n^{p_n-2}
\end{equation}
for sufficiently large $n$. Moreover, it holds
\begin{equation} \label{1005-11}
J_{D_n}\to\frac{1}{4\pi}.
\end{equation}

Denote
\begin{equation} \label{1005-7}
v_n=G(f_n\mathbf{1}_{D_n}),\quad h_{D_n}=u_n-v_n=G(f_n\mathbf{1}_{\Omega\setminus D_n}).
\end{equation}
For a bounded function $v$ on a set $U$, write
\begin{equation*}
\operatorname{osc}_U v=\sup_U v-\inf_U v.
\end{equation*}

\begin{proposition}
If $\beta_n\to \infty$, then there exists a constant $C>0$, depending only on $\Omega$, $\kappa_0$, $\|h\|_{C^2(\overline\Omega)}$ and $(\min_{\overline\Omega}h)^{-1}$, such that, for all sufficiently large $n$,
\begin{equation} \label{1005-8}
0\leq h_{D_n}\leq CM_n\quad\text{in }B_{s_n}(a_n).
\end{equation}
Moreover,
\begin{equation} \label{1005-9}
m_n^{p_n-1}\operatorname{osc}_{B_{s_n}(a_n)}h_{D_n}\to0.
\end{equation}
\end{proposition}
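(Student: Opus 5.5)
We split the source outside $D_n$ as $f_n\mathbf{1}_{\Omega\setminus D_n}=f_{0,n}+f_{1,n}\mathbf{1}_{\Omega\setminus D_n}$, so that
\begin{equation*}
h_{D_n}=v_{0,n}+G\big(f_{1,n}\mathbf{1}_{\mathcal{K}\setminus D_n}\big),\quad v_{0,n}=Gf_{0,n}.
\end{equation*}
Nonnegativity of $h_{D_n}$ is immediate from $G\geq0$ and $f_n\geq0$. For the low part, \eqref{12} gives $\|v_{0,n}\|_{C^1(\overline\Omega)}\leq C\lambda_nM_n^{p_n-1}=C\lambda_nM_n^{p_n-2}M_n\leq CM_n$ by \eqref{low-error-small}. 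Its oscillation on $B_{s_n}(a_n)$ is at most $Cs_n\lambda_nM_n^{p_n-1}\leq Cs_nM_n$, so $m_n^{p_n-1}\operatorname{osc}v_{0,n}\leq Cm_n^{p_n-1}M_ns_n\to0$ by \eqref{bound-2}.

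For the high part outside $D_n$, we split $\mathcal{K}\setminus D_n$ into the near region $B_{4s_n}(a_n)\setminus D_n$ and the far region $\{|y-a_n|\geq4s_n\}$. Fix $x\in B_{s_n}(a_n)$.

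\emph{Far region, sup bound.} For $|y-a_n|\geq4s_n$ we have $|x-y|\geq\frac34|y-a_n|$, so \eqref{7} gives $G(x,y)\leq\frac{1}{2\pi}\log\frac{Cd_\Omega}{|y-a_n|}$. Since $\log\frac{C}{t}\leq\frac{C'}{t}$ for $t\leq d_\Omega$, estimate \eqref{1005-5} bounds this contribution by $CM_n$.

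\emph{Far region, oscillation.} The gradient bound \eqref{6} gives $|\nabla_xG(x,y)|\leq\frac{C}{|y-a_n|}$ there. Hence the oscillation of the far contribution over $B_{s_n}(a_n)$ is at most
\begin{equation*}
Cs_nM_n\int_{\{|y-a_n|>s_n\}}\frac{\rho_n(y)}{|y-a_n|}\mathrm{d}y\leq Cs_nM_n,
\end{equation*}
again by \eqref{1005-5}. By \eqref{bound-2}, this is $o(m_n^{1-p_n})$.

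\emph{Near region.} The mass is small: $\int_{B_{4s_n}(a_n)\setminus D_n}f_{1,n}\leq CM_ns_n$ by \eqref{1005-6}. However, $G(x,y)$ is unbounded as $y\to x$, and this is the main obstacle. We need a pointwise bound on $f_{1,n}$ near $D_n$, or some other way to control the logarithmic singularity.

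The first idea is to write $G(x,y)\leq\frac1{2\pi}\log\frac{d_\Omega}{|x-y|}$ and use that $f_{1,n}\mathbf{1}_{\Omega\setminus D_n}$ is supported in the other components $D_{j,n}$. Each of these has diameter at most $\delta_n$ and satisfies $u_n\leq m_n$, so its source density is at most $p_n\lambda_n\|h\|_\infty m_n^{p_n-1}e^{m_n^{p_n}}$. Then
\begin{equation*}
\int_{B_{4s_n}}\log\frac{1}{|x-y|}\,g(y)\mathrm{d}y\leq \|g\|_{L^1}\Big(\log\frac{1}{\|g\|_{L^1}/\|g\|_\infty}+C\Big),
\end{equation*}
by the rearrangement (bathtub) inequality. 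With $\|g\|_{L^1}\leq CM_ns_n$ and $\log\|g\|_\infty\leq Cm_n^{p_n}$, the bound becomes $CM_ns_n\big(m_n^{p_n}+\log\frac1{M_ns_n}\big)$. Because $s_n=o(\lambda_n^N)$ for every $N$ while $m_n^{p_n-1}\leq C\lambda_n^{-1/2}$, with $m_n^{p_n}\leq C\lambda_n^{-1}$ up to powers and $\log\frac{1}{s_n}\sim\frac{c}{M_n^{\ell_n}}$, we need a comparison between $s_n$ and $M_n^{\ell_n}$. Using $e^{-c/M_n^{\ell_n}}\cdot M_n^{-\ell_n}\to0$, the quantity $m_n^{p_n-1}M_ns_n(m_n^{p_n}+M_n^{-\ell_n})$ tends to zero, which handles both the sup bound and the oscillation bound (the oscillation is at most twice the sup).

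Combining the three parts yields \eqref{1005-8} and \eqref{1005-9}. The delicate point is the near-region logarithmic estimate. If the bathtub bound turns out to be too crude, we would instead use the maximality $q_{j,n}\leq q_n$ together with Lemma~\ref{lem-diameter} applied to each nearby component to get a better handle on this term.
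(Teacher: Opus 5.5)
Your proposal is correct and follows essentially the paper's route: the same split into $Gf_{0,n}$, the far part $f_{1,n}\mathbf{1}_{\Omega\setminus B_{4s_n}(a_n)}$ controlled in $C^1$ through \eqref{1005-5}, and the near part $f_{1,n}\mathbf{1}_{B_{4s_n}(a_n)\setminus D_n}$ handled by the bathtub bound (the paper's \eqref{1005-12}) using $\|f_n\|_{L^\infty(\Omega)}$ and the mass bound \eqref{1005-6}. One step needs tightening: since $m_n^{p_n-1}M_n\to\infty$, the limit $\mathrm{e}^{-c/M_n^{\ell_n}}M_n^{-\ell_n}\to0$ alone does not give $m_n^{p_n-1}M_ns_nM_n^{-\ell_n}\to0$, but it closes once you combine $m_n^{p_n-1}\leq C\lambda_n^{-1/2}$ with either $M_n^{-\ell_n}\leq\lambda_n^{-\ell_n/2}$ from \eqref{bound-1} (as the paper does) or $s_nM_n^{-\ell_n}=o(\lambda_n^N)$ from \eqref{0831-10}; also, when you replace $\|g\|_{L^1}$ by its upper bound $CM_ns_n$, note that $a\mapsto a\big(1+\log\frac{B}{a}\big)$ is increasing for $a\leq B$.
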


\begin{proof}
Let $g\in L^\infty(\Omega)$ satisfy $0\leq g\leq B_g$ and $a_g=\int_\Omega g\mathrm{d}x>0$. Choose $r=\sqrt{\frac{a_g}{\pi B_g}}$. Since $a_g\leq B_g|\Omega|\leq\pi B_g d_\Omega^2$, we have $0<r\leq d_\Omega$.
Using \eqref{7} and the identity
\begin{equation*}
\int_0^r s\log\frac{r}{s}\mathrm{d}s=\left.\left(\frac{s^2}{2}\log\frac{r}{s}+\frac{s^2}{4}\right)\right|_{s=0}^{s=r}=\frac{r^2}{4},
\end{equation*}
we obtain for every $x\in\Omega$,
\begin{equation} \label{1005-12}
\begin{aligned}
Gg(x) &\leq\frac{1}{2\pi}\int_{\Omega\cap B_r(x)}\log\frac{d_\Omega}{|x-y|}g(y)\mathrm{d}y+\frac{1}{2\pi}\int_{\Omega\setminus B_r(x)}\log\frac{d_\Omega}{|x-y|}g(y)\mathrm{d}y\\
&\leq\frac{a_g}{2\pi}\log\frac{d_\Omega}{r}+\frac{1}{2\pi}\int_{\Omega\cap B_r(x)}\log\frac{r}{|x-y|}g(y)\mathrm{d}y\\
&\leq\frac{a_g}{2\pi}\log\frac{d_\Omega}{r}+B_g\int_0^r s\log\frac{r}{s}\mathrm{d}s\\
&=\frac{a_g}{2\pi}\log\frac{d_\Omega}{r}+\frac{B_gr^2}{4}=\frac{a_g}{4\pi}\left(1+\log\frac{\pi B_g d_\Omega^2}{a_g}\right).
\end{aligned}
\end{equation}
The right-hand side is increasing in $a_g$ for $0<a_g\leq\pi B_g d_\Omega^2$.

\vskip 0.1cm

Since $D_n\subset B_{4s_n}(a_n)$, we write
\begin{equation*}
f_n\mathbf{1}_{\Omega\setminus D_n}=f_{0,n}+g_{2,n}+g_{3,n},
\end{equation*}
where
\begin{equation*}
g_{2,n}=f_{1,n}\mathbf{1}_{B_{4s_n}(a_n)\setminus D_n},\quad g_{3,n}=f_{1,n}\mathbf{1}_{\Omega\setminus B_{4s_n}(a_n)}.
\end{equation*}
By Lemma \ref{lem-low-source}, we have
\begin{equation} \label{1005-13}
\|Gf_{0,n}\|_{C^1(\overline\Omega)}\leq C\lambda_nM_n^{p_n-1}.
\end{equation}
For $x\in B_{s_n}(a_n)$ and $y\in\Omega\setminus B_{4s_n}(a_n)$,
\begin{equation*}
|x-y|\geq|y-a_n|-s_n\geq\frac{3}{4}|y-a_n|.
\end{equation*}
Estimates \eqref{6} and \eqref{7} imply
\begin{equation*}
G(x,y)\leq\frac{1}{2\pi}\log\frac{4d_\Omega}{3|y-a_n|}\leq\frac{C}{|y-a_n|},\quad |\nabla_xG(x,y)|\leq\frac{C}{|y-a_n|}.
\end{equation*}
Using $f_{1,n}=M_n\rho_n$ and \eqref{1005-5}, we obtain
\begin{equation} \label{1005-14}
\|Gg_{3,n}\|_{C^1(B_{s_n}(a_n))}\leq CM_n\int_{\Omega\setminus B_{s_n}(a_n)}\frac{\rho_n(y)}{|y-a_n|}\mathrm{d}y\leq CM_n.
\end{equation}

On the other hand, using $f_{1,n}=M_n\rho_n$ and \eqref{1005-6}, we obtain
\begin{equation*}
\int_\Omega g_{2,n}\mathrm{d}x=M_n\int_{B_{4s_n}(a_n)\setminus D_n}\rho_n\mathrm{d}x\leq CM_ns_n.
\end{equation*}
Moreover,
\begin{equation*}
0\leq g_{2,n}\leq\|f_n\|_{L^\infty(\Omega)}\leq p_n\lambda_n\|h\|_{L^\infty(\Omega)}m_n^{p_n-1}\mathrm{e}^{m_n^{p_n}}.
\end{equation*}
Since $M_n\leq|\Omega|\|f_n\|_{L^\infty(\Omega)}$ and $s_n\to0$, for all sufficiently large $n$,
\begin{equation*}
CM_ns_n\leq M_n\leq|\Omega|\|f_n\|_{L^\infty(\Omega)}\leq\pi d_\Omega^2\|f_n\|_{L^\infty(\Omega)}.
\end{equation*}
By \eqref{1005-12} and the monotonicity established after it, if $g_{2,n}\neq0$, then
\begin{equation*}
\begin{aligned}
\|Gg_{2,n}\|_{L^\infty(\Omega)}&\leq\frac{\displaystyle\int_\Omega g_{2,n}\mathrm{d}x}{4\pi}\left(1+\log\frac{\pi d_\Omega^2\|f_n\|_{L^\infty(\Omega)}}{\int_\Omega g_{2,n}\mathrm{d}x}\right)
\leq\frac{CM_ns_n}{4\pi}\left(1+\log\frac{\pi d_\Omega^2\|f_n\|_{L^\infty(\Omega)}}{CM_ns_n}\right).
\end{aligned}
\end{equation*}
Using \eqref{uniform-max-bound}, \eqref{bound-1} and $2\leq\ell_n\leq3$, for sufficiently large $n$,
\begin{equation*}
\begin{aligned}
m_n^{p_n}=(m_n^{p_n-1})^{\ell_n}\leq C\lambda_n^{-\frac{\ell_n}{2}},\quad
\frac{\lambda_nm_n^{p_n-1}}{M_n}\leq C\frac{\sqrt{\lambda_n}}{M_n}\leq C, \quad
M_n^{-\ell_n}=\left(\frac{\sqrt{\lambda_n}}{M_n}\right)^{\ell_n}\lambda_n^{-\frac{\ell_n}{2}}\leq\lambda_n^{-\frac{\ell_n}{2}}.
\end{aligned}
\end{equation*}
By the definition of $s_n$ and $f_n$, we obtain
\begin{equation*}
\begin{aligned}
0&\leq\log\frac{\pi d_\Omega^2\|f_n\|_{L^\infty(\Omega)}}{CM_ns_n}\\
&\leq m_n^{p_n}+\log\frac{\pi d_\Omega^2p_n\lambda_n\|h\|_{L^\infty(\Omega)}m_n^{p_n-1}}{CM_n}+\frac{\pi\kappa_0^{\frac{1}{p_n-1}}}{M_n^{\ell_n}}-\frac{1}{2}\log C_0\\
&\leq C\lambda_n^{-\frac{\ell_n}{2}}+C+CM_n^{-\ell_n}\leq C\lambda_n^{-\frac{\ell_n}{2}}.
\end{aligned}
\end{equation*}
Consequently, since $s_n=o(\lambda_n^2)$ and $\ell_n\leq3$,
\begin{equation} \label{1005-15}
\|Gg_{2,n}\|_{L^\infty(\Omega)}\leq CM_ns_n\left(1+\lambda_n^{-\frac{\ell_n}{2}}\right)\leq CM_ns_n\lambda_n^{-\frac{\ell_n}{2}}=o(M_n).
\end{equation}
This estimate also holds when $g_{2,n}=0$. Combining \eqref{1005-13}-\eqref{1005-15} with \eqref{low-error-small} proves \eqref{1005-8}.

\vskip 0.1cm

Finally, from \eqref{1005-13}, \eqref{1005-14} and \eqref{low-error-small}, we obtain
\begin{equation} \label{1005-15-1}
\|\nabla(Gf_{0,n}+Gg_{3,n})\|_{L^\infty(B_{s_n}(a_n))}\leq CM_n(\lambda_nM_n^{p_n-2}+1)\leq CM_n.
\end{equation}
Since $Gg_{2,n}\geq0$, \eqref{1005-15} and \eqref{1005-15-1} imply
\begin{equation*}
\begin{aligned}
\operatorname{osc}_{B_{s_n}(a_n)}h_{D_n}&\leq2s_n\|\nabla(Gf_{0,n}+Gg_{3,n})\|_{L^\infty(B_{s_n}(a_n))}+\|Gg_{2,n}\|_{L^\infty(\Omega)}\\
&\leq CM_ns_n+CM_ns_n\lambda_n^{-\frac{\ell_n}{2}}\leq CM_ns_n\lambda_n^{-\frac{\ell_n}{2}}.
\end{aligned}
\end{equation*}
Estimate \eqref{uniform-max-bound} and Proposition \ref{pro:bound} imply $m_n^{p_n-1}\leq\frac{C}{\sqrt{\lambda_n}}$ and $M_n\leq C$. Therefore,
\begin{equation*}
0\leq m_n^{p_n-1}\operatorname{osc}_{B_{s_n}(a_n)}h_{D_n}\leq CM_ns_n\lambda_n^{-\frac{\ell_n+1}{2}}\leq C\frac{s_n}{\lambda_n^2}\to0.
\end{equation*}
This proves \eqref{1005-9}.
\end{proof}

\begin{remark}
We use $v_n=G(f_n\mathbf{1}_{D_n})$ so that $v_n/q_n$ has source mass one. Using $u_n/M_n$ to estimate $\mathrm{e}^{bu_n/q_n}$ would require $b<4\pi q_n/M_n$, and $q_n/M_n$ need not tend to one. For the same reason, we used $w_n=W_n/Q_n$ in the proof of Proposition \ref{prop-strong-scale}.
\end{remark}

For the component estimate, set
\begin{equation*}
E_{D_n}=\frac{p_n}{2}\int_{D_n}u_n^{p_n-1}f_n\mathrm{d}x.
\end{equation*}
By the Cauchy-Schwarz inequality and the definition of $J_{D_n}$,
\begin{equation*}
\begin{aligned}
q_n^2&=p_n^2\lambda_n^2\left(\int_{D_n}hu_n^{p_n-1}\mathrm{e}^{u_n^{p_n}}\mathrm{d}x\right)^2
\leq p_n^2\lambda_n^2\left(\int_{D_n}h\mathrm{e}^{u_n^{p_n}}\mathrm{d}x\right)\left(\int_{D_n}hu_n^{2(p_n-1)}\mathrm{e}^{u_n^{p_n}}\mathrm{d}x\right)=E_{D_n}q_n^2J_{D_n}.
\end{aligned}
\end{equation*}
Since $q_n>0$, we obtain
\begin{equation} \label{1005-CS}
E_{D_n}\geq\frac{1}{J_{D_n}}.
\end{equation}
Set $Z_n=\frac{p_n}{2}q_nu_n^{p_n-1}$. By the definitions of $E_{D_n}$ and $J_{D_n}$,
\begin{equation*}
E_{D_n}=\frac{1}{q_n}\int_{D_n}Z_nf_n\mathrm{d}x,\quad J_{D_n}=\frac{1}{q_n}\int_{D_n}\frac{f_n}{Z_n}\mathrm{d}x.
\end{equation*}

\begin{proposition}
If $\beta_n\to \infty$, then it holds
\begin{equation} \label{1005-16}
E_{D_n}\to4\pi,
\end{equation}
and
\begin{equation} \label{1005-17}
\frac{p_n}{2}\int_{B_{s_n}(a_n)}u_n^{p_n-1}f_n\mathrm{d}x\to4\pi.
\end{equation}
Moreover,
\begin{equation} \label{1005-18}
\frac{p_n}{2}\int_{D_n}\left(1-\frac{4\pi}{Z_n}\right)^2u_n^{p_n-1}f_n\mathrm{d}x=E_{D_n}-8\pi+16\pi^2J_{D_n}\to0.
\end{equation}
\end{proposition}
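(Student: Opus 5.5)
We need $E_{D_n}\to4\pi$. The lower bound follows at once from \eqref{1005-CS} and \eqref{1005-11}: $\liminf E_{D_n}\geq\lim 1/J_{D_n}=4\pi$. Identity \eqref{1005-18} is algebraic. Expanding the square and using $Z_n=\frac{p_n}{2}q_nu_n^{p_n-1}$, we get
\begin{equation*}
\frac{p_n}{2}\int_{D_n}\Big(1-\frac{8\pi}{Z_n}+\frac{16\pi^2}{Z_n^2}\Big)u_n^{p_n-1}f_n\mathrm{d}x=E_{D_n}-\frac{8\pi}{q_n}\int_{D_n}f_n\mathrm{d}x+\frac{16\pi^2}{q_n}\int_{D_n}\frac{f_n}{Z_n}\mathrm{d}x=E_{D_n}-8\pi+16\pi^2J_{D_n}.
\end{equation*}
Hence once \eqref{1005-16} holds, \eqref{1005-11} gives the limit $4\pi-8\pi+4\pi=0$. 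Similarly, \eqref{1005-17} should follow from \eqref{1005-16}. The contribution of $B_{s_n}(a_n)\setminus D_n$ is split into the low part $f_{0,n}$, controlled by \eqref{10} as $C\lambda_nM_n^{2p_n-2}\to0$, and the high part $f_{1,n}\mathbf 1_{B_{s_n}\setminus D_n}$. For the high part I bound $u_n^{p_n-1}\leq m_n^{p_n-1}$ and use \eqref{1005-6}, which gives at most $Cm_n^{p_n-1}M_ns_n\to0$ by \eqref{bound-2}.

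The core is the upper bound $\limsup E_{D_n}\leq4\pi$. I would prove it with a Pohozaev-type identity on $D_n$ applied to the decomposition $u_n=v_n+h_{D_n}$, with $v_n=G(f_n\mathbf 1_{D_n})$. The plan is to test \eqref{pressure-vector} with $Y=x-a_n$ on $D=D_n$. Since $\operatorname{div}Y=2$ and $|Y|\leq\delta_n$ on $D_n$, this gives
\begin{equation*}
\int_{D_n}f_n(x-a_n)\cdot\nabla u_n\mathrm{d}x=-2\int_{D_n}P_{D_n}\mathrm{d}x+O(\delta_n q_n^2).
\end{equation*}
On the left, write $\nabla u_n=\nabla v_n+\nabla h_{D_n}$. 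The symmetrization used in \eqref{nabla(Grho_n)-2}, with $S_Y\equiv1$, turns the $v_n$ part into $-\frac{q_n^2}{4\pi}+O(\delta_nq_n^2)$. Combined with \eqref{0831-5}, this only recovers $\int P_{D_n}\approx q_n^2/8\pi$, so it does not bound $E_{D_n}$ by itself and is not the route to use. Instead I would work with $u_n^{p_n-1}$ directly, in two steps.
\begin{enumerate}
\item[(a)] Since $u_n=t_{0,n}$ on $\partial D_n$, the function $w=u_n-t_{0,n}$ satisfies $-\Delta w=f_n$ in $D_n$ and $w=0$ on $\partial D_n$. Hence $\int_{D_n}(u_n-t_{0,n})f_n=\int_{D_n}|\nabla u_n|^2$.
\item[(b)] On $D_n$ we have $u_n\leq v_n+\sup_{B_{s_n}}h_{D_n}$ with $v_n=G(f_n\mathbf 1_{D_n})$. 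Using \eqref{1005-9}, $h_{D_n}$ is constant up to $o(m_n^{1-p_n})$ there. Thus on $D_n$ the equation is the equation of a single component with a nearly constant shift, and the maximal-mass normalization $q_n$ plays the role of the total mass.
\end{enumerate}

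Concretely, I would attempt the following. Set $Z_n=\frac{p_n}{2}q_nu_n^{p_n-1}$ and estimate $E_{D_n}=\frac1{q_n}\int Z_nf_n$ from above via the exponential integrability \eqref{8} applied to $v_n/q_n$, exactly as in the proof of Proposition \ref{prop-strong-scale}. Split $D_n$ into $\{Z_n\leq K\}$ and $\{Z_n>K\}$ for fixed $K>4\pi$. On $\{Z_n>K\}$, the estimate \eqref{0831-17} shows that the $J$-mass is at most $1/K$. Together with $J_{D_n}\to1/4\pi$, this forces almost all of the $J$-mass, hence (via \eqref{1005-CS}) the near-equality in Cauchy–Schwarz, onto $\{Z_n\approx 4\pi\}$. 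The main obstacle is converting this concentration of $f_n/Z_n$ into control of $\int Z_nf_n$. For this I would show that $\{Z_n>K\}$ carries little $f_n$-mass. Using $u_n^{p_n}\leq bu_n/q_n+\ldots$ with $b<4\pi$, which requires the oscillation bound \eqref{1005-9} and $M_n^{\ell_n}L_n\to0$ (so that the gain $e^{-c/q_n^{\ell_n}}$ beats $\lambda_n$), I expect the $f_n$-weighted mass where $Z_n\leq 4\pi-\eta$ to be $o(q_n)$. The mass where $Z_n\geq4\pi+\eta$ would be bounded from the identity $J_{D_n}\to1/4\pi$ together with $\frac1{q_n}\int f_n=1$. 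If this is not enough, I would fall back on blow-up at $\max_{\overline{D_n}}u_n$ via Lemma \ref{lem-critical-core}(i), with the upper bound supplied by \eqref{1005-9}.
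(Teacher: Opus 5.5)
Your lower bound, the algebraic identity in \eqref{1005-18}, and your derivation of \eqref{1005-17} from \eqref{1005-16} are correct and match the paper. The gap is in the core step $\limsup E_{D_n}\le4\pi$.

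Write $\mu_n=f_n\mathbf 1_{D_n}/q_n$, so that $E_{D_n}=\int Z_n\,\mathrm{d}\mu_n$ and $J_{D_n}=\int Z_n^{-1}\,\mathrm{d}\mu_n$. Your plan can deliver three things: $J_{D_n}\to\frac1{4\pi}$; $\mu_n(\{Z_n\le4\pi-\eta\})\to0$, which is indeed obtainable from \eqref{8} (and \eqref{1005-8} already suffices for it); and hence $\mu_n(\{Z_n\ge4\pi+\eta\})\to0$. Together these say only that $Z_n\to4\pi$ in $\mu_n$-measure, and that does not bound $\int Z_n\,\mathrm{d}\mu_n$.

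At this stage the only upper bound on $Z_n$ in $D_n$ is $Z_n\le\frac{p_n}{2}m_n^{p_n-1}q_n$, and $m_n^{p_n-1}q_n\ge c_*m_n^{p_n-1}M_n\to\infty$ by \eqref{bad-sequence}. Consider a set of $\mu_n$-measure of order $(m_n^{p_n-1}q_n)^{-1}$ on which $Z_n$ is of order $m_n^{p_n-1}q_n$. This is exactly what a bubble of height close to $m_n$ inside $D_n$ looks like. Such a set adds $O(1)$ to $E_{D_n}$ but only $o(1)$ to $J_{D_n}$. So showing that $\{Z_n>K\}$ carries little $f_n$-mass is the wrong target. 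What you need is $\int_{\{Z_n>K\}}Z_nf_n\,\mathrm{d}x=o(q_n)$.

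Your two remaining routes do not close the gap either. Invoking near-equality in \eqref{1005-CS} is circular, since $E_{D_n}J_{D_n}\to1$ is equivalent to the claim. The fallback via Lemma \ref{lem-critical-core} at $x_{1,n}$ is applicable, but it only produces a bubble contributing about $4\pi$, which is another lower bound. The paper identifies $Z_n(x_{1,n})\to4\pi$ and excludes further mass only afterwards, using \eqref{1005-18} and \eqref{1005-17}, which already rest on the upper bound.

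The missing idea is a quantitative estimate in which the deviation of $J_{D_n}$ from $\frac1{4\pi}$ is multiplied by $m_n^{p_n-1}q_n$. The paper uses $u_n\le m_n$ as a chord inequality: on $\{Z_n>\kappa\}$ one has $(Z_n-\kappa)(Z_n-\frac{p_n}{2}m_n^{p_n-1}q_n)\le0$. Integrating against $\mu_n$ gives
\begin{equation*}
E_{D_n}\le\kappa+\frac{p_n}{2}m_n^{p_n-1}q_n\Big(1-\kappa J_{D_n}+\frac{\kappa A_{0,n}}{q_n^2}\Big),\quad A_{0,n}=2\lambda_n\int_{D_n\cap\{Z_n\le\kappa\}}h\mathrm{e}^{u_n^{p_n}}\mathrm{d}x\le\frac{C\lambda_n}{2\pi p_n-\kappa}.
\end{equation*}
Here the bound on $A_{0,n}$ comes from \eqref{8} applied to $v_n/q_n$ together with \eqref{1005-8}. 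The paper then proceeds as follows:
\begin{itemize}
\item It uses the one-sided bound $J_{D_n}\ge A_{D_n}/q_n^2\ge\frac1{4\pi}-C\delta_n$ from \eqref{1005-10}.
\item It chooses $\kappa_n=2\pi p_n-\sqrt{\lambda_n}/q_n$ to balance the two $\kappa$-dependent errors.
\item This yields $E_{D_n}\le4\pi+C(2-p_n)m_n^{p_n-1}q_n+Cm_n^{p_n-1}q_n\delta_n+Cm_n^{p_n-1}\sqrt{\lambda_n}$.
\end{itemize}
Each error term vanishes only because of fine estimates:
\begin{itemize}
\item $(2-p_n)m_n^{p_n-1}q_n\to0$, using $\Theta_n(m_n)\le C$, $q_n/m_n\le CM_n^{\ell_n}$ and \eqref{0831-9};
\item $m_n^{p_n-1}q_n\delta_n\le m_n^{p_n-1}M_ns_n\to0$, by \eqref{bound-2};
\item $m_n^{p_n-1}\sqrt{\lambda_n}\to0$, by \eqref{1005-2}.
\end{itemize}
None of these ingredients appears in your proposal, and a qualitative use of $J_{D_n}\to\frac1{4\pi}$ cannot replace them.
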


\begin{proof}
By \eqref{1005-11} and \eqref{1005-CS}, we have
\begin{equation} \label{1005-19}
\liminf_{n\to\infty}E_{D_n}\geq\liminf_{n\to\infty}\frac{1}{J_{D_n}} \geq 4\pi.
\end{equation}
On the other hand, $u_n\leq m_n$ gives
\begin{equation*}
E_{D_n}=\frac{p_n}{2}\int_{D_n}u_n^{p_n-1}f_n\mathrm{d}x\leq\frac{p_n}{2}m_n^{p_n-1}q_n.
\end{equation*}

For $0<\kappa<2\pi p_n$, set
\begin{equation*}
D_{0,n}=D_n\cap\{Z_n\leq\kappa\},\quad A_{0,n}=2\lambda_n\int_{D_{0,n}}h\mathrm{e}^{u_n^{p_n}}\mathrm{d}x.
\end{equation*}
Let $w_{D_n}=\frac{v_n}{q_n}$. By \eqref{1005-4} and \eqref{1005-8}, we get
\begin{equation*}
u_n=q_nw_{D_n}+h_{D_n}\leq q_nw_{D_n}+CM_n\leq q_n(w_{D_n}+C)\quad\text{in }D_n.
\end{equation*}
In $D_{0,n}$, we have
\begin{equation*}
u_n^{p_n}=\frac{2Z_n}{p_n}\frac{u_n}{q_n}\leq\frac{2\kappa}{p_n}\big(w_{D_n}+C\big).
\end{equation*}
Since the source of $w_{D_n}$ has integral one, estimate \eqref{8} yields
\begin{equation} \label{A0n-bound}
\begin{aligned}
A_{0,n}&\leq2\lambda_n\|h\|_{L^\infty(\Omega)}\mathrm{e}^{\frac{2\kappa C}{p_n}}\int_\Omega\mathrm{e}^{\frac{2\kappa}{p_n}w_{D_n}}\mathrm{d}x
\leq\frac{8\pi^2d_\Omega^2\|h\|_{L^\infty(\Omega)}\mathrm{e}^{4\pi C}\lambda_n}{4\pi-\frac{2\kappa}{p_n}}
\leq\frac{C\lambda_n}{2\pi p_n-\kappa},
\end{aligned}
\end{equation}
with $C$ independent of $n$ and $\kappa\in(0,2\pi p_n)$.

\vskip 0.1cm

Since $u_n\leq m_n$, we have
\begin{equation*}
\begin{aligned}
&Z_n^2-\left(\kappa+\frac{p_n}{2}m_n^{p_n-1}q_n\right)Z_n+\frac{p_n\kappa}{2}m_n^{p_n-1}q_n
=(Z_n-\kappa)\left(Z_n-\frac{p_n}{2}m_n^{p_n-1}q_n\right)\leq0
\end{aligned}
\end{equation*}
in $D_n\cap\{Z_n>\kappa\}$. Using $Z_n\leq\kappa$ in $D_{0,n}$, we obtain
\begin{equation*}
Z_n\leq\kappa+\frac{p_n}{2}m_n^{p_n-1}q_n-\frac{p_n\kappa m_n^{p_n-1}q_n}{2Z_n}\mathbf{1}_{\{Z_n>\kappa\}}\quad\text{in }D_n.
\end{equation*}
Integrating against $\frac{f_n}{q_n}\mathrm{d}x$ gives
\begin{equation*}
E_{D_n}\leq\kappa+\frac{p_n}{2}m_n^{p_n-1}q_n-\frac{p_n\kappa}{2}m_n^{p_n-1}\int_{D_n\cap\{Z_n>\kappa\}}\frac{f_n}{Z_n}\mathrm{d}x.
\end{equation*}
By the definitions of $J_{D_n}$ and $A_{0,n}$,
\begin{equation*}
\begin{aligned}
\frac{1}{q_n}\int_{D_n\cap\{Z_n>\kappa\}}\frac{f_n}{Z_n}\mathrm{d}x
&=\frac{2\lambda_n}{q_n^2}\int_{D_n\cap\{Z_n>\kappa\}}h\mathrm{e}^{u_n^{p_n}}\mathrm{d}x
=J_{D_n}-\frac{A_{0,n}}{q_n^2}.
\end{aligned}
\end{equation*}
Therefore,
\begin{equation} \label{1005-21}
E_{D_n}\leq\kappa+\frac{p_n}{2}m_n^{p_n-1}q_n-\kappa\frac{p_n}{2}m_n^{p_n-1}q_n\left(J_{D_n}-\frac{A_{0,n}}{q_n^2}\right).
\end{equation}

By \eqref{bound-1} and \eqref{1005-4}, we have $\frac{\sqrt{\lambda_n}}{q_n}\to0$. For sufficiently large $n$, set
\begin{equation*}
\zeta_n=\frac{\sqrt{\lambda_n}}{q_n}\in(0,1),\quad \kappa_n=2\pi p_n-\zeta_n\in(0,2\pi p_n).
\end{equation*}
The definitions of $\kappa_n$ and $\zeta_n$ imply
\begin{equation*}
1-\frac{\kappa_n}{4\pi}=\frac{2-p_n}{2}+\frac{\zeta_n}{4\pi},
\end{equation*}
and
\begin{equation*}
m_n^{p_n-1}q_n\zeta_n=m_n^{p_n-1}\sqrt{\lambda_n},\quad \frac{m_n^{p_n-1}\lambda_n}{q_n\zeta_n}=m_n^{p_n-1}\sqrt{\lambda_n}.
\end{equation*}
By \eqref{1005-10},
\begin{equation*}
J_{D_n}\geq\frac{A_{D_n}}{q_n^2}\geq\frac{1}{4\pi}-C\delta_n.
\end{equation*}
Substituting this lower bound and \eqref{A0n-bound} into \eqref{1005-21}, we obtain
\begin{equation*}
\begin{aligned}
E_{D_n}&\leq\kappa_n+\frac{p_n}{2}m_n^{p_n-1}q_n\left(\frac{2-p_n}{2}+\frac{\zeta_n}{4\pi}\right)+Cm_n^{p_n-1}q_n\delta_n+C\frac{m_n^{p_n-1}\lambda_n}{q_n\zeta_n}\\
&\leq4\pi+C(2-p_n)m_n^{p_n-1}q_n+Cm_n^{p_n-1}q_n\delta_n+Cm_n^{p_n-1}\left(q_n\zeta_n+\frac{\lambda_n}{q_n\zeta_n}\right)\\
&\leq4\pi+C(2-p_n)m_n^{p_n-1}q_n+Cm_n^{p_n-1}q_n\delta_n+Cm_n^{p_n-1}\sqrt{\lambda_n}.
\end{aligned}
\end{equation*}
Since $\Theta_n(m_n)\leq C$ by \eqref{uniform-max-bound} and $m_n>1$,
\begin{equation*}
\frac{2-p_n}{2}m_n^{p_n}\leq L_n-2(p_n-1)\log m_n+C\leq L_n+C.
\end{equation*}
Also, $m_n>t_{0,n}$ and $q_n\leq M_n$ imply
\begin{equation*}
\frac{q_n}{m_n}\leq\frac{M_n}{t_{0,n}}=\kappa_0^{-\frac{1}{p_n-1}}M_n^{\ell_n}\leq CM_n^{\ell_n}.
\end{equation*}
Consequently, by \eqref{low-error-small} and \eqref{0831-9},
\begin{equation} \label{exponent-error}
0\leq(2-p_n)q_nm_n^{p_n-1}\leq\frac{2q_n}{m_n}(L_n+C)\leq CM_n^{\ell_n}(L_n+1)\to0.
\end{equation}
Since $q_n\leq M_n$ and $\delta_n=s_n^2\leq s_n$ for sufficiently large $n$, estimates \eqref{1005-2}, \eqref{bound-2} and \eqref{exponent-error} yield
\begin{equation*}
\begin{aligned}
E_{D_n}&\leq4\pi+C(2-p_n)q_nm_n^{p_n-1}+Cm_n^{p_n-1}M_ns_n+Cm_n^{p_n-1}\sqrt{\lambda_n}=4\pi+o(1).
\end{aligned}
\end{equation*}
Therefore,
\begin{equation*}
\limsup_{n\to\infty}E_{D_n}\leq4\pi.
\end{equation*}
Together with \eqref{1005-19}, this proves \eqref{1005-16}.

\vskip 0.1cm

Using $D_n\subset B_{s_n}(a_n)$, \eqref{10} and \eqref{1005-6}, we obtain
\begin{equation*}
\begin{aligned}
0\leq&\frac{p_n}{2}\int_{B_{s_n}(a_n)}u_n^{p_n-1}f_n\mathrm{d}x-E_{D_n}=\frac{p_n}{2}\int_{B_{s_n}(a_n)\setminus D_n}u_n^{p_n-1}f_n\mathrm{d}x\\
\leq&\frac{p_n}{2}m_n^{p_n-1}M_n\int_{B_{4s_n}(a_n)\setminus D_n}\rho_n\mathrm{d}x+\frac{p_n}{2}\int_\Omega u_n^{p_n-1}f_{0,n}\mathrm{d}x\\
\leq& Cm_n^{p_n-1}M_ns_n+C\lambda_nM_n^{2p_n-2}\to0,
\end{aligned}
\end{equation*}
where the last limit follows from \eqref{low-error-small} and \eqref{bound-2}. Together with \eqref{1005-16}, this proves \eqref{1005-17}. Finally, by the definition of $Z_n$,
\begin{equation*}
\begin{aligned}
\frac{p_n}{2}\int_{D_n}\left(1-\frac{4\pi}{Z_n}\right)^2u_n^{p_n-1}f_n\mathrm{d}x
&=\frac{p_n}{2}\int_{D_n}(Z_n-4\pi)^2\frac{u_n^{p_n-1}f_n}{Z_n^2}\mathrm{d}x\\
&=\frac{2\lambda_n}{q_n^2}\int_{D_n}(Z_n-4\pi)^2h\mathrm{e}^{u_n^{p_n}}\mathrm{d}x\\
&=E_{D_n}-\frac{8\pi}{q_n}\int_{D_n}f_n\mathrm{d}x+16\pi^2J_{D_n}\\
&=E_{D_n}-8\pi+16\pi^2J_{D_n}\to0.
\end{aligned}
\end{equation*}
This proves \eqref{1005-18}.
\end{proof}

\subsection{The component maximum}\

\vskip 0.1cm

In this subsection, we combine the local blow-up profile with \eqref{1005-18} to relate the maximum of $u_n$ on $D_n$ to its source mass. Since $u_n=t_{0,n}$ on $\partial D_n$ and $u_n>t_{0,n}$ in $D_n$, choose $x_{1,n}\in D_n$ such that
\begin{equation*}
\gamma_n=u_n(x_{1,n})=\max_{\overline{D_n}}u_n,\quad \nabla u_n(x_{1,n})=0.
\end{equation*}
Recall that $Z_n=\frac{p_n}{2}q_nu_n^{p_n-1}$.

\begin{proposition} \label{prop-component-max}
If $\beta_n\to \infty$, then it holds
\begin{equation} \label{1005-22}
Z_n(x_{1,n})=\frac{p_n}{2}\gamma_n^{p_n-1}q_n\to4\pi.
\end{equation}
Moreover,
\begin{equation} \label{1005-24}
u_n\leq\gamma_n+\frac{C}{\gamma_n^{p_n-1}}\quad\text{in }B_{s_n}(a_n).
\end{equation}
\end{proposition}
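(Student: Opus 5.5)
We prove \eqref{1005-24} first and then use it to run Lemma \ref{lem-critical-core}-(i) at $x_{1,n}$. Decompose $u_n=v_n+h_{D_n}$ as in \eqref{1005-7}, with $v_n=G(f_n\mathbf 1_{D_n})$. Outside $D_n$ we have $v_n\le u_n$ there only trivially, so we argue through harmonicity. The function $v_n$ is superharmonic in $\Omega$ and harmonic in $\Omega\setminus\overline{D_n}$. On $\partial D_n$ we have $v_n=t_{0,n}-h_{D_n}\le\gamma_n-\inf_{B_{s_n}(a_n)}h_{D_n}$. Inside $D_n$ we have $v_n\le\gamma_n-\inf_{B_{s_n}(a_n)}h_{D_n}$ as well, since $u_n\le\gamma_n$ there.

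Now $v_n$ is harmonic outside $\overline{D_n}$ and tends to $0$ at $\partial\Omega$. The maximum principle therefore gives $v_n\le\max_{\overline{D_n}}v_n\le\gamma_n-\inf_{B_{s_n}(a_n)}h_{D_n}$ on $\Omega\setminus D_n$. Hence in $B_{s_n}(a_n)$,
\[
u_n\le\gamma_n+\operatorname{osc}_{B_{s_n}(a_n)}h_{D_n}.
\]
By \eqref{1005-9} and $\gamma_n\le m_n$, this oscillation is $o(m_n^{1-p_n})$. To convert this into $C\gamma_n^{1-p_n}$, I need $\gamma_n^{p_n-1}\ge c\,m_n^{p_n-1}$, or at least that $m_n^{p_n-1}\operatorname{osc}$ dominates $\gamma_n^{p_n-1}\operatorname{osc}$. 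The latter is trivial since $\gamma_n\le m_n$. So $\gamma_n^{p_n-1}(u_n-\gamma_n)\le o(1)$ in $B_{s_n}(a_n)$, which gives \eqref{1005-24}, in fact with $o(1)$ in place of $C$.

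Next I verify the hypotheses of Lemma \ref{lem-critical-core}-(i) at $x_n=x_{1,n}$, with $b_n=\gamma_n$. Interiority holds because $D_n\subset\mathcal K$. Since $\gamma_n>t_{0,n}=(\kappa_0/M_n)^{1/(p_n-1)}$, we get $\gamma_n^{p_n}\ge cM_n^{-\ell_n}$. Together with \eqref{0831-9}, this gives $\gamma_n\to\infty$ and $L_n/\gamma_n^{p_n}\le CM_n^{\ell_n}L_n\to0$. The gradient vanishes at $x_{1,n}$.

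For the upper bound on $\xi_n$ on $B_R$, the bound \eqref{1005-24} yields $\xi_n\le o(1)$ on $B_{s_n}(a_n)$. So I must check that $B_{R\nu_n}(x_{1,n})\subset B_{s_n}(a_n)$, i.e.\ $\nu_n\ll s_n$. From \eqref{point-scale}, $\log(1/\nu_n)\ge\frac{\gamma_n^{p_n}-L_n}{2}-C\ge cM_n^{-\ell_n}$. Meanwhile $\log(1/s_n)=\pi\kappa_0^{1/(p_n-1)}M_n^{-\ell_n}+O(1)$. This comparison of constants is the main obstacle. I expect to handle it by using the quantity $\gamma_n^{p_n}\ge t_{0,n}^{p_n}=\kappa_0^{\ell_n}M_n^{-\ell_n}$ and comparing $\kappa_0^{2}/2$ with $\pi\kappa_0$ for $p_n\to2$. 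Since $\kappa_0<2\pi$, we have $\kappa_0^2/2<\pi\kappa_0$, which has the wrong sign. If so, I would instead improve the lower bound on $\gamma_n$ using $E_{D_n}\to4\pi$ and $u_n\le\gamma_n$, which gives $\frac{p_n}{2}\gamma_n^{p_n-1}q_n\ge E_{D_n}\to4\pi$. This yields $\gamma_n^{p_n}\gtrsim(4\pi/M_n)^{\ell_n}$. Then $\frac12(4\pi)^2M_n^{-2}=8\pi^2M_n^{-2}>\pi\kappa_0M_n^{-2}$, so $\nu_n/s_n\to0$ (with $\ell_n\to2$ absorbed via $M_n^{\ell_n}L_n\to0$-type bounds). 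Lemma \ref{lem-critical-core} then gives the profile \eqref{core-limit} and \eqref{core-masses}.

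Finally, I prove \eqref{1005-22}. The inequality $\liminf Z_n(x_{1,n})\ge4\pi$ holds since $E_{D_n}\le Z_n(x_{1,n})$. For the upper bound I use \eqref{1005-18}. On $B_{\nu_n R}(x_{1,n})$ we have $u_n=\gamma_n(1+o(1)\gamma_n^{-p_n})$, so $Z_n=Z_n(x_{1,n})(1+o(1))$ there. By \eqref{core-masses}, that ball carries mass $\frac{p_n}{2}\int u_n^{p_n-1}f_n\to4\pi R^2/(1+R^2)$, and this ball lies inside $D_n$ because $u_n\ge\gamma_n/2>t_{0,n}$ on it. Therefore
\[
\Bigl(1-\frac{4\pi}{\lim Z_n(x_{1,n})}\Bigr)^2\frac{4\pi R^2}{1+R^2}\le\lim \text{(LHS of \eqref{1005-18})}=0,
\]
which forces $Z_n(x_{1,n})\to4\pi$.
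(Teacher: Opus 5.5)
Your proposal is correct and follows essentially the same route as the paper. It proves $u_n\le\gamma_n+\operatorname{osc}_{B_{s_n}(a_n)}h_{D_n}$ by the maximum principle applied to $v_n$ in $u_n=v_n+h_{D_n}$, and it obtains $M_n^{\ell_n}\gamma_n^{p_n}\gtrsim 16\pi^2$ from $E_{D_n}\to4\pi$, which gives $s_n/\mu_n\to\infty$ because $8\pi^2>\pi\kappa_0$. It then applies Lemma \ref{lem-critical-core}-(i) at $x_{1,n}$ and uses \eqref{1005-18} on the core ball inside $D_n$.

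Two small points should be tightened. First, the claim $\gamma_n/2>t_{0,n}$ needs the justification $(\gamma_n/t_{0,n})^{p_n-1}\ge 2E_{D_n}/(p_n\kappa_0)\to4\pi/\kappa_0>2$. Second, $Z_n(x_{1,n})$ is not known a priori to converge or even to stay bounded, so the final step should be run along subsequences, using $Z_n(x_{1,n})\ge E_{D_n}\ge2\pi$ to control $\bigl(1-\frac{4\pi}{Z_n(x_{1,n})}\bigr)^2$, as the paper does.
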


\begin{proof}
The function $v_n$ defined in \eqref{1005-7} is harmonic in $\Omega\setminus\overline{D_n}$ and vanishes on $\partial\Omega$. Since $h_{D_n}\geq0$,
\begin{equation*}
0\leq v_n=u_n-h_{D_n}\leq u_n=t_{0,n}<\gamma_n\quad\text{on }\partial D_n.
\end{equation*}
The maximum principle therefore yields
\begin{equation*}
\max_{\overline\Omega}v_n=\max_{\overline{D_n}}v_n.
\end{equation*}
Choose $y_n\in\overline{D_n}$ such that $v_n(y_n)=\max_{\overline\Omega}v_n$. Since $a_n\in D_n$ and $\operatorname{diam}D_n\leq s_n^2<s_n$ for sufficiently large $n$, we have $\overline{D_n}\subset B_{s_n}(a_n)$. For $x\in B_{s_n}(a_n)$,
\begin{equation*}
\begin{aligned}
u_n(x)&=v_n(x)+h_{D_n}(x)\leq v_n(y_n)+h_{D_n}(x)\\
&=u_n(y_n)+h_{D_n}(x)-h_{D_n}(y_n)
\leq\gamma_n+\operatorname{osc}_{B_{s_n}(a_n)}h_{D_n}.
\end{aligned}
\end{equation*}
Since $u_n(x_{1,n})=\gamma_n$ and $x_{1,n}\in B_{s_n}(a_n)$, using $\gamma_n\leq m_n$, $p_n\leq2$ and \eqref{1005-9}, we obtain
\begin{equation} \label{1005-25}
0\leq\sup_{B_{s_n}(a_n)}\frac{p_n}{2}\gamma_n^{p_n-1}(u_n-\gamma_n)\leq m_n^{p_n-1}\operatorname{osc}_{B_{s_n}(a_n)}h_{D_n}\to0.
\end{equation}
This proves \eqref{1005-24} for sufficiently large $n$.

\vskip 0.1cm

Since $u_n\leq\gamma_n$ in $D_n$ and $q_n\leq M_n$, we have
\begin{equation*}
E_{D_n}\leq\frac{p_n}{2}\gamma_n^{p_n-1}q_n\leq\frac{p_n}{2}\gamma_n^{p_n-1}M_n.
\end{equation*}
By \eqref{1005-16} and $M_n\to0$, we obtain $\gamma_n\to\infty$ and
\begin{equation} \label{1005-25-1}
M_n^{\ell_n}\gamma_n^{p_n}\geq\left(\frac{2E_{D_n}}{p_n}\right)^{\ell_n}\to16\pi^2.
\end{equation}
Combining \eqref{1005-25-1} with \eqref{0831-9}, we obtain
\begin{equation} \label{1005-25-2}
0\leq\frac{L_n}{\gamma_n^{p_n}}\leq\frac{M_n^{\ell_n}L_n}{\left(\frac{2E_{D_n}}{p_n}\right)^{\ell_n}}\to0.
\end{equation}

Set
\begin{equation} \label{1005-26}
\begin{aligned}
\mu_n&=\frac{2\sqrt{2}}{p_n\sqrt{\lambda_n h(x_{1,n})}\gamma_n^{p_n-1}\mathrm{e}^{\frac{\gamma_n^{p_n}}{2}}},\quad
\eta_n(y)=\frac{p_n}{2}\gamma_n^{p_n-1}\big(u_n(x_{1,n}+\mu_ny)-\gamma_n\big).
\end{aligned}
\end{equation}
Since $s_n=\sqrt{\delta_n}$ and $\gamma_n>1$ for sufficiently large $n$, \eqref{0831-9} and \eqref{1005-25-1} yield
\begin{equation*}
\begin{aligned}
M_n^{\ell_n}\log\frac{s_n}{\mu_n}
&=\frac{M_n^{\ell_n}\gamma_n^{p_n}}{2}-\frac{M_n^{\ell_n}L_n}{2}+(p_n-1)M_n^{\ell_n}\log\gamma_n\\
&\quad-\pi\kappa_0^{\frac{1}{p_n-1}}+M_n^{\ell_n}\log\frac{p_n\sqrt{C_0h(x_{1,n})}}{2\sqrt{2}}\\
&\geq\frac{M_n^{\ell_n}\gamma_n^{p_n}}{2}-\frac{M_n^{\ell_n}L_n}{2}-\pi\kappa_0^{\frac{1}{p_n-1}}-CM_n^{\ell_n}\\
&\geq\frac{1}{2}\left(\frac{2E_{D_n}}{p_n}\right)^{\ell_n}-\pi\kappa_0^{\frac{1}{p_n-1}}+o(1)=8\pi^2-\pi\kappa_0+o(1).
\end{aligned}
\end{equation*}
Since $0<\kappa_0<2\pi$ and $M_n\to0$, we obtain
\begin{equation} \label{1005-27}
\frac{s_n}{\mu_n}\to\infty.
\end{equation}
For every fixed $R>0$, the bound $\operatorname{diam}D_n\leq s_n^2$ implies
\begin{equation*}
\frac{|x_{1,n}-a_n|+R\mu_n}{s_n}\leq s_n+\frac{R\mu_n}{s_n}\to0.
\end{equation*}
Thus $B_{R\mu_n}(x_{1,n})\subset B_{s_n}(a_n)$ for sufficiently large $n$. Estimate \eqref{1005-25} and $\eta_n(0)=0$ yield
\begin{equation*}
0\leq\sup_{B_R}\eta_n\leq\sup_{B_{s_n}(a_n)}\frac{p_n}{2}\gamma_n^{p_n-1}(u_n-\gamma_n)\to0.
\end{equation*}
Since $x_{1,n}\in\mathcal{K}\subset\subset\Omega$, $\nabla u_n(x_{1,n})=0$ and $\gamma_n\to\infty$, we obtain from \eqref{1005-25-2} and Lemma \ref{lem-critical-core}-(i) that
\begin{equation} \label{1005-28}
\eta_n\to-\log(1+|y|^2)\quad\text{in }C^2_{\mathrm{loc}}(\mathbb{R}^2),
\end{equation}
and
\begin{equation} \label{1005-28-1}
\frac{p_n}{2}\int_{B_{R\mu_n}(x_{1,n})}u_n^{p_n-1}f_n\mathrm{d}x\to\frac{4\pi R^2}{1+R^2}.
\end{equation}
Moreover, by \eqref{1005-28},
\begin{equation*}
\sup_{B_{\mu_n}(x_{1,n})}\left|\frac{u_n}{\gamma_n}-1\right|=\frac{2}{p_n\gamma_n^{p_n}}\sup_{B_1}|\eta_n|\leq\frac{C}{\gamma_n^{p_n}}.
\end{equation*}
Since $\frac{Z_n}{Z_n(x_{1,n})}=\left(\frac{u_n}{\gamma_n}\right)^{p_n-1}$ and $p_n\in[\frac{3}{2},2]$, \eqref{1005-28} and the mean value theorem yield
\begin{equation} \label{1005-28-2}
\sup_{B_{\mu_n}(x_{1,n})}\left|\frac{Z_n}{Z_n(x_{1,n})}-1\right|+\sup_{B_{\mu_n}(x_{1,n})}\left|\frac{Z_n(x_{1,n})}{Z_n}-1\right|\leq\frac{C}{\gamma_n^{p_n}}.
\end{equation}
Using \eqref{1005-28-2}, $E_{D_n}\leq\frac{p_n}{2}\gamma_n^{p_n-1}M_n$ and \eqref{1005-16}, we obtain
\begin{equation*}
\begin{aligned}
\inf_{B_{\mu_n}(x_{1,n})}M_nu_n^{p_n-1}
&=M_n\gamma_n^{p_n-1}\inf_{B_{\mu_n}(x_{1,n})}\frac{Z_n}{Z_n(x_{1,n})}\\
&\geq M_n\gamma_n^{p_n-1}\left(1-\frac{C}{\gamma_n^{p_n}}\right)\\
&\geq\frac{2E_{D_n}}{p_n}\left(1-\frac{C}{\gamma_n^{p_n}}\right)=4\pi+o(1)>\kappa_0.
\end{aligned}
\end{equation*}
Since $M_nt_{0,n}^{p_n-1}=\kappa_0$, we have
\begin{equation*}
u_n>t_{0,n} \quad \text{in } B_{\mu_n}(x_{1,n}).
\end{equation*}
This ball is connected and contains $x_{1,n}\in D_n$, so $B_{\mu_n}(x_{1,n})\subset D_n$.

\vskip 0.1cm

Since $Z_n\leq Z_n(x_{1,n})$ in $D_n$, we have
\begin{equation*}
Z_n(x_{1,n})\geq\frac{1}{q_n}\int_{D_n}Z_nf_n\mathrm{d}x=E_{D_n}\geq2\pi
\end{equation*}
for sufficiently large $n$ by \eqref{1005-16}. Hence $|1-\frac{4\pi}{Z_n(x_{1,n})}|\leq1$. Estimate \eqref{1005-28-2} also implies $\frac{4\pi}{Z_n}\leq4$ in $B_{\mu_n}(x_{1,n})$. It follows that
\begin{equation*}
\begin{aligned}
&\sup_{B_{\mu_n}(x_{1,n})}\left|\left(1-\frac{4\pi}{Z_n}\right)^2-\left(1-\frac{4\pi}{Z_n(x_{1,n})}\right)^2\right|\\
&\leq\frac{4\pi}{Z_n(x_{1,n})}\sup_{B_{\mu_n}(x_{1,n})}\left|\frac{Z_n(x_{1,n})}{Z_n}-1\right| \cdot \sup_{B_{\mu_n}(x_{1,n})}\left|2-\frac{4\pi}{Z_n}-\frac{4\pi}{Z_n(x_{1,n})}\right|\\
&\leq\frac{C}{Z_n(x_{1,n})}\sup_{B_{\mu_n}(x_{1,n})}\left|\frac{Z_n(x_{1,n})}{Z_n}-1\right|\leq\frac{C}{\gamma_n^{p_n}}.
\end{aligned}
\end{equation*}
Together with \eqref{1005-18} and \eqref{1005-28-1} with $R=1$, this gives
\begin{equation*}
\begin{aligned}
2\pi\left(1-\frac{4\pi}{Z_n(x_{1,n})}\right)^2
&=\left(1-\frac{4\pi}{Z_n(x_{1,n})}\right)^2\frac{p_n}{2}\int_{B_{\mu_n}(x_{1,n})}u_n^{p_n-1}f_n\mathrm{d}x+o(1)\\
&=\frac{p_n}{2}\int_{B_{\mu_n}(x_{1,n})}\left(1-\frac{4\pi}{Z_n}\right)^2u_n^{p_n-1}f_n\mathrm{d}x+o(1)\\
&\leq\frac{p_n}{2}\int_{D_n}\left(1-\frac{4\pi}{Z_n}\right)^2u_n^{p_n-1}f_n\mathrm{d}x+o(1)\to0.
\end{aligned}
\end{equation*}
Hence $Z_n(x_{1,n})\to4\pi$, which proves \eqref{1005-22}.
\end{proof}

\section{Radial comparison and the parameter bound} \label{sec-final}

In this section, we complete the proof of Theorem \ref{thm-simply-connected}. The integral limit \eqref{1005-17} and the point-selection argument from Section \ref{sec-cores} exclude a second blow-up point near $x_{1,n}$. The resulting pointwise bound and the Green representation provide the gradient estimate needed for radial comparison. We then use the circle mean identity and compare the radial estimate with the Green representation to obtain $\Theta_n(\gamma_n)\geq c>0$. Since $\gamma_n\leq m_n$ and $\Theta_n$ is nondecreasing, this contradicts \eqref{1005-2}.

\vskip 0.1cm

We assume \eqref{1005-1} and use the notations in Section \ref{sec-background}. For a function $w$ defined near $x_{1,n}$, write
\begin{equation*}
\overline w(r)=\frac{1}{2\pi}\int_0^{2\pi}w\big(x_{1,n}+r(\cos\theta,\sin\theta)\big)\mathrm{d}\theta.
\end{equation*}

\begin{lemma}
If \eqref{1005-1} holds, then there exists a constant $C>0$, depending only on $\Omega$, $\kappa_0$, $\|h\|_{C^2(\overline\Omega)}$ and $(\min_{\overline\Omega}h)^{-1}$, such that, for all sufficiently large $n$,
\begin{equation} \label{heavy-density}
|x-x_{1,n}|^2\frac{p_n^2\lambda_nh(x)}{2}u_n(x)^{2(p_n-1)}\mathrm{e}^{u_n(x)^{p_n}}\leq C \quad\text{in }B_{\frac{s_n}{8}}(x_{1,n}),
\end{equation}
and
\begin{equation} \label{heavy-gradient}
|x-x_{1,n}||\nabla u_n(x)|\leq\frac{C}{\gamma_n^{p_n-1}} \quad\text{in }B_{\frac{s_n}{64}}(x_{1,n}).
\end{equation}
\end{lemma}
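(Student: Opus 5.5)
We mirror the proof of \eqref{density-bound} and \eqref{global-max-gradient} in Proposition \ref{prop-uniform-max}, with $x_{1,n}$, $\gamma_n$, $\mu_n$ from \eqref{1005-26} replacing $x_{0,n}$, $m_n$, and the integral limit \eqref{1005-17} replacing the mass bound \eqref{source-reaction}.

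\textbf{Density bound.} Argue by contradiction: suppose that, along a subsequence, $\sup_{B_{s_n/8}(x_{1,n})}|z-x_{1,n}|^2\mathcal{F}_n(z)\to\infty$. Set $d_n(z)=\min\{|z-x_{1,n}|,\frac{s_n}{4}-|z-x_{1,n}|\}$, which is $1$-Lipschitz, and choose $x_n$ maximizing $d_n^2\mathcal{F}_n$ on $\overline{B_{s_n/4}(x_{1,n})}$. Then $\Lambda_n\to\infty$ and $\nu_n\leq 2d_n(x_n)/\sqrt{\Lambda_n}\to0$. For each fixed $R$ we have $\overline{B_{R\nu_n}(x_n)}\subset B_{s_n/4}(x_{1,n})$, and \eqref{selection-upper} verifies condition (ii) of Lemma \ref{lem-critical-core}.

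To apply that lemma we also need $b_n=u_n(x_n)\to\infty$ and $L_n/b_n^{p_n}\to0$. Solve for $b_n^{p_n}$ from $\Lambda_n$ as in the proof of Proposition \ref{prop-uniform-max}, using $d_n(x_n)\leq s_n$ and $b_n\leq\gamma_n+C\gamma_n^{1-p_n}$ from \eqref{1005-24}. This gives
\begin{equation*}
b_n^{p_n}\geq L_n+\log\Lambda_n-2\log s_n-2\log\gamma_n-C.
\end{equation*}
By Proposition \ref{prop-strong-scale}, $-\log s_n=\pi\kappa_0^{1/(p_n-1)}M_n^{-\ell_n}+O(1)$. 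By \eqref{1005-25-1}, $\gamma_n^{p_n}\leq CM_n^{-\ell_n}$ up to constants. Hence $b_n^{p_n}\geq cM_n^{-\ell_n}\to\infty$, and \eqref{0831-9} gives $L_n/b_n^{p_n}\to0$. Lemma \ref{lem-critical-core} then yields a second bubble carrying mass $\frac{4\pi R^2}{1+R^2}$ in $B_{R\nu_n}(x_n)$.

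Next, as in Proposition \ref{prop-uniform-max}, compare $\Lambda_n\leq|x_n-x_{1,n}|^2\mathcal{F}_n(x_n)$ with $\mathcal{F}_n(x_n)\leq C\mu_n^{-2}$. The latter uses $u_n\leq\gamma_n+C\gamma_n^{1-p_n}$, so $\mathrm{e}^{u_n^{p_n}}\leq C\mathrm{e}^{\gamma_n^{p_n}}$. This gives $|x_n-x_{1,n}|/\mu_n\to\infty$ and $\nu_n/|x_n-x_{1,n}|\to0$, so the balls $B_{R\mu_n}(x_{1,n})$ and $B_{R\nu_n}(x_n)$ are disjoint and both lie in $B_{s_n}(a_n)$. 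Adding \eqref{1005-28-1} and the second-bubble mass gives at least $8\pi R^2/(1+R^2)>4\pi+\epsilon$, contradicting \eqref{1005-17}. This proves \eqref{heavy-density}. We expect this step to be the main obstacle, specifically checking that the lower bound for $b_n^{p_n}$ dominates $L_n$ using only $s_n$-scale information; the key input is that $\log(1/s_n)$ and $\gamma_n^{p_n}$ are both of order $M_n^{-\ell_n}$ while $L_n=o(M_n^{-\ell_n})$.

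\textbf{Gradient bound.} First obtain $u_n\geq\gamma_n/2$ on $B_{s_n/32}(x_{1,n})$. If this lower bound is not available on the whole ball, use the weaker bound $u_n\geq t_{0,n}$ together with $u_n^{p_n-1}f_n$ bookkeeping.

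Fix $x$ with $d=|x-x_{1,n}|\leq s_n/64$ and write $\nabla u_n(x)=\int\nabla_xG(x,y)f_n(y)\,\mathrm{d}y$, split into three regions.
\begin{enumerate}
\item[(a)] On $B_{d/2}(x)$, \eqref{heavy-density} gives $f_n=2\mathcal{F}_n/(p_nu_n^{p_n-1})\leq C\gamma_n^{1-p_n}d^{-2}$, which contributes $C\gamma_n^{1-p_n}d^{-1}$.
\item[(b)] On $B_{s_n/4}(x_{1,n})\setminus B_{d/2}(x)$, use \eqref{6}. The contribution is at most $Cd^{-1}$ times $\int_{B_{s_n}(a_n)}f_n$. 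By $u_n^{p_n-1}$-weighting and \eqref{1005-17}, or directly from \eqref{1005-4}, \eqref{1005-6} and \eqref{1005-22}, this integral is at most $C\gamma_n^{1-p_n}$. Any part of the integral where $u_n$ is small is handled by \eqref{10}.
\item[(c)] Outside $B_{s_n/4}(x_{1,n})$, \eqref{1005-14} and \eqref{1005-13} bound the gradient by $CM_n$. Since $d\leq s_n$, this contributes $dCM_n\leq CM_ns_n=o(\gamma_n^{1-p_n})$ by \eqref{bound-2}, using $\gamma_n\leq m_n$.
\end{enumerate}
Summing (a)--(c) gives \eqref{heavy-gradient}.

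If the lower bound $u_n\geq\gamma_n/2$ is unavailable, we would obtain it as in \eqref{harmonic-lower}, by superharmonic comparison on annuli around $x_{1,n}$, restricting to the range where it holds.
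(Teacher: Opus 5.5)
Your density argument is essentially the paper's: the same cut-off $d_{1,n}$, the same point selection, the same use of \eqref{1005-24} to get $\mathcal{F}_n\le C\mu_n^{-2}$ near $x_{1,n}$ and separate the two bubbles, and the same contradiction with \eqref{1005-17}. One citation is reversed. \eqref{1005-25-1} is a \emph{lower} bound on $M_n^{\ell_n}\gamma_n^{p_n}$. The upper bound $\gamma_n^{p_n}\le CM_n^{-\ell_n}$, which you use to absorb $2\log\gamma_n$, comes from \eqref{1005-22} and \eqref{1005-4}.

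The gradient step, however, rests on a false claim. The bound $u_n\ge\gamma_n/2$ on $B_{s_n/32}(x_{1,n})$ cannot hold. By Proposition \ref{prop-localization}, every component of $\{u_n>t_{0,n}\}$ has diameter at most $\delta_n=s_n^2$. So each circle $\partial B_r(x_{1,n})$ with $\delta_n/2<r<s_n/32$, being connected, contains a point where $u_n\le t_{0,n}$. On the other hand, \eqref{1005-22} and $q_n\le M_n$ give $(t_{0,n}/\gamma_n)^{p_n-1}=\kappa_0/(M_n\gamma_n^{p_n-1})\le\frac{p_n\kappa_0}{8\pi}(1+o(1))$, so $t_{0,n}<\gamma_n/2$ for large $n$ because $\kappa_0<2\pi$. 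For the same reason, $u_n>t_{0,n}$ is not available on the ball either. Shrinking to the radius range where $u_n\ge\gamma_n/2$ does hold, as in \eqref{harmonic-lower}, proves the estimate only on a far smaller ball than $B_{s_n/64}(x_{1,n})$. So your region (a) has no justified pointwise bound on $f_n$. Invoking \eqref{10} does not fill the hole, since an $L^1$ bound on $f_{0,n}$ does not control $\int_{B_{d/2}(x)}|\nabla_xG(x,y)|f_{0,n}(y)\,\mathrm{d}y$ near the singularity.

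The missing idea is to use the lower bound only where it is true, namely on $\operatorname{supp}f_{1,n}$.
\begin{itemize}
\item Write $u_n=Gf_{1,n}+v_{0,n}$. On $\operatorname{supp}f_{1,n}$ one has $u_n^{p_n-1}>t_{0,n}^{p_n-1}=\kappa_0/M_n\ge c\gamma_n^{p_n-1}$, because $M_n\le q_n/c_*\le C\gamma_n^{1-p_n}$.
\item Hence \eqref{heavy-density} gives $f_{1,n}(y)\le C\gamma_n^{1-p_n}|y-x_{1,n}|^{-2}$ in $B_{s_n/8}(x_{1,n})$.
\item The contribution of $B_{d/2}(x)$ to $\nabla Gf_{1,n}(x)$ is then at most $C\gamma_n^{1-p_n}d^{-1}$. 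The rest is at most $CM_nd^{-1}\le C\gamma_n^{1-p_n}d^{-1}$, so your (b) and (c) collapse into this one bound.
\item The whole $f_{0,n}$ part is handled globally by \eqref{12}: $d|\nabla v_{0,n}|\le C\lambda_nM_n^{p_n-1}d=o(\gamma_n^{1-p_n})$ by \eqref{low-error-small}.
\end{itemize}
This is the paper's route. Alternatively, your structure survives if you replace $\gamma_n/2$ by a small multiple of $\gamma_n$. For $x\in B_{s_n}(a_n)$, domain monotonicity of $G$ gives $u_n(x)\ge v_n(x)\ge\frac{q_n}{2\pi}\log\frac{\delta_*}{4s_n}$. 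Using $\log\frac{1}{s_n}=\pi\kappa_0^{1/(p_n-1)}M_n^{-\ell_n}+O(1)$ and $q_n\ge c_*M_n$, this is at least $\frac{c_*}{4}t_{0,n}$ for large $n$. Hence $u_n^{p_n-1}\ge c\gamma_n^{p_n-1}$ on the whole ball, and your region (a) estimate then follows from \eqref{heavy-density}.
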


\begin{proof}
If \eqref{heavy-density} fails, then after passing to a subsequence,
\begin{equation*}
\sup_{B_{\frac{s_n}{8}}(x_{1,n})}|y-x_{1,n}|^2 \frac{p_n^2\lambda_nh(y)}{2}u_n(y)^{2(p_n-1)}\mathrm{e}^{u_n(y)^{p_n}}\to\infty.
\end{equation*}
In $\overline{B_{\frac{s_n}{4}}(x_{1,n})}$, set
\begin{equation*}
d_{1,n}(y)=\min\left\{|y-x_{1,n}|,\frac{s_n}{4}-|y-x_{1,n}|\right\}.
\end{equation*}
This function is $1$-Lipschitz. Choose $x_n$ maximizing $d_{1,n}^2\mathcal{F}_n$ in this closed ball. Set
\begin{equation*}
\begin{aligned}
b_n&=u_n(x_n),\quad \Lambda_n=d_{1,n}(x_n)^2\frac{p_n^2\lambda_nh(x_n)}{2}b_n^{2(p_n-1)}\mathrm{e}^{b_n^{p_n}},\quad
\widehat\mu_n =\frac{2\sqrt{2}}{p_n\sqrt{\lambda_n h(x_n)}b_n^{p_n-1}\mathrm{e}^{\frac{b_n^{p_n}}{2}}}.
\end{aligned}
\end{equation*}
Since $d_{1,n}(y)=|y-x_{1,n}|$ for $|y-x_{1,n}|\leq \frac{s_n}{8}$, the choice of $x_n$ gives
\begin{equation*}
\Lambda_n\to\infty,\quad d_{1,n}(x_n)>0,\quad \frac{\widehat\mu_n}{d_{1,n}(x_n)}=\frac{2}{\sqrt{\Lambda_n}}\to0.
\end{equation*}
In particular, $x_n\neq x_{1,n}$ and $x_n\in B_{\frac{s_n}{4}}(x_{1,n})$ for sufficiently large $n$.

\vskip 0.1cm

Since $|x_{1,n}-a_n|\leq\delta_n=s_n^2$,
\begin{equation*}
\frac{|x_{1,n}-a_n|+\frac{s_n}{4}}{s_n}\leq s_n+\frac{1}{4}<1
\end{equation*}
for sufficiently large $n$. Hence $\overline{B_{\frac{s_n}{4}}(x_{1,n})}\subset B_{s_n}(a_n)$. Since $b_n\leq m_n$ and $p_n\leq2$, \eqref{uniform-max-bound} gives
\begin{equation*}
L_n-2(p_n-1)\log b_n=-\log(\lambda_nb_n^{2(p_n-1)})\geq-\log\Theta_n(m_n)\geq-C.
\end{equation*}
Using $d_{1,n}(x_n)\leq\frac{s_n}{4}$ and $\Lambda_n\geq1$ for sufficiently large $n$, we obtain
\begin{equation} \label{heavy-height}
\begin{aligned}
b_n^{p_n} &=L_n-2(p_n-1)\log b_n+\log\frac{2\Lambda_n}{p_n^2h(x_n)}-2\log d_{1,n}(x_n)\\
&\geq\log\Lambda_n+2\log\frac{1}{s_n}-C
\geq\frac{2\pi\kappa_0^{\frac{1}{p_n-1}}}{M_n^{\ell_n}}-C\geq\frac{c}{M_n^{\ell_n}}\to\infty.
\end{aligned}
\end{equation}
Thus $b_n\to\infty$. Moreover, \eqref{0831-9} implies
\begin{equation} \label{heavy-log}
0\leq\frac{L_n}{b_n^{p_n}}\leq\frac{1}{c}M_n^{\ell_n}L_n\to0.
\end{equation}

Fix $A>0$. For sufficiently large $n$,
\begin{equation*}
A\widehat\mu_n<d_{1,n}(x_n)\leq\frac{s_n}{4}-|x_n-x_{1,n}|,
\end{equation*}
so $B_{A\widehat\mu_n}(x_n)\subset B_{\frac{s_n}{4}}(x_{1,n})\subset\subset\Omega$.
Since $x_{1,n}\in\mathcal{K}\subset\subset\Omega$ and $s_n\to0$,
\begin{equation} \label{heavy-interior}
\operatorname{dist}(x_n,\partial\Omega)\geq\operatorname{dist}(\mathcal{K},\partial\Omega)-\frac{s_n}{4}\geq\frac{1}{2}\operatorname{dist}(\mathcal{K},\partial\Omega)>0
\end{equation}
for sufficiently large $n$. By \eqref{selection-upper},
\begin{equation} \label{heavy-upper}
0\leq\sup_{|z|\leq A}\frac{p_n}{2}b_n^{p_n-1}\left(u_n(x_n+\widehat\mu_nz)-b_n\right)\leq-\log\left(1-\frac{2A}{\sqrt{\Lambda_n}}\right)+CA\widehat\mu_n\to0.
\end{equation}
By \eqref{heavy-height}-\eqref{heavy-upper}, Lemma \ref{lem-critical-core}-(ii) yields
\begin{equation*}
\frac{p_n}{2}\int_{B_{A\widehat\mu_n}(x_n)}u_n^{p_n-1}f_n\mathrm{d}x\to\frac{4\pi A^2}{1+A^2}.
\end{equation*}

For $y\in B_{\frac{s_n}{4}}(x_{1,n})$, \eqref{1005-24} and \eqref{1005-26} yield
\begin{equation*}
\begin{aligned}
\mu_n^2\frac{p_n^2\lambda_nh(y)}{2}u_n(y)^{2(p_n-1)}\mathrm{e}^{u_n(y)^{p_n}}
&=4\frac{h(y)}{h(x_{1,n})}\left(\frac{u_n(y)}{\gamma_n}\right)^{2(p_n-1)}\mathrm{e}^{u_n(y)^{p_n}
-\gamma_n^{p_n}}\\
&\leq C\left(1+\frac{C}{\gamma_n^{p_n}}\right)^{2(p_n-1)}\mathrm{e}^{C}\leq C.
\end{aligned}
\end{equation*}
Applying this bound at $x_n$ and using $d_{1,n}(x_n)\leq|x_n-x_{1,n}|$, we obtain
\begin{equation*}
\Lambda_n=d_{1,n}(x_n)^2\frac{p_n^2\lambda_nh(x_n)}{2}b_n^{2(p_n-1)}\mathrm{e}^{b_n^{p_n}}\leq\frac{Cd_{1,n}(x_n)^2}{\mu_n^2}\leq\frac{C|x_n-x_{1,n}|^2}{\mu_n^2}.
\end{equation*}
Consequently,
\begin{equation*}
\frac{\mu_n}{|x_n-x_{1,n}|}\leq\frac{C}{\sqrt{\Lambda_n}}\to0,\quad
\frac{\widehat\mu_n}{|x_n-x_{1,n}|}\leq\frac{\widehat\mu_n}{d_{1,n}(x_n)}=\frac{2}{\sqrt{\Lambda_n}}\to0.
\end{equation*}
Fix $A>1$. Using \eqref{1005-27}, we obtain
\begin{equation*}
\frac{|x_{1,n}-a_n|+A\mu_n}{s_n}\leq s_n+\frac{A\mu_n}{s_n}\to0,\quad
\frac{A(\mu_n+\widehat\mu_n)}{|x_n-x_{1,n}|}\to0.
\end{equation*}
Together with $B_{A\widehat\mu_n}(x_n)\subset B_{\frac{s_n}{4}}(x_{1,n})$, this implies
\begin{equation*}
B_{A\mu_n}(x_{1,n})\cup B_{A\widehat\mu_n}(x_n)\subset B_{s_n}(a_n),\quad
B_{A\mu_n}(x_{1,n})\cap B_{A\widehat\mu_n}(x_n)=\emptyset
\end{equation*}
for sufficiently large $n$. By \eqref{1005-17}, \eqref{1005-28-1} and Lemma \ref{lem-critical-core}-(ii),
\begin{equation*}
\begin{aligned}
4\pi&=\lim_{n\to\infty}\frac{p_n}{2}\int_{B_{s_n}(a_n)}u_n^{p_n-1}f_n\mathrm{d}x\\
&\geq\lim_{n\to\infty}\left(\frac{p_n}{2}\int_{B_{A\mu_n}(x_{1,n})}u_n^{p_n-1}f_n\mathrm{d}x+\frac{p_n}{2}\int_{B_{A\widehat\mu_n}(x_n)}u_n^{p_n-1}f_n\mathrm{d}x\right)\\
&=\frac{8\pi A^2}{1+A^2}>4\pi,
\end{aligned}
\end{equation*}
which is a contradiction. This proves \eqref{heavy-density}.

\vskip 0.1cm

To prove \eqref{heavy-gradient}, set $v_{0,n}=Gf_{0,n}$. By \eqref{1005-4} and \eqref{1005-22}, for sufficiently large $n$,
\begin{equation} \label{M_nt_{0,n}}
M_n\leq\frac{q_n}{c_*}\leq\frac{C}{\gamma_n^{p_n-1}},\quad t_{0,n}^{p_n-1}=\frac{\kappa_0}{M_n}\geq c\gamma_n^{p_n-1}.
\end{equation}
For $y\in\{u_n>t_{0,n}\}\cap B_{\frac{s_n}{8}}(x_{1,n})$ with $y\neq x_{1,n}$, \eqref{heavy-density} and \eqref{M_nt_{0,n}} give
\begin{equation*}
\begin{aligned}
f_{1,n}(y) =\frac{p_n\lambda_nh(y)u_n(y)^{2(p_n-1)}\mathrm{e}^{u_n(y)^{p_n}}}{u_n(y)^{p_n-1}}
\leq\frac{C}{u_n(y)^{p_n-1}|y-x_{1,n}|^2}\leq\frac{C}{\gamma_n^{p_n-1}|y-x_{1,n}|^2}.
\end{aligned}
\end{equation*}
Since $f_{1,n}=0$ in $\{u_n\leq t_{0,n}\}$, this bound holds whenever $0<|y-x_{1,n}|<\frac{s_n}{8}$.
Fix $x\in B_{\frac{s_n}{64}}(x_{1,n})\setminus\{x_{1,n}\}$ and set $d=|x-x_{1,n}|$. For $y\in B_{\frac{d}{2}}(x)$,
\begin{equation*}
\frac{d}{2}\leq|y-x_{1,n}|\leq\frac{3d}{2}<\frac{s_n}{8},\quad f_{1,n}(y)\leq\frac{C}{\gamma_n^{p_n-1}d^2}.
\end{equation*}
Estimate \eqref{6} yields
\begin{equation*}
\begin{aligned}
\int_{B_{\frac{d}{2}}(x)}|\nabla_xG(x,y)|f_{1,n}(y)\mathrm{d}y
&\leq\frac{C}{\gamma_n^{p_n-1}d^2}\int_{B_{\frac{d}{2}}(x)}\frac{\mathrm{d}y}{|x-y|}\leq\frac{C}{\gamma_n^{p_n-1}d}.
\end{aligned}
\end{equation*}
For $y\in\Omega\setminus B_{\frac{d}{2}}(x)$, we have $|x-y|\geq\frac{d}{2}$. Using \eqref{6} and \eqref{M_nt_{0,n}}, we obtain
\begin{equation*}
\begin{aligned}
\int_{\Omega\setminus B_{\frac{d}{2}}(x)}|\nabla_xG(x,y)|f_{1,n}(y)\mathrm{d}y
&\leq\frac{C}{d}\int_\Omega f_{1,n}(y)\mathrm{d}y\leq\frac{CM_n}{d}\leq\frac{C}{\gamma_n^{p_n-1}d}.
\end{aligned}
\end{equation*}
Differentiating $u_n=Gf_{1,n}+v_{0,n}$ and applying \eqref{12}, we obtain
\begin{equation*}
\begin{aligned}
d|\nabla u_n(x)|&\leq\frac{C}{\gamma_n^{p_n-1}}+d\|\nabla v_{0,n}\|_{L^\infty(\Omega)}\\
&\leq\frac{C}{\gamma_n^{p_n-1}}+C\lambda_ndM_n^{p_n-1}
\leq\frac{C}{\gamma_n^{p_n-1}}(1+\lambda_nM_n^{p_n-2}d)\leq\frac{C}{\gamma_n^{p_n-1}},
\end{aligned}
\end{equation*}
where the last inequality follows from \eqref{low-error-small} and $d\leq\frac{s_n}{64}$. This proves \eqref{heavy-gradient}.
\end{proof}

Recall that
\begin{equation*}
\Theta_n(s)=\lambda_ns^{2(p_n-1)}\mathrm{e}^{\frac{2-p_n}{2}s^{p_n}}.
\end{equation*}
Estimate \eqref{heavy-gradient} allows us to apply radial comparison near $x_{1,n}$.

\begin{proposition} \label{prop-final-height}
If \eqref{1005-1} holds, there exists a constant $c>0$, depending only on $\Omega$, $\kappa_0$, $\|h\|_{C^2(\overline\Omega)}$ and $(\min_{\overline\Omega}h)^{-1}$, such that, for all sufficiently large $n$,
\begin{equation} \label{heavy-height-lower}
\Theta_n(\gamma_n)\geq c.
\end{equation}
\end{proposition}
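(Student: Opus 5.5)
We follow Step 4 of the introduction: apply radial comparison at $x_{1,n}$, then compare it with the Green representation on a circle $\partial B_{R_n}(x_{1,n})$.

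\textbf{Step 1: radial comparison at $x_{1,n}$.} We invoke Lemma \ref{lem-radial} at $x_n=x_{1,n}$, $b_n=\gamma_n$, with scale $\nu_n=\mu_n$ from \eqref{1005-26}. The hypotheses are available:
\begin{itemize}
\item $\nabla u_n(x_{1,n})=0$ and the profile \eqref{1005-28} hold;
\item the bound $u_n\leq C\gamma_n$ on $B_{s_n}(a_n)$ follows from \eqref{1005-24};
\item the gradient bound is \eqref{heavy-gradient} on $B_{s_n/64}(x_{1,n})$.
\end{itemize}
Choose the outer radius
\begin{equation*}
\varrho_n=\min\left\{\frac{s_n}{64},\ \mu_n\mathrm{e}^{\sigma\gamma_n^{p_n}}\right\}
\end{equation*}
with a small $\sigma$. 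Then $t_n(\varrho_n)\leq\frac{1}{2}\cdot\frac{p_n}{2}\gamma_n^{p_n}$, and $\varrho_n/\mu_n\to\infty$ by \eqref{1005-27}. We obtain
\begin{equation*}
\frac{p_n}{2}\gamma_n^{p_n-1}\overline u_n(r)=\frac{p_n}{2}\gamma_n^{p_n}-t_n(r)+O(1)\quad\text{for }r\leq\varrho_n.
\end{equation*}

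\textbf{Step 2: choose $r_n<R_n$.} Pick $r_n$ with $D_n\subset B_{r_n}(x_{1,n})$ (possible since $\operatorname{diam}D_n\leq s_n^2$) and $R_n\leq\varrho_n$ such that
\begin{equation*}
\log\frac{R_n}{r_n}\geq cM_n^{-\ell_n},\qquad 0<\log\frac{1}{R_n}\leq CM_n^{-\ell_n}.
\end{equation*}
Since $\log\frac{1}{s_n}=\pi\kappa_0^{1/(p_n-1)}M_n^{-\ell_n}+O(1)$, the choice $R_n=\varrho_n$ should work, taking $r_n=\max\{\delta_n,\cdot\}$. One must check that $\mu_n\mathrm{e}^{\sigma\gamma_n^{p_n}}$ is comparable on the log scale to a power of $s_n$. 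This uses $\gamma_n^{p_n}M_n^{\ell_n}\to16\pi^2$ from \eqref{1005-22}, together with $L_n\ll\gamma_n^{p_n}$ from \eqref{1005-25-2}.

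The circle mean identity \eqref{derivate}, integrated over $[r_n,R_n]$, gives
\begin{equation*}
\frac{p_n}{2}\gamma_n^{p_n-1}\bigl(\overline u_n(r_n)-\overline u_n(R_n)\bigr)\geq\frac{p_n}{4\pi}\gamma_n^{p_n-1}q_n\log\frac{R_n}{r_n}.
\end{equation*}
The radial formula bounds the left side by $2\log\frac{R_n}{r_n}+C$. Hence
\begin{equation*}
\frac{p_n}{2}\gamma_n^{p_n-1}q_n\leq4\pi+CM_n^{\ell_n}.
\end{equation*}

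\textbf{Step 3: Green representation on $\partial B_{R_n}(x_{1,n})$.} For $|x-x_{1,n}|=R_n$, split $u_n=v_n+h_{D_n}$. We have $v_n(x)\geq\frac{q_n}{2\pi}\log\frac{1}{R_n+\delta_n}-Cq_n$, by the expansion of $G$ with $H$ bounded below on $\mathcal K$ (using $q_n\leq M_n$), and $h_{D_n}\geq0$. Multiplying by $\frac{p_n}{2}\gamma_n^{p_n-1}$ gives
\begin{equation*}
\frac{p_n}{2}\gamma_n^{p_n-1}\overline u_n(R_n)\geq\frac{Z_n(x_{1,n})}{2\pi}\log\frac{1}{R_n}-C.
\end{equation*}
By Step 2 and \eqref{1005-22},
\begin{equation*}
\frac{Z_n(x_{1,n})}{2\pi}\log\frac{1}{R_n}=2\log\frac{1}{R_n}-O\!\left(\bigl(|Z_n(x_{1,n})-4\pi|\bigr)M_n^{-\ell_n}\right).
\end{equation*}
Here the lower-bound direction requires $Z_n(x_{1,n})\geq4\pi-CM_n^{\ell_n}$, and obtaining it is the \textbf{main obstacle}. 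I would get it from Step 2 run in the opposite direction: the radial comparison gives the reverse inequality with the same $O(1)$ accuracy, so $\bigl|Z_n(x_{1,n})-4\pi\bigr|\log\frac{R_n}{r_n}\leq C$. This yields $\bigl|Z_n(x_{1,n})-4\pi\bigr|\leq CM_n^{\ell_n}$. The error above is then $O(M_n^{\ell_n}\cdot M_n^{-\ell_n})=O(1)$.

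Comparing with the radial upper bound at $r=R_n$, namely $\frac{p_n}{2}\gamma_n^{p_n}-2\log\frac{R_n}{\mu_n}+C$, we get
\begin{equation*}
\frac{p_n}{2}\gamma_n^{p_n}-2\log\frac{1}{\mu_n}\geq-C.
\end{equation*}
Exactly as in the final computation of Proposition \ref{prop-uniform-max}, the left side equals $-\log\Theta_n(\gamma_n)+O(1)$. Therefore $-\log\Theta_n(\gamma_n)\leq C$, which is \eqref{heavy-height-lower}.
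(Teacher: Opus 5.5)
Your outline (radial comparison at $x_{1,n}$, circle-mean bound on $Z_n(x_{1,n})$, Green representation on $\partial B_{R_n}(x_{1,n})$) follows the paper, but there are two genuine gaps: Step 3 proves the wrong inequality, and the radius in Steps 1--2 is too small.

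\textbf{Step 3 runs in the wrong direction.} You pair the Green \emph{lower} bound (from $h_{D_n}\geq0$) with the radial \emph{upper} bound at $R_n$. This gives
\[
-\log\Theta_n(\gamma_n)+O(1)=\frac{p_n}{2}\gamma_n^{p_n}+\log\mu_n^2\geq\Big(\frac{Z_n(x_{1,n})}{2\pi}-2\Big)\log\frac{1}{R_n}-C.
\]
That is a lower bound on $-\log\Theta_n(\gamma_n)$, i.e.\ only $\Theta_n(\gamma_n)\leq C$, which is already known from $\Theta_n(\gamma_n)\leq\Theta_n(m_n)\leq C$. Your final sentence (``therefore $-\log\Theta_n(\gamma_n)\leq C$'') reverses the inequality you actually derived.

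For a lower bound on $\Theta_n(\gamma_n)$ you need the opposite pairing. Use the radial \emph{lower} bound $\frac{p_n}{2}\gamma_n^{p_n-1}\overline u_n(R_n)\geq-\log\Theta_n(\gamma_n)+2\log\frac{1}{R_n}-C$. Combine it with a Green \emph{upper} bound on $\partial B_{R_n}(x_{1,n})$, which uses three facts:
\begin{itemize}
\item $h_{D_n}\leq CM_n$ from \eqref{1005-8};
\item $\gamma_n^{p_n-1}M_n\leq C$;
\item $\overline{v_n}(R_n)=\frac{q_n}{2\pi}\log\frac{1}{R_n}+O(q_n)$, valid because $D_n\subset B_{R_n}(x_{1,n})$.
\end{itemize}
This yields $-\log\Theta_n(\gamma_n)\leq\big(\frac{Z_n(x_{1,n})}{2\pi}-2\big)\log\frac{1}{R_n}+C$. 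What closes it is the \emph{upper} bound $Z_n(x_{1,n})\leq4\pi+CM_n^{\ell_n}$ you already proved in Step 2, together with $\log\frac{1}{R_n}\leq CM_n^{-\ell_n}$. The lower bound on $Z_n(x_{1,n})$ that you call the main obstacle is not needed. Your sketch of it via a reverse circle-mean inequality would also require bounding $\int_{B_\tau(x_{1,n})\setminus D_n}f_n$, which you do not do.

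\textbf{The radius in Steps 1--2 is too small.} Requiring $t_n(\varrho_n)\leq\frac12\cdot\frac{p_n}{2}\gamma_n^{p_n}$ forces $\sigma\leq\frac14$ asymptotically. We have $\varrho_n\leq\mu_n\mathrm{e}^{\sigma\gamma_n^{p_n}}$, $\log\frac1{\mu_n}=(\frac12+o(1))\gamma_n^{p_n}$ by \eqref{1005-26} and \eqref{1005-25-2}, and $M_n^{\ell_n}\gamma_n^{p_n}\geq16\pi^2+o(1)$ by \eqref{1005-25-1}. Hence
\[
\log\frac{\delta_n}{\varrho_n}\geq\Big(\frac12-\sigma+o(1)\Big)\gamma_n^{p_n}-(2\pi\kappa_0+o(1))M_n^{-\ell_n}\geq(4\pi^2-2\pi\kappa_0+o(1))M_n^{-\ell_n}\to\infty,
\]
since $\kappa_0<2\pi$. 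So your whole comparison ball is far smaller than the only available bound $\operatorname{diam}D_n\leq\delta_n$. You cannot choose $r_n<R_n\leq\varrho_n$ with $D_n\subset B_{r_n}(x_{1,n})$, and the lower bound $\int_{B_\tau(x_{1,n})}f_n\geq q_n$ on $[r_n,R_n]$ is lost.

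Also, \eqref{1005-22} gives $\gamma_n^{p_n}q_n^{\ell_n}\to16\pi^2$, not $\gamma_n^{p_n}M_n^{\ell_n}\to16\pi^2$; for $M_n$ you only have two-sided bounds.

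The fix is to take $\varrho_n$ a fixed fraction of $s_n$, e.g.\ $\varrho_n=\frac{s_n}{128}$, and verify the radius hypothesis of Lemma \ref{lem-radial} with some $\eta_0<1$ close to $1$ via
\[
\frac{p_n}{2}\gamma_n^{p_n}-t_n(\varrho_n)=-\log\Theta_n(\gamma_n)+2\log\frac{1}{\varrho_n}+O(1)\geq cM_n^{-\ell_n}\geq c'\frac{p_n}{2}\gamma_n^{p_n}.
\]
This uses the known bound $\Theta_n(\gamma_n)\leq\Theta_n(m_n)\leq C$ and $\gamma_n^{p_n}\leq CM_n^{-\ell_n}$ from \eqref{1005-4} and \eqref{1005-22}. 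Then $r_n=2\delta_n$ and $R_n=\frac{s_n}{256}$ satisfy your Step 2 requirements. With both corrections, your outline becomes the paper's proof.
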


\begin{proof}
Set $\varrho_n=\frac{s_n}{128}$. By the choice of $x_{1,n}$, \eqref{1005-25-1} and \eqref{1005-27},
\begin{equation*}
\gamma_n\to\infty,\quad \nabla u_n(x_{1,n})=0,\quad B_{\varrho_n}(x_{1,n})\Subset\Omega,\quad \varrho_n\to0,\quad \frac{\varrho_n}{\mu_n}\to\infty.
\end{equation*}
Moreover, \eqref{1005-28} gives
\begin{equation*}
\frac{p_n}{2}\gamma_n^{p_n-1}\left(u_n(x_{1,n}+\mu_ny)-\gamma_n\right)\to-\log(1+|y|^2)\quad\text{in }C^2_{\mathrm{loc}}(\mathbb{R}^2).
\end{equation*}
The bounds \eqref{heavy-density} and \eqref{heavy-gradient} hold in $B_{\varrho_n}(x_{1,n})$.
Since $\gamma_n\leq m_n$, \eqref{uniform-max-bound} implies $\Theta_n(\gamma_n)\leq\Theta_n(m_n)\leq C$. Also, \eqref{1005-4} and \eqref{1005-22} yield
\begin{equation*}
\gamma_n^{p_n-1}M_n\leq\frac{\gamma_n^{p_n-1}q_n}{c_*}\leq C,\quad \gamma_n^{p_n}\leq CM_n^{-\ell_n}.
\end{equation*}
Set $t_n(r)=\log\left(1+\frac{r^2}{\mu_n^2}\right)$. Since $\frac{\mu_n}{\varrho_n}\to0$, \eqref{1005-26} implies
\begin{equation*}
\begin{aligned}
\frac{p_n}{2}\gamma_n^{p_n}-t_n(\varrho_n)
&=-\log\Theta_n(\gamma_n)+2\log\frac{1}{\varrho_n}+\log\frac{8}{p_n^2h(x_{1,n})}-\log\left(1+\frac{\mu_n^2}{\varrho_n^2}\right)\\
&\geq\frac{2\pi\kappa_0^{\frac{1}{p_n-1}}}{M_n^{\ell_n}}-C\geq cM_n^{-\ell_n}\geq c'\frac{p_n}{2}\gamma_n^{p_n}.
\end{aligned}
\end{equation*}
After decreasing $c'>0$ if necessary, set $\eta_0=1-c'\in(0,1)$. Then
\begin{equation*}
t_n(\varrho_n)\leq\eta_0\frac{p_n}{2}\gamma_n^{p_n}.
\end{equation*}
By \eqref{1005-24}, $u_n\leq2\gamma_n$ in $B_{\varrho_n}(x_{1,n})$ for sufficiently large $n$. Lemma \ref{lem-radial} therefore yields
\begin{equation} \label{DT-radial-comparison}
\left|\frac{p_n}{2}\gamma_n^{p_n-1}\overline u_n(r)-\left(\frac{p_n}{2}\gamma_n^{p_n}-t_n(r)\right)\right|\leq C\quad\text{for }0\leq r\leq\varrho_n.
\end{equation}

Set $r_n=2\delta_n=2s_n^2$ and $R_n=\frac{s_n}{256}$. Since $x_{1,n}\in D_n$ and $\operatorname{diam}D_n\leq\delta_n$,
\begin{equation*}
D_n\subset B_{r_n}(x_{1,n}),\quad \frac{r_n}{R_n}=512s_n\to0,\quad r_n<R_n<\varrho_n
\end{equation*}
for sufficiently large $n$. Integrating the circle mean identity \eqref{derivate}, we obtain
\begin{equation*}
\begin{aligned}
\frac{p_n}{2}\gamma_n^{p_n-1}\left(\overline u_n(r_n)-\overline u_n(R_n)\right)
&=\frac{p_n\gamma_n^{p_n-1}}{4\pi}\int_{r_n}^{R_n}\frac{1}{\tau}\left(\int_{B_\tau(x_{1,n})}f_n\mathrm{d}x\right)\mathrm{d}\tau\\
&\geq\frac{p_n\gamma_n^{p_n-1}}{4\pi}\int_{r_n}^{R_n}\frac{1}{\tau}\left(\int_{B_{r_n}(x_{1,n})}f_n\mathrm{d}x\right)\mathrm{d}\tau\\
&\geq\frac{p_n\gamma_n^{p_n-1}q_n}{4\pi}\log\frac{R_n}{r_n}.
\end{aligned}
\end{equation*}
On the other hand, \eqref{DT-radial-comparison} gives
\begin{equation*}
\begin{aligned}
\frac{p_n}{2}\gamma_n^{p_n-1}\left(\overline u_n(r_n)-\overline u_n(R_n)\right)
&\leq t_n(R_n)-t_n(r_n)+C =\log\frac{\mu_n^2+R_n^2}{\mu_n^2+r_n^2}+C\leq2\log\frac{R_n}{r_n}+C.
\end{aligned}
\end{equation*}
For sufficiently large $n$,
\begin{equation*}
\log\frac{R_n}{r_n}=\frac{\pi\kappa_0^{\frac{1}{p_n-1}}}{M_n^{\ell_n}}-\log(512\sqrt{C_0})\geq\frac{c}{M_n^{\ell_n}}>0.
\end{equation*}
Comparing the two estimates, we obtain
\begin{equation} \label{gammaq-upper-M2}
\frac{p_n}{2}\gamma_n^{p_n-1}q_n\leq4\pi+\frac{C}{\log(R_n/r_n)}\leq4\pi+CM_n^{\ell_n}.
\end{equation}

Write $u_n=v_n+h_{D_n}$ as in \eqref{1005-7}. Since $x_{1,n}\in D_n$ and $\operatorname{diam}D_n\leq\delta_n$, for every $y\in D_n$, we have
$
|y-x_{1,n}|\leq\delta_n<R_n.
$
For $|z|<1$, the function
\begin{equation*}
z\mapsto\frac{1}{2\pi}\int_0^{2\pi}\log|(\cos\theta,\sin\theta)-z|\mathrm{d}\theta
\end{equation*}
is harmonic and radial, and equals its value $0$ at $z=0$. Therefore,
\begin{equation*}
\begin{aligned}
&\frac{1}{2\pi}\int_0^{2\pi}\log\frac{1}{|x_{1,n}+R_n(\cos\theta,\sin\theta)-y|}\mathrm{d}\theta\\
&=\log\frac{1}{R_n}-\frac{1}{2\pi}\int_0^{2\pi}\log\left|(\cos\theta,\sin\theta)-\frac{y-x_{1,n}}{R_n}\right|\mathrm{d}\theta
=\log\frac{1}{R_n}.
\end{aligned}
\end{equation*}
Since $H(\cdot,y)$ is harmonic in $B_{R_n}(x_{1,n})$, the mean-value property gives
\begin{equation*}
\frac{1}{2\pi}\int_0^{2\pi}H(x_{1,n}+R_n(\cos\theta,\sin\theta),y)\mathrm{d}\theta=H(x_{1,n},y).
\end{equation*}
Using Fubini's theorem and the boundedness of $H$ on $\mathcal{K}\times\mathcal{K}$, we obtain
\begin{equation*}
\begin{aligned}
\overline v_n(R_n)&=\int_{D_n}\frac{1}{2\pi}\int_0^{2\pi}G(x_{1,n}+R_n(\cos\theta,\sin\theta),y)\mathrm{d}\theta f_n(y)\mathrm{d}y\\
&=\frac{q_n}{2\pi}\log\frac{1}{R_n}+\int_{D_n}H(x_{1,n},y)f_n(y)\mathrm{d}y
=\frac{q_n}{2\pi}\log\frac{1}{R_n}+O(q_n).
\end{aligned}
\end{equation*}
Moreover,
\begin{equation*}
|x_{1,n}-a_n|+R_n\leq s_n^2+\frac{s_n}{256}<s_n
\end{equation*}
for sufficiently large $n$, so $\partial B_{R_n}(x_{1,n})\subset B_{s_n}(a_n)$. By \eqref{1005-8}, \eqref{1005-4} and \eqref{1005-22},
\begin{equation*}
0\leq\overline{h_{D_n}}(R_n)\leq CM_n,\quad \gamma_n^{p_n-1}q_n+\gamma_n^{p_n-1}M_n\leq C.
\end{equation*}
Consequently,
\begin{equation} \label{single-radius-green}
\begin{aligned}
\frac{p_n}{2}\gamma_n^{p_n-1}\overline u_n(R_n)&=\frac{p_n}{2}\gamma_n^{p_n-1}\left(\overline v_n(R_n)+\overline{h_{D_n}}(R_n)\right)
=\frac{p_n\gamma_n^{p_n-1}q_n}{4\pi}\log\frac{1}{R_n}+O(1).
\end{aligned}
\end{equation}

By \eqref{1005-26},
\begin{equation*}
\log\mu_n^2=L_n-2(p_n-1)\log\gamma_n-\gamma_n^{p_n}+\log\frac{8}{p_n^2h(x_{1,n})}.
\end{equation*}
Since $R_n=\frac{s_n}{256}$, \eqref{1005-27} gives $\frac{\mu_n}{R_n}\to0$. Thus \eqref{DT-radial-comparison} yields
\begin{equation*}
\begin{aligned}
\frac{p_n}{2}\gamma_n^{p_n-1}\overline u_n(R_n)
&=\frac{p_n}{2}\gamma_n^{p_n}-t_n(R_n)+O(1)\\
&=\frac{p_n}{2}\gamma_n^{p_n}+\log\mu_n^2+2\log\frac{1}{R_n}-\log\left(1+\frac{\mu_n^2}{R_n^2}\right)+O(1)\\
&=L_n-\frac{2-p_n}{2}\gamma_n^{p_n}-2(p_n-1)\log\gamma_n+2\log\frac{1}{R_n}\\
&\quad+\log\frac{8}{p_n^2h(x_{1,n})}-\log\left(1+\frac{\mu_n^2}{R_n^2}\right)+O(1)\\
&=-\log\Theta_n(\gamma_n)+2\log\frac{1}{R_n}+O(1).
\end{aligned}
\end{equation*}
Comparing with \eqref{single-radius-green} and using \eqref{gammaq-upper-M2}, we obtain
\begin{equation*}
\begin{aligned}
-\log\Theta_n(\gamma_n)
&=\left(\frac{p_n\gamma_n^{p_n-1}q_n}{4\pi}-2\right)\log\frac{1}{R_n}+O(1)\\
&\leq CM_n^{\ell_n}\log\frac{1}{R_n}+C =CM_n^{\ell_n}\left(\frac{\pi\kappa_0^{\frac{1}{p_n-1}}}{M_n^{\ell_n}}+\log\frac{256}{\sqrt{C_0}}\right)+C\leq C.
\end{aligned}
\end{equation*}
Therefore, we have
$\Theta_n(\gamma_n)\geq\mathrm{e}^{-C}>0$.
This proves \eqref{heavy-height-lower}.
\end{proof}

\begin{remark}
The upper bound in Remark \ref{rem-fixed-radius} follows from positivity of the circle mean at a fixed radius. For the lower bound, we also control the contribution from outside $D_n$ after multiplication by $\gamma_n^{p_n-1}$. By  \eqref{1005-4}, \eqref{1005-8} and \eqref{1005-22},
\begin{equation*}
0\leq\gamma_n^{p_n-1}\overline{h_{D_n}}(R_n)\leq C\gamma_n^{p_n-1}M_n\leq C\gamma_n^{p_n-1}q_n\leq C.
\end{equation*}
Since $R_n\to0$, the comparison also requires the quantitative estimate \eqref{gammaq-upper-M2}. The limit $Z_n(x_{1,n})\to4\pi$ alone does not bound $(Z_n(x_{1,n})-4\pi)\log\frac{1}{R_n}$ from above.
\end{remark}

We now complete the proof of Theorem \ref{thm-simply-connected} by comparing \eqref{1005-2} at the global maximum with \eqref{heavy-height-lower} at the maximum of the selected component.

\begin{proof}[\bf{Proof of Theorem \ref{thm-simply-connected}}]
Suppose that the conclusion fails. Then there exist positive solutions $u_n$ of \eqref{1}, with $p_n\in[1,2]$ and $\lambda_n>0$, such that $\beta_n\to\infty$. By Lemma \ref{lem-unbounded-sequence}, after passing to a subsequence,
\begin{equation*}
p_n\to2,\quad \lambda_n\to0,\quad M_n\to0,\quad \frac{L_n}{m_n^{p_n}}\to0,\quad M_nm_n^{p_n-1}\to\infty.
\end{equation*}
Fix $\kappa_0=\pi$. For sufficiently large $n$, choose a component $D_n$ of $\{u_n>\left(\frac{\kappa_0}{M_n}\right)^{\frac{1}{p_n-1}}\}$ with maximal source mass and set
\begin{equation*}
q_n=\int_{D_n}f_n\mathrm{d}x,\quad \gamma_n=\max_{\overline{D_n}}u_n\leq m_n.
\end{equation*}
Proposition \ref{prop-fixed-heavy} yields $q_n\geq c_*M_n$. By \eqref{1005-2},
\begin{equation*}
\Theta_n(m_n)\leq C\mathrm{e}^{-cM_nm_n^{p_n-1}}\to0.
\end{equation*}
Proposition \ref{prop-final-height} yields $\Theta_n(\gamma_n)\geq c>0$. Since $\Theta_n$ is nondecreasing on $(0,\infty)$ for $p_n\in[1,2]$,
\begin{equation*}
0<c\leq\Theta_n(\gamma_n)\leq\Theta_n(m_n)\to0,
\end{equation*}
a contradiction.

\vskip 0.1cm

To verify the stated dependence of $C_{\Omega,h}$, fix $\kappa_0=\pi$ and consider weights satisfying
\begin{equation*}
h\geq h_0>0\quad\text{on }\overline\Omega,\quad \|h\|_{C^2(\overline\Omega)}\leq H_0,
\end{equation*}
where $h_0,H_0>0$ are fixed. Then
\begin{equation*}
\|\nabla\log h\|_{L^\infty(\Omega)}\leq\frac{\|\nabla h\|_{L^\infty(\Omega)}}{\min_{\overline\Omega}h}\leq\frac{H_0}{h_0}.
\end{equation*}
Thus $\delta_*$, $c_*$ and the constants in \eqref{1005-2} and \eqref{heavy-height-lower} can be chosen depending only on $\Omega$, $h_0$ and $H_0$. A sequence of weights $h_n$ satisfying these bounds and corresponding solutions with $\beta_n\to\infty$ would yield the same contradiction. This proves the stated dependence of $C_{\Omega,h}$.
\end{proof}

\section{Convex domains and constrained critical points} \label{app-remaining}

In this section, we prove the energy limit on convex domains and the nonexistence of positive constrained critical points at sufficiently large energies.

\subsection{Energy limit on convex domains} \

\vskip 0.1cm

In this subsection, we take $h\equiv1$ and $p=2$ and prove Theorem \ref{thm-convex}. Set $R(x)=H(x,x)$. Since $H$ is symmetric, we have
\begin{equation*}
\nabla R(x)=2\nabla_xH(x,x).
\end{equation*}

\begin{proof}[\bf{Proof of Theorem \ref{thm-convex}}]
Suppose that the conclusion fails. Then there exist a constant $\varepsilon_0>0$ and a sequence of positive solutions $u_n$ of \eqref{1}, with parameters $\lambda_n\to0$, such that
\begin{equation*}
|E(u_n)-4\pi|\geq\varepsilon_0\quad\text{for every }n.
\end{equation*}
Theorem \ref{thm-simply-connected} and \eqref{lambda-m-critical} imply
\begin{equation*}
\sup_nE(u_n)<\infty,\quad m_n^2\geq\log\frac{\lambda_1}{2\lambda_n}\to\infty.
\end{equation*}
Apply Theorem 1.2 in \cite{DruetThizy2020} with parameter $2\lambda_n$ and weight equal to $1$. After passing to a subsequence, there exist an integer $N\geq1$, distinct points $x_1,\ldots,x_N\in\Omega$ and weights $\sigma_1,\ldots,\sigma_N>0$ such that
\begin{equation} \label{DT-energy-limit}
E(u_n)\to4\pi N,
\end{equation}
and
\begin{equation*}
\sigma_i\nabla_xH(x_i,x_i)+\sum_{j\neq i}\sigma_j\nabla_xG(x_i,x_j)=0\quad\text{for }i=1,\cdots,N.
\end{equation*}
Multiplying the $i$-th location equation by $-\sigma_i$, we obtain
\begin{equation*}
\frac{\sigma_i^2}{2}\nabla(-R)(x_i)-\sum_{j\neq i}\sigma_i\sigma_j\nabla_xG(x_i,x_j)=0\quad\text{for }i=1,\cdots,N.
\end{equation*}
By \cite{GrossiTakahashi2010}, the location equations have no solution with $N\geq2$. Thus $N=1$, and \eqref{DT-energy-limit} implies $E(u_n)\to4\pi$, contradicting $|E(u_n)-4\pi|\geq\varepsilon_0$.
\end{proof}

\subsection{Variational consequence} \label{app-variational}\

\vskip 0.1cm

In this subsection, we prove Corollary \ref{cor-bound} by showing that every positive critical point of $J|_{\mathcal{M}_\beta}$ is a positive solution of \eqref{1} with $p=2$ for some $\lambda>0$.

\vskip 0.1cm

By Lemma 2.1 in \cite{Struwe1988}, the functional $J$ is of class $C^1$ in $H_0^1(\Omega)$, with
\begin{equation*}
DJ(v)[\varphi]=2\int_\Omega hv\mathrm{e}^{v^2}\varphi\mathrm{d}x.
\end{equation*}
We also have
\begin{equation} \label{A-exp}
\text{$\mathrm{e}^{av^2}\in L^1(\Omega)$ for every $v\in H_0^1(\Omega)$ and $a>0$. }
\end{equation}
Indeed, by the density of $C_c^\infty(\Omega)$ in $H_0^1(\Omega)$, choose $\psi\in C_c^\infty(\Omega)$ such that
\begin{equation*}
2a\int_\Omega|\nabla(v-\psi)|^2\mathrm{d}x<4\pi.
\end{equation*}
Since $v^2\leq2(v-\psi)^2+2\psi^2$, the Moser-Trudinger inequality \cite{Trudinger1967,Moser1971} implies
\begin{equation*}
\int_\Omega\mathrm{e}^{av^2}\mathrm{d}x\leq\mathrm{e}^{2a\|\psi\|_{L^\infty(\Omega)}^2}\int_\Omega\mathrm{e}^{2a(v-\psi)^2}\mathrm{d}x<\infty.
\end{equation*}

\begin{proof}[\bf{Proof of Corollary \ref{cor-bound}}]
Let $u$ be a positive critical point of $J|_{\mathcal{M}_\beta}$. The Lagrange multiplier theorem yields $\lambda\in\mathbb{R}$ such that
\begin{equation*}
\int_\Omega\nabla u\cdot\nabla\varphi\mathrm{d}x=2\lambda\int_\Omega hu\mathrm{e}^{u^2}\varphi\mathrm{d}x \quad\text{for every }\varphi\in H_0^1(\Omega).
\end{equation*}
Taking $\varphi=u$ yields
\begin{equation*}
\lambda=\frac{\beta}{2\int_\Omega hu^2\mathrm{e}^{u^2}\mathrm{d}x}>0.
\end{equation*}

Fix $r>2$. Taking $v=u$ and $a=r+1$ in \eqref{A-exp} and using $t^r\mathrm{e}^{-t^2}\leq C_r$ for $t\geq0$, we obtain
\begin{equation*}
\int_\Omega|u\mathrm{e}^{u^2}|^r\mathrm{d}x=\int_\Omega|u|^r\mathrm{e}^{ru^2}\mathrm{d}x\leq C_r\int_\Omega\mathrm{e}^{(r+1)u^2}\mathrm{d}x<\infty.
\end{equation*}
Thus $hu\mathrm{e}^{u^2}\in L^r(\Omega)$. By the $L^r$ theory and the Sobolev embedding theorem,
\begin{equation*}
u\in W^{2,r}(\Omega)\subset C^{1,\alpha}(\overline\Omega)\quad\text{for }0<\alpha<1-\frac{2}{r}.
\end{equation*}
Hence $hu\mathrm{e}^{u^2}\in C^{0,\alpha}(\overline\Omega)$, and the Schauder theory implies $u\in C^{2,\alpha}(\overline\Omega)$. Since $u\geq0$ and $u\not\equiv0$, the strong maximum principle yields
\begin{equation*}
u>0 \quad \text{in } \Omega.
\end{equation*}
It follows by Theorem \ref{thm-simply-connected} that
\begin{equation*}
\beta=E(u)\leq C_{\Omega,h}.
\end{equation*}
Taking $\beta_*=\max\{C_{\Omega,h},4\pi\}+1$, we obtain $\beta\leq C_{\Omega,h}<\beta_*$, which proves Corollary \ref{cor-bound}.
\end{proof}

\appendix
\section{Boundary exclusion} \label{app-boundary}

In this section, we prove Lemma \ref{lem-interior} by the moving-plane and Kelvin-transform argument of \cite{deFigueiredoLionsNussbaum1982,Han1991}, as used in \cite{MalchiodiMartinazziThizy2024}.

\subsection{The reflected domain}\

\vskip 0.1cm

In this subsection, we use Kelvin inversion about an exterior tangent circle to construct a domain suitable for the moving-plane argument.

\vskip 0.1cm

Let $\nu$ be the outward unit normal to $\partial\Omega$. By Lemma 14.16 in \cite{GilbargTrudinger2001}, there exists $\rho_\Omega>0$ such that
\begin{equation*}
\overline{B_r(P+r\nu(P))}\cap\overline\Omega=\{P\}
\end{equation*}
for every $P\in\partial\Omega$ and $0<r<\rho_\Omega$. For every $P\in\partial\Omega$ and $0<s<\rho_\Omega$, the point $P-s\nu(P)$ belongs to $\Omega$ and has distance $s$ from $\partial\Omega$. Conversely, every $x\in\Omega$ with $\operatorname{dist}(x,\partial\Omega)<\rho_\Omega$ can be uniquely written as
\begin{equation} \label{local-representation}
x=P-s\nu(P),\quad P\in\partial\Omega,\quad s=\operatorname{dist}(x,\partial\Omega).
\end{equation}
Fix $P\in\partial\Omega$ and choose coordinates such that $P=0$ and $\nu(P)=e_2=(0,1)$. By the smoothness and compactness of $\partial\Omega$, there exist $b_0,C_0>0$, independent of $P$, and $\psi\in C^\infty([-b_0,b_0])$ such that
\begin{equation} \label{local}
\Omega\cap(-b_0,b_0)^2=\big\{x\in(-b_0,b_0)^2\mid x_2<\psi(x_1)\big\},
\end{equation}
where $\psi(0)=\psi'(0)=0$ and $\|\psi\|_{C^2([-b_0,b_0])}\leq C_0$.

\vskip 0.1cm

Choose
\begin{equation*}
r:=\min\left\{\frac{\rho_\Omega}{8},\frac{1}{2(1+\|\nabla\log h\|_{L^\infty(\Omega)})}\right\}.
\end{equation*}
Then $\overline{B_r(re_2)}\cap\overline\Omega=\{0\}$. Set
\begin{equation*}
K(y)=re_2+\frac{r^2(y-re_2)}{|y-re_2|^2},\quad \Sigma=K(\Omega),\quad v=u\circ K.
\end{equation*}
Therefore, we have $K\circ K=\mathrm{Id}$ and $u=v\circ K$. Since $|K(x)-re_2|=\frac{r^2}{|x-re_2|}$, we have $\Sigma\subset B_r(re_2)$ and
\begin{equation*}
\operatorname{dist}(re_2,\overline \Sigma)\geq\frac{r^2}{\operatorname{diam}\Omega+r}>0,\quad \overline \Sigma\cap\{y_2=0\}=\{0\}.
\end{equation*}

For $t>0$, write
\begin{equation*}
\Sigma_t=\Sigma\cap\{y_2<t\},\quad y^t=(y_1,2t-y_2).
\end{equation*}
For $y\in\Sigma_t$, we write $[y,y^t]:=\big\{(y_1,s)\mid y_2\leq s\leq2t-y_2\big\}$.

\begin{lemma} \label{lem-boundary-cap}
There exist $b,t_0>0$, depending only on $\Omega$ and $\|\nabla\log h\|_{L^\infty(\Omega)}$, and a smooth function $q$ such that
\begin{equation} \label{B-graph}
\begin{aligned}
\Sigma\cap(-b,b)^2&=\{y\in(-b,b)^2\mid y_2>q(y_1)\},\quad
q(0)=q'(0)=0,\quad |q|\leq\frac{b}{4}\quad\text{on }[-b,b].
\end{aligned}
\end{equation}
For every $0<t<t_0$,
\begin{equation} \label{B-cap}
[y,y^t]\subset \Sigma\cap\left(-\frac{b}{2},\frac{b}{2}\right)^2\quad\text{for every }y\in\Sigma_t.
\end{equation}
Moreover, $y^t\in \Sigma$ whenever $y\in\overline{\Sigma_t}$ and $y_2<t$.
\end{lemma}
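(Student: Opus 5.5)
The plan is to write $\partial\Sigma$ near $0$ as the image under $K$ of $\partial\Omega$ near $0$, and to compute its local graph explicitly. First, $K$ is a smooth diffeomorphism away from $re_2$ with $K(0)=0$ and $DK(0)=-I+2e_2\otimes e_2$, i.e.\ the reflection $(x_1,x_2)\mapsto(x_1,-x_2)$ at the origin; indeed $K(x)=re_2+r^2(x-re_2)/|x-re_2|^2$ and $x-re_2=-re_2+x$. Since $\partial\Omega\cap(-b_0,b_0)^2$ is the graph $x_2=\psi(x_1)$ with $\psi(0)=\psi'(0)=0$, its image is a curve through $0$ with horizontal tangent. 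By the implicit function theorem it is a graph $y_2=q(y_1)$ with $q(0)=q'(0)=0$. The $C^2$ norm of $q$ is controlled by $C_0$, by $r^{-1}$ (the curvature of the inversion), and by $\operatorname{diam}\Omega$, all of which are independent of $P$. Because $\Omega$ lies below the graph of $\psi$ and $K$ maps $\{x_2<0\}$ near $0$ to $\{y_2>0\}$, we get $\Sigma\cap(-b,b)^2=\{y_2>q(y_1)\}$. For this we need $b$ small enough that $K^{-1}((-b,b)^2)\subset(-b_0,b_0)^2$, and also that no other part of $\Sigma$ enters the square. The latter holds because $\overline\Sigma\cap\{y_2=0\}=\{0\}$ and, more quantitatively, because points of $\Omega$ far from $0$ map to points of $\Sigma$ at a definite distance from $0$. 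Taking $b$ smaller still gives $|q|\le\|q\|_{C^2}b^2/2\le b/4$.

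Next I would prove \eqref{B-cap}. Choose $t_0\le b/8$. For $y\in\Sigma_t$ there are two ways to be in $\Sigma$ with $y_2<t$. Points inside the square satisfy $q(y_1)<y_2<t$. Points outside the square must be excluded: a point of $\Sigma$ outside $(-b,b)^2$ has $y_2\ge c_b>0$, since $\overline\Sigma\setminus(-b,b)^2$ is compact and misses $\{y_2\le 0\}$ except at $0$. This uses $\Sigma\subset B_r(re_2)\subset\{y_2>0\}$, with $\overline\Sigma$ touching $\{y_2=0\}$ only at $0$. So $t_0<c_b$ excludes them.

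Inside the square, $q(y_1)<t$ together with $q(y_1)\ge -C y_1^2$ is not enough on its own to bound $|y_1|$. The point is that $q\ge c\,y_1^2$ for some $c>0$. This comes from the exterior sphere condition: $\overline{B_r(re_2)}\cap\overline\Omega=\{0\}$ maps under $K$ to $\Sigma\subset B_r(re_2)$, so $q(y_1)\ge r-\sqrt{r^2-y_1^2}\ge y_1^2/(2r)$. Hence $|y_1|\le\sqrt{2rt}<b/2$ once $t_0$ is small. Then the vertical segment from $y_2$ to $2t-y_2$ lies in $(-b/2,b/2)^2$, since $y_2>q\ge0$ and $2t-y_2<2t<b/2$. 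It stays above the graph of $q$ because $s\ge y_2>q(y_1)$, so it lies in $\Sigma$. The final claim follows the same way: if $y\in\overline{\Sigma_t}$ and $y_2<t$, then $y_2\ge q(y_1)$, so $y^t_2=2t-y_2>t>y_2\ge q(y_1)$, and again $|y_1|<b/2$ and $y^t$ lies in the square.

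The main obstacle is making the constants uniform in $P$. I would handle this with the uniform $C^2$ bounds on $\psi$, the fixed choice of $r$ depending only on $\rho_\Omega$ and $\|\nabla\log h\|_{L^\infty}$, and a compactness estimate $\inf\{y_2: y\in\overline\Sigma\setminus(-b,b)^2\}\ge c$. For the last one, I would note that for $y=K(x)$ we have $y_2\ge |y|^2/(2r)$ by the sphere inclusion, which gives a lower bound of order $b^2/r$.
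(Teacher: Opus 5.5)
Your proposal is correct and follows essentially the paper's route: \eqref{B-graph} comes from the implicit function theorem near $0$, and \eqref{B-cap} comes from the inclusion $\Sigma\subset B_r(re_2)$ (the paper works with $\psi(K_1(y))-K_2(y)$, whose $y_2$-derivative is at least $\frac12$, which gives the graph, the side on which $\Sigma$ lies and $|q|<\frac b4$ at once; and since $K$ is an involution with $K((-b,b)^2)\subset(-b_0,b_0)^2$, no separate check is needed that other parts of $\Sigma$ stay out of the square). The paper applies that inclusion directly as $|y_1|^2<2ry_2-y_2^2<2rt$ to every $y\in\Sigma_t$, which puts $y$ in the square immediately and makes your compactness exclusion and the bound $q\ge y_1^2/(2r)$ superfluous; also, $DK(0)=I_2-2e_2\otimes e_2$, not $-I_2+2e_2\otimes e_2$, though your verbal description $(x_1,x_2)\mapsto(x_1,-x_2)$ is the correct one.
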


\begin{proof}
For $|y|<\frac{r}{2}$, the definition of $K$ gives
\begin{equation*}
\|DK(y)\|\leq4,\quad \|D^2K(y)\|\leq\frac{C}{r}.
\end{equation*}
Choose $0<\delta<\min\left\{\frac{r}{4},\frac{b_0}{8}\right\}$. Since $K(0)=0$,
\begin{equation*}
|K(y)|\leq4|y|<4\sqrt{2}\delta<b_0
\quad\text{for }y\in(-\delta,\delta)^2.
\end{equation*}
Thus $K((-\delta,\delta)^2)\subset(-b_0,b_0)^2$. Write $K=(K_1,K_2)$ and set
\begin{equation*}
G(y)=\psi(K_1(y))-K_2(y).
\end{equation*}
Since $\psi(0)=\psi'(0)=0$, $K(0)=0$ and
\begin{equation*}
DK(0)=\begin{pmatrix}1&0\\0&-1\end{pmatrix},
\end{equation*}
we have
\begin{equation*}
G(0)=0,\quad \partial_{y_1}G(0)=0,\quad \partial_{y_2}G(0)=1.
\end{equation*}
By the uniform $C^2$ bound for $\psi$, after decreasing $\delta$ there exists $C>0$, independent of $P$, such that
\begin{equation*}
|D^2G(y)|\leq C,\quad \partial_{y_2}G(y)\geq\frac{1}{2},\quad |G(s,0)|\leq Cs^2
\end{equation*}
for $y\in(-\delta,\delta)^2$ and $|s|<\delta$. Choose $b\in(0,\delta)$ such that $8Cb<1$. For $|s|\leq b$,
\begin{equation*}
G\left(s,-\frac{b}{4}\right)\leq Cb^2-\frac{b}{8}<0<-Cb^2+\frac{b}{8}
\leq G\left(s,\frac{b}{4}\right).
\end{equation*}
Since $\partial_{y_2}G\geq\frac{1}{2}$, there exists a unique $q(s)\in(-\frac{b}{4},\frac{b}{4})$ satisfying $G(s,q(s))=0$. The implicit function theorem shows that $q$ is smooth and
\begin{equation*}
q'(s)=-\frac{\partial_{y_1}G(s,q(s))}{\partial_{y_2}G(s,q(s))},\quad q(0)=q'(0)=0.
\end{equation*}
For $y\in(-b,b)^2$, the local representation \eqref{local} and the strict monotonicity of $G$ in $y_2$ imply
\begin{equation*}
y\in\Sigma\iff K(y)\in\Omega\iff G(y)>0\iff y_2>q(y_1).
\end{equation*}
This proves \eqref{B-graph}.

\vskip 0.1cm

Choose $t_0>0$ such that
\begin{equation*}
2t_0<\frac{b}{2},\quad 2rt_0<\frac{b^2}{16}.
\end{equation*}
For $0<t<t_0$ and $y\in\Sigma_t$, the relation $\Sigma\subset B_r(re_2)$ gives
\begin{equation*}
0<y_2<t,\quad |y_1|^2<2ry_2-y_2^2<2rt_0<\frac{b^2}{16}.
\end{equation*}
Thus $|y_1|<\frac{b}{4}$. Moreover, by \eqref{B-graph}, we get
\begin{equation*}
q(y_1)<y_2<t<2t-y_2<2t_0<\frac{b}{2}.
\end{equation*}
Therefore, for every $(y_1,s)\in[y,y^t]$, we have
\begin{equation*}
|y_1|<\frac{b}{2},\quad q(y_1)<y_2\leq s\leq2t-y_2<\frac{b}{2}.
\end{equation*}
It follows from \eqref{B-graph} that
\begin{equation*}
[y,y^t]\subset \Sigma\cap\left(-\frac{b}{2},\frac{b}{2}\right)^2,
\end{equation*}
which proves \eqref{B-cap}.

\vskip 0.1cm
Finally, let $y\in\overline{\Sigma_t}$ with $y_2<t$. Since $\overline{\Sigma_t}\subset\overline{B_r(re_2)}\cap\{0\leq y_2\leq t\}$, we have
\begin{equation*}
|y_1|^2\leq2ry_2-y_2^2\leq2rt<\frac{b^2}{16}.
\end{equation*}
In particular, $|y_1|<\frac{b}{4}$. By \eqref{B-graph} and $y\in\overline \Sigma$, we obtain $q(y_1)\leq y_2$. Consequently,
\begin{equation*}
q(y_1)\leq y_2<2t-y_2<2t_0<\frac{b}{2}.
\end{equation*}
Thus $y^t\in \Sigma$ by \eqref{B-graph}.
\end{proof}

Reducing $t_0$ if necessary, we may assume that $2t_0<\frac{r}{2}$. Direct differentiation yields
\begin{equation*}
\begin{aligned}
&DK(y)=\frac{r^2}{|y-re_2|^2}\left(I_2-2\frac{(y-re_2)\otimes(y-re_2)}{|y-re_2|^2}\right),\\
&DK(y)DK(y)^{T}=\frac{r^4}{|y-re_2|^4}I_2,\quad \Delta K_i(y)=0\quad\text{for }i=1,2.
\end{aligned}
\end{equation*}
Set
$c(y)=\frac{r^4h(K(y))}{|y-re_2|^4}$.
Then the chain rule yields
\begin{equation*}
\begin{cases}
-\Delta v=c(y)F'(v)&\text{in }\Sigma,\\[1mm]
v=0&\text{on }\partial\Sigma.
\end{cases}
\end{equation*}
For $y\in\Sigma$ with $0<y_2<2t_0<\frac{r}{2}$, we have $|\partial_{y_2}K(y)|=\frac{r^2}{|y-re_2|^2}$. By the choice of $r$,
\begin{equation*}
\begin{aligned}
\frac{\partial_{y_2}c(y)}{c(y)}&=\frac{4(r-y_2)}{|y-re_2|^2}+\nabla\log h(K(y))\cdot\partial_{y_2}K(y)\\
&\geq\frac{4(r-y_2)-r^2\|\nabla\log h\|_{L^\infty(\Omega)}}{|y-re_2|^2}
\geq\frac{3r}{2|y-re_2|^2}>0.
\end{aligned}
\end{equation*}
Integrating along the segment in \eqref{B-cap}, we obtain
\begin{equation} \label{B-coeff}
c(y^t)-c(y)=\int_{y_2}^{2t-y_2}\partial_{y_2}c(y_1,s)\mathrm{d}s>0\quad\text{for }y\in\Sigma_t.
\end{equation}

\subsection{The reflected comparison}\

\vskip 0.1cm

In this subsection, we establish the reflection inequality and return to the original coordinates to prove Lemma \ref{lem-interior}. For $s>0$,
\begin{equation*}
F''(s)=p\lambda\mathrm{e}^{s^p}\left((p-1)s^{p-2}+ps^{2p-2}\right)>0.
\end{equation*}
Thus $F'$ is continuous and strictly increasing on $[0,\infty)$, with $F'(0)=\lambda$ if $p=1$ and $F'(0)=0$ if $p>1$.

\vskip 0.1cm
Let $\nu_\Sigma$ be the outward unit normal to $\partial\Sigma$. Since
\begin{equation*}
\begin{cases}
-\Delta v=c(y)F'(v)>0&\text{in }\Sigma,\\[1mm]
v=0&\text{on }\partial\Sigma,
\end{cases}
\end{equation*}
Hopf's lemma gives $\partial_{\nu_\Sigma}v<0$ on $\partial\Sigma$. On the graph in \eqref{B-graph}, differentiating $v(s,q(s))=0$ yields
\begin{equation*}
\partial_{y_1}v(s,q(s))+q'(s)\partial_{y_2}v(s,q(s))=0.
\end{equation*}
Since $\nu_\Sigma=\frac{(q',-1)}{\sqrt{1+(q')^2}}$, we obtain
\begin{equation} \label{B-normal}
\partial_{y_2}v(s,q(s))=-\frac{\partial_{\nu_\Sigma}v(s,q(s))}{\sqrt{1+(q'(s))^2}}>0.
\end{equation}

\begin{remark}
For $1<p<2$,
\begin{equation*}
F''(s)\sim p\lambda(p-1)s^{p-2}\to+\infty \quad\text{as }s\to0.
\end{equation*}
The Kelvin radius is chosen so that the transformed coefficient is strictly increasing in the direction of reflection. We use Hopf's lemma to start the comparison and the monotonicity of $F'$ to continue it, without estimating $F''$ near zero.
\end{remark}

\begin{proof}[\bf{Proof of Lemma \ref{lem-interior}}]
For $0<t<t_0$, set $w_t(y)=v(y^t)-v(y)$. By Lemma \ref{lem-boundary-cap},
\begin{equation*}
\begin{cases}
-\Delta w_t=c(y^t)F'(v(y^t))-c(y)F'(v(y))&\text{in }\Sigma_t,\\[1mm]
w_t=0&\text{on }\overline{\Sigma_t}\cap\{y_2=t\},\\[1mm]
w_t=v(y^t)>0&\text{on }\partial\Sigma\cap\{y_2<t\}.
\end{cases}
\end{equation*}
If $w_t\geq0$ in $\Sigma_t$, the monotonicity of $F'$ and \eqref{B-coeff} give
\begin{equation} \label{B-source}
\begin{aligned}
-\Delta w_t&=(c(y^t)-c(y))F'(v(y^t))+c(y)(F'(v(y^t))-F'(v(y)))\\
&\geq(c(y^t)-c(y))F'(v(y^t))>0\quad\text{in }\Sigma_t.
\end{aligned}
\end{equation}

By \eqref{B-normal}, $\partial_{y_2}v(0)>0$. Since $v\in C^1(\overline\Sigma)$, there exists $\eta>0$ such that $\partial_{y_2}v>0$ in $\overline\Sigma\cap B_\eta(0)$. For $y\in\Sigma_t$, the relation $\Sigma\subset B_r(re_2)$ gives
\begin{equation*}
|y_1|^2<2ry_2-y_2^2<2rt.
\end{equation*}
Hence every $z\in[y,y^t]$ satisfies $|z|^2<2rt+4t^2$. For all sufficiently small $t>0$, \eqref{B-cap} yields
\begin{equation} \label{B-seg}
w_t(y)=\int_{y_2}^{2t-y_2}\partial_{y_2}v(y_1,s)\mathrm{d}s>0\quad\text{for }y\in\Sigma_t.
\end{equation}

Set
\begin{equation*}
T=\sup\{s\in(0,t_0)\mid w_t\geq0\text{ in }\Sigma_t\text{ for every }0<t\leq s\}.
\end{equation*}
By \eqref{B-seg}, $T>0$. Suppose that $T<t_0$. For every $y\in\Sigma_T$, we have $y\in\Sigma_t$ whenever $y_2<t<T$. Letting $t\to T$ gives
\begin{equation*}
w_T(y)=\lim_{t\to T}w_t(y)\geq0.
\end{equation*}
If $w_T(y)=0$ at an interior point of $\Sigma_T$, then $-\Delta w_T(y)\leq0$, contradicting \eqref{B-source}. Thus $w_T>0$ in $\Sigma_T$. Moreover, for $y\in\partial\Sigma$ with $y_2<T$, Lemma \ref{lem-boundary-cap} gives $y^T\in\Sigma$. Hence
\begin{equation*}
w_T(y)=v(y^T)>0.
\end{equation*}

We next show that
\begin{equation} \label{B-top}
\partial_{y_2}v>0\quad\text{on }\overline\Sigma\cap\{y_2=T\}.
\end{equation}
Let $y\in\Sigma\cap\{y_2=T\}$ and choose $0<R<\frac{1}{2}\operatorname{dist}(y,\partial\Sigma)$. For every $z\in B_R(y-Re_2)$, we have
\begin{equation*}
z_2<T,\quad |z-y|\leq|z-(y-Re_2)|+R<2R<\operatorname{dist}(y,\partial\Sigma).
\end{equation*}
Hence $B_R(y-Re_2)\subset\Sigma_T$, and this ball is tangent to $\{y_2=T\}$ at $y$. Since
\begin{equation*}
\begin{cases}
-\Delta w_T>0&\text{in }\Sigma_T,\\[1mm]
w_T>0&\text{in }\Sigma_T,\\[1mm]
w_T=0&\text{on }\Sigma\cap\{y_2=T\},
\end{cases}
\end{equation*}
Hopf's lemma yields
\begin{equation*}
0<-\partial_{y_2}w_T(y)=2\partial_{y_2}v(y).
\end{equation*}
If $y\in\partial\Sigma\cap\{y_2=T\}$, choose $t\in(T,t_0)$. Then $y\in\overline{\Sigma_t}$. Approximating $y$ by points of $\Sigma_t$ in \eqref{B-cap}, we obtain
\begin{equation*}
y\in\left[-\frac{b}{2},\frac{b}{2}\right]^2\subset(-b,b)^2.
\end{equation*}
Thus $y$ belongs to the graph in \eqref{B-graph}, and \eqref{B-normal} gives $\partial_{y_2}v(y)>0$. This proves \eqref{B-top}.

\vskip 0.1cm

By the definition of $T$, for all sufficiently large $j$, there exist $t_j$ and $y_j$ satisfying
\begin{equation*}
T<t_j<T+\frac{1}{j}<t_0,\quad y_j\in\Sigma_{t_j},\quad w_{t_j}(y_j)<0.
\end{equation*}
After passing to a subsequence, we may assume that
\begin{equation*}
y_j\to y\in\overline\Sigma,\quad y_2\leq T,\quad y_j^{t_j}\to y^T.
\end{equation*}
If $y_2<T$, then $w_T(y)>0$. Indeed, this follows from \eqref{B-source} when $y\in\Sigma$, and from Lemma \ref{lem-boundary-cap} when $y\in\partial\Sigma$. Consequently,
\begin{equation*}
w_{t_j}(y_j)\to w_T(y)>0,
\end{equation*}
contrary to $w_{t_j}(y_j)<0$.
If $y_2=T$, then $y^T=y$. Since $t_j\to T$ and $y_j\to y$, we also have $y_j^{t_j}\to y$. Moreover,
\begin{equation*}
\sup_{z\in[y_j,y_j^{t_j}]}|z-y|
\leq\max\big\{|y_j-y|,|y_j^{t_j}-y|\big\}\to0.
\end{equation*}
By \eqref{B-top}, we have $\partial_{y_2}v(y)>0$. The segments $[y_j,y_j^{t_j}]$ lie in $\Sigma$ by \eqref{B-cap}. The continuity of $\partial_{y_2}v$ on $\overline\Sigma$ therefore implies that, for all sufficiently large $j$,
\begin{equation*}
\partial_{y_2}v(z)\geq\frac{1}{2}\partial_{y_2}v(y)>0\quad\text{for }z\in[y_j,y_j^{t_j}].
\end{equation*}
Therefore,
\begin{equation*}
w_{t_j}(y_j)=\int_{(y_j)_2}^{2t_j-(y_j)_2}\partial_{y_2}v((y_j)_1,s)\mathrm{d}s\geq(t_j-(y_j)_2)\partial_{y_2}v(y)>0,
\end{equation*}
which is again a contradiction. Hence $T=t_0$.

\vskip 0.1cm
For every $0<t<t_0$, we have $w_t\geq0$ in $\Sigma_t$. By \eqref{B-source}, an interior zero of $w_t$ is impossible. Thus
\begin{equation*}
\begin{cases}
-\Delta w_t>0&\text{in }\Sigma_t,\\[1mm]
w_t>0&\text{in }\Sigma_t,\\[1mm]
w_t=0&\text{on }\Sigma\cap\{y_2=t\}.
\end{cases}
\end{equation*}
For $y\in\Sigma\cap\{y_2=t\}$, Hopf's lemma yields
\begin{equation*}
0<-\partial_{y_2}w_t(y)=2\partial_{y_2}v(y).
\end{equation*}
Since $t\in(0,t_0)$ is arbitrary, we conclude that
\begin{equation*}
\partial_{y_2}v>0\quad\text{in }\Sigma\cap\{0<y_2<t_0\}.
\end{equation*}

Choose $0<\delta_*<\min\{\rho_\Omega,t_0\}$. For $0<s\leq\delta_*$, we have $-se_2\in\Omega$ by \eqref{local-representation} and
\begin{equation*}
K(-se_2)=\frac{rs}{r+s}e_2,\quad 0<\frac{rs}{r+s}<s<t_0.
\end{equation*}
With $u=v\circ K$, differentiating along the inward normal gives
\begin{equation*}
\frac{\mathrm{d}}{\mathrm{d}s}u(-se_2)
=\frac{r^2}{(r+s)^2}\partial_{y_2}v\left(\frac{rs}{r+s}e_2\right)>0.
\end{equation*}
Since $P\in\partial\Omega$ is arbitrary, returning to the original coordinates yields
\begin{equation*}
-\nabla u(P-s\nu(P))\cdot\nu(P)=\frac{\mathrm{d}}{\mathrm{d}s}u(P-s\nu(P))>0
\end{equation*}
for every $P\in\partial\Omega$ and $0<s\leq\delta_*$. By \eqref{local-representation}, $\nabla u(x)\neq0$ for every $x\in\Omega$ with $\operatorname{dist}(x,\partial\Omega)\leq\delta_*$.
\end{proof}

\medskip

\subsection*{Acknowledgements}\

\vskip 0.1cm

This work was supported by the National Key R\&D Program (No. 2023YFA1010002).
Hong-Ge Chen was supported by the National Natural Science Foundation of China (Grant Nos. 12671239 and 12571249).
Peng Luo and Shuangjie Peng were supported by the National Natural Science Foundation of China (Grant No. 12631005).

\vskip 0.2truein
\paragraph{\textbf{Declaration of AI-assisted tools.}}\

\vskip 0.1cm
This paper was written by the authors and was not generated by artificial intelligence. During its preparation, the authors used AI tools as auxiliary aids for exploratory discussions, preliminary proof drafting, and language editing. The authors take full responsibility for the mathematical validation, the final proof choices, and the final text.

\end{document}